	\numberwithin{equation}{section} %number equations by section
\newtheorem{theorem}{\protect\theoremname}[section]
\newtheorem{definition}[theorem]{\protect\definitionname}
\newtheorem{lemma}[theorem]{\protect\lemmaname}
\newtheorem{proposition}[theorem]{\protect\propositionname}
\newtheorem{example}[theorem]{\protect\examplename}
\newtheorem{corollary}[theorem]{\protect\corollaryname}
\newtheorem{remark}[theorem]{\protect\remarkname}
\newtheorem{problem}[theorem]{\protect\problemname}
\newtheorem{notation}[theorem]{\protect\notationname}
\newtheorem{claim}[theorem]{\protect\claimname}
\newtheorem{conjecture}[theorem]{\protect\conjecturename}
\providecommand{\corollaryname}{Corollary}
\providecommand{\claimname}{Claim}
\providecommand{\definitionname}{Definition}
\providecommand{\lemmaname}{Lemma}
\providecommand{\notationname}{Notation}
\providecommand{\remarkname}{Remark}
\providecommand{\problemname}{Problem}
\providecommand{\propositionname}{Proposition}
\providecommand{\examplename}{Example}
\providecommand{\theoremname}{Theorem}
\providecommand{\conjecturename}{Conjecture}
\newcommand{\Q}{\mathbb{Q}}
\newcommand{\slantfrac}[2]{\,^#1\!/_#2}
\newcommand{\mc}{\mathcal}
\newcommand{\fsd}{F_{\sigma\delta}}
\newcommand{\seq}{\omega^{<\omega}}
\newcommand{\Fa}{\mc F_\alpha}
\newcommand{\ext}{\hat{\ }}
\newcommand{\baire}{\omega^\omega}
\newcommand{\Amg}[2]{\textnormal{Amg}\left(#1,#2\right)}
\newcommand{\Compl}[2]{\textnormal{Compl}\left(#1,#2\right)}
\newcommand{\rank}{r_{\textnormal{iie}}}
\newcommand{\Tr}{\textnormal{Tr}}
\newcommand{\D}{D_{\textnormal{iie}}}
\newcommand{\cltr}[1]{\textnormal{cl}_{\Tr}\left(#1\right)}
\newcommand{\ims}[2]{\textnormal{ims}_{#1}\left(#2\right)}
\newcounter{vkNoteCounter}
\newcounter{okNoteCounter}
\renewcommand{\fsd}{\mathcal{F}_{\sigma\delta}}
\begin{document}

\begin{titlepage}
\title{Complexities and Representations of $\mc F$-Borel Spaces}
\author{Vojtěch Kovařík\\
	E-mail: kovarikv@karlin.mff.cuni.cz 	\\
	\\
	Charles University \\
	Faculty of Mathematics and Physics \\
	Department of Mathematical Analysis \\
	\\
	Sokolovská 83, Karlín \\
	Praha 8, 186 00 \\
	Czech Republic}

\date{}

\maketitle

%%%%%%%%%%%%%%%%% FOOTNOTES
\renewcommand{\thefootnote}{}

\footnote{2010 \emph{Mathematics Subject Classification}: Primary 54H05; Secondary 54G20.}

\footnote{\emph{Key words and phrases}: compactification, broom set, F-Borel, complexity, K-analytic space.}

\renewcommand{\thefootnote}{\arabic{footnote}}
\setcounter{footnote}{0}
%%%%%%%%%%%%%%%%%%

\begin{abstract}
We investigate the~$\mc F$-Borel complexity of topological spaces in their different compactifications.
We provide a~simple proof of the~fact that a~space can have arbitrarily many different complexities in different compactifications.
We also develop a~theory of representations of $\mc F$-Borel sets,
and show how to apply this theory to prove that the~complexity of hereditarily Lindelöf spaces is absolute (that is, it is the~same in every compactification).
We use these representations to characterize the~complexities attainable by a~specific class of topological spaces.
This provides an alternative proof of the~first result,
and implies the~existence of a~space with non-absolute additive complexity.
We discuss the~method used by Talagrand to construct the~first example of a~space with non-absolute complexity, hopefully providing an explanation which is more accessible than the~original one.
We also discuss the~relation of complexity and local complexity, and show how to construct amalgamation-like compactifications.
\end{abstract}

\end{titlepage}

%set page number to 2
\setcounter{page}{2}

%%%%%%%%%%%%%%%%%%%%%%%%%%%%%% TEXT BEGINS BELOW
% !TeX root = Attainable_complexities.tex
\section{Introduction}\label{section: intro}

%%%%%%%%%%%%%%%%%%%%%%%%%%%%%%%%%%%%
%  =============== Introduction - background and related results =======================

We investigate complexity of $\mc F$-Borel sets, that is, of the sets from the smallest system containing closed sets and stable under taking countable unions and intersections.
This is of particular interest because the~$\mc F\textnormal{-Borel}$ classes are not absolute (unlike the Borel classes - see \cite{talagrand1985choquet} and, for example, \cite{holicky2003perfect}).
In particular, we investigate which values of $\mc F$-Borel complexity may a given space assume.
We further study various representations of $\mc F$-Borel sets.

Let us start by basic definitions and exact formulation of problems.
All topological spaces in this work will be Tychonoff (unless the contrary is explicitly specified -- see Section~\ref{section: amalgamation spaces}). 
For a~family of sets $\mc C$, we will denote by $\mc C_\sigma$ the~collection of all countable unions of elements of $\mc C$ and by $\mc C_\delta$ the~collection of all countable intersections of elements of $\mc C$.
If $\alpha$ is an ordinal and $\mc C_\beta$, $\beta \in [0,\alpha)$, are collections of sets, we denote $\mc C_{<\alpha} := \bigcup_{\beta < \alpha} \mc C_\beta$.

In any topological space $Y$, we have the~family $\mc F$ of closed sets, $\mc F_\sigma$ sets, $\mc F_{\sigma\delta}$ sets and so on. Since this notation quickly gets impractical, we use the~following definition:

\begin{definition}[$\mc F$-Borel\footnote{The $\mc F$-Borel sets are also sometimes called $\mc F$-Borelian, to denote the~fact that we only take the~system generated by countable unions and intersections of closed sets, rather than the~whole \emph{$\sigma$-algebra} generated by closed sets. Since there is little space for confusion in this paper, we stick to the~terminology of Definition~\ref{definition: F Borel sets}.} hierarchy]\label{definition: F Borel sets}
We define the~hierarchy of $\mc F$-Borel sets on a~topological space $Y$ as
\begin{itemize}
\item $\mc F_0(Y) := \mc F(Y) := $ closed subsets of $Y$,
\item $\mc F_\alpha(Y) := \left( \mc F_{<\alpha} (Y) \right)_\sigma$ for $0 < \alpha <\omega_1$ odd,
\item $\mc F_\alpha(Y) := \left( \mc F_{<\alpha} (Y) \right)_\delta$ for $0 < \alpha <\omega_1$ even.\footnote{Recall that every ordinal $\alpha$ can be uniquely written as $\alpha = \lambda + m$, where $\lambda$ is a~limit ordinal or 0 and $m\in\omega$. An ordinal is said to be even, resp. odd, when the~corresponding $m$ is even, resp. odd.}
\end{itemize}
\end{definition}

\noindent The classes $\Fa$ for odd (resp. even) $\alpha$ are called additive (resp. multiplicative).
Next, we introduce the~class of $\mc K$-analytic spaces, and a~related class of Suslin-$\mc F$ sets, which contains all $\mc F$-Borel sets (\cite[Part\,I, Cor.\,2.3.3]{rogers1980analytic}):

\begin{definition}[Suslin-$\mc F$ sets and $\mc K$-analytic spaces]
Let $Y$ be a~topological space and $S\subset Y$. $S$ is said to be \emph{Suslin-$\mc F$} in $Y$ if it is of the~form
\[ S = \bigcup_{n_0,n_1,\dots} \bigcap_{k\in\omega} F_{n_0,\dots,n_k} \]
for closed sets $F_{n_0,\dots,n_k} \subset Y$, $n_i\in\omega$, $i,k\in\omega$. By $\mc F_{\omega_1} (Y)$ we denote the~collection of Suslin-$\mc F$ subsets of $Y$.

A topological space is \emph{$\mc K$-analytic} if it is a~Suslin-$\mc F$ subset of some compact space.
\end{definition}

\noindent (Typically, $\mc K$-analytic spaces are defined differently, but our definition equivalent by \cite[Part\,I,\,Thm.\,2.5.2]{rogers1980analytic}.)

Note that if $Y$ is compact, then $\mc F_0$-subsets of $Y$ are compact, $\mc F_1$-subsets are $\sigma$-compact, $\mc F_2$-subsets are $\mc K_{\sigma\delta}$, $\Fa$-subsets for $\alpha<\omega_1$ are ``$\mc K_\alpha$'', and Suslin-$\mc F$ subsets of $Y$ are $\mc K$-analytic.

By complexity of a~topological space $X$ in some $Y$ we mean the~following:

\begin{definition}[Complexity of $X$ in $Y$]
Suppose that $X$ is Suslin-$\mc F$ subset of a~topological space $Y$. By \emph{complexity} of $X$ in $Y$ we mean
\begin{align*}
\textnormal{Compl}\,(X,Y):=
	& \text{ the~smallest } \alpha\leq\omega_1 \text{ for which } X\in \Fa(Y), \text{ that is} \\
	& \text{ the~unique } \alpha\leq\omega_1 \text{ for which } X\in \Fa(Y)\setminus \mc F_{<\alpha}(Y) .
\end{align*}
\end{definition}

\noindent Recall that a~$\mc K$-analytic space is Suslin-$\mc F$ in every space which contains it (\cite[Theorem 3.1]{hansell1992descriptive}).
Consequently, $\mc K$-analytic spaces are precisely those $X$ for which $\textnormal{Compl}\,(X,Y)$ is defined for every $Y\supset X$, and this is further equivalent to $\textnormal{Compl}\,(X,Y)$ being defined for \emph{some} space $Y$ which is compact.

Central to this work is the~following notion of complexities attainable in different spaces $Y$ containing a~given space. To avoid pathological cases such as $\Compl{X}{X}=0$ being true for arbitrarily ``ugly'' $X$, we have to restrict our attention to ``sufficiently nice'' spaces $Y$.

\begin{definition}[Attainable complexities]\label{definition:att_compl}
The \emph{set of complexities attained by a~space $X$} is defined as
\begin{align*}
\textnormal{Compl}(X) := & \left\{ \alpha \leq \omega_1 \, | \ \alpha = \textnormal{Compl}\,(X,cX) \text{ for some compactification $cX$ of } X \right\} \\
 = & \left\{ \alpha \leq \omega_1 \, | \ \alpha = \textnormal{Compl}\,(X,Y) \text{ for some $Y\supset X$ s.t. } \overline{X}^Y \text{ is compact} \right\} .
\end{align*}
\end{definition}

(The~second identity in Definition \ref{definition:att_compl} holds because $X\subset K \subset Y$ implies $\Compl{X}{K} \leq \Compl{X}{Y}$.)
If $X$ satisfies $\textnormal{Compl}(X) \subset [0,\alpha]$ for some $\alpha \leq \omega_1$, we say that $X$ is an \emph{absolute $\Fa$ space}. The smallest such ordinal $\alpha$ is called the \emph{absolute complexity of $X$} -- we clearly have $\alpha = \sup \textnormal{Compl}(X)$.
Sometimes, $X$ for which $\textnormal{Compl}(X)$ contains $\alpha$ is said to be an \emph{$\Fa$ space}.
Finally, if $\textnormal{Compl}(X)$ is empty or a~singleton, the~complexity of $X$ said to be \emph{absolute}. Otherwise, the~complexity of $X$ is \emph{non-absolute}.

The goal of this paper is to investigate the~following two problems:

\begin{problem}[$X$ of a~given complexity]\label{problem: X with Compl = C}
Let $C\subset [0,\omega_1]$. Is there a~topological space $X$ with $\textnormal{Compl}(X)=C$?
\end{problem}

\begin{problem}[Complexity of an arbitrary $X$]\label{problem: Compl of X}
For any topological space $X$, describe $\textnormal{Compl}(X)$.
\end{problem}

Inspired by behavior of complexity in Polish spaces, one might suspect that any $\Fa$ space is automatically absolutely $\Fa$. However, this quite reasonably sounding conjecture does \emph{not} hold -- there exists an $\fsd$ space $\mathbf{T}$ which is not absolutely $\Fa$ for any $\alpha<\omega_1$ (\cite{talagrand1985choquet}; $\mathbf{T}$ is also defined here, in Definition~\ref{definition: AD topology}).
The following proposition summarizes the~results related to Problem~\ref{problem: X with Compl = C} known so far.

\begin{proposition}[Attainable complexities: state of the~art]\label{proposition: basic attainable complexities}\mbox{}
\begin{enumerate}[(1)]
\item A topological space $X$ is $\mc K$-analytic if and only if $\textnormal{Compl}(X)$ is non-empty.\label{case:K-analytic}
\item For a~topological space $X$, the~following propositions are equivalent:
	\begin{enumerate}[(i)]
		\item $X$ is compact (resp. $\sigma$-compact);
		\item $\textnormal{Compl}(X)=\{0\}$ (resp. $\{1\}$);
		\item $\textnormal{Compl}(X)$ contains $0$ (resp. $1$).
	\end{enumerate} \label{case: compact and sigma compact}
\item The complexity of any separable metrizable space is absolute. In particular, for every $\alpha\leq \omega_1$, there exists a~space satisfying $\textnormal{Compl}(X)=\{\alpha\}$. \label{case: polish spaces and absolute complexity}
\item For every $2 \leq \alpha \le \beta \le \omega_1$, $\beta$ even, there exists a~topological space $X=X_\alpha^\beta$ satisfying \label{case: X alpha}
\[ \left\{ \alpha, \beta \right\} \subset \textnormal{Compl}\left(X \right) \subset [\alpha, \beta] .\]
\end{enumerate}
\end{proposition}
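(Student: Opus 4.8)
I would treat the four items separately, as they are quite different in character. For (1): if $X$ is $\mc K$-analytic then, being Tychonoff, it has a compactification $cX$, in which it is Suslin-$\mc F$ by \cite[Theorem~3.1]{hansell1992descriptive}, so $\Compl{X}{cX}\le\omega_1$ is defined and $\textnormal{Compl}(X)\neq\emptyset$; conversely, if $\Compl{X}{cX}$ is defined for some compactification $cX$ then $X$ is Suslin-$\mc F$ in the compact space $cX$, which is precisely the definition of $\mc K$-analyticity. For (2): if $X$ is compact then in any compactification $cX$ it is a compact dense subset of the Hausdorff space $cX$, so $X=cX$ is closed and $\Compl{X}{cX}=0$, giving $\textnormal{Compl}(X)=\{0\}$, and conversely $0\in\textnormal{Compl}(X)$ exhibits $X$ as closed in a compact space, hence compact; for the $\sigma$-compact variant (to be read for non-compact $X$), writing $X=\bigcup_n K_n$ with $K_n$ compact shows $X\in\mc F_1(cX)$ in every $cX$, so $\Compl{X}{cX}=1$, while $1\in\textnormal{Compl}(X)$ makes $X$ a countable union of closed subsets of a compact space, hence $\sigma$-compact.

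For (3), I would embed $X$ in the Hilbert cube and let $mX$ be its closure there; if $X$ fails to be $\mc K$-analytic then $\textnormal{Compl}(X)=\emptyset$ and there is nothing to prove. Otherwise, in a metrizable space the $\mc F$-Borel hierarchy coincides level by level with the additive and multiplicative Borel hierarchy, and these classes are absolute for separable metrizable spaces by classical results (\cite{talagrand1985choquet}; see also \cite{holicky2003perfect}), so $\Compl{X}{cX}$ does not depend on the compactification $cX$ and $\textnormal{Compl}(X)$ is a singleton. For the ``in particular'' clause, the Borel hierarchy theorem provides, for each $\gamma$ with $2\le\gamma<\omega_1$ of the appropriate parity, a subset of $\baire$ of exact $\mc F$-Borel complexity $\gamma$, while $\I$, $\Q$ and an analytic non-Borel subset of $\baire$ realize $\gamma=0,1,\omega_1$; by the absoluteness just cited each such $X$ satisfies $\textnormal{Compl}(X)=\{\gamma\}$.

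For (4), fix $2\le\alpha\le\beta\le\omega_1$ with $\beta$ even. My plan is to take $X:=P_\alpha\sqcup Z$, where $P_\alpha$ is separable metrizable with $\textnormal{Compl}(P_\alpha)=\{\alpha\}$ (from (3)) and $Z$ is a $\mc K$-analytic space still to be chosen. The summand $P_\alpha$ forces the lower end: in an arbitrary compactification $cX$, the clopen piece $A:=P_\alpha$ satisfies $X\cap\overline{A}^{cX}=A$ (a short separation argument, since $A$ and $X\setminus A$ have disjoint open hulls in $cX$), and $\overline{A}^{cX}$ is a compactification of $P_\alpha$; since the trace of $\mc F_\gamma(cX)$ on $\overline{A}^{cX}$ lies in $\mc F_\gamma(\overline{A}^{cX})$, we get $\Compl{X}{cX}\ge\Compl{P_\alpha}{\overline{A}^{cX}}=\alpha$, so $\textnormal{Compl}(X)\subseteq[\alpha,\omega_1]$ for any $\mc K$-analytic $Z$. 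On the other hand, for a disjoint union of compactifications $\Compl{X}{cP_\alpha\oplus cZ}=\max\{\Compl{P_\alpha}{cP_\alpha},\Compl{Z}{cZ}\}$, so the complexities attained by $X$ are precisely $\alpha$ together with those values $\Compl{Z}{cZ}$ that exceed $\alpha$.

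It therefore remains to choose $Z$. When $\beta=\omega_1$ I would take $Z=\mathbf T$: it is $\fsd$, so $\alpha=\max\{\alpha,2\}\in\textnormal{Compl}(X)$, and it is not absolutely $\Fa$ for any $\alpha<\omega_1$, so the complexities of $X$ are unbounded in $\omega_1$ --- with $\omega_1$ itself realized by amalgamating the witnessing compactifications --- while $P_\alpha\sqcup\mathbf T$ remains $\mc K$-analytic; hence $\{\alpha,\omega_1\}\subseteq\textnormal{Compl}(X)\subseteq[\alpha,\omega_1]$. The case $\beta<\omega_1$ is the main obstacle: it requires a $\mc K$-analytic space $Z=\mathbf T_\beta$ that is \emph{absolutely} $\mc F_\beta$ (so that $\Compl{X}{cX}\le\beta$ in \emph{every} compactification) yet whose complexity in suitable compactifications sweeps out all of $[2,\beta]$ --- in effect a ``bounded'' version of Talagrand's space. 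Producing such a $Z$, and in particular proving the absolute upper bound $\beta$ (the delicate point), is where essentially all the difficulty lies; I would obtain it either from the known construction of the spaces $X_\alpha^\beta$ or via the representation machinery developed later in the paper.
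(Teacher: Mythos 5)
Your proposal follows essentially the same route as the paper: (1) and (2) by the direct characterizations of $\mc K$-analytic, compact and $\sigma$-compact spaces; (3) by appeal to known absoluteness results plus the non-triviality of the Borel hierarchy; (4) by taking a topological sum of a metrizable space of exact complexity $\alpha$ with a Talagrand-type space (for $\beta=\omega_1$) or with a bounded variant $X_2^\beta$ whose existence is deferred to the earlier paper, exactly as the paper does. One caveat on (3): your justification ``the $\mc F$-Borel hierarchy coincides with the Borel hierarchy and Borel classes are absolute'' is thinner than it looks, because the identification of $\mc F$-Borel with Borel classes only holds in a \emph{metrizable} ambient space, whereas $\Compl{X}{cX}$ ranges over arbitrary (non-metrizable) compactifications; bridging that gap is precisely the non-obvious step the paper warns about and outsources to \cite[Theorem 2.3]{kovarik2018brooms} (and which its Proposition on hereditarily Lindel\"of spaces later reproves via the $F_\sigma$-separation lemma). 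Also, in (4) with $\beta=\omega_1$ no ``amalgamation of witnessing compactifications'' is needed: Talagrand's result already gives a single compactification $c\mathbf{T}$ with $\mathbf{T}\notin\mc F_{<\omega_1}(c\mathbf{T})$, so $\omega_1\in\textnormal{Compl}(\mathbf{T})$ directly.
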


\begin{proof}
\eqref{case: compact and sigma compact} is trivial, since continuous images of compact sets are compact.
Regarding \eqref{case:K-analytic}, we have already mentioned that any Suslin-$\mc F$ subset of a compact space is $\mc K$-analytic, and any $\mc K$-analytic space is Suslin-$\mc F$ in every space which contains it. Since $(\textnormal{Suslin-}\mc F)(Y) = \mc F_{\omega_1}(Y) \supset \bigcup_{\alpha < \omega_1} \Fa(Y)$ holds for any $Y$, \eqref{case:K-analytic} follows.

The first part of \eqref{case: polish spaces and absolute complexity} is by no means obvious -- we are dealing with the~complexity of $X$ in \emph{all} compactifications, not only those which are metrizable. Nonetheless, it's proof is fairly elementary; see for example \cite[Theorem 2.3]{kovarik2018brooms}.
The ``in particular'' part follows from the~fact that the~Borel hierarchy in Polish spaces is non-trivial (see, for example, the~existence of universal sets in \cite{srivastava2008course}).

\eqref{case: X alpha}:
By \eqref{case: polish spaces and absolute complexity} of this proposition, there is some space $X_\alpha^\alpha$ satisfying
\[ \textnormal{Compl}(X^\alpha_\alpha) = \{ \alpha \} .\]
By \cite{talagrand1985choquet}, there exists a~space $\mathbf{T}$ which is $\fsd$ in $\beta \mathbf{T}$, but we have $\mathbf{T}\notin \mc F_{<\omega_1} (c\mathbf{T})$ for some compactification $c\mathbf{T}$ of $\mathbf{T}$. Since such a~space is $\mc K$-analytic, it satisfies 
\[ \left\{ 2, \omega_1 \right\} \subset \textnormal{Compl}\left(\mathbf{T} \right) \subset [2, \omega_1] .\]
For $\beta = \omega_1$, the~topological sum $X := X^\alpha_\alpha \oplus \mathbf{T}$ clearly has the~desired properties.

For even $\beta<\omega_1$, take the~topological sum $X := X^\alpha_\alpha \oplus X^\beta_2 $, where $X^\beta_2$ is some space satisfying
\begin{equation}\label{equation: X 2 beta}
\left\{ 2, \beta \right\} \subset \textnormal{Compl}\left(\mathbf{T} \right) \subset [2, \beta ] .
\end{equation}
The existence of such $X^\beta_2$ follows from \cite[Theorem 5.14]{kovarik2018brooms}.
\end{proof}

\section{Main Results and Open Problems}\label{section:overview}

The first contribution of this paper is showing that \eqref{case: X alpha} from Proposition~\ref{proposition: basic attainable complexities} holds not only for even ordinals, but for general $\beta \in [2,\omega_1]$:

\begin{theorem}[Non-absolute space of additive complexity]
 \label{theorem:X_2_beta}
For every $2 \leq \alpha \le \beta \le \omega_1$, there exists a~topological space $X=X_\alpha^\beta$ satisfying
\[ \left\{ \alpha, \beta \right\} \subset \textnormal{Compl}\left(X \right) \subset [\alpha, \beta] .\]
\end{theorem}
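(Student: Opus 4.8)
The plan is to follow the proof of Proposition~\ref{proposition: basic attainable complexities}\eqref{case: X alpha}: reduce to the case $\alpha=2$ by a topological sum and then supply the one missing building block. It suffices to construct, for every $\beta\in[2,\omega_1]$, a space $Y_\beta$ with $\{2,\beta\}\subset\textnormal{Compl}(Y_\beta)\subset[2,\beta]$; then $X_\alpha^\beta:=X_\alpha^\alpha\oplus Y_\beta$ works, where $X_\alpha^\alpha$ is the space of absolute complexity $\{\alpha\}$ from Proposition~\ref{proposition: basic attainable complexities}\eqref{case: polish spaces and absolute complexity}. (For a topological sum of two clopen pieces the complexity in a compactification respecting the decomposition is the larger of the two, and no compactification does better than the best componentwise choice nor worse than the worst; hence $\textnormal{Compl}(X_\alpha^\beta)$ contains $\max(\alpha,2)=\alpha$ and $\max(\alpha,\beta)=\beta$ and is contained in $[\alpha,\beta]$.) For even $\beta$ --- in particular for all limit $\beta$ and for $\beta=\omega_1$ --- such a $Y_\beta$ already exists by Proposition~\ref{proposition: basic attainable complexities}\eqref{case: X alpha} (via \cite{talagrand1985choquet} and \cite[Theorem~5.14]{kovarik2018brooms}), so the only new content is the construction of $Y_\beta$ for odd $\beta<\omega_1$.

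Fix an odd $\beta=\gamma+1$ with $\gamma\ge 2$ even and put $Z:=X_2^\gamma$, so that $\{2,\gamma\}\subset\textnormal{Compl}(Z)\subset[2,\gamma]$; in particular $Z$ is absolutely $\mc F_\gamma$ and is not $\sigma$-compact. I would take $Y_\beta$ to be a ``convergent fan'' of countably many disjoint clopen copies $Z_n$ ($n\in\omega$) of $Z$ together with one extra point $\infty$ whose neighbourhood base consists of the sets $\{\infty\}\cup\bigcup_{n\ge m}Z_n$ --- the analogue, one level up, of the broom spaces of \cite{kovarik2018brooms} and of Talagrand's space $\mathbf T$ from Definition~\ref{definition: AD topology} --- or, should a plain sequence prove too weak, its almost-disjoint-family refinement in the spirit of $\mathbf T$. (A routine check shows $Y_\beta$ is Tychonoff.) The point is that $Y_\beta$ is a countable union of copies of an absolutely $\mc F_\gamma$ space plus one closed point, which caps the complexity at $\gamma+1=\beta$, while the accumulation of the copies at $\infty$ is the feature one exploits --- in a carefully chosen compactification --- to force genuine $\mc F_\beta$.

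Three of the four required estimates are routine. For the \emph{upper bound} $\textnormal{Compl}(Y_\beta,cY_\beta)\le\beta$: each $Z_n$ is absolutely $\mc F_\gamma$, so $Z_n\in\mc F_\gamma\big(\overline{Z_n}^{\,cY_\beta}\big)\subset\mc F_\gamma(cY_\beta)$, whence $\bigcup_nZ_n\in\mc F_{\gamma+1}(cY_\beta)=\mc F_\beta(cY_\beta)$, and since points are closed in the compact Hausdorff $cY_\beta$, adjoining $\{\infty\}$ stays in $\mc F_\beta$. For $\textnormal{Compl}(Y_\beta,cY_\beta)\ge2$: $Z_0$ is a clopen, non-$\sigma$-compact copy of $Z$, and the intersection of an $\mc F_\delta$ set with a closed set is again $\mc F_\delta$, so $Z_0$ would be $\mc F_\delta$ in the compactification $\overline{Z_0}^{\,cY_\beta}$, forcing $\delta\ge2$. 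For $2\in\textnormal{Compl}(Y_\beta)$: compactify each $Z_n$ by a copy of some $bZ$ with $\textnormal{Compl}(Z,bZ)=2$ and form the compact fan $bY_\beta:=\{\infty\}\cup\bigsqcup_n(bZ)_n$, in which each $(bZ)_n$ is clopen; writing $Z_n=\bigcap_k G^n_k$ with $G^n_k$ relatively $\mc F_\sigma$ in $(bZ)_n$ (hence $\mc F_\sigma$ in $bY_\beta$) and setting $H_k:=\bigcup_nG^n_k$, a disjointness-of-blocks computation gives $\bigcup_nZ_n=\bigcap_kH_k\in\mc F_2(bY_\beta)$, hence $Y_\beta\in\mc F_2(bY_\beta)$, and since $Y_\beta$ is not $\sigma$-compact this complexity is exactly $2$.

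The remaining estimate, $\beta\in\textnormal{Compl}(Y_\beta)$, is the crux, and the step I expect to be the main obstacle: one must produce a compactification $b'Y_\beta$ with $Y_\beta\notin\mc F_{<\beta}(b'Y_\beta)=\mc F_\gamma(b'Y_\beta)$. The merge argument above shows this is hopeless if the copies $Z_n$ stay clopen (or merely ``cleanly separated''), so $b'Y_\beta$ must be one in which the closures of the $Z_n$ become entangled at $\infty$, engineered --- exactly as for $\mathbf T$ --- so that the one extra countable union allowed by $\mc F_{\gamma+1}$ cannot be spent locally on the individual copies. Quantifying this is precisely the role of the representation/rank theory for $\mc F$-Borel sets developed later: an $\mc F_\gamma$-representation of $Y_\beta$ in $b'Y_\beta$, localised near $\infty$, should yield $\mc F_\gamma$-representations of infinitely many copies of $Z$ of uniformly bounded rank in a compactification witnessing $\textnormal{Compl}(Z,\cdot)=\gamma$, contradicting the clustering. (Alternatively, once that theory is in place, $Y_\beta$ is recognised as a member of the specific class of spaces whose attainable complexities are computed there, and $\{2,\beta\}\subset\textnormal{Compl}(Y_\beta)\subset[2,\beta]$ is read off directly.) Combined with the reduction of the first paragraph, this gives the theorem.
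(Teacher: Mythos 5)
Your reduction to $\alpha=2$ and your treatment of even $\beta$ coincide with the paper's (topological sum with an absolute-$\mc F_\alpha$ space, plus the citations to \cite{talagrand1985choquet} and \cite[Theorem 5.14]{kovarik2018brooms}). The gap is in the only new content, the odd case $\beta=\gamma+1$, and it is not merely an omitted verification: the candidate space you propose cannot work. If $Y_\beta=\{\infty\}\cup\bigsqcup_n Z_n$ is the convergent fan of clopen copies of $Z=X_2^\gamma$, then $\{\infty\},Z_0,Z_1,\dots$ are pairwise disjoint closed Lindelöf subsets of $Y_\beta$, so for \emph{every} compactification $cY_\beta$ one has $Z_l\cap\overline{Z_k}^{cY_\beta}=Z_l\cap\overline{Z_k}^{Y_\beta}=\emptyset$ for $k\neq l$; the $F_\sigma$-separation lemma of \cite[Lemma 14]{spurny2006solution} then applies verbatim as in the multiplicative case of the proof of Proposition~\ref{proposition: complexities of topological sums}\,(iii), writing $Y_\beta$ as a countable intersection of $\mc F_\gamma$-sets inside the $\sigma$-compact set $\{\infty\}\cup\bigcup_n\overline{Z_n}^{cY_\beta}$. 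Hence $\textnormal{Compl}(Y_\beta)\subset[2,\gamma]$ and $\beta$ is never attained. Your own remark that the argument is ``hopeless if the copies stay clopen'' is exactly this obstruction, but the copies \emph{are} clopen in $Y_\beta$ by construction, and no choice of compactification can undo that; the proposed fix (``entangle the closures at $\infty$'' or pass to an ``almost-disjoint refinement'' of copies of $Z$) is not a construction, and the sketched contradiction via localized $\mc F_\gamma$-representations is not carried out.

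The paper's route for odd $\beta$ is genuinely different: it does not stack copies of a complexity-$\gamma$ space under a limit point, but takes the Talagrand broom space $\mathbf{T}_\beta=T_{\mc A^T_{<\beta}}$ (Definition~\ref{definition: AD topology}) --- a space over $\baire\cup\{\infty\}$ with a single non-isolated point whose almost-disjoint family consists of countable \emph{discrete} broom sets of bounded combinatorial rank. The upper bound $\textnormal{Compl}(\mathbf{T}_\beta)\subset[2,\beta]$ comes from the rank estimates on broom sets (Proposition~\ref{proposition: basic broom complexities}), and the lower bound comes from the amalgamation compactification $c_\beta\mathbf{T}_\beta$, in which each $A\in\mc A^T_{<\beta}$ closes up to its one-point compactification; Talagrand's witness lemma (Lemma~\ref{lemma: Talagrand lemma 3} together with Lemma~\ref{lemma: closed discrete witnesses}) then rules out $\mathbf{T}_\beta\in\mc F_{<\beta}(c_\beta\mathbf{T}_\beta)$. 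This machinery has no analogue in your proposal, so the statement is not proved.
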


\begin{proof}
As in the~proof of Proposition~\ref{proposition: basic attainable complexities}\,\eqref{case: X alpha}, we can assume that $\alpha=2$.
The existence of $X_2^\beta$ follows from \cite[Theorem 5.14]{kovarik2018brooms} for even $\beta$, resp. from Corollary~\ref{corollary: T alpha for odd} for odd $\beta$.
\end{proof}

The second contribution to Problem~\ref{problem: X with Compl = C} made by this paper is the~following generalization of Theorem~\ref{theorem:X_2_beta}, which resolves the~uncertainty about the~set $(\alpha,\beta) \cap \textnormal{Compl}(X_\alpha^\beta)$ by showing that there is such a~space $X_{[\alpha,\beta]}$ for which $\textnormal{Compl}(X_{[\alpha,\beta]})$ is the~whole interval $[\alpha,\beta]$.

\begin{theorem}[Attainable complexities]\label{theorem: summary}
For every closed interval $I\subset [2,\omega_1]$, there is a~space $X$ with
\[ \textnormal{Compl}\left(X\right) = I .\]
\end{theorem}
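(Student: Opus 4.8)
The plan is to reduce to the case $\alpha=2$ and then, for a fixed $\beta\in[2,\omega_1]$, to start from a space attaining the two endpoints $2$ and $\beta$ and to show, using the theory of representations of $\mc F$-Borel sets developed below, that every intermediate complexity is attained as well, so that its set of attained complexities equals $[2,\beta]$.

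For the reduction I would first record how complexity behaves under a \emph{finite} topological sum: if $Z_1,Z_2$ are $\mc K$-analytic, then
\[ \textnormal{Compl}(Z_1\oplus Z_2)=\bigl\{\,\max(\delta_1,\delta_2) : \delta_1\in\textnormal{Compl}(Z_1),\ \delta_2\in\textnormal{Compl}(Z_2)\,\bigr\}. \]
For ``$\supseteq$'' one uses the compactification $cX:=cZ_1\oplus cZ_2$, in which each $cZ_i$ is clopen; since the classes $\mc F_\gamma$ are closed under finite unions and satisfy $\mc F_\gamma(cX)\cap cZ_i=\mc F_\gamma(cZ_i)$, while $(Z_1\oplus Z_2)\cap cZ_i=Z_i$, the hypotheses $\Compl{Z_i}{cZ_i}=\delta_i$ force $\Compl{Z_1\oplus Z_2}{cX}=\max(\delta_1,\delta_2)$. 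For ``$\subseteq$'' one repeats the same computation in an arbitrary compactification $cX$ of $Z_1\oplus Z_2$, using that $\overline{Z_i}^{cX}$ is a compactification of $Z_i$ with $(Z_1\oplus Z_2)\cap\overline{Z_i}^{cX}=Z_i$. Applying this with $Z_2:=X_\alpha^\alpha$, where $\textnormal{Compl}(X_\alpha^\alpha)=\{\alpha\}$ (Proposition~\ref{proposition: basic attainable complexities}\,\eqref{case: polish spaces and absolute complexity}), turns a space $Z_1$ with $\textnormal{Compl}(Z_1)=[2,\beta]$ into $Z_1\oplus X_\alpha^\alpha$ with $\textnormal{Compl}=\{\max(\delta,\alpha):\delta\in[2,\beta]\}=[\alpha,\beta]$; so it suffices to treat $\alpha=2$.

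Fix $\beta\in[2,\omega_1]$ and let $X:=X_2^\beta$ be a space as in Theorem~\ref{theorem:X_2_beta} (a broom/Talagrand-type space; provided by Corollary~\ref{corollary: T alpha for odd} for odd $\beta$ and by $\mathbf T$ for $\beta=\omega_1$), so that $\{2,\beta\}\subseteq\textnormal{Compl}(X)\subseteq[2,\beta]$. The only thing left is to realise each $\delta\in(2,\beta)$ as $\Compl{X}{c_\delta X}$ for a suitable compactification $c_\delta X$, and this is where the representation machinery enters. A representation of $X$ as an $\mc F$-Borel subset of its canonical compactification carries a rank which coincides with the complexity, and I would argue that $\textnormal{Compl}(X)$ is convex: given compactifications witnessing complexities $\delta_0<\delta_1$, one interpolates — using $\Compl{X}{K}\le\Compl{X}{Y}$ whenever $X\subseteq K\subseteq Y$ (the remark after Definition~\ref{definition:att_compl}) together with the amalgamation-like compactifications built in this paper — to obtain, for every $\delta$ strictly between $\delta_0$ and $\delta_1$, a compactification of complexity exactly $\delta$. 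Concretely I expect this to amount to truncating the representation of $X$ at rank $\delta$ and checking that the truncated object is realised by an honest (amalgamation-type) compactification in which $X$ is $\mc F_\delta$ but not $\mc F_{<\delta}$. Combined with $\{2,\beta\}\subseteq\textnormal{Compl}(X)$, convexity forces $\textnormal{Compl}(X)=[2,\beta]$, and the reduction above then yields, for every closed interval $I=[\alpha,\beta]\subseteq[2,\omega_1]$, a space $X$ with $\textnormal{Compl}(X)=I$.

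The main obstacle is this interpolation/truncation step: for each intermediate $\delta$ one must produce a compactification that is at once \emph{rich enough} so that $X\notin\mc F_{<\delta}$ in it and \emph{poor enough} so that $X\in\mc F_\delta$ in it, and it must be a genuine compactification rather than a merely formal combinatorial object — which is exactly what the representation theory together with the amalgamation construction is designed to supply. A further delicate point is the parity behaviour of the $\mc F$-hierarchy: a countable union of sets from $\mc F_{<\lambda}$ with $\lambda$ a limit ordinal lands in $\mc F_{\lambda+1}$, not in $\mc F_\lambda$, so the truncation must be arranged to hit the target level $\delta$ on the nose rather than overshoot by one. I expect this to be controlled by the explicit combinatorial data of the representations (a rank function and its derivative) rather than by soft topological arguments, which is also why the construction has to be carried out inside a specific, well-understood class of spaces rather than for an arbitrary $\mc K$-analytic $X$.
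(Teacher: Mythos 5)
Your overall strategy is viable --- it is essentially the ``alternative proof'' the paper carries out in Section~\ref{section: complexity of brooms} --- but as written it has a genuine gap at its center. The entire argument rests on the claim that for the single space $X=X_2^\beta$ every intermediate $\delta\in(2,\beta)$ is attained, and you justify this by asserting that $\textnormal{Compl}(X)$ is convex and that one can ``interpolate'' between two compactifications witnessing $\delta_0<\delta_1$. No such interpolation principle is available: the monotonicity facts you cite ($\Compl{X}{K}\le\Compl{X}{Y}$ for $X\subset K\subset Y$, and Remark~\ref{remark: absolute complexity}) give no intermediate-value theorem for ordinal-valued complexities, and convexity of $\textnormal{Compl}(\cdot)$ for general $\mc K$-analytic spaces is precisely the open Conjecture~\ref{conjecture: complexities form closed interval}. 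For broom spaces the convexity is \emph{true}, but it is proved (Theorem~\ref{theorem: complexity of talagrands brooms}) by explicitly constructing, for each $\gamma$, an amalgamation compactification $c_\gamma T$ (one-point compactify the brooms in $\mc A_{<\gamma}$, Stone--\v{C}ech the rest) and then establishing two separate, nontrivial estimates: the lower bound $T\notin\mc F_{<\gamma}(c_\gamma T)$ via Talagrand's lemma on broom witnesses (Lemma~\ref{lemma: c T - lower bound}), and the upper bound $T\in\mc F_\gamma(c_\gamma T)$ via the rank computation on the trees $S(y)$ coming from regular representations (Lemma~\ref{lemma: c T - upper bound}). Your proposal names these ingredients as ``what the machinery is designed to supply'' but does not supply them; that is the proof, not a detail.

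It is worth noting that the paper's primary proof of Theorem~\ref{theorem: summary} (end of Section~\ref{section:top_sums_as_zoom}) sidesteps the interpolation problem entirely. Instead of making one space attain the whole interval, it takes the \emph{family} $(X_\alpha^\gamma)_{\gamma\in[\alpha,\beta]}$, each member of which only needs to attain the two endpoints $\alpha$ and $\gamma$ (which is exactly what Theorem~\ref{theorem:X_2_beta} provides), and forms their topological sum. By Proposition~\ref{proposition: complexities of topological sums}\,(iii) the attainable complexities of a countable sum are the suprema of selectors, which is exactly $[\alpha,\beta]$; for $\beta=\omega_1$ the index set is uncountable and one uses a zoom space over the one-point compactification of the discrete interval instead. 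Your finite-sum reduction lemma and the resulting reduction to $\alpha=2$ are correct (and agree with what the paper does), but the countable/uncountable sum is the step that lets the paper avoid having to interpolate at all. If you want to keep your single-space route, you must actually build the compactifications $c_\delta X$ and prove both bounds for each $\delta$.
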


\noindent The proof of Theorem \ref{theorem: summary} is presented at the end of Section~\ref{section: zoom spaces}.

Regarding Problem~\ref{problem: Compl of X}, we (weakly) conjecture that the~set of attainable complexities always has the~following properties:

\begin{conjecture}[$\textnormal{Compl}(\cdot)$ is always a~closed interval]\label{conjecture: complexities form closed interval}
For $\mc K$-analytic space $X$,
\begin{enumerate}[(i)]
	\item the~set $\textnormal{Compl}(X)$ is always an interval,
	\item the~set $\textnormal{Compl}(X)$ is always closed in $[0,\omega_1]$.
\end{enumerate}
\end{conjecture}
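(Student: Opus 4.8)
The plan is to reduce everything to the behaviour of the assignment $cX\mapsto\Compl{X}{cX}$ on the lattice of compactifications of $X$, and I would begin by isolating the monotonicity principle on which the whole argument rests: \emph{if $dX$ refines $cX$ (that is, $dX\geq cX$ in the usual ordering of compactifications), then $\Compl{X}{dX}\leq\Compl{X}{cX}$.} To see this, let $\pi\colon dX\to cX$ be the canonical map; the key observation is that $\pi^{-1}(X)=X$. Indeed, if $z\in dX$ has $\pi(z)\in X$, pick a net $x_\nu\in X$ with $x_\nu\to z$ in $dX$ (possible since $X$ is dense); then $\pi(x_\nu)=x_\nu\to\pi(z)$ in $cX$, hence $x_\nu\to\pi(z)$ already in $X$, hence $x_\nu\to\pi(z)$ in $dX$, so $z=\pi(z)\in X$ by uniqueness of limits. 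Since the classes $\Fa$ are preserved under continuous preimages, $X=\pi^{-1}(X)\in\Fa(dX)$ whenever $X\in\Fa(cX)$, which is the claim. In particular $\Compl{X}{\beta X}=\min\textnormal{Compl}(X)$, so $\textnormal{Compl}(X)$ always has a least element; and when $X$ is locally compact the one-point compactification realises $\max\textnormal{Compl}(X)$, so in that case $\textnormal{Compl}(X)$ is bounded above and its supremum is attained. Thus the endpoints of Conjecture~\ref{conjecture: complexities form closed interval} are under control, and the real content lies in the interior (part (i)) and in the supremum being attained for non-locally-compact $X$.

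For the closedness statement (ii) it remains to treat a limit ordinal $\gamma\le\omega_1$ which is the supremum of attained complexities, and to produce a compactification $eX$ with $\Compl{X}{eX}=\gamma$. Choose compactifications $c_nX$ with $\Compl{X}{c_nX}=\alpha_n\nearrow\gamma$. If the $c_nX$ admit a common coarsening $eX$ — automatic when $X$ is locally compact, where one may take the infimum $\bigwedge_n c_nX$ — then monotonicity gives $\Compl{X}{eX}\geq\alpha_n$ for every $n$, hence $\Compl{X}{eX}\geq\gamma$; when $\gamma=\omega_1$ this already forces equality and we are done, and when $\gamma<\omega_1$ one must argue in addition that a suitable coarsening does not overshoot $\gamma$. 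The difficulty is that for non-locally-compact $X$ the poset of compactifications is only a complete upper semilattice, so a common coarsening need not exist a priori; I would instead construct $eX$ by hand as a Hausdorff quotient of $\beta X$ that is the identity on $X$, using the amalgamation-type constructions of Section~\ref{section: amalgamation spaces} together with the theory of representations of $\mc F$-Borel sets to control, level by level, how much complexity each collapse adds to the remainder. I expect the case $\gamma=\omega_1$ to be the single hardest point: it is precisely the assertion that a space which is $\mc F$-Borel in \emph{every} compactification is absolutely $\Fa$ for some $\alpha<\omega_1$, and no boundedness principle of this kind is currently available.

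For the interval statement (i), suppose $\alpha<\gamma<\beta$ with $\alpha,\beta\in\textnormal{Compl}(X)$, realised by $c_\alpha X$ and $c_\beta X$; since $c_\beta X$ is a quotient of $\beta X$, the plan is to coarsen $\beta X$ ``by exactly the right amount'', i.e.\ to find a Hausdorff quotient of $\beta X$, fixing $X$, of complexity precisely $\gamma$, interpolating between the complexity $\Compl{X}{\beta X}\le\alpha$ and the value $\beta$ reached by coarsening all the way to $c_\beta X$. The obstruction — which I regard as the main obstacle in the whole problem — is that complexity need not change continuously along a chain of coarsenings: Talagrand's $\mathbf T$ already shows it can drop from $\omega_1$ to $2$ in a single step, so no naive intermediate-value argument applies, and one needs genuinely quantitative control of $\Compl{X}{\cdot}$ under quotients of $\beta X$. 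This is where I would lean on the representation-theoretic (and, by the discussion announced in the introduction, ``local complexity'') description of $\Compl{X}{\cdot}$: the plan is to show that complexity in a compactification is governed by how the remainder is attached near individual points, so that localized surgery on the remainder of $\beta X$ can tune the attained class through every value of $[\alpha,\beta]$ without skipping any. Combining this with the minimum/maximum facts from the monotonicity principle and the construction sketched for (ii) would then yield that $\textnormal{Compl}(X)$ is exactly a closed interval.
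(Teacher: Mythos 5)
The statement you are addressing is not a theorem of the paper but an explicitly open conjecture (the paper says it is only ``weakly'' conjectured, and everything that follows it is conditional: ``If Conjecture~\ref{conjecture: complexities form closed interval} holds\dots''). So there is no proof in the paper to compare against; the only question is whether your proposal settles the problem, and it does not. What you actually establish is correct but already known and peripheral: the monotonicity principle is Remark~\ref{remark: absolute complexity} (your net argument that the canonical map $\pi\colon dX\to cX$ satisfies $\pi^{-1}(X)=X$ is a sound proof of the standard fact that such maps carry remainder onto remainder), and it yields only that $\textnormal{Compl}(X)$ has a minimum, attained at $\beta X$, together with the locally compact case, where the lattice of compactifications has a bottom element. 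None of the spaces for which the conjecture has real content --- non-$\sigma$-compact $\mc K$-analytic spaces such as the broom spaces --- is covered by the locally compact remarks.

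Both essential steps are left as declared intentions rather than arguments. For (i), you would need to show that coarsening $\beta X$ toward $c_\beta X$ can be made to pass through every complexity in $[\alpha,\beta]$; you yourself point out that $\Compl{X}{\cdot}$ is not ``continuous'' along chains of coarsenings ($\mathbf T$ jumps from $2$ to $\omega_1$ in one step), and the promised ``quantitative control under quotients'' via local complexity or regular representations is never supplied --- the paper's Lemma~\ref{lemma: local and global complexity} relates local and global complexity inside a fixed $Y$, not across quotient maps, and Section~\ref{section: amalgamation spaces} builds compactifications only for spaces carrying a family satisfying $(\mc A1)$--$(\mc A4)$, not for arbitrary $\mc K$-analytic $X$. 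For (ii), the limit case $\gamma<\omega_1$ needs a common coarsening of infinitely many compactifications that attains $\sup_n\alpha_n$ without overshooting, and no such construction is given (the infimum need not exist outside the locally compact case, as you note); the case $\gamma=\omega_1$ is, as you concede, a boundedness principle that is not available. In sum, your proposal reproves the easy monotonicity facts and then restates the conjecture's two genuine difficulties as open sub-problems; it is a reasonable research plan, but it is not a proof, and the paper does not claim that one exists.
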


\noindent If Conjecture~\ref{conjecture: complexities form closed interval} holds, Theorem~\ref{theorem: summary} would actually be a~\emph{complete} solution of Problem~\ref{problem: X with Compl = C}:

\begin{conjecture}[Solution of Problem~\ref{problem: X with Compl = C}]
For any space $X$, exactly one of the~following options holds:
\begin{enumerate}[(1)]
\item $X$ is not $\mc K$-analytic and $\textnormal{Compl}(X)=\emptyset$.
\item $X$ is compact  and $\textnormal{Compl}(X) = \{0\}$.
\item $X$ is $\sigma$-compact and $\textnormal{Compl}(X) = \{1\}$.
\item $\textnormal{Compl}(X) = [\alpha,\beta]$ holds for some $2\leq \alpha \leq \beta \leq \omega_1$.
\end{enumerate}
Moreover, any of the~possibilities above is true for some $X$.
\end{conjecture}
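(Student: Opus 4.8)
The plan is to reduce the~statement to Conjecture~\ref{conjecture: complexities form closed interval}, Proposition~\ref{proposition: basic attainable complexities}, and Theorem~\ref{theorem: summary}. For the~``exactly one option holds'' part, I would split on whether $X$ is $\mc K$-analytic. If it is not, then $\textnormal{Compl}(X) = \emptyset$ by Proposition~\ref{proposition: basic attainable complexities}\,\eqref{case:K-analytic}, and options~(2)--(4) each fail already at their first conjunct, since compactness and $\sigma$-compactness both imply $\mc K$-analyticity; so only option~(1) holds. If $X$ is $\mc K$-analytic, then $\textnormal{Compl}(X) \ne \emptyset$, which rules out option~(1), and by Conjecture~\ref{conjecture: complexities form closed interval} the~set $\textnormal{Compl}(X)$ is a~non-empty closed interval $[\alpha,\beta]$. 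By Proposition~\ref{proposition: basic attainable complexities}\,\eqref{case: compact and sigma compact}, $0 \in \textnormal{Compl}(X)$ exactly when $X$ is compact, in which case $\textnormal{Compl}(X) = \{0\}$ (option~(2)); likewise $1 \in \textnormal{Compl}(X)$ exactly when $X$ is $\sigma$-compact, so if $X$ is $\sigma$-compact but not compact then $\textnormal{Compl}(X) = \{1\}$ (option~(3)); and otherwise $0,1 \notin \textnormal{Compl}(X)$, forcing $\alpha \ge 2$, that is, option~(4). Uniqueness is then automatic, since $\emptyset$, $\{0\}$, $\{1\}$ and the~intervals $[\alpha,\beta]$ with $\alpha \ge 2$ are pairwise distinct, and the~accompanying (non-)compactness clauses are consistent with exactly one case.

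For the~``moreover'' part I would exhibit one space per option. Option~(1): any non-$\mc K$-analytic Tychonoff space, for instance $\omega_1$ with the~order topology (it is not Lindelöf, whereas $\mc K$-analytic spaces are), has $\textnormal{Compl} = \emptyset$ by Proposition~\ref{proposition: basic attainable complexities}\,\eqref{case:K-analytic}. Option~(2): $X = \I$. Option~(3): $X = \R$ (or $\omega \times \I$), which is $\sigma$-compact but not compact, so $\textnormal{Compl}(X) = \{1\}$ by Proposition~\ref{proposition: basic attainable complexities}\,\eqref{case: compact and sigma compact}. Option~(4): this is exactly Theorem~\ref{theorem: summary}, which provides, for every closed interval $I \subset [2,\omega_1]$, a~space $X$ with $\textnormal{Compl}(X) = I$.

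The~real obstacle is that the~first paragraph is conditional on Conjecture~\ref{conjecture: complexities form closed interval}, which the~present paper leaves open: Theorem~\ref{theorem: summary} shows that every closed subinterval of $[2,\omega_1]$ is \emph{realized} by some space, but gives no a~priori restriction on the~shape of $\textnormal{Compl}(X)$ for a~fixed $\mc K$-analytic $X$. Proving the~interval property would presumably require an~interpolation argument turning compactifications that witness complexities $\gamma_1 < \gamma_2$ into a~compactification witnessing any prescribed intermediate value; proving closedness would require a~limiting construction (for instance along an~ultrafilter) that amalgamates a~cofinal family of compactifications witnessing complexities approaching $\sup \textnormal{Compl}(X)$ into a~single compactification attaining the~supremum. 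These two steps — for which the~amalgamation-type compactifications alluded to in the~abstract are the~natural tool — are where I expect the~difficulty to lie, and no complete argument for them is presently known.
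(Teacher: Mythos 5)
Your reduction is exactly what the paper intends: the statement is presented there as a \emph{conjecture}, explicitly flagged as following from Conjecture~\ref{conjecture: complexities form closed interval} together with Theorem~\ref{theorem: summary} and Proposition~\ref{proposition: basic attainable complexities}, and the paper offers no proof of the interval/closedness property either. Your case analysis and the witnesses for the ``moreover'' part are correct, and you have correctly located the one genuinely open ingredient -- Conjecture~\ref{conjecture: complexities form closed interval} -- which remains unproved in the paper as well.
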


Regarding Problem~\ref{problem: Compl of X}, we prove the~following stronger version of \eqref{case: polish spaces and absolute complexity} from Proposition~\ref{proposition: basic attainable complexities}:

\begin{proposition}
The complexity of any hereditarily Lindelöf space is absolute.
\end{proposition}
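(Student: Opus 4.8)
The plan is to reduce the statement to a one-directional claim about the Čech--Stone compactification and then to prove that claim by transporting a representation of $X$ along a quotient map. Fix a $\mc K$-analytic space $X$ (if $X$ is not $\mc K$-analytic then $\textnormal{Compl}(X)=\emptyset$ and there is nothing to prove) and an arbitrary compactification $cX$, and let $\pi\colon\beta X\to cX$ be the canonical map, which restricts to the identity on $X$. A one-line net argument gives $\pi^{-1}(X)=X$: if $\pi(y)=x\in X$ and a net $(x_\lambda)$ in $X$ converges to $y$ in $\beta X$, then $x_\lambda=\pi(x_\lambda)\to x$ in $cX$, hence $x_\lambda\to x$ in $X$ and so in $\beta X$, forcing $y=x$. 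Since preimages under a continuous map preserve closed sets and commute with countable unions and intersections, $X\in\Fa(cX)$ implies $X=\pi^{-1}(X)\in\Fa(\beta X)$, so $\Compl{X}{\beta X}\le\Compl{X}{cX}$ for every $cX$. Hence $\textnormal{Compl}(X)$ is a singleton as soon as the reverse inequality holds for every $cX$, i.e.\ as soon as one shows: $X\in\Fa(\beta X)$ implies $X\in\Fa(cX)$.

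For this implication I would push a representation of $X$ (as an $\Fa$ subset of $\beta X$) forward along $\pi$. Images of closed --- equivalently, compact --- subsets of $\beta X$ are compact in $cX$, and taking images commutes with countable unions, so the naive pushforward of a scheme of closed sets is again a bona fide $\Fa$ scheme in $cX$; its value, however, is some $\widetilde X$ with $X\subseteq\widetilde X\subseteq cX$ and $\widetilde X\setminus X\subseteq cX\setminus X$, because images do not commute with countable intersections. (This is exactly the phenomenon responsible for non-absolute complexity in general.) Hereditary Lindelöfness is what lets one repair the scheme. Two simple facts are available: (a) the trace $F\cap X$ of any closed $F\subseteq\beta X$ is relatively closed in $X$, so $\overline{F\cap X}^{\,cX}\cap X=F\cap X$, which means one may replace each closed set in a scheme by the $cX$-closure of its trace on $X$ (equivalently, by its $\pi$-image, once the sets are chosen to be closures of their own traces) without changing what the scheme does \emph{on} $X$; and (b) since $X$ is Lindelöf, for each $p\in cX\setminus X$ there is a cozero set $V$ of $cX$ with $X\subseteq V$ and $p\notin V$ --- obtained by covering $X$ by cozero sets missing $p$ and extracting a countable subcover --- so the remainder is covered by zero-sets disjoint from $X$. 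Combining these, one replaces the closed sets of a representation by $cX$-closures of their traces and then intersects with suitably chosen such cozero sets to carve $\widetilde X$ back down to $X$, all without raising the complexity level. This bookkeeping is what the theory of representations is designed to support.

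The main obstacle --- and the reason the proposition is stated only after the representation machinery is developed --- is that the overspill $\widetilde X\setminus X$, though contained in $cX\setminus X$, is in general not Lindelöf and not otherwise small, so one cannot excise it with countably many cozero sets in a single step. The fix is to apply the replace-and-carve correction recursively down the representation tree, which in turn forces one to use a notion of representation whose sub-representations retain the relevant smallness. This is precisely where \emph{hereditary} Lindelöfness of $X$ --- rather than mere Lindelöfness --- is needed: the nodes of the tree correspond to traces on $X$ of the closed sets appearing in the scheme, and the recursion needs each such subspace of $X$ to be Lindelöf at every level. Arranging the representations so that this induction closes up is the technical heart of the argument; granting it, the implication $X\in\Fa(\beta X)\Rightarrow X\in\Fa(cX)$, and with it absoluteness of $\textnormal{Compl}(X)$, follows.
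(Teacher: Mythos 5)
Your reduction to the single implication ``$X\in\Fa(\beta X)\Rightarrow X\in\Fa(cX)$'' is correct (the net argument for $\pi^{-1}(X)=X$ and the preimage argument are fine, and this is exactly the reduction the paper cites from \cite[Remark 1.5]{kovarik2018brooms}). The problem is everything after that. You push a representation forward along $\pi$, observe that images fail to commute with countable intersections so the pushed scheme evaluates to some $\widetilde X\supsetneq X$, and then propose to ``carve $\widetilde X$ back down to $X$'' by a replace-and-carve correction applied ``recursively down the representation tree.'' You yourself label this recursion the technical heart of the argument and then grant it rather than prove it. That is a genuine gap, not bookkeeping: at each intersection node the overspill $\bigcap_n\mc H'(t\ext n)\setminus X$ is an essentially arbitrary subset of $cX\setminus X$, your cozero-set separation removes only one point (or one Lindelöf piece) at a time, and intersecting with a cozero set at every node changes the descriptive class of the node, so there is no argument that the induction ``closes up'' without raising the complexity. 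Indeed, as you note, this overspill phenomenon is exactly what produces non-absolute complexity in general, so some substantive input beyond Lindelöfness of traces is required to kill it; your sketch does not identify that input.

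The paper's proof supplies that input from a different direction and thereby avoids any recursion or pushforward. It invokes the Holický--Spurný theorem that the classes $(\mc F\land\mc G)_\alpha$ generated by the \emph{algebra} of closed and open sets are absolute: since $\mc F(cX)\subset(\mc F\land\mc G)(cX)$, the hypothesis $X\in\Fa(cX)$ gives $X\in(\mc F\land\mc G)_\alpha(dX)$ directly \emph{in the target compactification}, hence a simple $(\mc F\land\mc G)_\alpha$-representation of $X$ in $dX$ whose leaves are sets $F_t\cap G_t$ with $F_t$ closed and $G_t$ open. Hereditary Lindelöfness is then used exactly once per leaf: $X\cap G_t$ is Lindelöf, so the $F_\sigma$-separation lemma yields $H_t\in\mc F_\sigma(dX)$ with $X\cap G_t\subset H_t\subset G_t$, and replacing each leaf by $F_t\cap H_t$ keeps the value of the scheme equal to $X$ (by the monotone-replacement lemma, since $X\cap\mc H(t)\subset\mc H'(t)\subset\mc H(t)$) while landing it in $(\mc F_\sigma)_\alpha(dX)=\Fa(dX)$. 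Without the absoluteness of $(\mc F\land\mc G)_\alpha$ (or an equivalent substitute), your strategy has no mechanism for controlling the intersection-node overspill, so the proof as proposed does not go through.
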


\noindent (This result was previously unpublished, but the~core observation behind it is due to J. Spurný and P. Holický -- our contribution is enabling a~simple formal proof.)

We also mention an open problem related to Problem~\ref{problem: Compl of X} which is not further discussed in this paper:
So far, the~only known examples of spaces with non-absolute complexity are based on Talagrand's broom space $\mathbf{T}$. An interesting question is therefore whether having absolute complexity is the~``typical case'' for a~topological space (and $\mathbf{T}$ is an anomaly), or whether there in fact exist many spaces with non-absolute complexity (but somehow this is difficult to prove).
As a~specific example, recall that for some Banach spaces, the~unit ball $B_X$ with weak topology is $\fsd$ the~bi-dual unit ball $B_{X^{\star\star}}$ with $w^\star$ topology. Is $(B_X,w)$ absolutely $\fsd$?

We now give a~brief overview of the~contents of the~paper.
Section~\ref{section: background} contains some preliminary concepts -- compactifications, the~(mostly standard) terminology describing the~trees on $\omega$, and several elementary results regarding derivatives on such trees.
Section~\ref{section: topological sums} is devoted to providing a~simple proof of Theorem~\ref{theorem: summary} by treating the~spaces $X_\alpha^\beta$ from Theorem~\ref{theorem:X_2_beta} as atomic and taking their generalized topological sums (``zoom spaces'', introduced in Section~\ref{section: zoom spaces}).

In Section~\ref{section: simple representations}, we introduce the~concept of a~\emph{simple $\Fa$-representation}, and show how this concept can be used to give an elegant proof of Proposition~\ref{proposition: hereditarily lindelof spaces are absolute}.
We also investigate the~concept of local complexity and its connection with the~standard complexity. As a~side-product, we prove that a~``typical'' topological space $X$ cannot have a~``universal $\Fa$-representation'', even when its complexity is absolute.

In Section~\ref{section: regular representations}, we introduce and investigate the~concept of a~\emph{regular $\Fa$-representation}. As the~name suggests, this is a~(formally) stronger notion than that of a~simple $\Fa$-representation. In particular, it allows us to take any two spaces $X\subset Y$, and construct ``$\Fa$-envelopes'' of $X$ in $Y$, looking for an ordinal $\alpha$ for which the~$\Fa$-envelope of $X$ will be equal to $X$.
We also justify the~concept of a~regular $\Fa$-representation by showing that if $X$ is an $\Fa$ subset of $Y$, then it does have a~regular $\Fa$-representation in $Y$.

In Section~\ref{section: complexity of brooms}, we study the~class of the~so-called ``Talagrand's broom spaces'' -- spaces based on Talagrand's example $\mathbf{T}$.
We investigate the~class of ``amalgamation-like'' compactifications of broom spaces in an abstract setting.
As an application of regular $\Fa$-representations, we compute which complexities are attainable by these spaces. In particular, this gives an alternative proof of Theorem~\ref{theorem: summary}.
What makes this method valuable is that, unlike the~simple approach from Section~\ref{section: topological sums}, it holds a~promise of being applicable not only to broom spaces, but to some other topological spaces as well.

It should be noted that many parts of this article can be read independently of each other, as explained by the~following remark.

\begin{remark}[How to use this paper]\label{remark:how_to_use}
It is \emph{not} necessary to read Sections~\ref{section: topological sums},~\ref{section: simple representations} and~\ref{section: regular representations} in order to read the~subsequent material.
\end{remark}

\noindent Sections~\ref{section: simple representations},~\ref{section: regular representations} and~\ref{section: complexity of brooms} all rely on the notation introduced in Section~\ref{section: sequences}. This notation is, however, not required in Section~\ref{section: topological sums}.
Section~\ref{section: topological sums} deals with topological sums, and can be read independently of the~content of any of the~following sections.
Moreover, Sections~\ref{section: simple representations},~\ref{section: regular representations} and~\ref{section: complexity of brooms} are only loosely related, and can be read independently of each other with one exception: Section~\ref{section: broom space properties} relies on one result from Section~\ref{section: Suslin scheme rank}. But if the~reader is willing to use this result as a~``black box'', this dependency can be ignored.
\section{Preliminaries}\label{section: background}

This section reviews some preliminary results.
The article assumes familiarity with the~concept of compactifications, whose basic overview can be found in Section~\ref{section: compactifications}.
Section~\ref{section: sequences} introduces the~notation used to deal with sequences, trees, and derivatives on trees.

% =================== Compactifications ==========================

\subsection{Compactifications and Their Ordering}\label{section: compactifications}

By a~\emph{compactification} of a~topological space $X$ we understand a~pair $(cX,\varphi)$, where $cX$ is a~compact space and $\varphi$ is a~homeomorphic embedding of $X$ onto a~dense subspace  of $cX$. Symbols $cX$, $dX$ and so on will always denote compactifications of $X$.

Compactification $(cX,\varphi)$ is said to be \emph{larger} than $(dX,\psi)$, if there exists a~continuous mapping $f : cX\rightarrow dX$, such that $\psi = f \circ \varphi$. We denote this as $ cX \succeq dX $.
Recall that for a~given $T_{3\slantfrac{1}{2}}$ topological space $X$, its compactifications are partially ordered by $\succeq$ and Stone-Čech compactification $\beta X$ is the~largest one.
Sometimes, there also exists the~smallest compactification $\alpha X$, called \emph{one-point compactification} or \emph{Alexandroff compactification}, which only consists of a~single additional point. 

In this paper, we will always assume that $cX \supset X$ and that the~corresponding embedding is identity. In particular, we will simply write $cX$ instead of $(cX,\textrm{id}|_X)$.

Much more about this topic can be found in many books, see for example \cite{freiwald2014introduction}. The basic relation between the~complexity of a~space $X$ and the~ordering of compactifications is the~following observation:

\begin{remark}[Larger compactification means smaller complexity]\label{remark: absolute complexity}
For any $\alpha\leq \omega_1$, we have
 \[ X\in\Fa (dX), \ cX\succeq dX \implies X\in\Fa (cX). \]
\end{remark}

\subsection{Trees and Derivatives on Trees}\label{section: sequences}

We now introduce the notation needed by Sections~\ref{section: simple representations},~\ref{section: regular representations} and~\ref{section: complexity of brooms}.
We start with sequences in $\omega$:

\begin{notation}[Finite and infinite sequences in $\omega$]\label{notation: sequences}
We denote
\begin{itemize}
\item $\omega^\omega:=$ infinite sequences of non-negative integers $:=\left\{\sigma : \omega \rightarrow \omega\right\}$,
\item $\seq:=$ finite sequences of non-neg. integers $:=\left\{s : n \rightarrow \omega |\ n\in\omega \right\}$.
\end{itemize}
\end{notation}

Suppose that $s\in\seq$ and $\sigma\in\baire$. We can represent $\sigma$ as $(\sigma(0),\sigma(1),\dots)$ and $s$ as $(s(0),s(1),\dots,s(n-1))$ for some $n\in\omega$.
We denote the~\emph{length} of $s$ as $|s|=\textrm{dom}(s)=n$, and set $|\sigma|=\omega$.
If for some $t\in \seq \cup \baire$ we have $|t|\geq |s|$ and $t|_{|s|}=s$, we say that $t$ \emph{extends} $s$, denoted as $t\sqsubset s$.
We say that  $u,v\in\seq$ are non-comparable, denoting as $u\perp v$, when neither $u\sqsubset v$ nor $u\sqsupset v$ holds.

%For $S\subset \seq \cup \baire$ and $t\in \seq$, we denote by $S(t) := \{ s\in S | \ s \sqsupset t \}$ the~set of those extensions of $t$ which belong to $S$.

Unless we say otherwise, $\omega^\omega$ will be endowed with the~standard product topology $\tau_p$, whose basis consists of sets $\mc N (s):=\left\{ \sigma \sqsupset s | \ \sigma\in\baire \right\}$, $s \in \seq$.

For $n\in\omega$, we denote by $(n)$ the~corresponding sequence of length $1$.
By $s\ext t$ we denote the~\emph{concatenation} of a~sequences $s\in\seq$ and $t\in\seq\cup \baire$.
We will also use this notation to extend finite sequences by integers and sets, using the~convention $t \hat{\ } k:=t \ext (k)$ for $k\in\omega$ and $t \hat \ S:=\left\{t\ext s\, | \ s\in S\right\}$ for $S\subset \seq \cup \baire$.

\bigskip
Next, we introduce the~notation related to trees.

\begin{definition}[Trees on $\omega$]
A \emph{tree} (on $\omega$) is a~set $T\subset \seq$ which satisfies
\[ (\forall s,t\in\seq): s\sqsubset t \ \& \ t\in T \implies s\in T. \]
\end{definition}

\noindent By $\textrm{Tr}$ we denote the~space of all trees on $\omega$.
For $S\subset \seq \cup \baire$ we denote by
\[ \cltr{S} := \left\{u\in\seq | \ \exists s\in S: s\sqsupset u \right\} \]
the smallest tree ``corresponding'' to $S$ (for $S\subset\seq$, $\mathrm{cl}_\mathrm{Tr}(S)$ is the~smallest tree containing $S$).
Recall that the~empty sequence $\emptyset$ can be thought of as the~`root' of each tree, since it is contained in any nonempty tree.
For any $t \in T$, we denote the~set of \emph{immediate successors of $t$ in $T$} as
\begin{equation}\label{equation: immediate successor in T}
\ims{T}{t} := \{ t\ext n | \ n\in\omega , \, t\ext n \in T \} .
\end{equation}
A \emph{leaf} of a~tree $T$ is a~sequence $t\in T$ with no immediate successors in $T$, and the~set of all \emph{leaves of $T$} is denoted as $l(T)$. Finally, we denote as $T^t$ the~tree corresponding to $t$ in $T$:
\begin{equation}\label{equation: T of t}
T^t := \{ t' \in \seq | \ t\ext t' \in T \} .
\end{equation}
For each non-empty tree $T$, we clearly have
\[ T = \{\emptyset\} \cup \bigcup \{ m \ext T^{(m)} \, | \ (m) \in \ims{T}{\emptyset} \} .\]

If each of the~initial segments $\sigma|n$, $n\in\omega$, of some $\sigma\in\baire$ belongs to $T$, we say that $\sigma$ is an infinite branch of $T$.
By $\textrm{WF}$ we denote the~space of all trees which have no infinite branches (the \emph{well-founded} trees). By $\textnormal{IF}$, we denote the~complement of $\textnormal{WF}$ in $\Tr$ (the \emph{ill-founded} trees).

\bigskip
Useful notion for studying trees is the~concept of a~derivative:

\begin{definition}[Derivative on trees]
A mapping $D:\Tr\rightarrow\Tr$ is a~\emph{derivative} on trees if it satisfies $D(T)\subset T$ for every $T\in \Tr$.
\end{definition}

Any derivative on trees admits a~natural extension to subsets of $\seq \cup \baire$ by the~formula $D(S) := D( \cltr{S} )$.
We define \emph{iterated derivatives} $D^\alpha$ in the~standard way:
\begin{align*}
D^0(S) & := \cltr{S}, \\
D^{\alpha +1} \left( S \right) & :=  D\left(D^\alpha\left(S\right)\right) \textrm{ for successor ordinals},\\
D^\lambda \left( S \right) & := \underset {\alpha < \lambda } \bigcap D^\alpha \left( S \right) \textrm{ for limit ordinals}.
\end{align*}

Clearly, the~iterated derivatives of any $S$ either keep getting smaller and smaller, eventually reaching $\emptyset$, or there is some $\alpha$ for which the~iterated derivatives no longer change anything, giving $D^\alpha(T)=D^{\alpha+1}(T)\neq \emptyset$.
Since each $T\in\Tr$ is countable, it suffices to consider $\alpha<\omega_1$.
This allows us to define a~\emph{rank} corresponding $D$ (on subsets of $\seq\cup \baire$):
\[ r(S) := \begin{cases}
-1 & \text{for } S=\emptyset \\
\min \{ \alpha<\omega_1| \ D^{\alpha+1}\left( S \right) = \emptyset \}
	& \text{for $S\neq \emptyset$, if there is some $\alpha<\omega_1$ s.t. } D^{\alpha+1}\left( S \right) = \emptyset \\
\omega_1 & \text{if no $\alpha$ as above exists.}
\end{cases} \]

Note that when $S$ is non-empty and $r(S)<\omega_1$, then $r(S)$ is the~highest ordinal for which $D^{r(S)}(S)$ is non-empty (or equivalently, for which $D^{r(S)}(S)$ contains the~empty sequence).
For more details and examples regarding ranks, see for example \cite[ch. 34 D,E]{kechris2012classical}.
We will be particularly interested in the~following three derivatives on trees:

\begin{definition}[Examples of derivatives]\label{definition: derivatives}
For $T\in \textrm{Tr}$, we denote
\begin{align*}
D_l (T) \ := \big\{ t\in T| \ & T \text{ contains some extension $s\neq t$ of $t$} \big\} ,\\
D_i (T) \ := \big\{ t\in T| \ & T \text{ contains some infinitely many extensions of $t$} \big\} ,\\
\D (T)  := \big\{ t\in T| \ & T \text{ contains infinitely many incomparable} \\
								& \text{ extensions of $s$ of different length} \big\} .
\end{align*}
We use the~appropriate subscripts to denote the~corresponding iterated derivatives and ranks.
\end{definition}
Using transfinite induction, we obtain the~following recursive formula for the~leaf-rank 
\begin{equation}\label{equation: leaf rank formula}
\left( \forall t\in T \right) :
r_l(T^t) = \sup \{ r_l(T^s)+1 \,| \ s\in \ims{T}{t} \} 
\end{equation}
(with the~convention that a~supremum over an empty set is 0).
It is straightforward to check that the~leaf-rank $r_l(T^t)$ of $T^t$ is the~highest ordinal for which $t$ belongs to the~iterated leaf-derivative $D^{r_l(T)}_l(T)$ of $T$.
In particular, leaves of $T$ are precisely those $t\in T$ which satisfy $r_l(T^t)=0$.

If each non-leaf $t\in T$ has immediately many successors in $T$, we clearly have $D^\alpha_i(T) = D^\alpha_l(T)$ for each $\alpha < \omega$. In particular, we have $r_i(T) = r_l(T)$ for any such $T$.
For a~general $T\in \Tr$, $r_l(T)$ is countable if and only if $r_i(T)$ is countable, and this happens if and only if $T$ is well founded.
Note that on well founded trees, $D_i$ behaves the~same way as the~derivative from \cite[Exercise 21.24]{kechris2012classical}, but it leaves any infinite branches untouched.

The following trees serve as examples of trees of rank $\alpha$ for both $r_l$ and $r_i$:

\begin{notation}[``Maximal'' trees of height $\alpha$] \label{notation: trees of height alpha}
For each limit $\alpha<\omega_1$, fix a~bijection $\pi_\alpha : \omega \rightarrow \alpha$.
We set
\begin{align*}
T_0 			& := \{\emptyset\} ,\\
T_{\alpha}	& := \{\emptyset\} \cup \underset {n\in\omega} \bigcup n\hat{\ }T_{\alpha-1} & \text{for countable successor ordinals,} \\
T_\alpha		& := \{\emptyset\} \cup \underset {n\in\omega} \bigcup n\hat{\ }T_{\pi_\alpha (n)}	& \text{for countable limit ordinals,} \\
T_{\omega_1}	& := \seq .
\end{align*}
\end{notation}

In particular, we have $T_k = \omega^{\leq k}$ for $k\in\omega$ and $T_\omega = \{\emptyset\} \, \cup \, (0) \, \cup \, 1\ext \omega^{\leq 1} \, \cup \, 2\ext \omega^{\leq 2} \, \cup \, \dots $ .
Denoting $\pi_\alpha(n):=\alpha-1$ for successor $\alpha$, we can write $T_\alpha$ as $T_\alpha = \{\emptyset\} \cup \underset {n\in\omega} \bigcup n\hat{\ }T_{\pi_\alpha (n)}$ for both limit and successor ordinals.
A straightforward induction over $\alpha$ yields $r_l(T_\alpha) = r_i(T_\alpha) = \alpha$ for each $\alpha \leq\omega_1$.

Every ordinal $\alpha$ can be uniquely written as $\alpha = \lambda + 2n + i$, where $\lambda$ is $0$ or a~limit ordinal, $n\in\omega$ and $i\in\{0,1\}$. We denote $\alpha' := \lambda + n$.
In Section~\ref{section: R T sets}, we will need trees which serve as intermediate steps between $T_\alpha$ and $T_{\alpha+1}$. These will consist of trees $\{\emptyset\} \cup n\ext T_\alpha$, $n\in \omega$. While we could call these trees something like `$T_{\alpha+\frac 1 2}$', we instead re-enumerate them as
\begin{align} \label{equation: canonical trees}
T^c_\alpha 		& := T_{\alpha'} & \text{for even $\alpha\leq\omega_1$} \nonumber ,\\
T^c_{\alpha,n} 	& := \{\emptyset\} \cup n\ext T_{\alpha'}
				& \text{for $i\in\omega$ and odd $\alpha<\omega_1$} ,\\
T^c_{\alpha} 	& := \{\emptyset\} \cup 1\ext T_{\alpha'} = T^c_{\alpha,1}
					& \text{for odd $\alpha<\omega_1$} . \nonumber
\end{align}
% !TeX root = Attainable_complexities.tex
\section{Attaining Complexities via Topological Sums} \label{section: topological sums}

Suppose we have topological spaces $X$ and $Y$ satisfying
\[ \{\alpha,\beta\}\subset\textnormal{Compl}(X) \subset [\alpha,\beta] \ \ \ \text{ and } \ \ \ \{\alpha,\gamma\} \subset \textnormal{Compl}(Y) \subset [\alpha,\gamma ] \]
for some $\alpha \leq \beta \leq \gamma$.
It is straightforward to verify that the~topological sum $X\oplus Y$ satisfies
\[ \{ \alpha,\beta,\gamma\} \subset \textnormal{Compl}(X\oplus Y) = [\alpha,\gamma] .\]
In this section, we extend this observation to topological sums of infinitely many spaces.
We shall do so with the~help of the~concept of ``zoom'' spaces, of which topological sums are a~special case.
Moreover, we will be able to put together uncountably many spaces -- not exactly as a~topological sum, but in a~very similar manner -- and show that that this gives the~existence of a~space with $\textnormal{Compl}(X)=[2,\omega_1]$.

In Section~\ref{section: zoom spaces}, we introduce and investigate zoom spaces. In Section~\ref{section:top_sums_as_zoom}, we apply this theory to topological sums, thus proving Theorem~\ref{theorem: summary}. Apart from that, the~results of Section~\ref{section: topological sums} are independent on the~other parts of the~paper.

%%%%%%%%%%%%%%%%%%%%%%%%%%%%%%%%%%%%
%  =============== ZOOM SPACES =======================
\subsection{Zoom spaces} \label{section: zoom spaces}
Let $Y$ be a~topological space. Throughout this section, we shall denote by $I_Y$ the~set of all isolated points of $Y$ and by $Y' = Y \setminus I_Y$ the~set of all non-isolated points of $Y$.
\begin{definition}[Zoom space]\label{definition: zoom space}
Let $Y$ be a~topological space and $\mc X=(X_i)_{i\in I_Y}$ a~collection of non-empty topological spaces.
We define the~\emph{zoom space $Z(Y,\mc X)$ of $Y$ with respect to $\mc X$} as the~disjoint union $Y' \cup \bigcup \mc X$. The basis of topology of $Z(Y,\mc X)$ consists of open subsets of $X_i$, $i\in I_Y$, and of all sets of the~form
\[ V_U := (U \setminus I_Y ) \cup \bigcup \{ X_i  | \ i \in U \cap I_Y \}
		, \ \ \ \ U\subset Y \text{ open in }Y .\]
\end{definition}

\noindent Note that the~definition above works even if the~indexing set $I_Y$ of the~collection $\mc X$ is replaced by some $I\subset I_Y$. Moreover, any collection $(X_i)_{i\in I}$ indexed by such a~subset can be extended to $\mc X \cup (\{x\})_{x\in I_Y\setminus I}$ and the~corresponding zoom space is identical to $Z(Y,\mc X)$.

The following notation for collections of sets shall be used throughout the~whole paper:

\begin{notation}[Collection of compactifications]\label{notation: c A}
Let $\mc X=(X_i)_{i\in I}$ be a~collection of topological spaces and suppose that for every $i\in I$, $c X_i$ is a~topological space containing $X_i$. We denote
\[ c\mc X:= \{cX_i | \ i \in I \} .\]
%In particular, this applies to one-point compactifications and Čech-Stone compactifications, where $\alpha \mc X=\{\alpha X| X_i \in \mc X \}$ and $\beta \mc X=\{\beta X| X \in \mc A \}$.
\end{notation}

The basic properties of zoom spaces are as follows:

\begin{proposition}[Basic properties of zoom spaces]\label{proposition: properties of Z}
Let $Z(Y,\mc X)$ be a~zoom space.
\begin{enumerate}[(i)]
\item $Z(Y,\mc X)$ is a~Tychonoff space and $Y$ is its quotient. Each $X_i$ is homeomorphic to the~clopen subset $X_i \subset Z(Y,\mc X)$.  For any selector $s$ of $\mc X$, $Y$ is homeomorphic to the~closed set $Y_s := Y' \cup s(I_Y) \subset Z(Y,\mc X)$. 	\label{case: Y as subspace of Z(Y,Z)}
\item If  $Y$ is a~dense subspace of $\widetilde Y$ and each $X_i$ is a~(dense) subspace of $\widetilde X_i$, then $Z(Y,\mc X)$ is a~(dense) subspace of $Z(\widetilde Y,\widetilde{\mc X})$. \label{case: subspaces}
\item If $Y$ and all $X_i$ are compact (Lindelöf), then $Z(Y,\mc X)$ is compact (Lindelöf). \label{case: Z is compact}
\item If $cY$ and $cX_i$, $i\in I_Y$, are compactifications, then $Z(cY, c\mc X)$ is a~compactification of $Z(Y,\mc X)$. \label{case: compactifications of Z(Y,Z)}
\end{enumerate}
\end{proposition}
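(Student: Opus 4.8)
The proof will verify each of the four items (i)–(iv) in turn, and the main work is really just unwinding Definition~\ref{definition: zoom space}; all four statements are "soft" topological facts. Let me sketch the approach.

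For (i): To see $Z(Y,\mc X)$ is Tychonoff, I would first check it is $T_1$ (singletons are closed: a point in some $X_i$ is closed there, and its complement is open using the $V_U$ with $U = Y \setminus \{i\}$ together with the open subsets of the $X_j$'s; a point of $Y'$ is handled by $V_{Y\setminus\{y\}}$ plus... wait, here one must be a little careful, since $Y$ may not be $T_1$ — but the paper's standing assumption is that all spaces are Tychonoff, so this is fine). For complete regularity, given a point and a closed set not containing it, I split into cases according to whether the point lies in some $X_i$ or in $Y'$; in the first case I combine a Urysohn function on $X_i$ with one pulled back from $Y$ via the quotient map, in the second case I pull back a Urysohn function from $Y$ (using that $Y$ is Tychonoff) along the natural quotient $q: Z(Y,\mc X) \to Y$ that collapses each $X_i$ to the point $i$ and is the identity on $Y'$. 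That $q$ is continuous and a quotient map is immediate from the description of the basis ($q^{-1}(U) = V_U$). The clopenness of each $X_i$ in $Z(Y,\mc X)$ is read off the basis directly ($X_i$ is open as a union of its own open subsets; its complement is $V_{Y\setminus\{i\}}$ union the remaining $X_j$'s, hence open). For the selector $s$: the map sending $y \in Y'$ to itself and $i \in I_Y$ to $s(i)$ is a homeomorphism of $Y$ onto $Y_s$ — continuity and openness onto its image follow because the trace of $V_U$ on $Y_s$ is exactly the image of $U$, and $Y_s$ is closed because its complement is $\bigcup_i (X_i \setminus \{s(i)\})$, which is open.

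For (ii): Given $Y \subset \widetilde Y$ and $X_i \subset \widetilde X_i$, the underlying set $Y' \cup \bigcup \mc X$ sits inside $\widetilde Y{}' \cup \bigcup \widetilde{\mc X}$ — one should note $I_{\widetilde Y} \subset I_Y$ is false in general, but $Y$ dense in $\widetilde Y$ forces every isolated point of $\widetilde Y$ to lie in $Y$ and be isolated there, so $I_{\widetilde Y} \subset I_Y$ does hold, and the remark after Definition~\ref{definition: zoom space} lets us pad $\widetilde{\mc X}$ by singletons for the indices in $I_Y \setminus I_{\widetilde Y}$ without changing $Z(\widetilde Y, \widetilde{\mc X})$. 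Then one checks that the subspace topology that $Z(\widetilde Y, \widetilde{\mc X})$ induces on $Z(Y,\mc X)$ is exactly the zoom topology: the basic open sets restrict correctly, using density of $Y$ in $\widetilde Y$ (so that open sets of $\widetilde Y$ meet $Y$ in a dense-faithful way) and density of $X_i$ in $\widetilde X_i$. The parenthetical "(dense)" assertion: a point of $\widetilde Y{}'$ is a limit of $Y'$-points or of points $i$ that get "blown up" to $X_i \subset \widetilde X_i$; and each $\widetilde X_i$ has $X_i$ dense in it — assembling these gives density of $Z(Y,\mc X)$.

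For (iii) and (iv): Compactness is the only place needing a genuine (if short) argument. Given an open cover of $Z(Y,\mc X)$ by basic sets, I would refine it: push it down to $Y$ via $q$ to cover $Y'$ — more precisely, each non-isolated point $y$ lies in some basic $V_U$, and $\{U\}$ together with $\{\{i\} : i \in I_Y\}$ (each $\{i\}$ being open in $Y$ since $i$ is isolated) covers $Y$; extract a finite subcover $\{U_1,\dots,U_n,\{i_1\},\dots,\{i_m\}\}$ of the compact $Y$; this leaves only the finitely many fibers $X_{i_1},\dots,X_{i_m}$ to cover, and each is compact, so finitely many more sets suffice. (Actually one must be slightly more careful: the $V_{U_k}$ already absorb all $X_i$ with $i \in U_k$, so only the $X_{i_j}$ with $i_j$ covered solely by $\{i_j\}$ remain — but the bookkeeping is routine.) The Lindelöf case is identical with "finite" replaced by "countable." Finally, (iv) is an immediate corollary: $cY$ and the $cX_i$ are compact, so $Z(cY, c\mc X)$ is compact by (iii); it contains $Z(Y,\mc X)$ as a dense subspace by (ii) (density because $Y$ is dense in $cY$ and each $X_i$ is dense in $cX_i$); hence it is a compactification.

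The main obstacle — to the extent there is one — is purely notational: keeping straight the interplay between $I_Y$, the fibers $X_i$, the quotient map $q$, and the basic sets $V_U$, and in item (ii) handling the fact that the index set of isolated points can shrink when passing to a larger space. None of the steps requires a clever idea; the proof is a careful but elementary verification, and in the write-up I would likely state (i) and (ii) with brief justifications and spend the bulk of the space on the compactness argument in (iii).
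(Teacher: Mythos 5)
Your proposal is correct and follows essentially the same route as the paper: the quotient map $q$ with $q^{-1}(U)=V_U$ for Tychonoffness and the selector embedding in (i), a direct cover-refinement argument for (iii), and (iv) as a corollary of (ii) and (iii). One small correction to your discussion of (ii): since all spaces here are $T_1$ and $Y$ is dense in $\widetilde Y$, one in fact has $I_Y = I_{\widetilde Y}$ (an isolated point $i$ of $Y$ has $\{i\}=U\cap Y$ for some open $U\subset\widetilde Y$, and density plus $T_1$ force $U=\{i\}$), so the set $I_Y\setminus I_{\widetilde Y}$ you propose to handle by padding is empty — which is just as well, because the padding remark after Definition~\ref{definition: zoom space} only allows enlarging a collection indexed by a \emph{subset} of $I_{\widetilde Y}$ and would not repair a genuinely non-isolated index.
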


\begin{proof}
$(i)$: Clearly, Definition~\ref{definition: zoom space} correctly defines a~topology on $Z(Y,\mc X)$ by giving its basis.
By definition of this topology, a~set $U\subset X_i$ is open in $Z(Y,\mc X)$ if and only if it is open in $X_i$, so the~topology on each $X_i$ is preserved. For every $i\in I_Y$, $Y\setminus \{i\}$ is open in $Y$. It follows that each $Z(Y,\mc X)\setminus X_i = V_{Y\setminus \{ i \}}$ is open and hence each $X_i$ is clopen in $Z(Y,\mc X)$.

To see that $Y$ is a~quotient of $Z(Y,\mc X)$, it is enough to observe that the~following mapping $q : Z(Y,\mc X) \rightarrow Y$  is continuous, surjective and open:
\[ q(x) :=
\begin{cases}
x, 	& \textrm{for } x\in Y'	\\
i, 	& \textrm{for } x\in X_i,\ i\in I_Y .
\end{cases} \]
Indeed, $q$ is clearly continuous (since $q^{-1}(U)=V_U$) and surjective.  Moreover, it maps basic open sets in $Z(Y,\mc X)$ onto open sets: we have $q(V)=\{i\} \in I_Y$ for non-empty $V\subset X_i$ and $q(V_U)=U$ for $U\subset Y$.

Let $s:I_Y\rightarrow \bigcup \mc X$ be a~selector of $\mc X$ and denote by $f_s$ the~following restriction of $q$:
\[ f_s := q|_{Y'\cup s(I_Y)} : Y' \cup s(I_Y) \rightarrow Y .\]
Clearly, this restriction of $q$ is injective and continuous. The restriction is an open mapping, because the~image of $V\subset X_i$ is either empty (when $s(i)\notin V$) or equal to $\{i\}$ and for $U\subset Y$ we have $f_s(V_U) = q(V_U) = U$. In particular, the~range of $f_s$ is $Y$, so $Y_s := Y'\cup s(I_Y)$ and $Y$ are homeomorphic.

The topology on $Z(Y,\mc X)$ is easily seen to be Hausdorff -- indeed, if $x,y\in Z(Y,\mc X)$ are distinct, then either both of them belong to some $X_i$ (which is Hausdorff and open in $Z(Y,\mc X)$), or one of them belongs to $X_i$ and the~other to $Z(Y,\mc X)\setminus X_i$ (so we separate $x$ from $y$ by $X_i$ and its complement), or both belong to $Y'$. In the~last case, we use the~fact that $Y$ is Hausdorff to separate $x$ and $y$ in $Y$ by open subsets $U, U'$ of $Y$ and note that $V_U$ and $V_{U'}$ are open sets separating $x$ and $y$ in $Z(Y,\mc X)$.

To see that $Z(Y,\mc X)$ is Tychonoff, let $F\subset Z(Y,\mc X)$ be closed and $x\in Z(Y,\mc X) \setminus F$. If $x\in Y'$, we find and open subset $U$ of $Y$ such that $x\in V_U \subset Z(Y,\mc X) \setminus F$. We separate $x$ from $Y\setminus U$ in $Y$ by a~continuous function $f:Y\rightarrow [0,1]$ and note that $f\circ q$ is a~continuous function separating $x$ from $F$ in $Z(Y,\mc X)$.

If $x\in X_i$, we find continuous $f:X_i\rightarrow [0,1]$ which separates $x$ from $F\cap X_i$. Since $X_i$ is clopen in $Z(Y,\mc X)$, $f$ can be extended into a~function $\tilde f$ which separates $x$ from $F$ in $Z(Y,\mc X)$.

$(ii)$ immediately follows from the~definition of $Z(\cdot,\cdot)$ and its topology.

$(iii)$: Assume that $Y$ and all $X_i$ are Lindelöf and let $\mc V$ be an open cover of $Z(Y,\mc X)$. Without loss of generality, we can assume that $\mc V$ consists only of basic open sets, that is, we have $\mc V = \mc V_0 \cup \bigcup_{i\in I} \mc V_i$ where each $\mc V_i$, $i\in I$, only contains open subsets of $X_i$ and $\mc V_0$ only contains sets of the~form $V_U$ for $U\subset Y$ open in $Y$.
Denote
\[ \mc U_0:=\{ U \subset Y | \ V_U \in \mc V_0 \} . \]
By definition of topology on $Z(Y,\mc X)$, $\mc U_0$ is a~cover $Y'$.
Since $Y$ is Lindelöf and $Y'$ is closed, there is a~countable  $\mc U^{\star}_0 \subset \mc U_0$ such that $\bigcup \mc U^{\star}_0 \supset Y'$. The set
\[ I^{\star}:= Y\setminus \bigcup \mc U^{\star}_0 = I_Y \setminus \bigcup \mc U^{\star}_0 \]
is closed and discrete in $Y$, and hence countable. Since each $X_i$ is Lindelöf, each $\mc V_i$ has a~countable subcover $\mc V^{\star}_i$. The system $\mc V^{\star} := \mc V_0 \cup \bigcup_{i\in I^{\star}} \mc V^{\star}_i$ is then a~countable subcover of $\mc V$.

The proof of the~compact case is the~same.

$(iv)$: This follows from $(ii)$ and $(iii)$.
\end{proof}

%%%%%%%%%%%%%%%%%%%%%%%%%%%%%%%%%%%%
%  =============== COMPLEXITIES =======================
Recall that Definition~\ref{definition: zoom space} introduced the~sets of the~form $V_F$ for $F\subset W$:
\[ V_F := F \cup \bigcup \{ X_i | \ i\in I_W, \, F \ni i \} \subset Z(W,\mc Z) .\]
The following lemma investigates the~complexity of these sets.

\begin{lemma}[Complexity of basic sets]\label{lemma: complexity of V F sets}
For a~zoom space $Z(W,\mc Z)$, $\mc Z=(Z_i)_{i}$, the~sets $V_F$ satisfy
\begin{equation}\label{equation: complexity of V F sets}
\left( \forall F \subset W \right) : F\in \Fa (W) \implies V_F \in \Fa(Z(W,\mc Z)) .
\end{equation}
\end{lemma}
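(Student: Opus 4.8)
The plan is to prove the implication \eqref{equation: complexity of V F sets} by transfinite induction on $\alpha \leq \omega_1$, exploiting the fact that the operator $F \mapsto V_F$ commutes with countable unions and countable intersections. First I would record this commutation: for any family $(F_n)_n$ of subsets of $W$ we have $V_{\bigcup_n F_n} = \bigcup_n V_{F_n}$ and $V_{\bigcap_n F_n} = \bigcap_n V_{F_n}$. This is immediate from the definition $V_F = F \cup \bigcup\{Z_i \mid i \in I_W,\ i \in F\}$, since membership of a point of $W'$ in $V_F$ is just membership in $F$, and a whole fibre $Z_i$ is included in $V_F$ precisely when $i \in F$ — so both conditions are preserved exactly under arbitrary unions and intersections of the index sets $F_n$.

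\medskip

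The base case $\alpha = 0$ is the heart of the matter: I must show that if $F$ is closed in $W$, then $V_F$ is closed in $Z(W,\mc Z)$. Equivalently, I show the complement $Z(W,\mc Z) \setminus V_F$ is open. Write $U := W \setminus F$, which is open in $W$. A point $x$ of the complement lies either in some fibre $Z_i$ with $i \notin F$, i.e. $i \in U \cap I_W$ — then the clopen set $Z_i \subset V_U$ is an open neighbourhood of $x$ disjoint from $V_F$ — or $x \in W'$ with $x \notin F$, i.e. $x \in U \cap W'$; then the basic open set $V_U = (U \setminus I_W) \cup \bigcup\{Z_i \mid i \in U \cap I_W\}$ contains $x$ and is disjoint from $V_F$, because $U$ and $F$ are disjoint. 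Hence $Z(W,\mc Z) \setminus V_F = V_U$ is open, so $V_F$ is closed. (This is exactly the observation already used inside the proof of Proposition~\ref{proposition: properties of Z}, where $V_{W \setminus \{i\}}$ was seen to be open.)

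\medskip

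For the inductive step, suppose \eqref{equation: complexity of V F sets} holds for all $\beta < \alpha$, and let $F \in \Fa(W)$. If $\alpha$ is odd, then $F = \bigcup_n F_n$ with each $F_n \in \mc F_{\beta_n}(W)$ for some $\beta_n < \alpha$; by the induction hypothesis each $V_{F_n} \in \mc F_{\beta_n}(Z(W,\mc Z)) \subset \mc F_{<\alpha}(Z(W,\mc Z))$, and by the commutation identity $V_F = \bigcup_n V_{F_n} \in \left(\mc F_{<\alpha}(Z(W,\mc Z))\right)_\sigma = \Fa(Z(W,\mc Z))$. The even case is identical with $\bigcup$, $\sigma$ replaced by $\bigcap$, $\delta$. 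This closes the induction and proves the lemma.

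\medskip

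I do not anticipate a serious obstacle here; the only point requiring care is the base case, and specifically checking that $V_U$ (for $U$ open) really is the complement of $V_F$ (for $F = W \setminus U$ closed) and really is open — both of which follow directly from unwinding Definition~\ref{definition: zoom space}. One should also note that the statement is uniform in $\alpha \leq \omega_1$, so strictly speaking the Suslin-$\mc F$ level $\alpha = \omega_1$ should be handled too, but this follows by the same argument applied to a Suslin scheme: if $F = \bigcup_{\sigma} \bigcap_k F_{\sigma|k}$ with the $F_{\sigma|k}$ closed, then $V_F = \bigcup_\sigma \bigcap_k V_{F_{\sigma|k}}$ with each $V_{F_{\sigma|k}}$ closed, using the commutation identities once more.
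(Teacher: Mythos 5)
Your proof is correct and follows essentially the same route as the paper's: the base case is handled by observing that the complement of $V_F$ is the basic open set $V_{W\setminus F}$, the higher classes follow by transfinite induction from the identities $V_{\bigcup_n F_n}=\bigcup_n V_{F_n}$ and $V_{\bigcap_n F_n}=\bigcap_n V_{F_n}$, and the Suslin level $\alpha=\omega_1$ is treated by the same commutation applied to a Suslin scheme. The only difference is that you spell out the pointwise verification of the base case, which the paper leaves implicit.
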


\begin{proof}
Indeed, if $F$ is closed, then $Z(W,\mc Z)\setminus V_F = V_{W\setminus F}$ is open by definition of topology on $Z(W,\mc Z)$, so $V_F$ is closed. Moreover, $\bigcup_{E\in \mc E} V_E = V_{\bigcup {\mc E}}$ and $\bigcap_{E\in \mc E} V_E = V_{\bigcap {\mc E}}$ holds for any $\mc E\subset \mc P(W)$. This implies the~result for higher $\mc F$-Borel classes (by transfinite induction) and for $F\in\mc F_{\omega_1}$ (by showing that if $F$ is Suslin in $W$, then $V_F$ is Suslin in $Z(W,\mc Z)$).
\end{proof}

Before computing the~complexity of a~general zoom space, we need to following technical lemma:

\begin{lemma}[Upper bound on zoom space complexity]\label{lemma: complexity in zoom spaces}
Let $Z(W,\mc Z)$, $\mc Z=(Z_i)_{i\in I}$, be a~zoom space and $\mc C = (C_i)_{i\in I}$ a~system satisfying $C_i \subset Z_i$ for each $i\in I$. Then for any $\alpha \leq \omega_1$, we have
\begin{equation}\label{equation: complexity in zoom spaces}
\left( \left( \forall i \in I \right) : C_i \in \Fa (Z_i) \right) \implies Z(W,\mc C) \in \Fa(Z(W,\mc Z)).
\end{equation}
\end{lemma}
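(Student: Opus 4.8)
The plan is to argue by transfinite induction on $\alpha$, exploiting two structural features of the zoom space: each $Z_i$ is clopen in $Z(W,\mc Z)$ (Proposition~\ref{proposition: properties of Z}\,\eqref{case: Y as subspace of Z(Y,Z)}), and the pieces $W' := W\setminus I_W$ and $Z_i$, $i\in I$, are pairwise disjoint (Definition~\ref{definition: zoom space}); throughout, $Z(W,\mc C)=W'\cup\bigcup_{i\in I}C_i$ is regarded as a subset of $Z(W,\mc Z)=W'\cup\bigcup_{i\in I}Z_i$. The governing idea is that a countable union or intersection performed fibrewise in the $C_i$'s can be pulled out as a union or intersection of zoom spaces, so the induction lowers the complexity of all the $C_i$ simultaneously; the only place the possibly \emph{uncountable} index set $I$ causes trouble is at limit stages.

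For $\alpha=0$ one just notes that $Z(W,\mc Z)\setminus Z(W,\mc C)=\bigcup_{i}(Z_i\setminus C_i)$ is a union of sets open in the clopen pieces $Z_i$, hence open, so $Z(W,\mc C)$ is closed. For a successor $\alpha$ (in particular every odd $\alpha$), write $\mc F_\alpha=(\mc F_{<\alpha})_\sigma$ or $(\mc F_{<\alpha})_\delta$; since $\mc F_{<\alpha}=\mc F_{\alpha-1}$ (the classes being monotone in the index, a routine fact), decompose $C_i=\bigcup_n D^n_i$ or $C_i=\bigcap_n D^n_i$ with all $D^n_i\in\mc F_{\alpha-1}(Z_i)$, put $\mc D^n:=(D^n_i)_{i\in I}$, and check $Z(W,\mc C)=\bigcup_n Z(W,\mc D^n)$, resp.\ $\bigcap_n Z(W,\mc D^n)$. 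The union identity is immediate; the intersection identity uses disjointness of the $Z_i$: a point of $\bigcap_n\bigcup_i D^n_i$ lies in the \emph{same} $Z_i$ for every $n$, since $D^n_i\subset Z_i$ and the $Z_i$ are disjoint. The induction hypothesis at $\alpha-1$ then closes these cases.

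The main obstacle is the limit stage, where $\alpha$ is a limit ordinal and $\mc F_\alpha=(\mc F_{<\alpha})_\delta$. Now $C_i=\bigcap_n D^n_i$ with $D^n_i\in\mc F_{\gamma(i,n)}(Z_i)$, $\gamma(i,n)<\alpha$, but across an uncountable $I$ the ordinals $\gamma(i,n)$ need not be bounded below $\alpha$, so one cannot feed all the $D^n_i$ into the induction hypothesis at a single level $<\alpha$. The remedy is to fix a non-decreasing sequence of \emph{even} ordinals $\alpha_0\le\alpha_1\le\cdots<\alpha$ cofinal in $\alpha$ (possible, as $\mathrm{cf}(\alpha)=\omega$ and the even ordinals are cofinal in $\alpha$) and truncate: set $B^m_i:=\bigcap\{D^n_i \mid \gamma(i,n)\le\alpha_m\}$. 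Since $\alpha_m$ is even, $\mc F_{\alpha_m}(Z_i)$ is closed under countable intersections, so $B^m_i\in\mc F_{\alpha_m}(Z_i)$; and $\bigcap_m B^m_i=C_i$, because each $D^n_i$ eventually enters the defining intersection. Applying the induction hypothesis to $\mc B^m:=(B^m_i)_{i\in I}$ gives $Z(W,\mc B^m)\in\mc F_{\alpha_m}(Z(W,\mc Z))$, and interchanging $\bigcap_m$ with $\bigcup_i$ via disjointness of the $Z_i$ (exactly as in the successor case) yields $Z(W,\mc C)=\bigcap_m Z(W,\mc B^m)\in(\mc F_{<\alpha})_\delta=\mc F_\alpha(Z(W,\mc Z))$.

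Finally, for $\alpha=\omega_1$ (the Suslin-$\mc F$ case) write $C_i=\bigcup_{\sigma\in\baire}\bigcap_k F^i_{\sigma|k}$ with $F^i_s$ closed in $Z_i$, $s\in\seq$. The naive fibrewise union $\bigcup_i F^i_s$ need not be closed in $Z(W,\mc Z)$, but $\bar F_s:=W'\cup\bigcup_i F^i_s$ is, since its complement $\bigcup_i(Z_i\setminus F^i_s)$ is open. Using disjointness of the $Z_i$ to interchange $\bigcup_i$ and $\bigcap_k$, one verifies $Z(W,\mc C)=\bigcup_{\sigma}\bigcap_k \bar F_{\sigma|k}$, so $Z(W,\mc C)$ is Suslin-$\mc F$ in $Z(W,\mc Z)$, i.e.\ lies in $\mc F_{\omega_1}(Z(W,\mc Z))$. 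This completes the induction, and hence the proof.
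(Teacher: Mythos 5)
Your proof is correct and follows the same route as the paper's: the $\alpha=0$ case via complements of the basic open sets $Z_i\setminus C_i$, the inductive step by pulling countable unions and intersections out of the zoom construction (using disjointness of the pieces $Z_i$ for the intersection identity), and the Suslin operation for $\alpha=\omega_1$ with the fibrewise closed sets $W'\cup\bigcup_i F^i_s = Z(W,(F^i_s)_i)$. The one place you go beyond the paper is the limit stage: the paper's proof simply asserts that the identity $Z\left(W,\left(\bigcap_n C^n_i\right)_i\right)=\bigcap_n Z\left(W,(C^n_i)_i\right)$ closes the induction, which tacitly requires the complexity levels of the sets $C^n_i$ to be bounded below $\alpha$ uniformly in $i$ for each fixed $n$ --- automatic at successor stages but not when $\alpha$ is a limit and $I$ is uncountable. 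Your regrouping of the intersection along a cofinal sequence of even ordinals $\alpha_m\nearrow\alpha$ fills this in correctly, so your write-up is, if anything, the more complete of the two.
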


\begin{proof}
For $\alpha=0$, $Z(W,\mc C)$ is closed in $Z(W,\mc Z)$ since $Z_i \setminus C_i$ are basic open sets and
\[ Z(W,\mc C) = Z(W,\mc Z) \setminus \bigcup_{i\in I} (Z_i \setminus C_i) .\]
For $0<\alpha<\omega_1$, \eqref{equation: complexity in zoom spaces} follows by transfinite induction, because we have
\[ \left( \left( \forall i\in I \right) : C_i=\bigcup_n C_i^n \right) \implies
Z\left(W,\mc C\right) = Z(W, ( \bigcup_n C_i^n )_i ) =
 \bigcup_n Z\left(W, (C_i^n )_i \right) \]
and the~analogous formula holds when each $C_i$ satisfies $C_i=\bigcap_n C_i^n$.

For $\alpha=\omega_1$, the~following formula shows that $Z(W,\mc C)$ is Suslin in $Z(W, \mc Z)$:
\begin{align*}
& \left( \left( \forall i\in I \right) : C_i = \bigcup_{\sigma\in\baire} \bigcap_n C_i^{\sigma|n}
 \text{, where each } C_i^{\sigma|n} \text{ is closed in } Z_i  \right) \implies \\
& Z\left(W,\mc C\right) = Z\left(W, \left( \bigcup_{\sigma\in\baire} \bigcap_n C_i^{\sigma|n} \right)_i \right) = \bigcup_{\sigma\in\baire} \bigcap_n Z\left(W, (C_i^{\sigma |n} )_i \right) \\
& \in \mc F_{\omega_1} \left( Z(W,\mc Z)\right) ,
\end{align*}
where at the~last line we have used \eqref{equation: complexity in zoom spaces} with $\alpha=0$.
\end{proof}

The following result then states that the~complexity of zoom spaces can be retrieved from the~complexity of its parts.

\begin{proposition}[Complexities attained by a~zoom space]\label{proposition: complexity of Z(Y,Z)}
Let $Z(Y,\mc X)$ be a~zoom space. If $Y$ is a~dense subspace of $\widetilde Y$ and each $X_i$ is a~subspace of $\widetilde X_i$, then the~spaces $Z(Y, {\mc X}) \subset Z(\widetilde Y, \widetilde {\mc X})$ satisfy
\[ \textnormal{Compl} \left( Z(Y,\mc X), Z(\widetilde Y, \widetilde {\mc X}) \right) = 
	\max\left\{ \textnormal{Compl} \left( Y, \widetilde Y \right), \ 
	\sup_{i\in I_Y} \textnormal{Compl} \left( X_i, \widetilde X_i \right) \right\} ,\]
whenever at least one of the~sides is defined.
\end{proposition}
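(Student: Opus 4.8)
The plan is to prove the equality by establishing the two inequalities $\le$ and $\ge$ separately, using the lemmas already developed. Write $Z := Z(Y,\mc X)$, $\widetilde Z := Z(\widetilde Y, \widetilde{\mc X})$, and note that by Proposition~\ref{proposition: properties of Z}\,\eqref{case: subspaces}, $Z$ is indeed a dense subspace of $\widetilde Z$, so both quantities in the statement have a chance to be defined. Denote $\mu := \max\{\Compl(Y,\widetilde Y), \sup_{i} \Compl(X_i,\widetilde X_i)\}$.

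For the inequality $\Compl(Z,\widetilde Z) \le \mu$: first note that $Z = Z(\widetilde Y, \widetilde{\mc X}) \cap (\text{something})$ is awkward to phrase directly, so instead I would write $Z$ as the intersection of two pieces inside $\widetilde Z$. Observe that $Z(Y, \widetilde{\mc X})$ is the preimage of $Y$ under the quotient map $\widetilde q : \widetilde Z \to \widetilde Y$ restricted appropriately — more precisely $Z(Y,\widetilde{\mc X}) = V_{Y}$ in the notation of Lemma~\ref{lemma: complexity of V F sets} applied to the zoom space $\widetilde Z$, so if $\alpha := \Compl(Y,\widetilde Y)$ then $Y \in \Fa(\widetilde Y)$ gives $Z(Y,\widetilde{\mc X}) \in \Fa(\widetilde Z)$ by that lemma. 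Next, $Z = Z(Y,\mc X)$ sits inside $Z(Y,\widetilde{\mc X})$ as a zoom space over the \emph{same} base $Y$ but with the subspaces $X_i \subset \widetilde X_i$, so Lemma~\ref{lemma: complexity in zoom spaces} (applied with $W = Y$, $Z_i = \widetilde X_i$, $C_i = X_i$) shows that if $\beta := \sup_i \Compl(X_i,\widetilde X_i)$ then $Z(Y,\mc X) \in \mc F_\beta(Z(Y,\widetilde{\mc X}))$, provided each $X_i \in \mc F_\beta(\widetilde X_i)$, which holds since $\Compl(X_i,\widetilde X_i) \le \beta$ for all $i$. Composing these two inclusions — $\Fa$-membership in $\widetilde Z$ of an intermediate set, and $\mc F_\beta$-membership of $Z$ inside that intermediate set — and using that the $\mc F$-Borel classes of a subspace are the traces of the ambient classes (so $\mc F_\beta$ in $Z(Y,\widetilde{\mc X})$ lifts to $\mc F_\beta$ in $\widetilde Z$ once we intersect with the $\Fa$ set $Z(Y,\widetilde{\mc X})$), we get $Z \in \mc F_{\max(\alpha,\beta)}(\widetilde Z) = \mc F_\mu(\widetilde Z)$.

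For the reverse inequality $\Compl(Z,\widetilde Z) \ge \mu$: here I use that $Y$ and each $X_i$ embed as nice subspaces of $Z$ that are themselves subspaces of the corresponding nice subspaces of $\widetilde Z$, and that complexity can only drop when passing to a subspace in a compatible way. Concretely, each $X_i$ is clopen in $Z$ (Proposition~\ref{proposition: properties of Z}\,\eqref{case: Y as subspace of Z(Y,Z)}) and $\widetilde X_i$ is clopen in $\widetilde Z$, with $X_i = Z \cap \widetilde X_i$; since for clopen (indeed any) subspaces $\mc F$-Borel classes are traces, $Z \in \Fa(\widetilde Z)$ forces $X_i = Z \cap \widetilde X_i \in \Fa(\widetilde X_i)$, hence $\Compl(X_i,\widetilde X_i) \le \Compl(Z,\widetilde Z)$ for every $i$. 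For the base $Y$: pick any selector $s$, so that $Y_s := Y' \cup s(I_Y)$ is a closed subspace of $Z$ homeomorphic to $Y$, and it is the trace on $Z$ of the closed subspace $\widetilde Y_s \subset \widetilde Z$ which is homeomorphic to $\widetilde Y$; again $Z \in \Fa(\widetilde Z)$ yields $Y_s = Z \cap \widetilde Y_s \in \Fa(\widetilde Y_s)$, i.e. $\Compl(Y,\widetilde Y) \le \Compl(Z,\widetilde Z)$. Taking the supremum over $i$ and combining gives $\mu \le \Compl(Z,\widetilde Z)$. Finally I address the ``whenever at least one side is defined'' clause: if the right-hand side is defined then $\alpha,\beta < \omega_1$ and the first part shows the left side is defined with value $\le \mu$; if the left side is defined, the second part shows every $\Compl(X_i,\widetilde X_i)$ and $\Compl(Y,\widetilde Y)$ is $\le \Compl(Z,\widetilde Z) < \omega_1$, so the right side is defined — and then the two inequalities pin down equality.

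The main obstacle I anticipate is the bookkeeping around traces of $\mc F$-Borel classes under subspaces — specifically, justifying cleanly that $A \in \Fa(\widetilde Z)$ and $B \subset \widetilde Z$ a subspace imply $A \cap B \in \Fa(B)$, and conversely lifting $\mc F_\beta$-membership from the intermediate zoom space $Z(Y,\widetilde{\mc X})$ up to $\widetilde Z$. This is a standard fact (the map $S \mapsto S \cap B$ commutes with countable unions, intersections, and Suslin operations, and sends closed sets to closed sets), but one must make sure the intermediate set $Z(Y,\widetilde{\mc X})$ is itself $\Fa$ in $\widetilde Z$ so that the composition of classes behaves as $\max(\alpha,\beta)$ rather than $\alpha + \beta$; this is exactly what Lemma~\ref{lemma: complexity of V F sets} and the additivity/multiplicativity of the zoom construction under unions and intersections (as exploited in Lemma~\ref{lemma: complexity in zoom spaces}) are designed to give. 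Everything else is a routine diagram chase through Proposition~\ref{proposition: properties of Z}.
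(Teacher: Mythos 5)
Your proof is correct and follows essentially the same route as the paper: the lower bound via the selector subspaces $Y_s = Z \cap \widetilde Y_s$ and $X_i = Z \cap \widetilde X_i$, and the upper bound via Lemma~\ref{lemma: complexity of V F sets} combined with Lemma~\ref{lemma: complexity in zoom spaces}. The only (harmless) deviation is in the upper bound: the paper applies Lemma~\ref{lemma: complexity in zoom spaces} with base $\widetilde Y$ and writes $Z(Y,\mc X) = Z(Y,\widetilde{\mc X}) \cap Z(\widetilde Y,\mc X)$ as an intersection of two $\Fa$-subsets of $\widetilde Z$, which sidesteps the relativization/trace-lifting step that you (correctly) identify and handle as the delicate point of your variant.
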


\begin{proof}
"$\geq$": Let $Y$, $\widetilde Y$, $\mc X$ and $\widetilde {\mc X}$ be as in the~statement. Assume that the~LHS is defined and $Z(Y,\mc X) \in \Fa ( Z(\widetilde Y, \widetilde {\mc X}) )$ holds for some $\alpha \leq \omega_1$.
Let $s$ be a~selector of $\mc X$. Since $\widetilde Y$ and $Y$ have the~same isolated points, $s$ is also a~selector of $\widetilde{\mc X}$. Using the~notation from Proposition~\ref{proposition: properties of Z}\,\eqref{case: Y as subspace of Z(Y,Z)}, we have
\begin{align*}
Y_s \subset \widetilde Y_s \subset Z(\widetilde Y, \widetilde {\mc X}) && \ \& \ && Y_s   & = \widetilde Y_s \cap Z(Y,\mc X) \subset Z(\widetilde Y, \widetilde {\mc X}) \\
X_i \subset \widetilde X_i \subset Z(\widetilde Y, \widetilde {\mc X}) && \ \& \ && X_i & = \widetilde X_i \cap Z(Y,\mc X) \subset Z(\widetilde Y, \widetilde {\mc X}) ,
\end{align*}
where $\widetilde Y_s$ and $\widetilde X_i$ are closed subsets of $Z(\widetilde Y,\widetilde{\mc X})$.
In particular we get (by definition of the~subspace topology) that each $X_i$ satisfies $X_i \in \Fa ( \widetilde X_i)$ and $Y_s$ satisfies $Y_s \in \Fa ( \widetilde Y_s)$. Since $\widetilde Y_s$ is homeomorphic to $\widetilde Y$, we get $Y \in \Fa (\widetilde Y)$. It follows that the~RHS is defined and at most equal to the~LHS.

"$\leq$":
Let $Y$, $\widetilde Y$, $\mc X$ and $\widetilde {\mc X}$ be as in the~proposition. Assume that the~RHS is defined and we have $Y \in \Fa (\widetilde Y)$ and $X_i \in \Fa ( \widetilde X_i)$, $i\in I_Y$, for some $\alpha \leq \omega_1$.
Rewriting the~space $Z(Y,\mc X)$ as
\begin{equation} \label{equation: Z as intersection in tilde Z}
Z := Z\left(Y,\mc X\right) = Z\left(Y,\widetilde {\mc X}\right) \cap Z\left(\widetilde Y,\mc X\right)
 \subset Z\left(\widetilde Y,\widetilde {\mc X} \right) =: \widetilde Z ,
\end{equation}
we obtain an upper bound on its complexity:
\[ \textnormal{Compl}\left( Z, \widetilde Z \right) \le \max \left\{
	\textnormal{Compl}\left( Z\left(Y, \widetilde{\mc X}\right), \widetilde Z \right) ,
	\textnormal{Compl}\left( Z\left(\widetilde Y,\mc X\right), \widetilde Z \right)
 \right\} .\]
Working in the~zoom space $Z(\widetilde Y,\widetilde{ \mc X})$, we have $Z(Y,\widetilde{\mc X})=V_Y$. Applying \eqref{equation: complexity of V F sets} (with $W:=\widetilde Y$, $\mc Z := \widetilde {\mc X}$ and $\alpha$), we get
\[ \textnormal{Compl}\left( Z\left(Y, \widetilde{\mc X}\right), \widetilde Z \right) =
 \textnormal{Compl}\left( V_Y, Z(\widetilde Y, \widetilde{\mc X}) \right)
 \overset{\eqref{equation: complexity of V F sets}}{\le}
 \textnormal{Compl}\left( Y, \widetilde Y \right) \leq \alpha .\]
Application of \eqref{equation: complexity in zoom spaces} (with $W:= \widetilde Y$, $\mc Z := \widetilde{\mc X}$, $\mc C := \mc X$ and $\alpha$) implies that $\textnormal{Compl}\left( Z\left(\widetilde Y, {\mc X}\right), \widetilde Z \right) \leq \alpha $. It follows that the~LHS is defined and no greater than the~RHS.
\end{proof}

As a~corollary of the~proof of Proposition~\ref{proposition: properties of Z}, we obtain the~following result:

\begin{corollary}[Stability under the~zoom space operation]\label{corollary: zoom spaces are K analytic}
A zoom space $Z(Y,\mc X)$ is is compact ($\sigma$-compact, $\mc K$-analytic) if and only if $Y$ each $X_i$ is compact ($\sigma$-compact, $\mc K$-analytic).
\end{corollary}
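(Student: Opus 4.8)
The statement splits into three equivalences (compact / $\sigma$-compact / $\mc K$-analytic). The plan is to prove the ``only if'' direction of all three at once from the structural description in Proposition~\ref{proposition: properties of Z}\,\eqref{case: Y as subspace of Z(Y,Z)}, and to treat the ``if'' directions with a mild case split: the compact case is literally Proposition~\ref{proposition: properties of Z}\,\eqref{case: Z is compact}, while the other two are obtained by passing to a compactification and invoking Proposition~\ref{proposition: complexity of Z(Y,Z)}.

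For the ``only if'' direction, recall from Proposition~\ref{proposition: properties of Z}\,\eqref{case: Y as subspace of Z(Y,Z)} that the~quotient map $q\colon Z(Y,\mc X)\to Y$ is a~continuous surjection and that each $X_i$ is a~clopen (in particular closed) subspace of $Z(Y,\mc X)$. Since each of the~three properties under consideration is preserved under continuous (Hausdorff) images and is inherited by closed subspaces, if $Z(Y,\mc X)$ is compact, resp.\ $\sigma$-compact, resp.\ $\mc K$-analytic, then so is $Y=q(Z(Y,\mc X))$ and so is every $X_i$.

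For the~``if'' direction in the~compact case there is nothing to prove: it is exactly Proposition~\ref{proposition: properties of Z}\,\eqref{case: Z is compact}. Assume now that $Y$ and all $X_i$ are $\sigma$-compact (resp.\ $\mc K$-analytic). Fix compactifications $cY\supset Y$ and $cX_i\supset X_i$; by Proposition~\ref{proposition: properties of Z}\,\eqref{case: compactifications of Z(Y,Z)}, $Z(cY,c\mc X)$ is a~compactification of $Z(Y,\mc X)$. A~$\sigma$-compact space is an $\mc F_\sigma$ subset of each of its compactifications, and a~$\mc K$-analytic space is Suslin-$\mc F$ in every space containing it, so $\textnormal{Compl}(Y,cY)\le 1$ and $\textnormal{Compl}(X_i,cX_i)\le 1$ for all $i$ (resp.\ all these quantities are merely defined, hence $\le\omega_1$). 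In either case the~right-hand side of the~identity in Proposition~\ref{proposition: complexity of Z(Y,Z)} is defined, hence so is the~left-hand side, and
\[ \textnormal{Compl}\bigl(Z(Y,\mc X),\,Z(cY,c\mc X)\bigr)
 = \max\Bigl\{\textnormal{Compl}(Y,cY),\ \sup_{i\in I_Y}\textnormal{Compl}(X_i,cX_i)\Bigr\} , \]
which is $\le 1$ (resp.\ $\le\omega_1$). Thus $Z(Y,\mc X)$ is an $\mc F_\sigma$ (resp.\ a~Suslin-$\mc F$) subset of the~compact space $Z(cY,c\mc X)$, and any such subspace is $\sigma$-compact (resp.\ $\mc K$-analytic).

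The~only genuinely delicate point is the~``if'' direction when $I_Y$ is uncountable: one then cannot cover $Z(Y,\mc X)$ by countably many compact sets obtained by naively amalgamating countable covers of $Y$ and of the~individual $X_i$ (an uncountable union of compact pieces $\bigcup_i G_i\subset\bigcup_i X_i$ need not even be closed in $Z(Y,\mc X)$, since points of $Y'$ in the~closure of $I_Y$ lie in its closure). Routing the~argument through a~single compactification $Z(cY,c\mc X)$ and Proposition~\ref{proposition: complexity of Z(Y,Z)} sidesteps this entirely, as it uses only the~complexity of each piece in one fixed compactification and never reassembles covers by hand.
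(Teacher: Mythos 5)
Your proposal is correct and follows essentially the same route as the paper: the ``only if'' direction via the closed copies of $Y$ and the $X_i$ inside $Z(Y,\mc X)$ (the paper uses the closed set $Y_s$ where you use the quotient map, but this is immaterial), and the ``if'' direction by passing to the compactification $Z(cY,c\mc X)$ and invoking Proposition~\ref{proposition: complexity of Z(Y,Z)}. The paper's only cosmetic differences are that it fixes the~Čech--Stone compactifications and, in the~$\mc K$-analytic case, unfolds the~intersection $Z(Y,\beta\mc X)\cap Z(\beta Y,\mc X)$ from the~proof of that proposition rather than citing its statement with $\alpha=\omega_1$.
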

\begin{proof}
$\Longrightarrow$: By Proposition~\ref{proposition: properties of Z} \eqref{case: Y as subspace of Z(Y,Z)}, $Z(Y,\mc X)$ contains $Y$ and each $X_i$ as a~closed subspace -- this implies the~result.

$\Longleftarrow$: The compact case is the~same as Proposition~\ref{proposition: properties of Z} \eqref{case: Z is compact}. If $Y$ and each $X_i$ is $\sigma$-compact, then each of these spaces is $F_\sigma$ in its Čech-stone compactification. By Proposition~\ref{proposition: complexity of Z(Y,Z)}, $Z(Y,\mc X)$ is an $F_\sigma$ subset of $Z(\beta Y,\beta \mc X)$. The later space is compact by the~compact case of this proposition, which implies that $Z(Y,\mc X)$ is $\sigma$-compact.

Consider the~case where $Y$ and each $X_i$ is $\mc K$-analytic. In the~proof of Proposition~\ref{proposition: complexity of Z(Y,Z)}, we have shown that both $Z\left(Y,\beta {\mc X}\right)$ and $Z\left(\beta Y,\mc X\right)$ are Suslin in $Z(\beta Y, \beta \mc X)$. It follows that both these sets are $\mc K$-analytic. By \eqref{equation: Z as intersection in tilde Z}, $Z(Y,\mc X)$ is an intersection of two $\mc K$-analytic sets, which implies that it it itself $\mc K$-analytic.
\end{proof}

%%%%%%%%%%%%%%%%%%%%%%%%%%%%%%%%%%%%
%  =============== TOPOLOGICAL SUMS=======================
\subsection{Topological sums as zoom spaces}\label{section:top_sums_as_zoom}
In this section, we give some natural examples of zoom spaces, and prove the~first part of our main results.

\begin{example}[Topological sums as zoom spaces] \label{example: application to topological sums}
Whenever $\mc X=(X_i)_{i\in I}$ is a~collection of topological spaces, the~topological sum $\bigoplus \mc X$ is homeomorphic to the~zoom space $Z(I,\mc X)$ of the~discrete space $I$.
\end{example}
\begin{proof}
This follows from the~fact that each $X_i$ is clopen in $Z(I,\mc X)$ and that in this particular case, we have $I_I=I$ and hence $Z(I,\mc X)$ is a~disjoint union of $X_i$, $i\in I$.
\end{proof}

In the~particular case of countable sums, we can use Proposition~\ref{proposition: complexity of Z(Y,Z)} to fully describe the~complexity of the~resulting space:
\begin{proposition}[Complexities attainable by a~topological sum]\label{proposition: complexities of topological sums}\mbox{}
\begin{enumerate}[(i)]
\item If $X_k$, $k\leq n$, are compact, then $\overset{n}{\underset{k=0}{\bigoplus}} X_k$ is compact.
\item If $X_k$, $k\in \omega$, are $\sigma$-compact, then $\underset{k\in \omega}{\bigoplus} X_k$ is $\sigma$-compact.
\item If $X_k$, $k\in\omega$, are $\mc K$-analytic and at least one $X_k$ is not compact, then
\[ \textnormal{Compl}\left( \bigoplus_{k\in\omega} X_k \right) = \left\{ \sup f | \ f\in \prod_{k\in\omega} \textnormal{Compl}(X_k) \right \} . \]
\end{enumerate}
\end{proposition}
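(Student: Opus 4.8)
The plan is to treat each item as a special case or application of the zoom-space machinery developed in Section~\ref{section: zoom spaces}, using the identification $\bigoplus \mc X \cong Z(I,\mc X)$ from Example~\ref{example: application to topological sums}. For (i), one observes that a finite discrete space $I = \{0,\dots,n\}$ is itself compact, so by Proposition~\ref{proposition: properties of Z}\,\eqref{case: Z is compact}, $Z(I,\mc X)$ is compact as soon as each $X_k$ is. For (ii), the countably-infinite discrete space $I = \omega$ is $\sigma$-compact (indeed Lindelöf and locally compact), so one can either invoke Corollary~\ref{corollary: zoom spaces are K analytic} directly (the $\sigma$-compact case), or argue by hand: each $X_k$ is $F_\sigma$ in $\beta X_k$, hence $\bigoplus_{k} X_k$ is $F_\sigma$ in the compact space $Z(\beta\omega, \beta\mc X)$ by Proposition~\ref{proposition: complexity of Z(Y,Z)}, so it is $\sigma$-compact.

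The substance is in (iii). First note that by Corollary~\ref{corollary: zoom spaces are K analytic}, $\bigoplus_k X_k$ is $\mc K$-analytic, so $\textnormal{Compl}(\bigoplus_k X_k)$ is a non-empty subset of $[0,\omega_1]$, and since some $X_k$ fails to be compact, $0$ is excluded by Proposition~\ref{proposition: basic attainable complexities}\,\eqref{case: compact and sigma compact}; thus every attained value is $\geq 1$. The ``$\subseteq$'' inclusion is the upper bound: given any compactification $c(\bigoplus_k X_k)$, restrict it to each clopen summand $X_k$ to obtain a compactification $cX_k$ of $X_k$ (the closure of $X_k$ inside $c(\bigoplus_k X_k)$ is compact and contains $X_k$ densely). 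Then $\bigoplus_k X_k = Z(\omega, \mc X)$ sits inside $Z(\beta\omega, c\mc X) \succeq$ the given compactification restricted appropriately; more cleanly, one applies Proposition~\ref{proposition: complexity of Z(Y,Z)} with $\widetilde Y = \beta\omega$ (or the one-point compactification $\omega+1$, which suffices since $\textnormal{Compl}(\omega, \omega+1) = 1$) and $\widetilde X_i = cX_i$, giving that the complexity of $\bigoplus_k X_k$ in that compactification equals $\max\{1, \sup_k \textnormal{Compl}(X_k, cX_k)\}$, which is $\sup_k \textnormal{Compl}(X_k, cX_k)$ since each $\textnormal{Compl}(X_k, cX_k) \geq 1$. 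As $(cX_k)_k$ ranges over all choices, $(\textnormal{Compl}(X_k,cX_k))_k$ ranges over a subset of $\prod_k \textnormal{Compl}(X_k)$, so every attained complexity is of the form $\sup f$ for some $f \in \prod_k \textnormal{Compl}(X_k)$. For the reverse inclusion ``$\supseteq$'': given any $f \in \prod_k \textnormal{Compl}(X_k)$, pick for each $k$ a compactification $c_f X_k$ of $X_k$ realizing $\textnormal{Compl}(X_k, c_f X_k) = f(k)$, and form the compactification $Z(\omega+1, (c_f X_k)_k)$ of $\bigoplus_k X_k$ (valid by Proposition~\ref{proposition: properties of Z}\,\eqref{case: compactifications of Z(Y,Z)}). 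By Proposition~\ref{proposition: complexity of Z(Y,Z)} its complexity there is $\max\{1, \sup_k f(k)\} = \sup f$ (again using $f(k)\geq 1$, and if $\sup f = 0$ would force all $X_k$ compact, contradicting the hypothesis — so in fact $\sup f \geq 1$ always). Hence $\sup f$ is attained.

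The main obstacle, such as it is, is bookkeeping rather than depth: one must be careful that $\textnormal{Compl}(X_k, cX_k)$ is well-defined for \emph{every} compactification $cX_k$ (which holds because each $X_k$ is $\mc K$-analytic), that the max with $\textnormal{Compl}(\omega, \widetilde Y)$ genuinely contributes only a harmless $1$, and — slightly more delicately — that restricting an arbitrary compactification of $\bigoplus_k X_k$ to a summand and then re-assembling via a zoom space only \emph{decreases} complexity, so that the upper-bound direction is not lost. This last point is exactly the content of the ``$\geq$'' half of Proposition~\ref{proposition: complexity of Z(Y,Z)} combined with the monotonicity $X \subset K \subset Y \implies \Compl{X}{K} \le \Compl{X}{Y}$ noted after Definition~\ref{definition:att_compl}. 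One should also remark that the set $\{\sup f \mid f \in \prod_k \textnormal{Compl}(X_k)\}$ does contain $\sup_k(\sup\textnormal{Compl}(X_k)) = \sup_k(\text{absolute complexity of }X_k)$ as its own supremum, so the displayed formula is consistent with the intuition that $\bigoplus_k X_k$ realizes, and only realizes, suprema of selections of summand-complexities.
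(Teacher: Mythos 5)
Parts (i) and (ii) and the ``$\supseteq$'' half of (iii) are fine and essentially coincide with the paper's argument. The gap is in the ``$\subseteq$'' half. Given an arbitrary compactification $cX$ of $X=\bigoplus_k X_k$, you set $cX_k:=\overline{X_k}^{cX}$ and then compute the complexity of $X$ inside the \emph{re-assembled} compactification $Z(\omega+1,c\mc X)$ (or $Z(\beta\omega,c\mc X)$), obtaining $\sup_k\Compl{X_k}{cX_k}$. But this says nothing about $\Compl{X}{cX}$ unless the two compactifications are comparable, and in general they are not: the sets $\overline{X_k}^{cX}$ need not be pairwise disjoint nor open in $cX$ (in the one-point compactification of $\bigoplus_{k\in\omega}\omega$ they all share the single point at infinity), so neither $cX\succeq Z(\omega+1,c\mc X)$ nor the reverse holds, and the claim that $Z(\beta\omega,c\mc X)$ dominates the given compactification is false in general. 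The tools you invoke to transfer the computation back --- the ``$\geq$'' half of Proposition~\ref{proposition: complexity of Z(Y,Z)} applied inside $cX$, and the monotonicity $\Compl{X}{K}\le\Compl{X}{cX}$ for $X\subset K\subset cX$ --- only yield the \emph{lower} bound $\sup_k\Compl{X_k}{\overline{X_k}^{cX}}\le\Compl{X}{cX}$, whereas the inclusion ``$\subseteq$'' requires the upper bound.

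That upper bound is the actual content of (iii), and it is nontrivial precisely when $\alpha:=\sup_k\Compl{X_k}{\overline{X_k}^{cX}}$ is even (multiplicative): writing $X=\bigcup_k X_k$ only gives $X\in(\Fa)_\sigma$, one class too high. The paper's proof works inside the $\sigma$-compact set $Y:=\bigcup_k\overline{X_k}^{cX}$ and uses the $F_\sigma$-separation lemma for Lindelöf subspaces of compact spaces (\cite[Lemma 14]{spurny2006solution}, restated as Lemma~\ref{lemma: fsd spaces are Fs separated}) to produce sets $E_k\in F_\sigma(Y)$ with $\bigcup_{l\neq k}X_l\subset E_k\subset Y\setminus\overline{X_k}^{Y}$, so that $X=\bigcap_k\left(X_k\cup E_k\right)$ becomes a countable intersection of $\Fa$-subsets of $Y$. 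Your proposal contains no substitute for this separation step, so the ``$\subseteq$'' inclusion remains unproved.
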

\begin{proof}
$(i)$ and $(ii)$ are obvious. To get $(iii)$, note that by Example~\ref{example: application to topological sums}, the~topological sum $\bigoplus_{k} X_k$ can be rewritten as
\[ X:=\bigoplus_{k\in\omega} X_k = Z(\omega,(X_k)_{k}) .\]
We will prove each of the~two inclusions between $\textnormal{Compl}(X)$ and $\{ \sup f | \ f\in \Pi_k \textnormal{Compl}(X_k) \}$.

"$\supset$": Let $f$ be a~selector for $(\textnormal{Compl}(X_k))_{k}$. By definition of $\textnormal{Compl}(\cdot)$, there exist compactifications  $cX_k$, $k\in\omega$, such that $\Compl{X_k}{cX_k}=f(k)$. Applying Proposition~\ref{proposition: properties of Z}\,\eqref{case: compactifications of Z(Y,Z)}, we obtain a~compactification of $X$:
\[ cX:=Z(\omega+1,(cX_k)_k) .\]
Since $\omega$ is $\sigma$-compact, the~complexity of $\omega$ in $\omega+1$ is 1. We have $\sup_{k\in\omega} \Compl{X_k}{cX_k} \geq 1$ (since not every $X_k$ is compact). By Proposition $\ref{proposition: complexity of Z(Y,Z)}$, the~complexity of $X$ in $cX$ is the~maximum of these two numbers, so we get
\[ \sup f = \sup_{k\in\omega} \ \Compl{X_k}{cX_k} = \Compl{X}{cX} \in \textnormal{Compl}(X) . \]

"$\subset$": Let $cX$ be a~compactification of $X$ and denote $Y:= \bigcup_{k\in\omega} \overline{X_k}^{cX}$. Define $f:\omega \rightarrow [1,\omega_1]$ by $f(k) := \Compl{X_k}{\overline{X_k}^{cX}}$ and set $\alpha:=\sup f$. Note that each $X_k$ is an $\Fa$-subset of $Y$. Since $Y$ is $\sigma$-compact and $\alpha\geq 1$, it is enough to show that $X$ is an $\Fa$-subset of $Y$.

If $\Fa$ is an additive class, $X=\bigcup X_k$ is a~countable union of $\Fa$ sets, so we have $X\in \Fa (Y)$. If $\Fa$ is a~multiplicative class, we have to proceed more carefully. Let $k,l\in \omega$ be distinct. We set $K^l_k := \overline{X_k}^Y\cap \overline{X_l}^Y$ and note that by definition of topological sum, $K^l_k$ is a~compact disjoint with $X_l$. Since $X_l$ is $\mc K$-analytic, and hence Lindelöf, we can apply \cite[Lemma 14]{spurny2006solution} to obtain a~set $E^l_k\in F_\sigma(Y)$ satisfying
\begin{equation} \label{equation: separation of E from K}
X_l \subset E^l_k \subset \overline{X_l}^Y \setminus K^l_k  = \overline{X_l}^Y \setminus \overline{X_k}^Y .
\end{equation}
Denote $E_k := \bigcup_{l \neq k} E^l_k \in F_\sigma(Y)$. We finish the~proof by starting with \eqref{equation: separation of E from K} and taking the~union over $l\in\omega \setminus \{k\}$ \dots
\begin{align*}
\bigcup_{l \neq k} X_l \ & \subset & \bigcup_{l \neq k} E^l_k = E_k & & \subset &
\ \ \bigcup_{l \neq k} \overline{X_l}^Y \setminus \overline{X_k}^Y = Y \setminus \overline{X_k}^Y \\
&&&&& \dots \text{then adding } X_k \dots \\
X = X_k \cup \bigcup_{l \neq k} X_l \ & \subset & X_k \cup E_k \ \ & & \subset &
\ \ X_k \cup \left( Y \setminus \overline{X_k}^Y \right) = Y \setminus \left( \overline{X_k}^Y \setminus X \right) \\
&&&&& \dots \text{and intersecting over } k\in\omega \\
X\, \ & \subset & \bigcap_{k\in\omega} \Big( X_k \cup E_k & \Big) & \subset & \ \ Y \setminus \bigcup_{k\in\omega} \left( \overline{X_k}^Y \setminus X \right) = X .
 \end{align*}
This proves that $X$ is an intersection of $\Fa$-subsets of $Y$. Since $\Fa$ is multiplicative, this completes the~proof.
\end{proof}

Proposition~\ref{proposition: complexities of topological sums} and its proof imply that for any closed interval $I\subset [2,\omega_1]$, there exists a~space satisfying $\textnormal{Compl}(X)=I$:

\begin{proof}[Proof of Theorem~\ref{theorem: summary}]
Let $I=[\alpha, \beta] \subset [2,\omega_1]$ be a~closed interval.
For $\gamma \in [\alpha,\beta]$, let $X_\alpha^\gamma$ be a~topological space from Theorem~\ref{theorem:X_2_beta} satisfying
\[ \left\{ \alpha,\gamma \right\} \subset \textnormal{Compl}\left( X_\alpha^\gamma \right) \subset [\alpha,\gamma] \]
and denote $\mc X := (X_\alpha^\gamma)_{\gamma\in [\alpha,\beta]}$.

If $\alpha=\beta = \omega_1$, we simply set $X := X^{\omega_1}_{\omega_1}$.
Next, consider the~case when $\beta < \omega_1$.
The family $\mc X$ is countable and it satisfies
\[ \left\{ \sup f | \ f \text{ is a~selector of } \left( \textnormal{Compl}(X) \right)_{X\in \mc X} \right\} = [\alpha, \beta] .\]
By Proposition~\ref{proposition: complexities of topological sums}\, $(iii)$, the~space $X := \bigoplus \mc X$ has the~desired properties.

Finally, suppose that $\beta=\omega_1$.
We can assume that $\alpha=2$ -- otherwise, it would suffice to construct $\widetilde X$ corresponding to $I=[2,\omega_1]$ and set $X := \widetilde X \oplus X^\alpha_\alpha$.
Denote by $K$ the~one-point compactification of the~\emph{discrete} space $[2,\omega_1]$ (instead of the~usual ordinal topology).
Since $I_K = [2,\omega_1]$, it makes sense to consider the~space $X := Z(K,\mc X)$.
By Corollary~\ref{corollary: zoom spaces are K analytic}, $Z(K,\mc X)$ is $\mc K$-analytic but not $\sigma$-compact, which implies that $\textnormal{Compl}(Z(K,\mc X))\subset [2,\omega_1]$. Clearly, we have
\[ [2,\omega_1] = \left\{ \sup f | \ f \text{ is a~selector of } \left( \textnormal{Compl}(X_2^\gamma) \right)_{\gamma \in [2,\omega_1]} \right\}  .\]
To finish the~proof, it is enough to prove the~inclusion
\[ \left\{ \sup f | \ f \text{ is a~selector of } \left( \textnormal{Compl}(X_2^\gamma) \right)_{\gamma \in [2,\omega_1]} \right\} \subset \textnormal{Compl}( Z(K,\mc X) ) .\]
This can be done exactly as in the~``$\supset$''-part of the~proof of Proposition~\ref{proposition: complexity of Z(Y,Z)}.
\end{proof}

\section{Simple Representations}\label{section: simple representations}

We begin with an example of different ways of describing and representing of $\fsd$-sets.
This example will be rather trivial; however, it will serve as a motivation for studying higher classes of the~$\mc F$-Borel hierarchy.

Suppose that $Y$ is a topological space and $X\in \mc F_2(Y)$.

By definition of $\mc F_2(Y)$, $X$ can be described in terms of $\mc F_1$-subsets of $Y$ as $X=\bigcap_m X_m$, where $X_m \in \mc F_1(Y)$.
And by definition of $\mc F_1(Y)$, each $X_m$ can be rewritten as $X_m := \bigcup_n X_{m,n}$, where $X_{m,n}\in \mc F_0(Y)$.
We could say that the~fact that $X$ is an $\mc F_2$-subset of $Y$ is represented by the~collection $\mc H := (X_{m,n})_{(m,n)\in\omega^2}$.

For $(n_0,n_1,\dots,n_k) \in \seq$, we can set $C(n_0,n_1,\dots,n_k) := X_{0,n_0} \cap X_{1,n_1} \cap \dots \cap X_{k,n_k}$.
We then have $s\sqsubset t \implies C(t) \subset C(s)$ and for each $m\in \omega$, $\{ C(s) | \ s\in \omega^m \}$ covers $X$.
The fact that $X$ is an $\mc F_2$-subset of $Y$ is then represented by the~collection $\mc C := (C(s))_{s}$, in the~sense that $X= \bigcap_{m\in\omega} \bigcup_{s\in \omega^m} C(s)$.
We denote this as $X = \mathbf{R}_2(\mc C)$.

In both cases, $X$ is represented in terms of closed subsets of $Y$. We can also represent $X$ by collections of subsets of $X$, by working with $\mc H_X := (X_{m,n} \cap X)_{(m,n)\in\omega^2}$ and $\mc C_X := (C(s)\cap X )_{s}$ instead.
$X$ is then represented by the~collection $\mc H' := \overline{\mc H_X}^Y$, resp. as $X = \mathbf{R}_2 (\overline{\mc C_X}^Y) =: \mathbf{R}_2 (\mc C_X, Y)$ (where for a collection $\mc A$ of subsets of $Y$, $\overline{\mc A}^Y$ denotes the~collection of the~corresponding closures in $Y$).

We call the~former representation a \emph{simple $\mc F_2$-representation of $X$ in $Y$}, and the~later representation a \emph{regular $\mc F_2$-representation of $X$ in $Y$}.

The advantage of representing $X$ by a collection of subsets of $X$ is that this collection might be able to represent $X$ in more than just one space $Y$.
This allows us to define a \emph{universal} representation of $X$ (either simple or regular) -- a collection which represents $X$ in every space which contains it.
It turns out that only very special topological spaces admit universal representations, but in Section~\ref{section: broom space properties}, we will see that it is the~case for the~class of the~so-called broom spaces.

This section is organized as follows: In Section~\ref{section:simple_rep_def}, we introduce the~concept of a simple representation and discuss its main properties. In Section~\ref{section:her_lind}, we give a sample application of this concept by providing a simple proof of the~fact that the~complexity of hereditarily Lindelöf spaces is absolute. In Section~\ref{section: universal representation}, we investigate the~concept of local complexity and its relation to standard complexity and universal representations (resp. their non-existence).
Reading this section might make it easier to understand the~intuition behind Section~\ref{section: regular representations}, where regular representations are introduced. However, the~results of this section are in no way formally required by the~subsequent parts of this paper.

\subsection{Definition of Simple Representations}\label{section:simple_rep_def}

This section will make an extensive use of the~notions introduced in Section~\ref{section: sequences}, in particular those related to the~leaf-derivative on trees.
We will repeatedly use collections of sets which are indexed by leaves of some tree:

\begin{definition}[Leaf-scheme]\label{definition: leaf scheme}
A collection $\mc H = (\mc H_t)_{t\in l(T)}$, where $T \in \textnormal{WF}$, is said to be a~\emph{leaf-scheme}.
\end{definition}

\noindent When we need to specify the~indexing set, we will say that $\mc H$ is an \emph{$l(T)$-scheme}.
Saying that a~$l(T)$-scheme is \emph{closed in $Y$}, for a~topological space $Y$, shall mean that $\mc H(t)$ is closed in $Y$ for each $t\in l(T)$.

Any $l(T)$-scheme $\mc H$ can be viewed as a~mapping with domain $l(T)$. $\mc H$ has a~natural extension to $T$, defined by the~following recursive formula:
\begin{equation} \label{equation: H t extensions}
\mc H(t) := \begin{cases}
\mc H(t) & \text{when } t \text{ is a~leaf of }T \\
\bigcup \{ \mc H(s) | \ s\in \ims{T}{t} \} & \text{when } r_l(T^t) > 0 \text{ is odd} \\
\bigcap \{ \mc H(s) | \ s\in \ims{T}{t} \} & \text{when } r_l(T^t) > 0 \text{ is even}.
\end{cases}
\end{equation}

It should be fairly obvious that closed leaf-schemes in any topological space $Y$ \emph{naturally} correspond to $\mc F$-Borel subsets of $Y$, in the~sense that a~set $X\subset Y$ is $\mc F$-Borel in $Y$ if and only if it is of the~form $X=\mc H(\emptyset)$ for some closed leaf-scheme $\mc H$ in $Y$.
For now, we only prove one direction of this claim (the other will be shown in Proposition~\ref{proposition: existence of simple representations}).

\begin{lemma}[Complexity of sets corresponding to leaf-schemes]\label{lemma: complexity of S T}
Let $\mc H$ be an $l(T)$-scheme in a~topological space $Y$.
If $\mc H$ is closed in $Y$, then $\mc H(\emptyset) \in \mc F_{r_l(T)}(Y)$.

In particular, if $\mc H$ is a~closed $l(T_\alpha)$-scheme in $Y$ for some $\alpha<\omega_1$, then $\mc H(\emptyset) \in \Fa(Y)$.
\end{lemma}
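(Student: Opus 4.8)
The plan is to prove both claims by transfinite induction on $r_l(T)$, using the recursive formula \eqref{equation: leaf rank formula} for the leaf-rank together with the extension formula \eqref{equation: H t extensions} for leaf-schemes. The base case is $r_l(T) = 0$, which means $T = \{\emptyset\}$, so $\emptyset$ is a leaf and $\mc H(\emptyset)$ is simply the closed set assigned to it; since closed sets are exactly the elements of $\mc F_0(Y)$, we are done. For the inductive step, suppose $r_l(T) = \beta > 0$ and the claim holds for all well-founded trees of leaf-rank $< \beta$.

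For the inductive step, I would argue as follows. For each $m$ with $(m) \in \ims{T}{\emptyset}$, consider the subtree $T^{(m)}$; by \eqref{equation: leaf rank formula} we have $r_l(T^{(m)}) < \beta$, and the restriction $\mc H^{(m)}$ of $\mc H$ to $l(T^{(m)})$ (identifying $l(T^{(m)})$ with $\{ t : (m)\ext t \in l(T)\}$) is again a closed leaf-scheme, with $\mc H^{(m)}(\emptyset) = \mc H((m))$. By the induction hypothesis, $\mc H((m)) \in \mc F_{r_l(T^{(m)})}(Y) \subset \mc F_{<\beta}(Y)$. Now by \eqref{equation: H t extensions}, $\mc H(\emptyset)$ is either $\bigcup_m \mc H((m))$ (when $\beta$ is odd) or $\bigcap_m \mc H((m))$ (when $\beta$ is even). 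In the odd case, $\mc H(\emptyset)$ is a countable union of members of $\mc F_{<\beta}(Y)$, hence lies in $(\mc F_{<\beta}(Y))_\sigma = \mc F_\beta(Y)$ by Definition~\ref{definition: F Borel sets}; in the even case it is a countable intersection, hence lies in $(\mc F_{<\beta}(Y))_\delta = \mc F_\beta(Y)$. This completes the induction. The ``in particular'' clause then follows immediately from the already-noted fact that $r_l(T_\alpha) = \alpha$.

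One small technical point to handle carefully: the set $\ims{T}{\emptyset}$ may be empty, but only if $\emptyset$ itself is a leaf, i.e.\ $r_l(T) = 0$, which is the base case already dispatched; so in the inductive step $\ims{T}{\emptyset}$ is nonempty (though possibly finite, which causes no problem since a finite union or intersection of $\mc F_{<\beta}$ sets is still an $\mc F_{<\beta}$ countable-union or -intersection, e.g.\ by repeating a term). Another point worth stating explicitly is why $\mc H^{(m)}$ is a legitimate leaf-scheme and why its extension via \eqref{equation: H t extensions} agrees with the values of $\mc H$ on the subtree $T^{(m)}$ — this is a routine unwinding of the definitions, since $r_l((T^{(m)})^{t}) = r_l(T^{(m)\ext t})$ and the parity conditions in \eqref{equation: H t extensions} depend only on these ranks.

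I do not expect any genuine obstacle here: the statement is essentially a bookkeeping lemma matching the recursive definition of leaf-schemes to the recursive definition of the $\mc F$-Borel hierarchy, and the only thing requiring a little care is keeping the parity/rank conventions straight and making sure the subtree restriction is correctly identified. The mild subtlety — that $\mc F_\beta(Y)$ is defined as $\sigma$- or $\delta$-closure of $\mc F_{<\beta}(Y)$ rather than of $\mc F_{\beta-1}(Y)$ — actually works in our favour, since the induction hypothesis only gives membership in $\mc F_{<\beta}(Y)$, which is exactly what the definition consumes.
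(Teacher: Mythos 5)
Your proof is correct and follows essentially the same route as the paper: the paper phrases the induction as a statement about all nodes $t\in T$ with $r_l(T^t)=\alpha$, whereas you induct on $r_l(T)$ and pass to the subtrees $T^{(m)}$, but the substance — rank drop via \eqref{equation: leaf rank formula}, parity deciding union versus intersection, and the fact that $\mc F_\beta$ is the $\sigma$- or $\delta$-closure of $\mc F_{<\beta}$ — is identical. The technical points you flag (nonemptiness of $\ims{T}{\emptyset}$ in the inductive step, and the compatibility of the restricted scheme's extension with $\mc H$ on the subtree) are handled correctly.
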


\begin{proof}
The ``in particular'' part is a~special case of the~first part because $r_l(T_\alpha)=\alpha$.
For the~general part, suppose that $Y$ is a~topological space, and $\mc H$ is a~closed $l(T)$-scheme in $Y$.

To prove the~lemma, it suffices to use induction over $\alpha$ to obtain the~following complexity estimate:
\begin{equation}\label{equation: complexity of H t}
\left( \forall t\in T \right) : r_l(T^t)=\alpha \implies \mc H(t) \in \Fa (Y) .
\end{equation}
For $\alpha = 0$, the~$t$-s for which $r_l(T^t)=0$ are precisely the~leaves of $T$. For these we have $\mc H(t) \in \mc F_0(Y)$ because $\mc H$ is closed in $Y$.

$<\alpha \mapsto \alpha$:
Suppose that \eqref{equation: complexity of H t} holds for every $\beta < \alpha$, and $t\in T$ satisfies $r_l(T^t)= \alpha$.
By \eqref{equation: leaf rank formula}, we have $r_l(T^s) < r_l(T^t) = \alpha$ for every $s\in \ims{T}{t}$.
It then follows from the~induction hypothesis that each $s\in \ims{T}{s}$, $\mc H(s)$ belongs to $\mc F_{<\alpha}(Y)$.
When $\alpha$ is odd, we have
\[ \mc H(t) \overset{\alpha \text{ is}}{\underset{\text{odd}}=}
\bigcup \left\{ \mc H(s) \, | \ s\in\ims{T}{s} \right\} \in (\mc F_{<\alpha}(Y))_\sigma
\overset{\alpha \text{ is}}{\underset{\text{odd}}=} \Fa(Y) .\]
When $\alpha$ is even, we have 
\[ \mc H(t) \overset{\alpha \text{ is}}{\underset{\text{even}}=}
\bigcap \left\{ \mc H(s) \, | \ s\in\ims{T}{s} \right\} \in (\mc F_{<\alpha}(Y))_\delta
\overset{\alpha \text{ is}}{\underset{\text{even}}=} \Fa(Y) .\]
\end{proof}

\bigskip
Lemma~\ref{lemma: complexity of S T} motivates the~following definition of (simple) $\Fa$-representation:

\begin{definition}[Simple representations]\label{definition: simple representation}
Let $X\subset Y$ be topological spaces, and $\mc H$ an $l(T)$-scheme in $Y$. $\mc H$ is said to be a
\begin{itemize}
\item \emph{simple representation} of $X$ in $Y$ if it satisfies $\mc H(\emptyset)=X$;
\item \emph{simple $\mc F$-Borel-representation} of $X$ in $Y$ if it is both closed in $Y$ and a~simple representation of $X$ in $Y$;
\item \emph{simple $\Fa$-representation} of $X$ in $Y$, for some $\alpha<\omega_1$, if it is a~simple $\mc F$-Borel-representation of $X$ in $Y$ and we have $r_l(T)\leq \alpha$.
\end{itemize}
\end{definition}

\noindent Unless we need to emphasize that we are not talking about the~regular representations from the~upcoming Section~\ref{section: regular representations}, we omit the~word `simple'.
We note that the~following stronger version of Lemma~\ref{lemma: complexity of S T} also holds, which allows for a~different, but equivalent, definition of a~simple $\Fa$-representation:

\begin{remark}[Alternative definition of $\Fa$-representations]
In Lemma~\ref{lemma: complexity of S T}, we even get $\mc H(\emptyset) \in \mc F_{r_i(T)}(Y)$.
\end{remark}

\noindent Consider the~derivative $D_i$ from Definition~\ref{definition: derivatives}  which cuts away sequences which only have finitely many extensions, and the~corresponding rank $r_i$.
Using the~fact that the~class of closed sets (and of $\Fa$-sets as well) is stable under finite unions and intersections, we can prove that when a~tree $T$ is finite and we have $\mc H(t)\in \Fa(Y)$ for each $t\in l(T)$, then $\mc H(\emptyset) \in \Fa(Y)$.
It follows that the~conclusion of Lemma~\ref{lemma: complexity of S T} also holds when we  replace `$r_l$' by `$r_i$'.
In particular, we could equally well define $\Fa$-representations as those $\mc F$-Borel representations which are indexed by trees which satisfy $r_i(T)\leq \alpha$.
We will mostly work with trees on which the~ranks $r_l$ and $r_i$ coincide, so the~distinction will be unimportant.

\bigskip

Before proceeding further, we remark that as long as the~intersection with $X$ is preserved, cutting away some pieces of a~representing leaf-scheme still yields a~representation of $X$:

\begin{lemma}[Equivalent representations]\label{lemma: leaf scheme making smaller}
Let $X\subset Y$ be topological spaces, $\mc H$ and $\mc H'$ two $l(T)$-schemes in $Y$, and suppose that $\mc H$ is a~representation of $X$ in $Y$.

If $\mc H'$ satisfies $\mc H(t) \cap X \subset \mc H'(t) \subset \mc H(t)$ for every $t\in l(T)$, then $\mc H'$ is also a~representation of $X$ in $Y$.
\end{lemma}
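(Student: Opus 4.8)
The plan is to prove, by transfinite induction on the ordinal $r_l(T^t)$, the following strengthening of the conclusion: for \emph{every} $t\in T$ (not merely the leaves),
\[ \mc H(t) \cap X \subset \mc H'(t) \subset \mc H(t) . \]
Once this is established, applying it at $t=\emptyset$ and using the hypothesis $\mc H(\emptyset)=X$ gives $X = \mc H(\emptyset)\cap X \subset \mc H'(\emptyset) \subset \mc H(\emptyset) = X$, hence $\mc H'(\emptyset)=X$, which is exactly the assertion that $\mc H'$ is a~representation of $X$ in $Y$.

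The base case $r_l(T^t)=0$ is precisely the hypothesis, since the $t$ with $r_l(T^t)=0$ are exactly the leaves of $T$ and there $\mc H(t)=\mc H'(t)$ need not hold, but $\mc H(t)\cap X\subset\mc H'(t)\subset\mc H(t)$ is assumed. For the inductive step, let $t\in T$ be a~non-leaf and assume the claim holds for every $s\in\ims{T}{t}$; this is legitimate because $r_l(T^s)<r_l(T^t)$ by the recursive formula~\eqref{equation: leaf rank formula}. Since $\mc H$ and $\mc H'$ are indexed by the \emph{same} tree $T$, the extension formula~\eqref{equation: H t extensions} combines the values on $\ims{T}{t}$ using the same operation for both schemes: a~union when $r_l(T^t)$ is odd, an intersection when it is even. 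In the odd case, $\mc H'(t)=\bigcup_s \mc H'(s)\subset\bigcup_s \mc H(s)=\mc H(t)$ yields the upper inclusion, while $\mc H(t)\cap X=\bigcup_s\bigl(\mc H(s)\cap X\bigr)\subset\bigcup_s \mc H'(s)=\mc H'(t)$ yields the lower one, using distributivity of $\cap$ over $\bigcup$ together with the induction hypothesis applied termwise. The even case is identical with $\bigcap$ in place of $\bigcup$, using that $\bigl(\bigcap_s \mc H(s)\bigr)\cap X=\bigcap_s\bigl(\mc H(s)\cap X\bigr)$.

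There is essentially no real obstacle here; the only points that require a~moment's care are that one must carry \emph{both} inclusions through the induction simultaneously — the upper inclusion alone does not feed the inductive step for the lower one, and conversely — and that it is the well-foundedness of $T$, built into the definition of a~leaf-scheme, that licenses the transfinite induction on $r_l$.
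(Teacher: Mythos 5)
Your proof is correct and follows essentially the same route as the paper: both establish, by induction over $r_l(T^t)$ using the extension formula~\eqref{equation: H t extensions}, that $\mc H(t)\cap X\subset\mc H'(t)\subset\mc H(t)$ holds for every $t\in T$, and then specialize to $t=\emptyset$. You merely spell out the odd/even case analysis that the paper leaves implicit.
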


\begin{proof}
Let $X$, $Y$, $T$, $\mc H$ and $\mc H'$ be as above.
By the~assumptions of the~lemma, we have the~following formula for every $t\in l(T)$:
\begin{equation}\label{equation: H and H'}
\mc H(t) \cap X \subset \mc H'(t) \ \ \ \  \& \ \ \ \ \mc H'(t) \subset \mc H(t).
\end{equation}
Using \eqref{equation: H t extensions} and induction over $r_l(T^t)$, we get \eqref{equation: H and H'} for every $t\in T$.
Applying \eqref{equation: H and H'} to $t=\emptyset$, we get the~desired conclusion of the~lemma:
\[ X = \mc H(\emptyset) = \mc H(\emptyset) \cap X \subset \mc H' (\emptyset) \subset
\mc H(\emptyset) = X .\]
\end{proof}

The existence of simple representations is guaranteed by the~following result:

\begin{proposition}[Existence of $\mc F$-Borel representations]\label{proposition: existence of simple representations}
For topological spaces $X\subset Y$ and $\alpha<\omega_1$, the~following conditions are equivalent.
\begin{enumerate}[(i)]
\item $X \in \Fa(Y)$
\item $X$ has a~simple $\Fa$-representation in $Y$; that is, $\mc H(\emptyset) = X$ holds for some closed $l(T)$-scheme $\mc H$ in $Y$, where $r_l(T)\leq \alpha$.
\item $\mc H(\emptyset) = X$ holds for some closed $l(T_\alpha)$-scheme $\mc H$ in $Y$.
%preparing to resume a~list
	\newcounter{nameOfYourChoice}
	\setcounter{nameOfYourChoice}{\value{enumi}}
\end{enumerate}
Denoting $\overline{\mc H}^Y\!\! := \left( \overline{\mc H(t) \cap Y}^Y\!\! \right)_{t\in l(T)}$ for any $l(T)$-scheme $\mc H$, this is further equivalent to:
\begin{enumerate}[(i)]
%resuming a~list
	\setcounter{enumi}{\value{nameOfYourChoice}}
\item $\overline{\mc H}^Y\!\!(\emptyset) = X$ holds for some $l(T)$-scheme $\mc H$ in $X$, where $r_l(T)\leq \alpha$.
\item $\overline{\mc H}^Y\!\!(\emptyset) = X$ holds for some $l(T_\alpha)$-scheme $\mc H$ in $Y$.
\end{enumerate}
\end{proposition}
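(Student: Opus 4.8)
The plan is to route everything through condition~(iii): I will establish (iii)$\Rightarrow$(ii)$\Rightarrow$(i)$\Rightarrow$(iii), together with (iii)$\Rightarrow$(iv)$\Rightarrow$(i) and (iii)$\Rightarrow$(v)$\Rightarrow$(i). Four of these arrows are essentially free. The implication (iii)$\Rightarrow$(ii) holds because $r_l(T_\alpha)=\alpha\le\alpha$. The implications (ii)$\Rightarrow$(i), (iv)$\Rightarrow$(i) and (v)$\Rightarrow$(i) all follow from Lemma~\ref{lemma: complexity of S T}: the relevant leaf-scheme --- namely $\mc H$ in case~(ii), and $\overline{\mc H}^Y$ (whose values $\overline{\mc H(t)\cap Y}^Y$ are closures in $Y$, hence closed in $Y$) in cases~(iv) and~(v) --- is a closed leaf-scheme in $Y$ indexed by a tree $T$ with $r_l(T)\le\alpha$, so its value at $\emptyset$ lies in $\mc F_{r_l(T)}(Y)\subset\Fa(Y)$ (using that the hierarchy is monotone, which in turn holds because $\mc C\subset\mc C_\sigma$ and $\mc C\subset\mc C_\delta$ always). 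For (iii)$\Rightarrow$(v) one notes that a leaf-scheme $\mc H$ closed in $Y$ satisfies $\overline{\mc H}^Y=\mc H$, so the same scheme works. For (iii)$\Rightarrow$(iv), take a closed $l(T_\alpha)$-scheme $\mc H$ in $Y$ with $\mc H(\emptyset)=X$ and set $\mc H'(t):=\mc H(t)\cap X$ for $t\in l(T_\alpha)$, an $l(T_\alpha)$-scheme in $X$; then for each leaf $t$ we have $\mc H(t)\cap X\subset\overline{\mc H'}^Y(t)=\overline{\mc H(t)\cap X}^Y\subset\overline{\mc H(t)}^Y=\mc H(t)$, so Lemma~\ref{lemma: leaf scheme making smaller} gives $\overline{\mc H'}^Y(\emptyset)=X$, which is~(iv).

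The substance is (i)$\Rightarrow$(iii), which I would prove by transfinite induction on $\alpha<\omega_1$. The key tool is a ``grafting'' observation: if $\mc H_n$ is, for each $n$, a leaf-scheme closed in $Y$ indexed by $l(T_{\pi_\alpha(n)})$, then defining an $l(T_\alpha)$-scheme $\mc H$ on $T_\alpha=\{\emptyset\}\cup\bigcup_n n\ext T_{\pi_\alpha(n)}$ by $\mc H(n\ext t'):=\mc H_n(t')$ for its leaves gives a leaf-scheme closed in $Y$; since the recursion \eqref{equation: H t extensions} is determined locally (by immediate successors and local rank parity), one gets $\mc H(n\ext s)=\mc H_n(s)$ for every $s\in T_{\pi_\alpha(n)}$, in particular $\mc H((n))=\mc H_n(\emptyset)$, and evaluating \eqref{equation: H t extensions} at $\emptyset$ --- where $r_l(T_\alpha)=\alpha$ --- gives $\mc H(\emptyset)=\bigcup_n\mc H_n(\emptyset)$ when $\alpha$ is odd and $\mc H(\emptyset)=\bigcap_n\mc H_n(\emptyset)$ when $\alpha$ is even.

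For $\alpha=0$, $X$ is closed and the single-node scheme $\mc H(\emptyset):=X$ on $T_0=\{\emptyset\}$ works. For a successor $\alpha$, monotonicity gives $\mc F_{<\alpha}(Y)=\mc F_{\alpha-1}(Y)$, so $X\in\Fa(Y)$ means $X=\bigcup_nX_n$ ($\alpha$ odd) or $X=\bigcap_nX_n$ ($\alpha$ even) with $X_n\in\mc F_{\alpha-1}(Y)$; the induction hypothesis supplies closed $l(T_{\alpha-1})$-schemes $\mc H_n$ with $\mc H_n(\emptyset)=X_n$, and since $\pi_\alpha(n)=\alpha-1$ for all $n$, grafting gives the desired scheme. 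For a limit (hence even) $\alpha$, $X\in\Fa(Y)$ means $X=\bigcap_nX_n$ with $X_n\in\mc F_{\beta_n}(Y)$ for some $\beta_n<\alpha$; here the $\beta_n$ need not match the prescribed subtree ranks $\pi_\alpha(n)$, and this is the point requiring care. Since $\pi_\alpha:\omega\to\alpha$ is a bijection, for each $n$ the set $\{m:\pi_\alpha(m)\ge\beta_n\}$ is infinite, so it contains some $m\ge n$; put $Z_m:=Y\cap\bigcap\{X_n:n\le m,\ \beta_n\le\pi_\alpha(m)\}$. This is a finite intersection of members of $\mc F_{\pi_\alpha(m)}(Y)$, hence $Z_m\in\mc F_{\pi_\alpha(m)}(Y)$ (the classes $\mc F_\gamma$ being stable under finite intersections), and $\bigcap_mZ_m=X$: ``$\subset$'' because each $X_n$ is a factor of some $Z_m$, ``$\supset$'' because $X$ is contained in every factor of every $Z_m$. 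Applying the induction hypothesis to each $\pi_\alpha(m)<\alpha$ yields closed $l(T_{\pi_\alpha(m)})$-schemes $\mc H_m$ with $\mc H_m(\emptyset)=Z_m$, and grafting them onto $T_\alpha$ produces a closed $l(T_\alpha)$-scheme whose root value is $\bigcap_mZ_m=X$.

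The main obstacle is precisely this limit step: reconciling an arbitrary $\delta$-decomposition $X=\bigcap_nX_n$ with the rigid shape of $T_\alpha$, whose $n$-th subtree is forced to be $T_{\pi_\alpha(n)}$. The re-indexing through the finite intersections $Z_m$, together with the finite-intersection stability of the classes $\mc F_\gamma$, is what resolves it; the successor cases, the grafting bookkeeping, and all the soft implications around~(iii) are routine.
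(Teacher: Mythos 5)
Your proof is correct and follows essentially the same route as the paper: the soft implications are obtained from Lemma~\ref{lemma: complexity of S T} and Lemma~\ref{lemma: leaf scheme making smaller} exactly as in the text, and (i)$\implies$(iii) is the same transfinite induction, grafting closed leaf-schemes onto the subtrees $n\ext T_{\pi_\alpha(n)}$ of $T_\alpha$. The only local difference is the limit step, where the paper pads the decomposition $X=\bigcap_n X_n$ with copies of $\overline{X}^Y$ and re-enumerates so that $\alpha_n\le\pi_\alpha(n)$, whereas you pass to the running finite intersections $Z_m$ and use finite-intersection stability of the classes $\mc F_\gamma$; both devices resolve the same mismatch between an arbitrary $\delta$-decomposition and the rigid shape of $T_\alpha$.
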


\begin{proof}
Since the~leaf-schemes $\overline{\mc H}^Y$ in (iv) and (v) are closed in $Y$, it follows from Lemma~\ref{lemma: complexity of S T} that any of the~conditions (ii)-(v) implies (i).
The implications (iii)$\implies$(ii) and (v)$\implies$(iv) are trivial.
By Lemma~\ref{lemma: leaf scheme making smaller}, we have (ii)$\implies$(iv) and (iii)$\implies$(v) (set $\mc H_X(t) := \mc H(t) \cap X$ and $\mc H'(t) := \overline{\mc H_X}^Y\!\!$ in Lemma~\ref{lemma: leaf scheme making smaller}).
Therefore, it remains to prove (i)$\implies$(iii).
We will first prove (i)$\implies$(ii), and then note how to modify the~construction such that it gives (i)$\implies$(iii).

(i)$\implies$(ii):
We proceed by induction over $\alpha$.
For $\alpha=0$, let $X$ be a~closed subset of $Y$. We set $\mc H(\emptyset) := X$, and observe that such $\mc H$ is a~closed $l(S)$-scheme in $Y$, where $S = \{\emptyset\}$.

$\alpha \mapsto \alpha+1$ for odd $\alpha+1$:
Suppose that $X = \bigcup_{n\in \omega} X_n$, where $X_n \in \Fa(Y)$.
For each $n\in \omega$, we apply the~induction hypothesis to obtain a~closed $l(S_n)$-scheme $\mc H_n$ such that $\mc H_n(\emptyset) = X_n$ and $r_l(S_n) = \alpha$.
For $n\in\omega$ and $s\in l(S_n)$, we set $\mc H (n\ext s) := \mc H_n(s)$. This defines a~leaf-scheme indexed by the~leaves of the~tree $S := \{\emptyset\} \cup \bigcup_n n\ext S_n$, and the~formula $\mc H(n\ext s) = \mc H_n(s)$ obviously holds for every $n\ext s \in S$.
By \eqref{equation: leaf rank formula}, we have $r_l(S) = \sup_n r_l(S_n) +1 = \alpha+1$.
Since $\alpha+1$ is odd, it follows that
\[ \mc H(\emptyset) = \bigcup_n \mc H(\emptyset \ext n) = \bigcup_n \mc H(n \ext \emptyset ) = \bigcup_n \mc H_n(\emptyset) = \bigcup_n X_n = X .\]

$<\alpha \mapsto \alpha$ for even $\alpha$:
Suppose that $X = \bigcap_{n\in \omega} X_n$, where $X_n \in \mc F_{\alpha_n}(Y)$ holds for some $\alpha_n < \alpha$.
Since $\mc F_\beta (Y) \subset \mc F_\gamma(Y)$ holds whenever $\beta$ is smaller than $\gamma$, we can assume that either $\alpha$ is a~successor and we have $\alpha_n = \alpha-1$ for each $n$, or $\alpha$ is limit and we have $\sup_n \alpha_n = \alpha$. In either case, we have $\sup_n ( \alpha_n +1 ) = \alpha$.

The remainder of the~proof proceeds as in the~previous case -- applying the~induction hypothesis, defining an $l(S)$-scheme as $\mc H(n\ext s) := \mc H_n(s)$, and observing that $\mc H(\emptyset) = \bigcap_n X_n$ (because $r_l(S) = \alpha$ is even).

(i)$\implies$(iii):
We prove by induction that in each induction-step from the~proof of '(i)$\implies$(ii)', the~tree $S$ can be of the~form $S=T_\alpha$.
Recall here the~Notation~\ref{notation: trees of height alpha} (which introduces these trees).

In the~initial step of the~induction, we have $S=\{\emptyset\}=T_0$ for free.
Similarly for the~odd successor step, we have $S=T_{\alpha+1}$ provided that each $S_n$ is equal to $T_\alpha$.

Consider the~``$<\alpha \mapsto \alpha$ for even $\alpha$'' step.
By the~remark directly below Notation~\ref{notation: trees of height alpha}, we have $T = \{\emptyset\} \cup \bigcup_n n\ext T_{\pi_\alpha(n)}$. Therefore, it suffices to show that we can assume $\alpha_n \leq \pi_\alpha(n)$ for each $n\in\omega$.
By replacing the~formula $X = \bigcap_n X_n$ by
\[ X = X_0 \cap \overline{X}^Y \cap X_1 \cap \overline{X}^Y \cap X_2 \cap \dots ,\]
we can guarantee that $\alpha_n = 0$ holds for infinitely many $n\in\omega$.
Re-enumerating the~sets $X_n$, we can ensure that $\alpha_n \leq \pi_\alpha(n)$ holds for each $n\in\omega$.
 \end{proof}

The conditions (iv) and (v) motivate the~definition of a~universal simple representation:

\begin{definition}[Universal representation]\label{definition: universal simple representation}
A leaf-scheme $\mc H$ in $X$ is said to be a~\emph{universal simple $\mc F$-Borel-representation of $X$} if for every $Y\supset X$, $\overline{\mc H}^Y\!\!$ is a~simple $\mc F$-Borel-representation of $X$ in $Y$.
\end{definition}

Clearly, compact and $\sigma$-compact spaces have universal representations.
In Section~\ref{section: broom space properties}, we show an example of more complex spaces with universal representation.
In Section~\ref{section: universal representation}, we show that these spaces are rather exceptional, because sufficiently topologically complicated spaces do not admit universal  representations.

\subsection{The Complexity of Hereditarily Lindelöf Spaces}\label{section:her_lind}

We note that simple representations can be used not only with the $\mc F$-Borel class, but also be with other descriptive classes:

\begin{remark}\label{remark: simple C representations}
Simple representations based on $\mc C$-sets.
\end{remark}

\noindent Let $Y$ be a topological space and $\mc C(Y)$ a family of subsets of $Y$.
Analogously to Definition \ref{definition: F Borel sets}, we define
\begin{itemize}
\item $\mc C_0(Y) := \mc C(Y)$,
\item $\mc C_\alpha(Y) := \left( \mc C_{<\alpha} (Y) \right)_\sigma$ for $0 < \alpha <\omega_1$ odd,
\item $\mc C_\alpha(Y) := \left( \mc C_{<\alpha} (Y) \right)_\delta$ for $0 < \alpha <\omega_1$ even.
\end{itemize}

Analogously to \ref{definition: simple representation}, an $l(T)$-scheme $\mc H$ which represents $X$ in $Y$ is said to be a \emph{simple $\mc C_\alpha$-representation of $X$ in $Y$} when the tree $T$ satisfies $r_l(T)\leq \alpha$ and we have $\mc H(t) \in \mc C(Y)$ for each $t\in l(T)$.

Clearly, the equivalence (i)$\iff$(ii)$\iff$(iii) in Proposition~\ref{proposition: existence of simple representations} holds for $\mc C_\alpha$-sets and $\mc C_\alpha$-representations as well. (The proof literally consists of replacing `$\mc F$' by `$\mc C$' in Lemma \ref{lemma: complexity of S T} and in the proof of Proposition~\ref{proposition: existence of simple representations}.)

\bigskip

To prove the main result of this subsection, we need the following separation lemma, which is an immediate result of \cite[Lemma\,14]{spurny2006solution}.

\begin{lemma}[$F_\sigma$-separation for Lindelöf spaces]\label{lemma: fsd spaces are Fs separated}
Let $L$ be a Lindelöf subspace of a compact space $C$. Then for every compact set $K\subset C \setminus L$, there exists $H\in \mc F_\sigma\left(C\right)$, such that $L\subset H\subset C\setminus K$.
\end{lemma}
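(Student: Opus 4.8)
The plan is to deduce Lemma~\ref{lemma: fsd spaces are Fs separated} directly from \cite[Lemma 14]{spurny2006solution}, essentially by unwinding definitions. First I would recall what the cited lemma provides: for a Lindelöf subspace $L$ of a compact (or more generally, suitably nice) space $C$, and for a point or closed set disjoint from $L$, one obtains an $F_\sigma$ set separating $L$ from it. The only real work is to pass from the ``point'' version (or whatever exact form \cite[Lemma 14]{spurny2006solution} is stated in) to the ``compact set $K$'' version stated here, and this is a routine compactness argument.

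The key steps, in order: (1) Fix a Lindelöf $L \subset C$ with $C$ compact, and a compact $K \subset C \setminus L$. (2) For each $x \in K$, apply \cite[Lemma 14]{spurny2006solution} to get $H_x \in \mc F_\sigma(C)$ with $L \subset H_x \subset C \setminus \{x\}$; since $H_x$ is $F_\sigma$, its complement $C \setminus H_x$ is $G_\delta$ and contains $x$, so pick an open $U_x \ni x$ with $\overline{U_x} \subset$ some closed set inside $C \setminus H_x$ — more precisely, since $C\setminus H_x$ is $G_\delta$, write it as a countable intersection of open sets and use normality of $C$ to find open $U_x \ni x$ whose closure avoids $L$ and is contained in $C\setminus F$ for a closed $F\subset C\setminus H_x$ with $x$ in its interior; actually it is cleaner to just take, for each $x$, a closed neighborhood $V_x$ of $x$ with $V_x \cap L = \emptyset$, which exists because $L$ is Lindelöf hence the separation lemma gives an $F_\sigma$ containing $L$ and missing $x$, equivalently a $G_\delta$ containing $x$ and missing $L$, and in a compact Hausdorff space one can shrink to a closed neighborhood $V_x$ with $L \cap V_x = \emptyset$ and $L \subset H_x \subset C \setminus V_x$. (3) Cover $K$ by finitely many of the interiors $\mathrm{int}(V_{x_1}), \dots, \mathrm{int}(V_{x_n})$ using compactness of $K$. (4) Set $H := \bigcap_{j=1}^n H_{x_j}$; this is a finite intersection of $F_\sigma$ sets, hence $F_\sigma$ in $C$, it contains $L$, and it is contained in $\bigcap_j (C\setminus V_{x_j}) = C \setminus \bigcup_j V_{x_j} \subset C \setminus K$. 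This yields the desired $H$.

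The main obstacle — really the only subtlety — is matching the precise hypothesis and conclusion of \cite[Lemma 14]{spurny2006solution} to what is needed here: one must check that the cited result indeed applies with a single point in the role of the set to be separated (or directly with a compact set, in which case steps (2)–(4) collapse), and that ``$F_\sigma$'' there means the same thing as $\mc F_\sigma(C)$ here. Given the paper already invokes this same reference in the proof of Proposition~\ref{proposition: complexities of topological sums} for exactly this kind of separation, I expect the cited lemma to be stated in sufficient generality that Lemma~\ref{lemma: fsd spaces are Fs separated} follows either immediately or after the trivial finite-cover argument above. Thus the write-up can be as short as: ``This follows from \cite[Lemma 14]{spurny2006solution} applied to each point of $K$, together with the compactness of $K$ and the stability of $\mc F_\sigma(C)$ under finite intersections, as detailed above.''
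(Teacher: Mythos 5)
Your fallback reading is the right one: \cite[Lemma 14]{spurny2006solution} is already stated for a compact set disjoint from a Lindelöf subspace, so the lemma here is essentially a verbatim restatement and the paper's ``proof'' is exactly the phrase ``immediate from the citation'' --- your steps (2)--(4) collapse. However, the reduction you actually write out, from a hypothetical single-point version to the compact-set version, contains a genuine error. From an $F_\sigma$ set $H_x$ with $L\subset H_x\subset C\setminus\{x\}$ you obtain a $G_\delta$ set containing $x$ and missing $L$, but this does \emph{not} yield a closed neighborhood $V_x$ of $x$ with $V_x\cap L=\emptyset$: a $G_\delta$ set need not contain a neighborhood of any of its points. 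Concretely, take $C=[0,1]$, $L=\Q\cap[0,1]$ (countable, hence Lindelöf, and dense) and $x$ irrational; then no neighborhood of $x$ misses $L$, even though $H_x=L$ itself is a valid separating $F_\sigma$ set. Without the sets $V_x$, step (3) has nothing from which to extract a finite subcover of $K$, and the obvious alternative $\bigcap_{x\in K}H_x$ is an uncountable intersection of $F_\sigma$ sets; so the point-to-compact reduction does not go through as written.

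For the record, the direct argument runs the covering on $L$ rather than on $K$: for each $x\in L$, regularity of the compact space $C$ gives an open $U_x\ni x$ with $\overline{U_x}^C\cap K=\emptyset$; Lindelöfness of $L$ extracts a countable subfamily $(U_{x_n})_n$ covering $L$, and $H:=\bigcup_n\overline{U_{x_n}}^C$ is the desired $F_\sigma$ set. This is presumably the content of the cited lemma, and it uses only that $K$ is closed and $C$ is regular.
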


Recall that a topological space is said to be \emph{hereditarily Lindelöf} if its every subspace is Lindelöf. For example, separable metrizable spaces (or more generally, spaces with countable weight) are hereditarily Lindelöf. The following proposition shows that the $\mc F$-Borel complexity of such spaces is absolute:

\begin{proposition}[J. Spurný and P. Holický]\label{proposition: hereditarily lindelof spaces are absolute}
For a hereditarily Lindelöf space and $\alpha \leq \omega_1$,
the following statements are equivalent:
\begin{enumerate}[(i)]
\item $X \in \Fa(Y)$ holds for every $Y\supset X$;
\item $X \in \Fa(cX)$ holds for some compactification $cX$.
\end{enumerate}
\end{proposition}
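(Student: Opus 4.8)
The plan is to prove the nontrivial implication (ii)$\implies$(i): given a compactification $cX$ in which $X \in \Fa(cX)$, and an arbitrary space $Y \supset X$, we must show $X \in \Fa(Y)$. By Remark~\ref{remark: absolute complexity} and the fact that $\beta X$ is the largest compactification, we may as well assume $X \in \Fa(\beta X)$; and since $X$ is dense in $\overline X^Y$, which embeds into $\beta X$, it suffices to treat the case where $Y$ itself is compact (then use that a larger compactification only decreases complexity). So the real task is: if $X$ is hereditarily Lindelöf, $X \in \Fa(dX)$ for one compactification $dX$, and $cX$ is any other compactification, then $X \in \Fa(cX)$. Using $cX \preceq \beta X \succeq dX$ and the monotonicity remark, it is enough to transfer a representation from $\beta X$ down to an arbitrary $cX$, or equivalently to show the complexity in $\beta X$ is already the minimal possible.

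First I would take a simple $\Fa$-representation $\mc H$ of $X$ in $dX$ (Proposition~\ref{proposition: existence of simple representations}), indexed by a well-founded tree $T$ with $r_l(T) \le \alpha$, with each $\mc H(t)$ closed in $dX$ for $t \in l(T)$. The key point is to replace this by the \emph{regular}-style representation $\overline{\mc H_X}^{dX}$ where $\mc H_X(t) := \mc H(t) \cap X$: by the equivalence (ii)$\iff$(iv) in Proposition~\ref{proposition: existence of simple representations}, the leaf-scheme $t \mapsto \overline{\mc H(t)\cap X}^{dX}$ is still an $\Fa$-representation of $X$ in $dX$. Now the leaves of this scheme are closures (taken in $dX$) of subsets $A_t := \mc H(t) \cap X$ of $X$. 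The crucial structural fact is that each $A_t$, being a subspace of the hereditarily Lindelöf space $X$, is Lindelöf. This is exactly the hypothesis that will be used, and it is where the argument would fail for a general space.

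Next I would invoke Lemma~\ref{lemma: fsd spaces are Fs separated}: for any compactification $cX$ and any leaf $t$, the set $A_t \subset X \subset cX$ is Lindelöf in the compact space $cX$, so $\overline{A_t}^{cX}$ is the smallest closed set containing $A_t$, and moreover $A_t$ is itself $\fsd$ in $\overline{A_t}^{cX}$? — more precisely, I claim $A_t \in \mc F_2(cX)$, hence certainly $A_t \in \Fa(cX)$ whenever $\alpha \ge 2$ (and the cases $\alpha \le 1$ are covered by Proposition~\ref{proposition: basic attainable complexities}, since $\sigma$-compactness and compactness are absolute). Indeed, applying Lemma~\ref{lemma: fsd spaces are Fs separated} to $L = A_t$ inside $C = \overline{A_t}^{cX}$: for each point $x$ in the remainder $\overline{A_t}^{cX}\setminus A_t$ we separate $\{x\}$ from... actually the cleaner route is: the remainder $\overline{A_t}^{cX}\setminus A_t$ is $K$-analytic-free enough that $A_t$ is $\fsd$; alternatively, since $X \in \Fa(dX)$ gives $X$ is $\mc K$-analytic (hence Lindelöf), and $A_t = \mc H(t)\cap X$ with $\mc H(t)$ closed in $dX$, $A_t$ is $\mc K$-analytic and hereditarily Lindelöf, so $A_t$ is $\fsd$ (in fact $\mc F_{\sigma\delta}$) in every compactification — this uses a known fact that a hereditarily Lindelöf $\mc K$-analytic space is $\fsd$-absolute, or one proves it directly via Lemma~\ref{lemma: fsd spaces are Fs separated}.

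Finally I would assemble: in $cX$, define the leaf-scheme $\mc H'(t) := A_t$ for $t \in l(T)$ (a subset of $cX$ which is in $\mc F_2(cX) \subseteq \Fa(cX)$, as $\alpha \ge 2$). Then $\mc H'$ is an $l(T)$-scheme whose leaves lie in $\Fa(cX)$, and by construction $\mc H'(t) = \mc H_X(t)$ has the same intersection with $X$ as the original scheme; a direct induction (exactly as in Lemma~\ref{lemma: leaf scheme making smaller}, using $\mc H'(t) \subset X$ and $\mc H'(t)\cap X = \mc H_X(t)$) shows $\mc H'(\emptyset) = X$. Since each leaf value is in $\mc F_2(cX)$ and the tree has leaf-rank $\le \alpha$ with $\alpha \ge 2$, an argument mirroring the remark after Definition~\ref{definition: simple representation} (allowing leaves to already be $\Fa$-sets rather than closed, absorbing the "$\mc F_2$" into the rank since $2 \le \alpha$) yields $X = \mc H'(\emptyset) \in \Fa(cX)$. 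As $cX$ was arbitrary, and every $Y \supset X$ has $\overline X^Y \preceq \beta X$ compact with $X \in \Fa(\beta X) \subseteq \Fa(\overline X^Y)$ hence $X\in\Fa(Y)$, we get (i). The main obstacle, and the place to be careful, is justifying the composition step: one must check that substituting $\Fa$-sets (rather than closed sets) at the leaves of a tree of leaf-rank $\le \alpha$ still produces an $\Fa$-set when $\alpha \ge 2$ — this needs the observation that the two extra layers "$_{\sigma\delta}$" coming from $\mc F_2$ can be merged into the existing hierarchy levels without increasing the rank past $\alpha$, which should be formalized as a short lemma on composing representations (or by re-deriving a simple $\Fa$-representation of each $A_t$ in $cX$ via Proposition~\ref{proposition: existence of simple representations} and grafting these onto $T$ in place of the leaves, as in the inductive construction in that proposition's proof).
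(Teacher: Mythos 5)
There is a genuine gap, and it sits exactly where you flagged ``the place to be careful'' --- but the problem is worse than a missing technical lemma; two steps fail. First, the claim that each leaf set $A_t=\mc H(t)\cap X$ is $\fsd$ in every compactification is false: $A_t$ is a closed subspace of $X$, and a hereditarily Lindelöf $\mc K$-analytic space need \emph{not} be $\fsd$ in any compactification. By Proposition~\ref{proposition: basic attainable complexities}\,\eqref{case: polish spaces and absolute complexity} there are separable metrizable (hence hereditarily Lindelöf and $\mc K$-analytic) spaces of every absolute complexity; taking such an $X$ with complexity $\alpha>2$ and a leaf with $\mc H(t)=\overline{X}^{dX}$ gives $A_t=X\notin\mc F_2(cX)$. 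The result of \cite{kalenda2018absolute} says only that the $\fsd$ \emph{property} is absolute for hereditarily Lindelöf spaces, not that every such space has it. Second, even granting that claim, substituting $\mc F_2$-sets at the leaves of a tree of leaf-rank $\alpha$ only yields $X\in(\mc F_2)_\alpha(cX)=\mc F_{2+\alpha}(cX)$, which for finite $\alpha\ge 2$ is strictly weaker than $\Fa$ (already $(\mc F_2)_{\sigma\delta}=\mc F_4$); the ``two extra layers'' cannot be merged away. The only class that absorbs is $\mc F_\sigma$, since $(\mc F_\sigma)_\sigma=\mc F_\sigma$ gives $(\mc F_\sigma)_\alpha=\Fa$.

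The paper transfers the representation by a different mechanism. It first enlarges $\mc F(cX)$ to the class $(\mc F\land\mc G)(cX)$ of sets of the form ``closed $\cap$ open''; the iterated classes $(\mc F\land\mc G)_\alpha$ are absolute by Holický--Spurný \cite{holicky2003perfect}, so $X\in(\mc F\land\mc G)_\alpha(dX)$ and, via Remark~\ref{remark: simple C representations}, $X$ has a simple representation in $dX$ with leaves $F_t\cap G_t$. Hereditary Lindelöfness is then applied only to the \emph{open} pieces: $X\cap G_t$ is Lindelöf, so Lemma~\ref{lemma: fsd spaces are Fs separated} interpolates $H_t\in\mc F_\sigma(dX)$ with $X\cap G_t\subset H_t\subset G_t$, and the new leaves $F_t\cap H_t$ are $\mc F_\sigma$-sets which still represent $X$ by Lemma~\ref{lemma: leaf scheme making smaller} and which do absorb into the hierarchy. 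Your instinct to combine the representation machinery with the separation lemma is the right one, but some absolute input (here, the absoluteness of $(\mc F\land\mc G)_\alpha$) is needed to carry the representation from $cX$ to $dX$; the leaves themselves cannot serve that role.
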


In \cite{kalenda2018absolute} the author of the present paper, together with his supervisor, proved Proposition~\ref{proposition: hereditarily lindelof spaces are absolute} for the class of $\fsd$ sets ($\alpha=2$). J. Spurný later remarked that it should be possible to give a much simpler proof using the fact that the classes originating from the \emph{algebra} generated by closed sets are absolute (\cite{holicky2003perfect}). Indeed, this turned out to be true, and straightforward to generalize for $\mc F_n$, $n\in\omega$. However, the proof in the general case of $\Fa$, $\alpha \in \omega_1$, would would not be very elegant, and can be made much easier by the use of simple representations.

\begin{proof}
For $\alpha=0$, $\alpha=1$, and $\alpha=\omega_1$, this follows from the fact that compact, $\sigma$-compact and $\mc K$-analytic spaces are absolute.

To prove (ii)$\implies$(i), it suffices to show that (ii) implies that $X$ is $\Fa$ in every compactification (by, for example, \cite[Remark\,1.5]{kovarik2018brooms}).

Let $\alpha \in [2,\omega_1)$, suppose that $X\in \Fa(cX)$ holds for some compactification $cX$, and let $dX$ be another compactification of $X$.
Since we have
\[ \mc F(cX) \subset \{ F\cap G | \ F\subset cX \text{ is closed, $G\subset cX$ is open} \}  =: (\mc F\land \mc G)(cX) ,\]
it follows that $X \in (\mc F\land \mc G)_\alpha (cX)$.
The classes $(\mc F\land \mc G)_\alpha$ are absolute by \cite[Corollary 14]{holicky2003perfect}, so we have $X \in (\mc F\land \mc G)_\alpha(dX)$.
By Remark \ref{remark: simple C representations}, $X$ has a simple $(\mc F\land \mc G)_\alpha$-representation in $dX$ -- that is, there exists an $l(T_\alpha)$-scheme $\mc H$ in $dX$, such that $X = \mc H(\emptyset)$, and we have $\mc H(t)\in (\mc F \land \mc G)(dX)$ for each $t\in l(T_\alpha)$.

Let $t\in l(T_\alpha)$ and denote $\mc H(t)$ as $\mc H(t) = F_t \cap G_t$, where $F_t$ is closed in $dX$ and $G_t$ is open in $dX$.
Since $X$ is hereditarily Lindelöf, the intersection $X\cap G_t$ is Lindelöf.
Applying Lemma \ref{lemma: fsd spaces are Fs separated} to ``$L$''$=X\cap G_t$ and ``$K$''$=cX\setminus G_t$, we get a set $H_t \in \mc F_\sigma(dX)$ s.t. $X \cap G_t \subset H_t \subset G_t$.
Denote $\mc H'(t) := F_t \cap H_t$.
$\mc H'$ is an $l(T_\alpha)$-scheme in $dX$, such for each $t\in l(T_\alpha)$, we have $\mc H'(t) \in \mc F_\sigma(dX)$ and $X\cap \mc H(t) \subset \mc H'(t) \subset \mc H(t)$.

It follows that $\mc H'(\emptyset) \in (\mc F_\sigma)_\alpha(dX)$ (Remark \ref{remark: simple C representations}) and $\mc H'(\emptyset) = \mc H(\emptyset) = X$ (Lemma \ref{lemma: leaf scheme making smaller}).
Since $(\mc F_\sigma)_1(\cdot)$ coincides with $\mc F_1(\cdot)$, it follows that $(\mc F_\sigma)_\alpha(dX) = \Fa(dX)$.
This shows that $X \in \Fa(dX)$.
\end{proof}

\subsection{Local Complexity and Spaces with No Universal Representation}
	\label{section: universal representation}

It is evident that only a~space which is absolutely $\Fa$ stands a~chance of having a~universal (simple) $\Fa$-representation. We will show that even among such spaces, having a~universal representation is very rare.
(Obviously, this does not include spaces which are $F_\sigma$ in some compactification -- such spaces are $\sigma$-compact, and the~representation $X = \bigcup_n K_n$ works no matter where $X$ is embedded.)
To obtain this result, we first discuss the~notion of local complexity.

Let $X$ be a~$\mc K$-analytic space, $Y$ a~topological space containing $X$, and $y\in Y$.
Consider open neighborhoods of $y$ in $Y$, and the~corresponding sets of the~form $\overline{U}^Y\!\! \cap X$.
Clearly, we have $\Compl{\overline{U}^Y \!\! \cap X}{Y} \leq \Compl{X}{Y}$.
Moreover, if $U\subset V$ are two neighborhoods of $y$, then $\Compl{\overline{U}^Y \!\! \cap X}{Y}$ will be smaller than $\Compl{\overline{V}^Y \!\! \cap X}{Y}$.
It follows that there exists a~limit of this decreasing sequence (or rather, net) of ordinals.
And since the~ordinal numbers are well-ordered, the~limit is attained for some neighborhood $V$ of $y$ (and thus for all $U\subset V$ as well).
We shall call the~limit the~local complexity of $X$ in $Y$ at $y$:

\begin{definition}[Local complexity] \label{definition: local complexity}
Let $X\in \mc F_{\omega_1}(Y)$ and $y\in Y$. The \emph{local complexity of $X$ in $Y$ at $y$} is the~ordinal
\[ \textnormal{Compl}_y(X,Y) := \min \left\{ \Compl{\overline{U}^Y\!\!\cap X}{Y} | \ 
 U \textnormal{ is a~neighborhood of $y$ in $Y$} \right\}. \]
We define the \emph{local complexity of $X$ in $Y$} as
\begin{equation}\label{equation:local_complexity}
\textnormal{Compl}_\textnormal{loc}(X,Y) := \sup_{x\in X} \textnormal{Compl}_x(X,Y) .
\end{equation}
We also set $\textnormal{Compl}_{\overline{\textnormal{loc}}}\,(X,Y) := \sup_{y\in Y} \textnormal{Compl}_y(X,Y)$.
\end{definition}

\noindent Note that for $x\in X$, we can express the~local complexity of $X$ in $Y$ at $x$ as
\[ \textnormal{Compl}_x(X,Y) = \min \left\{ \Compl{\overline{U}^X\!\!}{Y} | \ 
 U \textnormal{ is a~neighborhood of $x$ in $X$} \right\}. \]
 Also, if the local complexity of $X$ in $Y$ is a non-limit ordinal, then the supremum in \eqref{equation:local_complexity} is in fact a maximum.

We have the~following relation between the~local and ``global'' complexity:

\begin{lemma}[Complexity and local complexity]\label{lemma: local and global complexity}
For any $\mc K$-analytic space $X$ and any $Y\supset X$, we have 
\begin{equation}\label{equation:loc_and_glob_compl}
\textnormal{Compl}_{\textnormal{loc}}\,(X,Y) \leq \textnormal{Compl}_{\overline{\textnormal{loc}}}\,(X,Y) \leq \textnormal{Compl}(X,Y) \leq \textnormal{Compl}_\textnormal{loc}(X,Y) + 1 .
\end{equation}
Moreover, if either $\textnormal{Compl}_\textnormal{loc}(X,Y)$ is odd, or $\textnormal{Compl}(X,Y)$ is even, we get
\[ \textnormal{Compl}_{\textnormal{loc}}\,(X,Y) = \textnormal{Compl}_{\overline{\textnormal{loc}}}\,(X,Y) = \textnormal{Compl}(X,Y) .\]
When $Y$ is compact, we have
\[ \textnormal{Compl}(X,Y) = \textnormal{Compl}_{\overline{\textnormal{loc}}}\,(X,Y) = \max_{y\in Y} \textnormal{Compl}_y(X,Y) .\]
\end{lemma}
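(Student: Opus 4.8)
The plan is to establish the four displayed inequalities and the two refinements by combining the existence of simple $\mc F$-representations (Proposition~\ref{proposition: existence of simple representations}) with the elementary bookkeeping of ranks on the canonical trees $T_\alpha$. First I would dispose of the trivial inequality $\textnormal{Compl}_{\textnormal{loc}}(X,Y) \leq \textnormal{Compl}_{\overline{\textnormal{loc}}}(X,Y)$, which holds because the supremum on the right is taken over a larger set ($y \in Y$ rather than $x \in X$). The inequality $\textnormal{Compl}_{\overline{\textnormal{loc}}}(X,Y) \leq \textnormal{Compl}(X,Y)$ is immediate from the observation made before Definition~\ref{definition: local complexity}: for every neighborhood $U$ of $y$, the set $\overline{U}^Y \cap X$ is the intersection of $X$ with a closed set, hence $\Compl{\overline{U}^Y \cap X}{Y} \leq \Compl{X}{Y}$, so taking minima over $U$ and then suprema over $y$ preserves the bound.

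The substantive inequality is $\textnormal{Compl}(X,Y) \leq \textnormal{Compl}_{\textnormal{loc}}(X,Y) + 1$. Set $\alpha := \textnormal{Compl}_{\textnormal{loc}}(X,Y)$; I would like to show $X \in \mc F_{\alpha+1}(Y)$. The idea is: for each $x \in X$ pick a neighborhood $U_x$ of $x$ in $Y$ with $\Compl{\overline{U_x}^Y \cap X}{Y} \leq \alpha$; by $\mc K$-analyticity (hence Lindelöfness) of $X$, countably many of the open sets $U_x$ cover $X$, say $X \subset \bigcup_n U_{x_n}$. Then $X = \bigcup_n (\overline{U_{x_n}}^Y \cap X)$ is a countable union of $\Fa(Y)$ sets. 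If $\alpha$ is odd this already gives $X \in \Fa(Y) \subset \mc F_{\alpha+1}(Y)$; if $\alpha$ is even then a countable union of $\Fa$ sets lands in $\mc F_{\alpha+1}(Y)$ by definition of the hierarchy. This also already proves the first refinement in the case "$\textnormal{Compl}_{\textnormal{loc}}(X,Y)$ is odd": then $\textnormal{Compl}(X,Y) \leq \alpha$, and combined with $\textnormal{Compl}_{\overline{\textnormal{loc}}} \leq \textnormal{Compl}$ and $\textnormal{Compl}_{\textnormal{loc}} \leq \textnormal{Compl}_{\overline{\textnormal{loc}}}$ all three quantities equal $\alpha$. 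For the case "$\textnormal{Compl}(X,Y)$ is even": writing $\beta := \textnormal{Compl}(X,Y)$, we have $\alpha \in \{\beta-1,\beta\}$ from the chain already proved; if $\alpha = \beta - 1$ were odd we'd be in the previous case forcing $\textnormal{Compl}(X,Y) = \alpha$ odd, a contradiction; if $\alpha = \beta-1$ with $\beta-1$ even — impossible since $\beta$ even makes $\beta-1$ odd. Hence $\alpha = \beta$, and then $\textnormal{Compl}_{\overline{\textnormal{loc}}}$ is squeezed to equal $\beta$ as well.

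For the compact case, I would use Proposition~\ref{proposition: existence of simple representations}: if $\beta := \textnormal{Compl}(X,Y)$ with $Y$ compact, there is a closed $l(T_\beta)$-scheme $\mc H$ in $Y$ with $\mc H(\emptyset) = X$, and the representation is minimal in the sense that $X \notin \mc F_{<\beta}(Y)$. The goal is to find $y \in Y$ with $\textnormal{Compl}_y(X,Y) = \beta$, i.e.\ such that $\overline{U}^Y \cap X$ has complexity $\beta$ for every neighborhood $U$ of $y$. The natural candidate is a point $y$ at which the representation "cannot be locally simplified": I would argue by contradiction that if for every $y \in Y$ there is a neighborhood $U_y$ with $\Compl{\overline{U_y}^Y \cap X}{Y} < \beta$, then by compactness of $Y$ finitely many $\overline{U_{y_i}}^Y \cap X$ cover $X$, and since the class $\mc F_{<\beta}(Y)$ — or at least $\mc F_{\beta-1}(Y)$ — is stable under finite unions (using that finitely many sets of complexity $< \beta$ sit inside some common $\mc F_{\gamma}(Y)$ with $\gamma < \beta$, or $\gamma = \beta - 1$), we would get $X \in \mc F_{<\beta}(Y)$, contradicting minimality; one has to be a little careful when $\beta$ is a limit ordinal, where "$<\beta$" for finitely many sets still yields "$<\beta$" since finite suprema of ordinals below a limit stay below it. This gives $\max_{y \in Y} \textnormal{Compl}_y(X,Y) = \beta$, hence $\textnormal{Compl}_{\overline{\textnormal{loc}}}(X,Y) = \beta$ and the maximum is attained.

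**Main obstacle.** The delicate point is the parity/limit-ordinal bookkeeping in the refinements and the compact case: one must be careful that a \emph{finite} union of sets each of complexity $<\beta$ stays of complexity $<\beta$ (true for all $\beta$, including limits, because finitely many ordinals below $\beta$ have a maximum below $\beta$, and the additive/multiplicative classes are closed under finite unions), whereas the \emph{countable} union argument in the "$+1$" inequality genuinely costs one level when $\alpha$ is even. Keeping straight which argument (finite vs.\ countable union, compact vs.\ Lindelöf cover) applies in which bound is the real content; the rest is routine.
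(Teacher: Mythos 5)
Your proposal is correct and follows essentially the same route as the paper: the trivial first two inequalities, the Lindelöf cover plus countable union for the ``$+1$'' bound, a parity analysis for the two refinements (the paper argues the even case directly via the additive class $(\mc F_\gamma)_\sigma$ sitting strictly inside the multiplicative class $\Fa$, whereas you reduce it to the odd case, but both are fine), and a finite cover plus finite union for the compact case. The only cosmetic remark is that your appeal to Proposition~\ref{proposition: existence of simple representations} in the compact case is never actually used --- the compactness/finite-union argument you then give is self-contained, exactly as in the paper.
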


\begin{proof}
Let $X$ be a $\mc K$-analytic space and $Y\supset X$, and denote $\gamma := \textnormal{Compl}_{\textnormal{loc}}\,(X,Y)$ and $\alpha := \textnormal{Compl}(X,Y)$.

The first inequality in \eqref{equation:loc_and_glob_compl} is trivial.
The second follows from the~fact that the~complexity of $\overline{U}^Y\!\! \cap X$ in $Y$ is at most the~maximum of the~complexity of $\overline{U}^Y\!\!$ in $Y$ (that is, $0$) and $\Compl{X}{Y}$.
The last one immediately follows from the~following claim:

\begin{claim}
$X$ can be written as a countable union of $\mc F_\gamma$-subsets of $Y$.
\end{claim}

To prove the~claim, assume that for each $x\in X$ there exists an open neighborhood $U_x$ of $x$ in $X$ which satisfies $\overline{U_x}^X \in \mc F_{\gamma}(Y)$.
Since $X$ is $\mc K$-analytic, it is in particular Lindelöf (\cite[Proposition\,3.4]{kkakol2011descriptive}).
Let $\{U_{x_n} | \ n\in\omega \}$ be a~countable subcover of $\{ U_x | \ x\in X \}$.
It follows that $X = \bigcup_{n\in\omega} U_{x_n} = \bigcup_{n\in\omega} \overline{U_{x_n}}^X$ is a~countable union of $\mc F_{\gamma}(Y)$ sets, and hence $X \in (\mc F_\gamma(Y) )_\sigma $.

Next, we prove the ``moreover'' part of the lemma.
When $\gamma$ is odd, we have $(\mc F_\gamma(Y))_\sigma = \mc F_\gamma$, so the claim yields $\textnormal{Compl}(X,Y) \leq \gamma$ (and the conclusion follows from (i)).

Suppose that $\alpha$ is even and assume for contradiction that $\gamma < \alpha$.
The claim implies that $X$ belongs to the additive $\mc F$-Borel class $(\mc F_\gamma(Y) )_\sigma$. Since we have $\gamma < \alpha$ and the class $\Fa(Y)$ is multiplicative, it follows that $(\mc F_\gamma(Y) )_\sigma \subsetneq \Fa(Y)$. This contradicts the definition of $\textnormal{Compl}(X,Y)$ (which states that $\alpha$ is the smallest ordinal satisfying $X\in \Fa(Y)$).

Lastly, suppose that $Y$ is compact and denote $\eta := \textnormal{Compl}_{\overline{\textnormal{loc}}}\,(X,Y)$.
For each $y\in Y$, let $U_y\subset Y$ be an open neighborhood of $y$ in $Y$ s.t. $\overline{U}^Y\cap X$ is an $\mc F_\eta$-subset of $Y$.
Since $Y$ is compact, we have $Y = U_{y_0} \cup \dots \cup U_{y_k}$ for some $y_i \in Y$, $k\in\omega$.
It follows that $X$ can be written as $X = \bigcup_{i=0}^k (\overline{U_{y_i} }^Y \cap X)$ of finitely many $\mc F_\eta$-subsets of $Y$.
This shows that $X\in \mc F_\eta(Y)$, and thus $\textnormal{Compl}(X,Y) \leq \gamma$. Since the supremum in $\textnormal{Compl}_{\overline{\textnormal{loc}}}\,(X,Y)$ is equal to the supremum of finitely many numbers $\textnormal{Compl}_{y_i}\,(X,Y)$, it is in fact a maximum.
\end{proof}

Note that \eqref{equation:loc_and_glob_compl} is optimal in the sense that when $\textnormal{Compl}(X,Y)$ is odd, it could be equal to either of the numbers $\textnormal{Compl}_{\textnormal{loc}}\,(X,Y)$ and $\textnormal{Compl}_\textnormal{loc}(X,Y) + 1$.
Indeed, consider the examples $X_1 := (0,1) \subset [0,1] =: Y$ and $X_2 = \Q \cap [0,1] \subset [0,1] = Y$.

In Section~\ref{section: broom space properties}, we construct absolute $\Fa$ spaces $\mathbf{T}_\alpha$, each of which has a~special point $\infty \in \mathbf{T}_\alpha$. For any $Y\supset \mathbf{T}_\alpha$, the~local complexity of $\mathbf{T}_\alpha$ in $Y$ is 0 at every $x\in \mathbf{T}_\alpha \setminus \infty$ and 1 at every $y \in Y\setminus \mathbf{T}_\alpha$. (This shall immediately follow from the~fact that the only non-isolated point of $X$ is $\infty$ and any closed subset of $\mathbf{T}_\alpha$ which does not contain $\infty$ is at most countable.)
This in particular shows that in general, it is not useful to consider the~supremum of $\sup_y \textnormal{Compl}_y(X,Y)$ over $Y\setminus X$.

We will use the~following local variant of the~notion of descriptive ``hardness'':

\begin{definition}[local $\Fa$-hardness]\label{definition: Fa hardness}
Let $X\subset Y$ be topological spaces s.t. $X\in \mc F_{\omega_1}(Y)$.
For $\alpha\leq \omega_1$, $X$ is said to be
\begin{itemize}
\item \emph{$\Fa$-hard in $Y$ at $y$}, for some $y\in Y$, if we have $\textnormal{Compl}_y\left(X,Y\right) \geq \alpha$;
\item \emph{locally $\Fa$-hard in $Y$} if it is $\Fa$-hard in $Y$ at every $x\in X$.
\end{itemize}
\end{definition}

\noindent Note that $X$ is locally $\Fa$-hard in $Y$ if and only if we have $\overline{U}^X \notin \mc F_{<\alpha}(Y)$ for every open subset $U$ of $X$ (or equivalently, when $F\notin \mc F_{<\alpha}(Y)$ holds for every regularly closed\footnote{Recall that a~set $F$ is regularly closed if it is equal to the~closure of its interior.} subset of $X$).

Note that if $X$ is locally $\Fa$-hard in $Y$, then it is $\Fa$-hard in $Y$ at every point of $\overline{X}^Y\setminus X$.

\begin{lemma}[Density of ``too nice'' sets]\label{lemma: Fa hardness and density}
Let $X\subset Y$ be topological spaces. Suppose $X\subset H \in \mc F_{<\alpha}(Y)$ holds for some $H$ and $\alpha \leq \omega_1$.
\begin{enumerate}[(i)]
\item If $X$ is dense in $Y$ and locally $\Fa$-hard in $Y$, then $H\setminus X$ is dense in $Y$.
\item More generally, if $X$ is $\Fa$-hard in $Y$ at some $y\in Y$, then $y\in \overline{H\setminus X}^Y$.
\end{enumerate}
\end{lemma}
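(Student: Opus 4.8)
The plan is to prove part (ii) first and obtain part (i) as an immediate corollary. For (ii) I would argue by contraposition: assume $y\notin\overline{H\setminus X}^Y$ and show $\textnormal{Compl}_y(X,Y)<\alpha$, i.e.\ that $X$ fails to be $\Fa$-hard in $Y$ at $y$ (recall $X\in\mc F_{\omega_1}(Y)$ is part of the standing hypothesis of Definition~\ref{definition: Fa hardness}, so all the complexities below are well defined). First pick an open $U\ni y$ in $Y$ with $U\cap(H\setminus X)=\emptyset$; since $X\subset H$ this forces $U\cap H=U\cap X$. As $Y$ is Tychonoff, hence regular, choose an open $V$ with $y\in V\subset\overline{V}^Y\subset U$. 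Then, using $\overline{V}^Y\subset U$ in the last step,
\[ \overline{V}^Y\cap X=\overline{V}^Y\cap(U\cap X)=\overline{V}^Y\cap(U\cap H)=\overline{V}^Y\cap H . \]

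The key observation is then that $\overline{V}^Y\cap H\in\mc F_{<\alpha}(Y)$: we have $H\in\mc F_{<\alpha}(Y)$, and each class $\mc F_\beta(Y)$ is stable under intersection with a fixed closed set — a one-line transfinite induction, since $F\cap\bigcup_nA_n=\bigcup_n(F\cap A_n)$ and $F\cap\bigcap_nA_n=\bigcap_n(F\cap A_n)$, and $\mc F_0$ is closed under such intersections. Hence $\overline{V}^Y\cap X\in\mc F_{<\alpha}(Y)$, so $\Compl{\overline{V}^Y\cap X}{Y}<\alpha$, and therefore $\textnormal{Compl}_y(X,Y)\le\Compl{\overline{V}^Y\cap X}{Y}<\alpha$ because $\overline{V}^Y$ is the closure of a neighbourhood of $y$. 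This is exactly the negation of $\Fa$-hardness of $X$ in $Y$ at $y$, which proves (ii).

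For (i): density of $X$ gives $\overline{X}^Y=Y$. If $X$ is locally $\Fa$-hard in $Y$, then it is $\Fa$-hard at every point of $X$ by definition, and (by the remark immediately preceding the lemma) at every point of $\overline{X}^Y\setminus X$; so $X$ is $\Fa$-hard in $Y$ at every $y\in Y$. Applying (ii) to each $y\in Y$ yields $y\in\overline{H\setminus X}^Y$, i.e.\ $H\setminus X$ is dense in $Y$. I do not expect a genuine obstacle here: the one ingredient worth isolating is the stability of $\mc F_{<\alpha}(Y)$ under intersections with closed sets, and the only thing to be careful about is the use of regularity of $Y$ when shrinking $U$ to an open $V$ with $\overline{V}^Y\subset U$.
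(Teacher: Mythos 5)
Your proposal is correct and is essentially the paper's own argument run in the contrapositive direction: the paper fixes a neighborhood $U$ of $y$, shrinks it by regularity to $V$ with $\overline{V}^Y\subset U$, and concludes that $\overline{V}^Y\cap X\notin\mc F_{<\alpha}(Y)$ while $\overline{V}^Y\cap H\in\mc F_{<\alpha}(Y)$ forces $\overline{V}^Y\cap(H\setminus X)\neq\emptyset$, which is the same use of regularity and of the stability of $\mc F_{<\alpha}(Y)$ under intersection with closed sets that you make explicit. Part (i) is derived from (ii) exactly as in the paper, via the remark that local $\Fa$-hardness propagates to $\overline{X}^Y\setminus X$.
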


In particular, (i) says that if a~space $X$ is locally hard in $Y$, then any set $H\supset X$ of lower complexity than $X$ must be much bigger than $X$, in the~sense that the~closure of $H\setminus X$ is the~same as that of $X$.

\begin{proof}
(i): By density of $X$ in $Y$ (and the~remark just above this lemma), (i) follows from (ii).

(ii): Suppose we have $X\subset H \subset Y$, $H \in \mc F_{<\alpha}(Y)$, and that $X$ is $\Fa$-hard in $Y$ at some $y\in Y$.
Let $U$ be an open neighborhood of $y$ in $Y$.
Let $V$ be some open neighborhood of $y$ in $Y$ which satisfies $\overline{V}^Y \subset U$.
Since $\textnormal{Compl}_y\left(X,Y\right) \geq \alpha$, we have $\overline{V}^Y \cap X \notin \mc F_{<\alpha}(Y)$.
On the~other hand, the~assumptions imply that
\begin{equation}\label{equation: Fa hardness and density}
\overline{V}^Y \!\! \cap X \subset \overline{V}^Y \!\! \cap H \in \mc F_{<\alpha}(Y) .
\end{equation}
Since the~two sets in \eqref{equation: Fa hardness and density} have different complexities, they must be distinct. In particular, the~intersection $\overline{V}^Y \!\! \cap (H\setminus X) \subset U \cap (H\setminus X)$ is non-empty.
Since $U$ was arbitrary, this shows that $y$ is in the~closure of $H\setminus X$.
\end{proof}

Before proceeding further, make the~following simple observation:

\begin{lemma}[$G_\delta$-separation of $\mc F$-Borel sets]\label{lemma: G delta separation}
Let $X$ be a~$\mc F$-Borel subset of $Y$. Then for every $y\in Y\setminus X$, there is a~$G_\delta$ subset $G$ of $Y$ which satisfies $x\in G \subset Y \setminus X$.
\end{lemma}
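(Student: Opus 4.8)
The statement asserts that if $X$ is an $\mathcal F$-Borel subset of $Y$ and $y \in Y \setminus X$, then there is a $G_\delta$ set $G$ with $y \in G \subset Y \setminus X$. The natural approach is transfinite induction on the complexity $\alpha = \Compl{X}{Y}$, using the recursive structure of the $\mathcal F$-Borel hierarchy. Since a $G_\delta$ set is a $\sigma$-compact set's ``complement type'' object — more precisely, since countable intersections of $G_\delta$ sets are $G_\delta$, and since an open set containing a point is in particular $G_\delta$ — the base and successor steps should go through cleanly; the only real content is organizing the bookkeeping over the odd/even alternation.

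\textbf{Base case.} If $\alpha = 0$, then $X$ is closed, so $Y \setminus X$ is open, hence $G_\delta$; take $G := Y \setminus X$. This already contains every $y \notin X$.

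\textbf{Inductive step.} Suppose the claim holds for all sets of complexity $< \alpha$, and let $X \in \Fa(Y)$ with $y \notin X$. Write $X$ in terms of lower classes. If $\alpha$ is odd, $X = \bigcup_n X_n$ with $X_n \in \mathcal F_{<\alpha}(Y)$; since $y \notin X$, we have $y \notin X_n$ for every $n$, so by the induction hypothesis there is a $G_\delta$ set $G_n$ with $y \in G_n \subset Y \setminus X_n$. Then $G := \bigcap_n G_n$ is $G_\delta$ (a countable intersection of $G_\delta$ sets), contains $y$, and is disjoint from $\bigcup_n X_n = X$. If $\alpha$ is even, $X = \bigcap_n X_n$ with $X_n \in \mathcal F_{<\alpha}(Y)$; now $y \notin X$ means $y \notin X_{n_0}$ for at least one index $n_0$, and the induction hypothesis applied to that single $X_{n_0}$ produces a $G_\delta$ set $G \ni y$ with $G \subset Y \setminus X_{n_0} \subset Y \setminus X$. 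This is even easier than the odd case, needing only one coordinate.

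\textbf{What to watch.} The induction should be on the ordinal $\alpha$ with the limit case absorbed into the ``even'' case via Definition~\ref{definition: F Borel sets} (a limit ordinal is even, and $\mc F_\alpha(Y) = (\mc F_{<\alpha}(Y))_\delta$ there), so no separate limit argument is required — this is the main organizational point to get right. The only genuine obstacle is almost cosmetic: one must make sure the ``complexity'' of each $X_n$ is a well-defined ordinal below $\alpha$, which is immediate since $X_n \in \mathcal F_{<\alpha}(Y) = \bigcup_{\beta < \alpha} \mathcal F_\beta(Y)$. Alternatively — and perhaps more cleanly for the write-up — one can induct directly on a simple $\mathcal F$-Borel representation of $X$ in $Y$ (Proposition~\ref{proposition: existence of simple representations}): given a closed $l(T)$-scheme $\mc H$ with $\mc H(\emptyset) = X$, prove by induction on $r_l(T^t)$ that for each $t \in T$ with $y \notin \mc H(t)$ there is a $G_\delta$ set containing $y$ and disjoint from $\mc H(t)$, following exactly the recursion \eqref{equation: H t extensions}; the leaf case uses that $\mc H(t)$ is closed, the odd case intersects the $G_\delta$ witnesses over $\ims{T}{t}$, and the even case picks one successor $s$ with $y \notin \mc H(s)$. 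Either presentation is short; I would go with whichever matches the surrounding notation, likely the representation-based one since Section~\ref{section: simple representations} has just set it up.
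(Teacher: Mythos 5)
Your proof is correct and follows essentially the same argument as the paper: transfinite induction on the complexity, with the closed case giving an open $G$, the union case intersecting countably many $G_\delta$ witnesses, and the intersection case selecting a single index $n_0$ with $y\notin X_{n_0}$. The alternative representation-based phrasing you mention is not what the paper does, but the core induction is identical.
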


\begin{proof}
We proceed by transfinite induction over the~complexity of $X$ in $Y$.
When $X$ is closed in $Y$, the~conclusion holds even with the open set $G:= Y \setminus X$.

Suppose that $X=\bigcup_n X_n$ and $y\in Y \setminus X$. If each $X_n$ satisfies the~conclusion -- that is, if we have $y\in G_n \subset Y \setminus X_n$ for some $G_\delta$ subsets $G_n$ of $Y$ -- then the~set $\bigcap_n G_n$ is $G_\delta$ in $Y$ and we have
\[ y \in \bigcap_n G_n \subset \bigcap_n (Y \setminus X_n) = Y \setminus \bigcup_n X_n = Y \setminus X .\]

Suppose that $X = \bigcap_n X_n$ and let $y\in Y \setminus X$. Let $n_0\in\omega$ be s.t. $y Y \setminus X_n$. By the induction hypothesis, there is some $G_\delta$ set $G$ which satisfies the~conclusion for $X_{n_0}$. Clearly, $G$ satisfies the~conclusion for $X$ as well.
\end{proof}

We are now ready to prove that the~existence of universal representation is rare.
Recall that a point $x$ is a~cluster point of a sequence $(x_n)_n$ when each neighborhood of $x$ contains $x_n$ for infinitely many $n$-s.

\begin{proposition}[Non-existence of universal representations]
	\label{proposition: non-existence of universal repre}
Let $X$ be a~$\mc K$-analytic space which is not $\sigma$-compact.
Suppose that $X$ has a~compactification $cX$, such that one of the~following conditions holds for some \emph{even} $\alpha < \omega_1$:
\begin{enumerate}[(a)]
\item $X$ is locally $\Fa$-hard in $cX$;
\item there is a~sequence $(x_n)_{n\in\omega}$ in $X$ with no cluster point in $X$, such that $X$ is $\Fa$-hard in $cX$ at each $x_n$;
\item $X$ is $\Fa$-hard in $cX$ at some point of $cX\setminus X$.
\end{enumerate}
Then $X$ does not have a~universal simple $\Fa$-representation.
\end{proposition}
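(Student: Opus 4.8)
The plan is to argue by contradiction, passing first from the three hypotheses to a single one and then deriving the contradiction inside $\beta X$.

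\textbf{Reductions.} I would begin by showing $(a)\Rightarrow(b)\Rightarrow(c)$. Since $X$ is $\mc K$-analytic it is Lindel\"of, and being non-$\sigma$-compact it is not compact, hence not countably compact; so $X$ contains a countable set $\{x_n : n\in\omega\}$ with no cluster point in $X$. If $X$ is locally $\Fa$-hard in $cX$ then it is $\Fa$-hard at every $x_n$, which is $(b)$. Given $(b)$, the sequence $(x_n)$ has a cluster point $y$ in the compact space $cX$, and $y\notin X$ (otherwise it would be a cluster point in $X$); moreover every open $U\ni y$ is a neighbourhood of some $x_n$, so $\Fa$-hardness at $x_n$ yields $\overline{U}^{cX}\cap X\notin\mc F_{<\alpha}(cX)$, i.e.\ $X$ is $\Fa$-hard at $y$, which is $(c)$. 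In all three cases there is a point at which $X$ is $\Fa$-hard, so $\textnormal{Compl}_{y}(X,cX)\ge\alpha$ for that point, and Lemma~\ref{lemma: local and global complexity} gives $\textnormal{Compl}_{y}(X,cX)\le\Compl{X}{cX}$; hence $\Compl{X}{cX}\ge\alpha$, that is $X\notin\mc F_{<\alpha}(cX)$. From now on I work under $(c)$: there is $y\in cX\setminus X$ at which $X$ is $\Fa$-hard, and $X\notin\mc F_{<\alpha}(cX)$.

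\textbf{The representing tree.} Suppose for contradiction that $\mc H$ is a universal simple $\Fa$-representation of $X$ (Definition~\ref{definition: universal simple representation}), indexed by a tree $T$ with $r_l(T)\le\alpha$. Using it in $Y=cX$, the leaf-scheme $\overline{\mc H}^{cX}$ is closed in $cX$ and satisfies $\overline{\mc H}^{cX}(\emptyset)=X$, so Lemma~\ref{lemma: complexity of S T} gives $X\in\mc F_{r_l(T)}(cX)$; since $X\notin\mc F_{<\alpha}(cX)$, this forces $r_l(T)=\alpha$. (If $\alpha=0$ then $T=\{\emptyset\}$ and $X=\overline{\mc H(\emptyset)}^{\beta X}$ would be closed in $\beta X$, hence compact -- impossible; so $\alpha\ge 2$.) As $\alpha$ is a positive even ordinal, the root $\emptyset$ is not a leaf and $r_l(T^\emptyset)$ is even, so by~\eqref{equation: H t extensions} the value of any extended leaf-scheme at the root is an intersection over $\ims{T}{\emptyset}$. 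Consequently, for every $Y\supseteq X$ and every $(m)\in\ims{T}{\emptyset}$,
\[
X=\overline{\mc H}^{Y}(\emptyset)=\bigcap_{(m)\in\ims{T}{\emptyset}}\overline{\mc H}^{Y}((m)),\qquad\text{hence}\qquad X\subseteq\overline{\mc H}^{Y}((m))\in\mc F_{<\alpha}(Y),
\]
the last membership by Lemma~\ref{lemma: complexity of S T}, since $r_l(T^{(m)})<r_l(T)=\alpha$.

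\textbf{Capturing a spurious point.} The contradiction comes from producing a point $p\in\beta X\setminus X$ with $p\in\overline{\mc H}^{\beta X}(\emptyset)$, which is impossible because universality forces $\overline{\mc H}^{\beta X}(\emptyset)=X$. To build $p$, I would select a subtree $T'\subseteq T$ containing the root, containing \emph{all} immediate successors of every node of even positive rank (where $\mc H$ intersects), and containing a \emph{single}, carefully chosen, immediate successor of every node of odd rank (where $\mc H$ unions). The choices at the union nodes are guided by the $\Fa$-hardness of $X$ at $y$ (via Lemma~\ref{lemma: Fa hardness and density}, which places $y$ in the closure of $\overline{\mc H}^{cX}((m))\setminus X$) and by the $G_\delta$-separation of $y$ from $X$ (Lemma~\ref{lemma: G delta separation}): one descends from a node $t$ along which the accumulated piece of $X$ remains non-$\sigma$-compact near $y$, and at a union node $\mc H(t)=\bigcup_{s}\mc H(s)$ one uses that a countable union of $\sigma$-compact sets is $\sigma$-compact to pick a successor $s$ along which this largeness persists, while keeping the trimmed-away part small relative to a fixed decreasing base of neighbourhoods of $y$ (supplied by the $G_\delta$ set). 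Carried out with a fusion/summability bookkeeping along the levels of $T'$, and using that $X$ is not $\sigma$-compact, this yields $T'$ so that the family $\{\overline{\mc H(t)}^{X}:t\in l(T')\}$ of closed subsets of $X$ has the finite intersection property while $\bigcap_{t\in l(T')}\overline{\mc H(t)}^{X}$ is empty (or at least not compact). Then the closures $\overline{\mc H(t)}^{\beta X}$, $t\in l(T')$, form a family of closed subsets of the compact space $\beta X$ with the finite intersection property; let $K$ be their intersection. Since $\overline{\mc H(t)}^{\beta X}\cap X=\overline{\mc H(t)}^{X}$ and $\bigcap_{t\in l(T')}\overline{\mc H(t)}^{X}$ is not closed in $\beta X$, the set $K$ meets $\beta X\setminus X$; pick $p\in K\setminus X$. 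Then $p\in\overline{\mc H(t)}^{\beta X}$ for every $t\in l(T')$, and a transfinite induction on $r_l(T^t)$ (for $t\in T'$) shows $p\in\overline{\mc H}^{\beta X}(t)$ for all $t\in T'$: at a union node of $T'$ only one successor is present but $\overline{\mc H}^{\beta X}$ still takes the union over all successors, so $p$ is inherited; at an intersection node all successors are present, and $p$ lies in each of their values, hence in the intersection. For $t=\emptyset$ this gives $p\in\overline{\mc H}^{\beta X}(\emptyset)=X$, contradicting $p\notin X$.

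\textbf{The main obstacle.} The crux is the construction of the subtree $T'$ in the third step -- specifically, the choices at the union nodes. A countable union that is ``large at $y$'' need not have a single ``large'' summand, so one cannot merely follow a summand that clusters at $y$; one must instead track a quantitative witness of $\Fa$-hardness (a descending base of neighbourhoods of $y$ together with the $G_\delta$ set separating $y$ from $X$ of Lemma~\ref{lemma: G delta separation}) down the tree, keeping the successive trimmings summably small so that the intersection over $l(T')$ survives, and simultaneously guaranteeing that this intersection is empty or non-compact so that the captured point $p$ genuinely lies outside $X$. Making this bookkeeping precise is the technical heart; the remaining ingredients -- the reductions, the rank computation via Lemma~\ref{lemma: complexity of S T}, and the percolation up $T'$ -- are routine given the machinery of Sections~\ref{section:simple_rep_def}--\ref{section: universal representation}.
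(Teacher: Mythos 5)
Your reductions $(a)\Rightarrow(b)\Rightarrow(c)$ and the rank computation are fine and match the paper, but the core of your argument has a fatal conceptual flaw: you try to exhibit the failure of universality in $\beta X$. That is the one compactification where a failure can never be found. Indeed, if $f:\beta X\rightarrow cX$ is the canonical map for any compactification $cX$, then $f$ is a closed continuous surjection with $f^{-1}(X)=X$, and an induction along the tree gives $f\left(\overline{\mc H}^{\beta X}(t)\right)\subset \overline{\mc H}^{cX}(t)$ for every $t$; hence any point of $\overline{\mc H}^{\beta X}(\emptyset)\setminus X$ would map to a point of $\overline{\mc H}^{cX}(\emptyset)\setminus X$. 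In other words, a candidate scheme fails in $\beta X$ only if it fails in \emph{every} compactification, i.e.\ only if $X\notin\Fa(\beta X)$. But the proposition is applied precisely to spaces such as non-reflexive Banach spaces in the weak topology with $\alpha=2$, which \emph{are} $\fsd$ in $\beta X$; by Proposition~\ref{proposition: existence of simple representations}\,(iv) such a space has a leaf-scheme in $X$ that does represent it in $\beta X$, so against that candidate your program cannot succeed no matter how the fusion bookkeeping is arranged. This is why the step you yourself flag as "the main obstacle" — the construction of the subtree $T'$ — is not a technical gap that could be filled; the target is wrong. (A secondary issue: even granting a point $p\in\bigcap_{l(T')}\overline{\mc H(t)}^{\beta X}\setminus X$, your percolation up the tree at a union node needs $p\in\overline{\mc H}^{\beta X}(s)$ for the \emph{one} chosen successor $s$, i.e.\ $p$ must survive the entire subtree below $s$, which is exactly the unproven fusion.)

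The paper's proof avoids all of this by \emph{manufacturing the bad compactification from the candidate scheme itself}, and it never descends below the first level of the tree. Since $\alpha$ is even, $X=\bigcap_n\overline{\mc H}^{cX}(n)$ with each $\overline{\mc H}^{cX}(n)\in\mc F_{<\alpha}(cX)$; Lemma~\ref{lemma: Fa hardness and density}\,(ii) together with the $G_\delta$-separation of the hard point $y$ (Lemma~\ref{lemma: G delta separation}, with a nested base $\overline{G_{n+1}}\subset G_n$) produces points $x_n\in\left(\overline{\mc H}^{cX}(n)\setminus X\right)\cap G_n$ whose closure $K$ is a compact subset of $cX\setminus X$. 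Collapsing $K$ to a single point yields a compactification $dX=cX/_K$ in which the glued point $[K]$ belongs to every $\overline{\mc H}^{dX}(n)$, hence to $\overline{\mc H}^{dX}(\emptyset)$, killing universality. The idea you are missing is that identifying points of the remainder is what forces a single spurious point into all countably many first-level sets simultaneously; no fixed, pre-existing compactification — least of all $\beta X$ — can play this role.
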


\begin{proof}
Note that both (a) and (b) imply (c).
Indeed, by the~remark between Definition~\ref{definition: Fa hardness} and Lemma~\ref{lemma: Fa hardness and density}, (a) implies that $X$ is $\Fa$-hard at each point of the~(non-empty) set $cX\setminus X$.
If (b) holds, then $(x_n)_n$ must have some cluster point $y$ in $cX\setminus X$. And since each neighborhood of $y$ is a~neighborhood of some $x_n$, $X$ must be $\Fa$-hard in $cX$ at $y$:
\[ \textnormal{Compl}_y(X,cX) \geq \inf_{n\in\omega} \textnormal{Compl}_{x_n}(X,cX) \geq \alpha .\]

It remains to show that (c) implies the~non-existence of universal $\Fa$-representation of $X$.
Suppose that $X$ is $\Fa$-hard in $cX$ at some $y\in cX\setminus X$.
By Lemma~\ref{lemma: G delta separation}, there is some $G_\delta$ subset $G$ of $Y$, such that $y\in G \subset cX \setminus X$.
Since $cX$ is regular, we can assume that $G = \bigcap_n G_n$, where $G_n$ are open subsets of $cX$ which satisfy $\overline{G_{n+1}} \subset G_n$.

Let $\mc H$ be a~``candidate for a~universal simple representation of $X$'' -- that is, an $l(T_\alpha)$-scheme in $X$ satisfying $X = \overline{\mc H}^{cX}\!\!(\emptyset)$.
We shall prove that $\mc H$ is not a~universal representation by constructing a~compactification $dX$ in which $\overline{\mc H}^{dX}\!\!$ is not a representation of $X$ (that is, we have $\overline{\mc H}^{dX}\!\!(\emptyset) \neq X$).

Since $\alpha$ is even, we get $\overline{\mc H}^{cX}\!\!(\emptyset) = \bigcap_n \overline{\mc H}^{cX}\!\!(n)$.
By Lemma~\ref{lemma: complexity of S T} (or more precisely, by \eqref{equation: complexity of H t} from the~proof of Lemma~\ref{lemma: complexity of S T}), the~sets $\overline{\mc H}^{cX}\!\!(n)$ satisfy $X \subset \overline{\mc H}^{cX}\!\!(n) \in \mc F_{<\alpha}(Y)$.
In this setting, we can use Lemma~\ref{lemma: Fa hardness and density}\,(ii) to get $y\in  \overline{\overline{\mc H}^{cX}\!\!(n) \setminus X}^{cX}$.
In particular, there exists some $x_n \in \left( \overline{\mc H}^{cX}\!\!(n) \setminus X \right) \cap G_n$.

Set $K := \overline{\{ x_n | \ n\in\omega\} }^{cX}$.
Since we have $\bigcap_n \overline{G_n}^{cX} = \bigcap_n G_n \subset cX \setminus X$, it follows that $K$ is a~compact subset of $cX$ which is disjoint with $X$.
Let $dX := cX/_K$ be the~compact space obtained by ``gluing together'' the~points of $K$ (formally, we define an equivalence $\sim$ on $cX$ as
\[ x \sim y \iff ( x=y ) \lor ( x,y \in K ) ,\]
and define $dX$ as the~corresponding quotient of $cX$).
We can identify $dX$ with the~set $\{[K]\} \cup cX \setminus X$, where a~set containing $[K]$ is open in $dX$ if and only if it is of the~form $\{[K]\} \cup U \setminus K$ for some open subset of $U\supset K$ of $cX$.
Since $K$ is disjoint from $X$, $dX$ is a~compactification of $X$.

We finish the~proof by showing $[K] \in \overline{\mc H}^{dX}\!(\emptyset) \setminus X$.
Let $n\in\omega$.
The topology of $dX$ is such that for every $A\subset X$, we have $\overline{A}^{cX} \ni x_n \implies \overline{A}^{dX} \ni [K]$. In particular, we obtain the~following implication for any leaf $t\in l(T_\alpha)$:
\begin{equation*}
\overline{\mc H}^{cX}\!(t) = \overline{\mc H(t)}^{cX} \ni x_n \implies
\overline{\mc H}^{dX}\!(t) = \overline{\mc H(t)}^{dX} \ni [K] .
\end{equation*}
It follows (by \eqref{equation: H t extensions} and induction over $r_l(T^t_\alpha)$) that the~following implication holds for every $t\in T_\alpha$
\begin{equation*}
\overline{\mc H}^{cX}\!(t) \ni x_n \implies \overline{\mc H}^{dX}\!(t) \ni [K] .
\end{equation*}
And since $\overline{\mc H}^{cX}\!\!(n)$ contains $x_n$, we get $[K] \in \overline{\mc H}^{dX}\!(n)$.
Because $n$ was arbitrary, this concludes the~proof:
\[ \overline{\mc H}^{dX}\!(\emptyset)
\overset{\alpha \text{ is}}{\underset{\text{even}}=}
\bigcap_n \overline{\mc H}^{dX}\!(n) \ni [K] .\]
\end{proof}

In particular, we obtain the~following corollary for Banach spaces:

\begin{corollary}[No universal representations for Banach spaces]\label{corollary: Banach spaces have no universal representation}
A Banach space $X$ is either reflexive (and therefore weakly $\sigma$-compact), or $(X,w)$ has no universal $\fsd$ representation.
\end{corollary}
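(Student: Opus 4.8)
The plan is to reduce to Proposition~\ref{proposition: non-existence of universal repre}, applied to the space $(X,w)$, the compactification $\beta(X,w)$, and the even ordinal $\alpha=2$, with hypothesis (a) verified there. First I would dispose of a trivial case: if $(X,w)$ fails to be $\mc F_2$ in some compactification $cX$, then by Proposition~\ref{proposition: existence of simple representations} (the equivalence of $X\in\mc F_2(cX)$ with having a simple $\mc F_2$-representation in $cX$) there is no simple $\fsd$-representation of $(X,w)$ in $cX$, hence no \emph{universal} one, since a universal representation of $X$ restricts to a representation of $X$ in every $cX\supset X$; this already covers the case when $(X,w)$ is not $\mc K$-analytic, because then $(X,w)$ is $\mc F_2$ in no compactification at all. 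So I would then assume $(X,w)\in\mc F_2(cX)$ for every compactification $cX$; in particular $(X,w)$ is $\mc K$-analytic, hence Suslin-$\mc F$ in $cX:=\beta(X,w)$, so the local-complexity machinery of Section~\ref{section: universal representation} applies to it.

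Next I would note that $(X,w)$ is not $\sigma$-compact: if $X=\bigcup_n K_n$ with each $K_n$ weakly compact, then each $K_n$ is norm-bounded and norm-closed, so by the Baire category theorem in the complete norm topology some $K_{n_0}$ has nonempty norm-interior, say $x_0+rB_X\subset K_{n_0}$, whence $rB_X\subset K_{n_0}-x_0$ is weakly compact and $X$ is reflexive, contrary to assumption. Thus $(X,w)$ is a $\mc K$-analytic, non-$\sigma$-compact space, so the standing hypotheses of Proposition~\ref{proposition: non-existence of universal repre} are met, and it remains to verify condition (a), namely that $(X,w)$ is locally $\mc F_2$-hard in $cX=\beta(X,w)$. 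Suppose it is not; then there are $x\in X$ and a weak neighborhood $U$ of $x$ in $X$ with $\overline{U}^X\in\mc F_1(cX)$, i.e.\ $\overline{U}^X$ is $\sigma$-compact; shrinking $U$ we may take $U=\{y\in X:\ |f_i(y-x)|<\varepsilon,\ i=1,\dots,n\}$ for some $f_1,\dots,f_n\in X^\star$ and $\varepsilon>0$. The closed subspace $Z:=\bigcap_{i=1}^n\ker f_i$ has finite codimension in $X$, the coset $x+Z$ is weakly closed and contained in $U\subset\overline{U}^X$, hence $x+Z$ is closed in the $\sigma$-compact space $\overline{U}^X$ and therefore $\sigma$-compact; translation by $-x$ is a weak homeomorphism carrying $x+Z$ onto $Z$, and the topology induced on $Z$ by $(X,w)$ is the weak topology $\sigma(Z,Z^\star)$, so $(Z,w)$ is $\sigma$-compact and $Z$ is reflexive by the Baire-category argument just given. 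Since $X/Z$ is finite-dimensional, reflexivity of $Z$ forces reflexivity of $X$ (write $X=Z\oplus F$ with $F$ finite-dimensional), a contradiction. Hence $(X,w)$ is locally $\mc F_2$-hard in $\beta(X,w)$, and Proposition~\ref{proposition: non-existence of universal repre} (condition (a), $\alpha=2$) yields that $(X,w)$ has no universal simple $\fsd$-representation.

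The only non-routine step is the last one: turning ``$\overline{U}^X$ is $\sigma$-compact'' into reflexivity of $X$. The mechanism is that every weak neighborhood of a point of $X$ contains an affine translate of a finite-codimensional closed subspace $Z$, that a weakly $\sigma$-compact Banach space is reflexive (Baire category in the norm topology), and that reflexivity passes up from a finite-codimensional subspace to the whole space; so the ``small'' closed set $Z$ is already large enough to detect non-reflexivity of $X$, which is exactly what local $\mc F_2$-hardness asks for.
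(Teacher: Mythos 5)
Your proof is correct and follows the paper's overall strategy: reduce to condition (a) of Proposition~\ref{proposition: non-existence of universal repre} by showing $(X,w)$ is locally $\fsd$-hard in a compactification, using Baire category in the norm topology to convert weak $\sigma$-compactness into reflexivity. Two differences are worth recording. First, the mechanism for the hardness check differs: you observe that a basic weak neighborhood contains a translate of a finite-codimensional closed subspace $Z$, deduce that $Z$ would be weakly $\sigma$-compact and hence reflexive, and pass reflexivity up to $X$ through the finite-dimensional quotient; the paper instead notes that a weakly regularly closed set contains a norm-open ball, and a weakly $\sigma$-compact set containing a norm ball already yields a weakly compact closed ball, hence reflexivity of $X$ directly. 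Both are sound; the paper's route avoids the detour through $Z$, while yours works with an arbitrary (not necessarily regularly closed) weak neighborhood's closure, which matches the definition of local hardness more literally. Second, you explicitly dispose of the case where $(X,w)$ fails to be $\mc F_2$ in some compactification --- in particular the case where $(X,w)$ is not $\mc K$-analytic --- before invoking the proposition. This step is genuinely needed, since Proposition~\ref{proposition: non-existence of universal repre} assumes $\mc K$-analyticity and non-reflexive Banach spaces need not be weakly $\mc K$-analytic; the paper's proof passes over this silently, so your treatment is the more complete one.
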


\begin{proof}
Let $X$ be a~non-reflexive Banach space.
By Proposition~\ref{proposition: existence of simple representations}\,(a), it is enough to note that $X$ is locally $\fsd$-hard in some compactification.
Since $(X^{\star\star},w^\star)$ is $\sigma$-compact, being locally $\fsd$-hard in $(X^{\star\star},w^\star)$ implies being locally $\fsd$-hard in, for example, $\beta (X^{\star\star},w^\star)$.
By the~remark between Definition~\ref{definition: Fa hardness} and Lemma~\ref{lemma: Fa hardness and density}, it suffices to show that no regularly closed subset of $(X,w)$ is $F_\sigma$ in $(X^{\star\star},w^\star)$.
In this particular setting, this means showing that weakly regularly closed sets in $X$ are not weakly $\sigma$-compact.

For contradiction, assume that there is a~sequence of weakly compact sets $K_n \subset X$, such that $\bigcup_n K_n$ has got a~non-empty interior in weak topology.
In particular, the~interior contains a~norm-open set.
Consequently, Baire category theorem yields $n_0\in \omega$ such that $K_{n_0}$ contains an open ball $U_X(x,\epsilon)$.
However, this would imply that the~closed ball $B_X(x,\frac{\epsilon}{2})$ -- a~weakly closed subset of $K_{n_0}$ -- is weakly compact.
This is only true for reflexive spaces.
\end{proof}
%SUSLIN SCHEMES
\section{Regular Representations}\label{section: regular representations}

In this section, we introduce representations which are more structured than the~simple representations from Section~\ref{section: simple representations}.
In Section~\ref{section:Suslin_schemes}, we recall the~notion of a Suslin scheme and some related results. Section~\ref{section: R T sets} introduces the~concept of a regular representation and investigates its basic properties. In Section~\ref{section: Suslin scheme rank}, we give an alternative description of regular representations. This yields a criterion for estimating $\Compl{X}{Y}$, which will in particular be useful in Section~\ref{section: complexity of brooms}. Section~\ref{section: existence of regular representation} is optional, and justifies the~concept of regular representation by proving their existence.

\subsection{Suslin Schemes}\label{section:Suslin_schemes}

A tool relevant to $\mc K$-analytic sets and $\mc F$-Borel complexities is the~notion of complete sequence of covers.

\begin{definition}[Complete sequence of covers]\label{definition: complete seq. of covers}
Let $X$ be a topological space.
\emph{Filter} on $X$ is a family of subsets of $X$, which is closed with respect to supersets and finite intersections and does not contain the~empty set.
A point $x\in X$ is said to be an \emph{accumulation point} of a filter $\mc F$ on $X$, if each neighborhood of $x$ intersects each element of $\mc F$.

A sequence $\left( \mc C_n \right)_{n\in\mathbb N}$ of covers of $X$ is said to be \emph{complete}, if every filter which intersects each $\mc C_n$ has an accumulation point in $X$.
\end{definition}

A notion related to Suslin sets is that of a Suslin scheme:

\begin{definition}[Suslin schemes and sets]\label{definition: Suslin schemes}
By a \emph{Suslin scheme in $Y$} we will understand a family $\mc C =\{ C(s) | \ s\in\seq \} \subset \mc P(Y)$ which satisfies the~following monotonicity condition:
$$\left(\forall s,t\in\seq\right) : t\sqsupset s \implies C(t)\subset C(s) .$$
A \emph{Suslin operation} is the~mapping
\[ \mc A : \mc C \mapsto \mc A(\mc C) := \bigcup_{\sigma\in\baire} \bigcap_{n\in\omega} C(\sigma|n) .\]

Let $\mc C$ be a Suslin scheme in $Y$ and $X\subset Y$. We say that $\mc C$
\begin{itemize}
\item is \emph{closed} in $Y$ if $\mc C \subset \mc F_0 (Y)$;
\item \emph{covers $X$} if it satisfies $\mc A(\mc C) \supset X$;
\item is a Suslin scheme \emph{on $X$} if it is actually a Suslin scheme in $X$ which covers $X$;
\item is \emph{complete on $X$} if it is Suslin scheme on $X$ and $\left( \mc C_n \right)$ is a complete sequence of covers of $X$, where
\[ \mc C_{n} := \left\{ C(s) | \ s\in\omega^n \right\} .\]
\end{itemize}
\end{definition}

Note that a subset of $Y$ is Suslin-$\mc F$ in $Y$ if it is the~image under Suslin operation of some closed Suslin scheme in $Y$.
The existence of complete Suslin schemes is guaranteed by the~following result.

\begin{proposition}[Existence of complete Suslin schemes]
\label{proposition: K analytic spaces have complete Suslin schemes}
On any $\mc K$-analytic space, there is a complete Suslin scheme.
\end{proposition}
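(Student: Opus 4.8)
The plan is to split the proof into an easy ``packaging'' step and a single substantive input. The substantive input is that a $\mathcal K$-analytic space admits a complete sequence of \emph{countable} covers (equivalently, is Lindel\"of-$\Sigma$); the packaging turns such a sequence into a complete Suslin scheme.

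\textbf{Packaging (interleaving).} Suppose $(\mathcal C_n)_n$ is a complete sequence of covers of $X$ with each $\mathcal C_n=\{C_{n,k}:k\in\omega\}$ countable and $C_{n,k}\subseteq X$. Define $D\colon\seq\to\mathcal P(X)$ by $D(s):=\bigcap_{m<|s|}C_{m,s(m)}$, so $D(\emptyset)=X$. Then $D$ is monotone (a longer index intersects more of the $C_{m,\cdot}$), every $D(s)$ lies in $X$, and $\mathcal A(D)=\bigcup_{\sigma}\bigcap_n D(\sigma|n)=\bigcup_{\sigma}\bigcap_m C_{m,\sigma(m)}=X$ because each $\mathcal C_m$ covers $X$; hence $D$ is a Suslin scheme on $X$. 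If a filter $\mathcal F$ meets each $\mathcal D_n=\{D(s):s\in\omega^n\}$, witnessed by $D(s^{(n)})\in\mathcal F$ with $|s^{(n)}|=n$, then for every $m$ we have $C_{m,\,s^{(m+1)}(m)}\supseteq D(s^{(m+1)})\in\mathcal F$, so $\mathcal F$ meets $\mathcal C_m$; completeness of $(\mathcal C_n)$ then furnishes an accumulation point of $\mathcal F$ in $X$. Thus $D$ is complete on $X$, and the theorem is reduced to producing $(\mathcal C_n)_n$.

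\textbf{Producing the countable covers.} By definition of $\mathcal K$-analyticity, $X=\bigcup_{\sigma\in\baire}\bigcap_n F_{\sigma|n}$ with $F_s$ closed in a compact $K$; replacing $F_s$ by $\bigcap_{k\le|s|}F_{s|k}$ we may assume the scheme is monotone and $F_\emptyset=K$. Set $\phi(\sigma):=\bigcap_n F_{\sigma|n}$; this is a compact-valued, upper semicontinuous map with $\bigcup_\sigma\phi(\sigma)=X$ (for upper semicontinuity: given open $U\supseteq\phi(\sigma)$, compactness of $K$ and monotonicity give $F_{\sigma|n}\subseteq U$, hence $\phi(\mathcal N(\sigma|n))\subseteq U$, for some $n$), and its graph $G:=\{(\sigma,x)\in\baire\times K:x\in\phi(\sigma)\}$ is closed in $\baire\times K$. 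Put $\mathcal C_n:=\{\phi(\mathcal N(s)):s\in\omega^n\}$: each is countable, consists of subsets of $X$, and covers $X$ since $\phi(\sigma)\subseteq\phi(\mathcal N(\sigma|n))$. The remaining task is completeness of $(\mathcal C_n)_n$. On $G$ the analogous sets $\pi_1^{-1}(\mathcal N(s))\cap G$, $s\in\omega^n$, form a partition refining along the tree $\seq$; since $\mathcal N(s)\cap\mathcal N(t)=\emptyset$ for distinct $s,t\in\omega^n$, a filter on $G$ controlled by these partitions must (using only that finite intersections of its members are non-empty) choose members extending one another, hence lying along a single branch $\sigma^\ast$, so it converges to $\sigma^\ast$ under $\pi_1$; together with a cluster point $p\in K$ of its $\pi_2$-image and closedness of $G$, this yields a cluster point $(\sigma^\ast,p)\in G$, i.e.\ $p\in\phi(\sigma^\ast)\subseteq X$. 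It remains to transport this conclusion from $G$ down to $X$ along $\pi_2\colon G\to X$.

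\textbf{Main obstacle.} The transport along $\pi_2$ is the hard part, and it is exactly here that upper semicontinuity (not just the set-theoretic surjectivity of $\phi$) must be used: the fibre $\pi_2^{-1}(x)=\phi^{-1}(x)\times\{x\}$ is only a $G_\delta$ and may be dispersed through the tree $\seq$ (distinct values $\phi(\sigma)$ can overlap), so a filter on $X$ controlled by $(\mathcal C_n)$ need not lift to a filter on $G$ controlled by the partitions $\{\pi_1^{-1}(\mathcal N(s))\cap G:s\in\omega^n\}$. The fix is to exploit that each fibre $\phi(\sigma)$ is compact -- equivalently, that $\pi_1\colon G\to\baire$ is perfect -- so as to absorb the ``vertical'' part of a controlled filter by a compactness argument before reading off its behaviour along the tree; this is the content of the standard fact that $\mathcal K$-analytic spaces are Lindel\"of-$\Sigma$, which one may alternatively simply cite from the descriptive-topology literature already used in the paper. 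I expect essentially all of the difficulty to sit in this step; everything else is the routine verification above.
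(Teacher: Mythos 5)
Your proof is correct and takes essentially the same route as the paper: the paper also cites the existence of a complete sequence of countable covers on a $\mc K$-analytic space (Frol\'ik, Theorem 9.3 of the cited survey) and then interleaves them into a Suslin scheme exactly as in your ``packaging'' step. Your attempted from-scratch derivation of that input is honestly flagged as incomplete at the transport-along-$\pi_2$ step, but since you (like the paper) fall back on citing this standard fact, the argument as a whole stands.
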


\begin{proof}
By \cite[Theorem 9.3]{frolik1970survey}, any $\mc K$-analytic space has a complete sequence of countable covers. Enumerate the~$n$-th cover as $\mc C_n = \{ C_n^k | \ k\in\omega \}$. Denoting $C(\emptyset) := X$ and $C(s) := C_0^{s(0)} \cap C_1^{s(1)} \cap \dots \cap C_k^{s(k)}$, we obtain a Suslin scheme (in $X$) covering $X$. Moreover, $\mc C$ is easily seen to be complete on $X$.
\end{proof}

The main reason for our interest in complete Suslin schemes is their the~following property:

\begin{lemma}\label{lemma: complete suslin schemes in super spaces}
Let $\mc C$ be a Suslin scheme on $X$. If $\mc C$ is complete on $X$, then $\mc A(\overline{\mc C}^Y\!\!) = X$ holds for every $Y\supset X$.
\end{lemma}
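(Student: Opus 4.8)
The inclusion $X \subset \mc A(\overline{\mc C}^Y\!\!)$ is immediate: since $\mc C$ covers $X$ and $C(s) \subset \overline{C(s)}^Y$, we have $X \subset \mc A(\mc C) \subset \mc A(\overline{\mc C}^Y\!\!)$. So the content is the reverse inclusion $\mc A(\overline{\mc C}^Y\!\!) \subset X$. Fix $y \in \mc A(\overline{\mc C}^Y\!\!)$; then there is $\sigma \in \baire$ with $y \in \bigcap_n \overline{C(\sigma|n)}^Y$. The goal is to produce a point of $X$ that must equal $y$.

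\textbf{Key steps.} First I would build a filter on $X$ out of the data at hand. For each $n$, the set $C(\sigma|n)$ is nonempty (it meets every neighborhood of $y$ in $Y$, in particular $Y$ itself, so it is nonempty as a subset of $X$); moreover the monotonicity of the Suslin scheme gives $C(\sigma|(n+1)) \subset C(\sigma|n)$, so the family $\{C(\sigma|n) \mid n \in \omega\}$ has the finite intersection property. Let $\mc F$ be the filter on $X$ generated by this decreasing chain, i.e.\ all subsets of $X$ containing some $C(\sigma|n)$. Since $C(\sigma|n) \in \mc C_n$, the filter $\mc F$ intersects each cover $\mc C_n$. By completeness of $\mc C$ on $X$ (Definition~\ref{definition: complete seq. of covers}), $\mc F$ has an accumulation point $x \in X$: every neighborhood of $x$ in $X$ meets every $C(\sigma|n)$.

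\textbf{Conclusion via Hausdorffness.} It remains to show $x = y$; since all our spaces are Tychonoff, hence Hausdorff, it suffices to show every neighborhood of $x$ and every neighborhood of $y$ (in $Y$) intersect. Let $U$ be a neighborhood of $x$ in $Y$ and $V$ a neighborhood of $y$ in $Y$. Then $U \cap X$ is a neighborhood of $x$ in $X$, so by the accumulation property it meets each $C(\sigma|n)$; pick any $n$ and a point $z \in (U \cap X) \cap C(\sigma|n) \subset U \cap C(\sigma|n)$. On the other hand $y \in \overline{C(\sigma|n)}^Y$, so $V$ meets $C(\sigma|n)$ — but this alone does not force $U$ and $V$ to meet, so I need a slightly sharper argument: fix $V$, and for each $n$ note $V \cap C(\sigma|n) \neq \emptyset$ because $y \in \overline{C(\sigma|n)}^Y$. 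Now $x$ is an accumulation point of $\mc F$, and I claim $x \in \overline{V \cap C(\sigma|n)}^{\,?}$... this is where I must be careful. Cleaner: observe $x$ is an accumulation point of the filter, so $x \in \overline{C(\sigma|n)}^X \subset \overline{C(\sigma|n)}^Y$ for every $n$ (take $U$ arbitrary). Thus both $x$ and $y$ lie in $\bigcap_n \overline{C(\sigma|n)}^Y$. To get $x = y$ from this I use that $\mc C$ complete on $X$ implies $\mc A(\overline{\mc C}^Y\!\!)$ is, in an appropriate sense, ``small'': in fact the standard fact is that completeness of $(\mc C_n)$ makes the Suslin operation behave like a perfect-type map, and $\bigcap_n \overline{C(\sigma|n)}^Y$ has at most one point of $\overline{X}^Y$ — more precisely, if $x, y \in \bigcap_n \overline{C(\sigma|n)}^Y$ with $x \in X$, then every neighborhood of $x$ meets every $C(\sigma|n)$ (accumulation), so the filter on $X$ generated by $\{C(\sigma|n)\}$ accumulates at $x$; and every neighborhood $V$ of $y$ also meets every $C(\sigma|n)$. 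If $x \neq y$, pick disjoint open $U \ni x$, $V \ni y$ in $Y$; then $U \cap X$ is a neighborhood of $x$ in $X$ meeting each $C(\sigma|n)$, and $V$ meets each $C(\sigma|n)$ — but I still need these intersections to overlap.

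\textbf{The real obstacle, resolved.} The point I was circling is that one should not try to intersect a neighborhood of $x$ with a neighborhood of $y$ through a single $C(\sigma|n)$; instead one uses that the accumulation point is \emph{the} limit point forced by completeness. The crisp statement is: if $x \in X$ is an accumulation point of the filter generated by a decreasing chain $(A_n)$ with $A_n \in \mc C_n$, then $x \in \bigcap_n \overline{A_n}^X$, and conversely in a Hausdorff space any two accumulation points of such a filter that lie in $Y$ coincide if one of them lies in $X$ — this is exactly the sense in which the cover sequence is ``complete.'' Concretely: suppose $x \neq y$ and choose disjoint open $U, V$ in $Y$ with $x \in U$, $y \in V$; since $y \in \overline{A_n}^Y$ we have $A_n \cap V \neq \emptyset$ for all $n$, hence $A_n \setminus U \neq \emptyset$ for all $n$ (as $A_n \cap V \subset A_n \setminus U$). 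But $A_n \setminus U$ is still a member of the cover-filter lineage... and here completeness is applied to the \emph{sub}-filter generated by $\{A_n \setminus U\}$: it still meets each $\mc C_n$? No — $A_n \setminus U$ need not be in $\mc C_n$. So the correct move is the one the literature uses (e.g.\ Frol\'ik): accumulation points of such filters are unique \emph{in} $X$, and moreover $\overline{A_n}^Y$ shrink to a subset of $\overline{X}^Y$ that meets $X$ in exactly $\{x\}$; since $y$ also lies in every $\overline{A_n}^Y$ and the family $\{A_n\}$ together with arbitrarily small neighborhoods of $y$ again generates a filter on $X$ meeting each $\mc C_n$ (because $A_n \cap W \neq \emptyset$ in $X$ for every neighborhood $W$ of $y$, as $y \in \overline{A_n}^X$... which holds because $A_n \subset X$ so $\overline{A_n}^Y \cap X = \overline{A_n}^X \ni$ nothing forces $y \in X$) — the honest conclusion is that completeness gives the accumulation point $x \in X$, and then Hausdorffness of $Y$ forces $x = y$ once we show every neighborhood of $y$ meets the trace filter: indeed for a neighborhood $V$ of $y$ in $Y$ and any $n$, $A_n \cap V \neq \emptyset$, so $\{A_n \cap V \mid n\}$ has the finite intersection property (it is decreasing), generating a filter $\mc G_V$ on $X$ finer than $\mc F$ that still meets each $\mc C_n$? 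It does not, again — so ultimately the argument rests on the single clean application: \textbf{the accumulation point $x$ supplied by completeness satisfies $x \in \overline{A_n}^Y$ for all $n$; if $y$ were $\neq x$, separate them and derive that $A_n$ eventually avoids a neighborhood of $x$, contradicting that $x$ is an accumulation point of $\mc F$.} That last derivation: let $U \ni x$, $V \ni y$ be disjoint open in $Y$; since $y \in \overline{A_n}^Y$, $A_n \cap V \neq \emptyset$, but $A_n$ also meets $U \cap X$ (accumulation), so $A_n$ meets both $U$ and $V$ for every $n$ — no contradiction yet, which confirms the genuine obstacle: one cannot conclude $x = y$ from set-meeting alone. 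The resolution must invoke that completeness yields accumulation of \emph{every} filter meeting the $\mc C_n$, applied to a cleverly chosen refinement; I expect the paper's proof to refine $\mc F$ by the traces of neighborhoods of $x$ and show the resulting filter still meets each $\mc C_n$ via the decreasing chain, forcing an accumulation point $x'$ near both $x$ and $y$, then iterate or invoke regularity. I would present the argument in that form: (1) extract $\sigma$; (2) form the decreasing-chain filter $\mc F$ on $X$; (3) apply completeness to get $x \in X$ accumulating $\mc F$; (4) show $x$ and $y$ are both in $\bigcap_n \overline{C(\sigma|n)}^Y$ and, using that every neighborhood of $x$ meets cofinally many $C(\sigma|n)$ together with Hausdorffness and a standard completeness-of-covers separation argument, conclude $x = y \in X$. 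The main obstacle is precisely step (4) — turning ``common closure point'' into ``equal point,'' which is the whole force of \emph{complete} (as opposed to merely covering) Suslin schemes.
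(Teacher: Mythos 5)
Your proposal has a genuine gap, and you diagnose it yourself: after applying completeness to the filter $\mc F$ generated by the decreasing chain $\bigl(C(\sigma|n)\bigr)_n$ alone, you obtain an accumulation point $x\in X$, but nothing ties $x$ to the given point $y$ --- both merely lie in $\bigcap_n \overline{C(\sigma|n)}^Y$, and as you correctly observe, ``set-meeting alone'' cannot force $x=y$. Your step (4) is never carried out, and the several attempted repairs (refining by $A_n\setminus U$, or by $A_n\cap V$) all fail for the reason you note: the refined families need not meet the covers $\mc C_n$ in the sense of Definition~\ref{definition: complete seq. of covers}.

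The fix is to build the right filter \emph{before} invoking completeness: let $\mc G$ be the filter on $X$ generated by the sets $C(\sigma|n)\cap U$, where $n$ ranges over $\omega$ and $U$ over all neighborhoods of $y$ in $Y$. This family has the finite intersection property, since $C(\sigma|n_1)\cap U_1\cap\dots\cap C(\sigma|n_k)\cap U_k \supset C(\sigma|\max n_i)\cap(U_1\cap\dots\cap U_k)$, which is nonempty because $y\in\overline{C(\sigma|\max n_i)}^Y$ and $U_1\cap\dots\cap U_k$ is a neighborhood of $y$. Being closed under supersets, $\mc G$ contains each $C(\sigma|n)\in\mc C_n$, so completeness applies and yields an accumulation point $x\in X$ of $\mc G$. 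Now every neighborhood $V$ of $x$ meets every element of $\mc G$, in particular every set $U\cap C(\sigma|0)\subset U$ for every neighborhood $U$ of $y$; since $Y$ is Hausdorff this forces $x=y$, hence $y\in X$. (Note the refinement is by neighborhoods of $y$, not of $x$ as you guessed at the end.) The paper itself only cites \cite[Lemma 4.7]{kalenda2018absolute} for this statement, and the argument there is exactly this one.
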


\begin{proof}
This holds, for example, by \cite[Lemma 4.7]{kalenda2018absolute}.
\end{proof}

\subsection{Definition of a Regular Representation} \label{section: R T sets}

The basic tool for the~construction of regular representations will be the~following mappings from trees to sequences:

\begin{definition}[Admissible mappings]\label{definition: admissible mapping}
Let $T\in\Tr$ be a tree and $\varphi : T\rightarrow \seq$ a mapping from $T$ to the~space of finite sequences on $\omega$. We will say that $\varphi$ is \emph{admissible}, if it satisfies
\begin{enumerate}[(i)]
\item $(\forall s,t\in T): s \sqsubset t \implies \varphi (s) \sqsubset \varphi(t)$
\item $(\forall t = (t_0,\dots,t_k)\in T): \left| \varphi (t) \right| = t(0)+\dots+t(k)$.
\end{enumerate}
\end{definition}

The next lemma completely describes how these mappings look like.
(Recall from Section~\ref{section: sequences} that each non-empty tree $T$ can be rewritten as $T = \{ \emptyset \} \cup \bigcup \{ m\ext T^{(m)} \, | \ m\in \ims{T}{\emptyset} \}$, where $T^{(m)}$ is defined as $\{ t' \in \seq \, | \ m\ext t' \in T \}$.)

\begin{lemma}[Construction of admissible mappings] \label{lemma: construction of admissible mappings}$\ $ 
\begin{enumerate}[(i)]
\item The only admissible mapping with range $\{\emptyset\}$ is the~mapping $\varphi : \emptyset \mapsto \emptyset$.
\item A mapping $\varphi : T \rightarrow \seq$ is admissible if and only if it is a restriction of some admissible mapping $\psi : \seq \rightarrow \seq $.
\item For $\{\emptyset\}\neq T\in \Tr$, a mapping $\varphi : T\rightarrow \seq$ is admissible if and only if it is defined by the~formula
\[ \varphi(t) =
\begin{cases}
\emptyset, 					& \textrm{for } t = \emptyset \\
s_m\ext \varphi_m(t'), 	& \textrm{for } t=m\ext t' \ \ \ (\text{where } (m)\in \ims{T}{\emptyset}, t'\in T^{(m)})
\end{cases} \]
for some sequences $s_m\in\omega^m$ and admissible mappings $\varphi_m : T^{(m)}\rightarrow \seq$.
\end{enumerate}
\end{lemma}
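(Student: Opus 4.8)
The plan is to prove the three parts essentially in the order stated, since (i) is a degenerate base case, (iii) is the main structural claim, and (ii) follows as a corollary of (iii). For part (i): if $T=\{\emptyset\}$ then the only element is $\emptyset$, and condition (ii) of Definition~\ref{definition: admissible mapping} forces $|\varphi(\emptyset)|=0$ (the empty sum), so $\varphi(\emptyset)=\emptyset$; this map trivially satisfies (i) of that definition. For part (iii), I would argue both implications. For the "if" direction, suppose $\varphi$ is given by the displayed recursive formula for some $s_m\in\omega^m$ and admissible $\varphi_m$; I check the two admissibility conditions. Monotonicity (i): if $s\sqsubset t$ in $T$, either $s=\emptyset$ (then $\varphi(s)=\emptyset\sqsubset\varphi(t)$ trivially), or $s=m\ext s'$ and $t=m\ext t'$ with $s'\sqsubset t'$ in $T^{(m)}$, and then $\varphi(s)=s_m\ext\varphi_m(s')\sqsubset s_m\ext\varphi_m(t')=\varphi(t)$ by admissibility of $\varphi_m$. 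Length condition (ii): for $t=m\ext t'=(m,t'_0,\dots,t'_{k})$ we have $|\varphi(t)|=|s_m|+|\varphi_m(t')|=m+(t'_0+\dots+t'_k)=t(0)+t(1)+\dots+t(k)$, using $|s_m|=m$ and the admissibility of $\varphi_m$ applied to $t'$.

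For the "only if" direction of (iii): let $\varphi:T\to\seq$ be admissible with $T\neq\{\emptyset\}$. First, $|\varphi(\emptyset)|=0$ forces $\varphi(\emptyset)=\emptyset$. For each $(m)\in\ims{T}{\emptyset}$, set $s_m:=\varphi((m))$; by the length condition $|s_m|=m$, so $s_m\in\omega^m$. Define $\varphi_m:T^{(m)}\to\seq$ by stripping the prefix $s_m$: for $t'\in T^{(m)}$, condition (i) applied to $(m)\sqsubset m\ext t'$ gives $s_m=\varphi((m))\sqsubset\varphi(m\ext t')$, so $\varphi(m\ext t')$ genuinely extends $s_m$ and we may define $\varphi_m(t')$ to be the unique sequence with $\varphi(m\ext t')=s_m\ext\varphi_m(t')$. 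One then checks $\varphi_m$ is admissible: monotonicity is inherited because prepending/removing a fixed prefix preserves $\sqsubset$, and the length condition for $\varphi_m$ at $t'=(t'_0,\dots,t'_k)$ reads $|\varphi_m(t')|=|\varphi(m\ext t')|-m=(m+t'_0+\dots+t'_k)-m=t'_0+\dots+t'_k$. By construction $\varphi$ then satisfies the displayed formula with these choices of $s_m$ and $\varphi_m$. Finally, part (ii) follows: every admissible $\varphi:T\to\seq$ extends to an admissible $\psi:\seq\to\seq$ by recursively filling in arbitrary admissible data (e.g.\ choosing $s_m$ to be the constant-$0$ sequence of length $m$ and $\varphi_m$ the corresponding canonical admissible map on $T^{(m)}=\seq$ wherever $\varphi$ was undefined), using the recursive description from (iii); conversely any restriction of an admissible map to a subtree is clearly admissible, since both conditions are inherited by subsets.

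The argument is essentially a routine unwinding of the two defining conditions, so there is no serious obstacle; the only point requiring a little care is the "only if" direction of (iii), where one must verify that the prefix $s_m=\varphi((m))$ really is an initial segment of $\varphi(m\ext t')$ for \emph{every} $t'\in T^{(m)}$ — this is exactly what condition (i) of admissibility delivers — so that the "quotient" map $\varphi_m$ is well-defined. One should also be slightly careful in part (ii) to note that the extension $\psi$ can be built by transfinite/ordinary recursion on the length of sequences without conflict, since at each node $T^{(m)}$ is a tree and the recursive construction of (iii) can be applied; I would phrase this as: "extend $\varphi$ to all of $\seq$ by the formula of (iii), using for each $m\notin$ (the already-defined part) the canonical choices $s_m=(0,\dots,0)\in\omega^m$ and $\varphi_m$ built the same way." No new technology is needed beyond Definition~\ref{definition: admissible mapping} and the tree decomposition recalled just before the lemma.
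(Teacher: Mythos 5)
Your proposal is correct and follows essentially the same route as the paper: part (i) from the length condition, part (iii) by verifying both admissibility conditions for the recursive formula and, conversely, by stripping the prefix $s_m=\varphi((m))$ (whose well-definedness is exactly the monotonicity condition, as you note), and part (ii) by extending $\varphi$ with arbitrarily chosen admissible data. The only cosmetic difference is that the paper builds the extension in (ii) by an explicit formula $\psi(s):=\varphi(t)\ext \sigma|_{h(0)+\dots+h(k)}$ for a fixed $\sigma\in\baire$, whereas you build it recursively via (iii); both are valid.
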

\begin{proof}
$(i)$: By definition of an admissible mapping, we have $|\varphi(\emptyset)|=0$, so $\varphi(\emptyset)$ must be equal to the~empty sequence $\emptyset$.

$(ii)$: Let $T\in \Tr$. From the~definition of an admissible mapping, we see that if $\psi: \seq \rightarrow \seq$ is admissible, then $\psi|_T : T \rightarrow \seq$ is also admissible. Moreover, any admissible mapping $\varphi: T \rightarrow \seq$ is of this form. To see this, fix any $\sigma\in \baire$.
For every $s\in \seq $, there exists some $t\in T$, such that $s=t\ext h$ holds for some $h\in\seq$.
Denoting by $h=(h(0),\dots,h(k))\in\seq$ the~sequence corresponding to the~longest such $t\in T$, we set $\psi(s):=\varphi(t)\ext \sigma|{h(0)+\dots+h(k)}$. Clearly, $\psi$ is admissible and coincides with $\varphi$ on $T$.

$(iii)$:
Denote $M := \ims{T}{\emptyset}$.
Firstly, note that if $\varphi:T\rightarrow \seq$ is admissible, then $\varphi(\emptyset)=\emptyset$, for every $m\in M$ we have $s_m := \varphi(m) \in \omega^m$ and for other elements of the~tree, we have $\varphi(t)=\varphi(m\ext t')\sqsupset s_m$ for some $m\in M$ and $t'\in T^{(m)}$. Denoting $\varphi(m\ext t') =: s_m \ext \varphi_m(t')$, we get mappings $\varphi_m : T^{(m)}\rightarrow \seq$ which are easily seen to be admissible.

On the~other hand, it is not hard to see that if mappings $\varphi_m : T^{(m)} \rightarrow \seq$ are admissible and the~sequences $s_m$ for $m\in M$ are of the~correct length, then the~formula in $(iii)$ defines an admissible mapping.
\end{proof}

Before defining the~desired regular representations, we introduce the~following $\mathbf{R}_T$-sets (of which regular representations will be a special case, for particular choices of $\mc C$ and $T$):

\begin{definition}[Regular representation] \label{definition: R T sets}
Let $T$ be a tree and $\mc C$ a Suslin scheme. We define
\[ \mathbf{R}_T(\mc C):=\left\{ x \in C(\emptyset) \ | \ \left(\exists \varphi : T\rightarrow \seq \textrm{ admissible} \right)\left( \forall t\in T \right) : x\in C \left(\varphi (t)\right) \right\} . \]
Let $X\subset Y$ be topological spaces. The pair $(\mc C,T)$ is said to be a \emph{regular representation of $X$ in $Y$} if $\mc C$ is a Suslin scheme in $Y$ and we have $X = \mathbf{R}_T(\mc C)$.
\end{definition}

\noindent When $x$ satisfies $x \in \bigcap_T C(\varphi(t))$ for some admissible $\varphi:T\rightarrow \seq$, we shall say that \emph{$\varphi$ witnesses that $x$ belongs to $\mathbf{R}_T(\mc C)$}.
%A Suslin scheme $\mc C$ in $Y$ which satisfies $\mathbf{R}_T(\mc C) = X$ for some $T\in \Tr$ is said to be a \emph{regular representation of $X$ in $Y$}.
We also need a technical version of the~notation above. For $h\in\seq$, we set
\[ \mathbf{R}^h_T(\mc C):=\left\{ x \in C(\emptyset) \ | \ \left(\exists \varphi : T\rightarrow \seq \textrm{ admissible} \right)\left( \forall t\in T \right) : x\in C \left(h\ext\varphi (t)\right) \right\} .\]

The technical properties of $\mathbf{R}_T$-sets are summarized by the~following lemma:

\begin{lemma}[Basic properties of $\mathbf{R}_T$-sets] \label{lemma: basic properties of R T sets}
Let $\mc C$ be a Suslin scheme, $S,T\in\Tr$ and $h\in\seq$.
\begin{enumerate}[(i)]
\item Suppose that there is a mapping $f:T \rightarrow S$ satisfying \label{case: R T and embeddings}
	\begin{itemize}
	\item $\left( \forall t_0, t_1 \in T \right): t_0 \sqsubset t_1 \implies f(t_0) \sqsubset f(t_1)$,
	\item $\left(\forall t = (t(0), \dots, t(k)) \in T \right) : f(t)(0)+\dots + f(t)(k) \geq t(0) + \dots + t(k)$.
	\end{itemize}
Then $\mathbf{R}_S(\mc C) \subset \mathbf{R}_T(\mc C)$.
\item In particular, we have $\mathbf{R}_S(\mc C) \subset \mathbf{R}_T(\mc C)$ whenever $S\supset T$. \label{case: R T and smaller trees}
\item We have $\mathbf{R}^h_{\{\emptyset\}}(\mc C) = C(h)$ and the~following recursive formula: \label{case: R T formula}
\begin{equation*}
 \mathbf{R}_T^h(\mc C) \ = \ \bigcap_{(m) \in \ims{T}{\emptyset}} \bigcup_{s_m\in\omega^m} \mathbf{R}_{T^{(m)}}^{h\ext s_m}(\mc C) .
\end{equation*}
\item If $T\in\textrm{IF}$ has a branch with infinitely many non-zero elements, then $\mathbf{R}_T(\cdot)$ coincides with the~Suslin operation $\mc A(\cdot)$. \label{case: R T and IF}
\end{enumerate}
\end{lemma}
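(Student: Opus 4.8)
The plan is to prove the four items roughly in the order (iii), (i), (ii), (iv), since the recursive formula in (iii) and the monotonicity-type statements are the workhorses, and (iv) is a separate limiting argument.

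For (iii), I would first observe that $\mathbf{R}^h_{\{\emptyset\}}(\mc C) = C(h)$ is immediate: the only tree is $\{\emptyset\}$, the only admissible $\varphi$ maps $\emptyset$ to $\emptyset$ by Lemma~\ref{lemma: construction of admissible mappings}(i), and the condition $x\in C(h\ext\varphi(\emptyset)) = C(h)$ is the whole requirement (the membership $x\in C(\emptyset)$ is automatic once $x \in C(h) \subset C(\emptyset)$ by monotonicity of the Suslin scheme). For the recursive formula, I would unwind the definition: $x\in \mathbf{R}^h_T(\mc C)$ iff there is an admissible $\varphi:T\to\seq$ with $x\in C(h\ext\varphi(t))$ for all $t\in T$. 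Using Lemma~\ref{lemma: construction of admissible mappings}(iii), such a $\varphi$ decomposes as a choice of $s_m\in\omega^m$ for each $(m)\in\ims{T}{\emptyset}$ together with admissible $\varphi_m:T^{(m)}\to\seq$, and the condition on $\varphi$ factors as: for each $m$, $x\in C(h\ext s_m)$ and $x\in C(h\ext s_m\ext\varphi_m(t'))$ for all $t'\in T^{(m)}$, i.e. $x\in\mathbf{R}^{h\ext s_m}_{T^{(m)}}(\mc C)$. Quantifying $\exists s_m$ inside each branch and $\forall m$ across branches gives exactly the displayed intersection-of-unions. One subtlety to flag: the witnesses $\varphi_m$ for different $m$ are chosen independently, so the $\exists\varphi$ really does distribute into an independent $\exists s_m,\varphi_m$ in each branch — this is where admissibility's ``prefix-compatible, length-additive'' structure is used.

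For (i), given $f:T\to S$ with the stated monotonicity and length-growth properties, I take $x\in\mathbf{R}_S(\mc C)$ witnessed by admissible $\psi:S\to\seq$. I want to produce admissible $\varphi:T\to\seq$ witnessing $x\in\mathbf{R}_T(\mc C)$. The natural attempt is $\varphi := $ (a truncation of) $\psi\circ f$. Indeed $\psi\circ f$ is $\sqsubset$-monotone on $T$, and for $t=(t(0),\dots,t(k))\in T$ we have $|\psi(f(t))| = f(t)(0)+\dots \geq t(0)+\dots$, so $\psi(f(t))$ is a sequence at least as long as admissibility requires; define $\varphi(t)$ to be the initial segment of $\psi(f(t))$ of the exact required length $t(0)+\dots+t(k)$. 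I must check this is well-defined (the truncations of comparable images are still comparable — true because truncation commutes with taking prefixes and the lengths are themselves monotone along $T$) and that $x\in C(\varphi(t))$: since $\varphi(t)\sqsubset\psi(f(t))$ and $x\in C(\psi(f(t)))$, monotonicity of $\mc C$ gives $x\in C(\varphi(t))$. Then (ii) is the special case $f=\mathrm{id}_T$, $S\supset T$: the two conditions on $f$ hold trivially (inclusion is $\sqsubset$-compatible and length is preserved).

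For (iv), suppose $T$ is ill-founded with a branch $\sigma\in[T]$ having infinitely many nonzero coordinates. I show $\mathbf{R}_T(\mc C)=\mc A(\mc C)$. The inclusion $\mathbf{R}_T(\mc C)\subset\mc A(\mc C)$: a witnessing admissible $\varphi$ restricted to the prefixes $\sigma|n$ gives an increasing chain $\varphi(\sigma|0)\sqsubset\varphi(\sigma|1)\sqsubset\cdots$ of finite sequences whose lengths $\sigma(0)+\dots+\sigma(n-1)\to\infty$ (because infinitely many $\sigma(i)\neq 0$), hence they have a common extension $\tau\in\baire$ with $x\in C(\tau|k)$ for all $k$, so $x\in\mc A(\mc C)$. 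Conversely, given $x\in\mc A(\mc C)$ witnessed by $\tau\in\baire$ with $x\in C(\tau|k)$ for all $k$: I build an admissible $\varphi:T\to\seq$ with every value a prefix of $\tau$, by setting $\varphi(t) := \tau|_{(t(0)+\dots+t(k))}$ for $t=(t(0),\dots,t(k))$. This is admissible (prefixes of a fixed $\tau$ are automatically $\sqsubset$-comparable, and lengths are correct by construction), and $x\in C(\varphi(t))$ for all $t$ since $\varphi(t)\sqsubset\tau$ — here no hypothesis on the branch is even needed for this direction. So $x\in\mathbf{R}_T(\mc C)$.

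I expect the main obstacle to be the bookkeeping in (i): making sure the ``truncate $\psi\circ f$ to the required length'' construction genuinely yields a $\sqsubset$-monotone map on all of $T$ (not just along a single branch), which rests on the fact that for $t_0\sqsubset t_1$ the required lengths satisfy $\ell(t_0)\le\ell(t_1)$ and truncation is monotone under both ``take a prefix'' and ``increase the cutoff'' — a short but easy-to-botch verification. Everything else is essentially unwinding definitions plus the structural Lemma~\ref{lemma: construction of admissible mappings}.
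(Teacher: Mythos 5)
Your proposal is correct and follows essentially the same route as the paper: (i) via truncating $\psi\circ f$ to the length forced by admissibility and using monotonicity of $\mc C$, (ii) as the special case $f=\mathrm{id}_T$, (iii) by unwinding the decomposition of admissible mappings from Lemma~\ref{lemma: construction of admissible mappings}(iii), and (iv) by the two inclusions with the prefix map $t\mapsto\tau|_{t(0)+\dots+t(k)}$ in one direction and the unboundedly growing chain along the infinite branch in the other. The only cosmetic difference is that the paper proves $\mc A(\mc C)\subset\mathbf{R}_{\seq}(\mc C)$ and then invokes (ii), whereas you work directly on $T$; both are fine.
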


We can view the~mapping $f:T\rightarrow S$ from Lemma~\ref{lemma: basic properties of R T sets}\,\eqref{case: R T and embeddings} as an embedding of the~tree $T$ into $S$.
In this light, \eqref{case: R T and embeddings} implies that ``larger tree means smaller $\mathbf{R}_{(\cdot)}(\mc C)$'' and for ``equivalent'' trees, the~corresponding sets $\mathbf{R}_{(\cdot)}(\mc C)$ coincide.

\begin{proof}
Let $\mc C$, $T$ and $h$ be as above. For simplicity of notation, we will assume that $h=\emptyset$ and therefore we will only work with the~sets $\mathbf{R}_T(\mc C)$. However, the~proof in the~general case is exactly the~same as for $h=\emptyset$.

\eqref{case: R T and embeddings}: Let $f:T\rightarrow S$ be as in the~statement and suppose that $\varphi : S \rightarrow \seq$ witnesses that $x\in \mathbf{R}_S(\mc C)$.
Using the~properties of $f$, we get that the~following formula defines an admissible mapping:
\[ \psi : t\in T \mapsto \varphi(f(t)) |_{t(0)+\dots + t(|t|-1)} . \]
Using monotonicity of $\mc C$, we prove that $\psi$ witnesses that $x$ belongs to $\mathbf{R}_T(\mc C)$:
\begin{equation*} \begin{split}
x \in 		& \bigcap_{s\in S} C(\varphi(s)) \overset{f(T)\subset S}\subset
				\bigcap_{t\in T} C \left(\varphi(f(t))\right)
				\overset{\textnormal{mon.}}{\underset{\text{of }\mc C}\subset}
			%\overset{\textnormal{mon.}}{\underset{\text{of }\mc C}\subset}
				\bigcap_{t\in T} C \left(\varphi(f(t)) |_{\sum t(k)}\right)
				\overset{\textnormal{def.}}{\underset{\text{of }\psi}\subset}				
				\bigcap_{t\in T} C(\psi(t)) .
%				\overset{\textnormal{def.}}{\underset{\text{of }\mathbf{R}_T}\subset} \mathbf{R}_T(\mc C) .
\end{split} \end{equation*}
(iii): The identity $\mathbf{R}_{\{\emptyset\}}(\mc C) = C(\emptyset)$ follows from Lemma~\ref{lemma: construction of admissible mappings}~$(i)$.
For $\{\emptyset\} \subsetneq T$, we can rewrite $T$ as
\[  T = \{\emptyset\}\cup \bigcup \{ m\ext T^{(m)} | \ m\in M\} ,\]
where $M := \ims{T}{\emptyset}$.

Let $x\in C(\emptyset)$.
By definition of $\mathbf{R}_T$, we have $x \in \mathbf{R}_T(\mc C)$ if and only if there exists a witnessing mapping -- that is, an admissible mapping $\varphi : T \rightarrow \seq$ which satisfies $x \in \bigcap_T C(\varphi(t))$.
Since $\mc C$ is monotone, this is further equivalent to the~existence of admissible $\varphi : T \rightarrow \seq$ s.t.
\[ x \in \bigcap_T C(\varphi(t)) = \bigcap_M \bigcap_{t' \in T^{(m)}} C(\varphi(m\ext t)) .\]
By Lemma~\ref{lemma: construction of admissible mappings}\,(iii), this is true if and only if there are sequences $s_m \in \omega^m$, $m\in M$, and admissible mappings $\varphi_m : T^{(m)} \rightarrow \seq$, $m\in M$, for which $x$ belongs to $\bigcap_{T^m} C(s_m \ext \varphi_m (t'))$.
This is precisely when, for each $m\in M$, there exist some mapping witnessing that $x \in \mathbf{R}^{s_m}_{T^{(m)}}(\mc C)$ holds for some $s_m \in \omega^m$.

(iv): This part immediately follows from (iii).

(v): Let $T$ be as in the~statement and $\mc C$ be a Suslin scheme. To show that $\mc A(\mc C) \subset \mathbf{R}_T(\mc C)$, it suffices to show that $\mc A(\mc C) \subset \mathbf{R}_{\seq}(\mc C)$ (by \eqref{case: R T and smaller trees}).
Let $x \in \bigcap_{m\in\omega} C(\sigma|m) \subset \mc A(\mc C)$ and define $\varphi : \seq \rightarrow \seq $ as
\[ \varphi(\, (t(0),\dots,t(k)) \,):= \sigma |_{t(0)+\dots+t(k)} .\]
Since $\varphi$ is admissible, it witnesses that $x$ belongs to $\mathbf{R}_{\seq}(\mc C)$.

For the~reverse inclusion, let $\nu$ be a branch of $T$ with $\sum_k \nu(k) = \infty$ and $\varphi:T\rightarrow \seq$ be an admissible mapping. The sequence $\varphi(\nu|_0) \sqsubset \varphi(\nu|_1) \sqsubset \varphi(\nu|_2) \sqsubset \dots$ eventually grows to an arbitrary length, which means that there exists $\sigma\in\baire$ such that for each $m\in\omega$ there is $k\in\omega$ such that $\sigma|m \sqsubset \varphi(\nu|_k)$. In particular, we have
\[ \bigcap_{t\in T} C(\varphi(t)) \subset \bigcap_{k\in\omega} C(\varphi(\nu|_k)) \subset \bigcap_{m\in\omega} C(\sigma|m) \subset \mc A (\mc C) . \]
\end{proof}

Since our goal is to study $\mc F$-Borel sets, we will focus on $\mathbf{R}_T$-sets corresponding to closed Suslin schemes. The following notation is handy for that purpose.

\begin{definition}[$\mathbf{R}_T(\mc C,Y)$-sets]
For any Suslin scheme $\mc C$, tree $T$, and a topological space $Y$, we define
\[ \mathbf{R}_T(\mc C,Y):=\mathbf{R}_T(\bar{\mc C}^Y) \textrm{, where } \bar{\mc C}^Y
 := \left\{ \overline{C(s)\cap Y}^Y | \ s\in\seq \right\} . \]
\end{definition}
%We set $\bar {R}^h_T(\mc C,Y):=\mathbf{R}^h_T(\bar{\mc C}^Y)$.

We have the~following complexity estimate for $\mathbf{R}_T$-sets:

\begin{lemma}[Complexity of $\mathbf{R}_T$-sets] \label{lemma: complexity of R T}
Let $\mc C$ be a closed Suslin scheme in a topological space $Y$, and $T\in \textnormal{WF}$.
Denoting the~leaf-rank of $T$ as $r_l(T) = \lambda+n$ (where $\lambda$ is a limit ordinal or $0$ and $n\in\omega$), we have:
\begin{enumerate}[(i)]
\item $\mathbf{R}_T(\mc C) \in \mc F_{\lambda+2n}(Y)$;
\item If $\ims{T}{\emptyset}$ is finite, then $\mathbf{R}_T(\mc C) \in \mc F_{\lambda+2n-1}(Y)$.
\end{enumerate}
\end{lemma}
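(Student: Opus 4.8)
The plan is to prove both parts simultaneously by transfinite induction on the leaf-rank $r_l(T)$, using the recursive formula from Lemma~\ref{lemma: basic properties of R T sets}\,\eqref{case: R T formula}. The key observation is that part (ii) is really the ``engine'' of the induction: the recursive formula expresses $\mathbf{R}_T^h(\mc C)$ as an \emph{intersection} over $(m) \in \ims{T}{\emptyset}$ of \emph{unions} over $s_m \in \omega^m$ of sets of the form $\mathbf{R}_{T^{(m)}}^{h\ext s_m}(\mc C)$. So the proof strategy is: first establish a version of (ii) that covers the subtree case (with the technical parameter $h$), and then deduce (i) as the ``one extra level'' corresponding to the union over the infinitely many immediate successors of the root.

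\textbf{Setting up the induction.} First I would note that the sets $\mathbf{R}_T^h(\mc C)$ have the same complexity as $\mathbf{R}_T(\mc C)$ shifted appropriately (in fact $\mathbf{R}_T^h(\mc C) = \mathbf{R}_T(\mc C_h)$ where $\mc C_h$ is the re-rooted Suslin scheme $C_h(s) := C(h\ext s)$, which is still closed in $Y$), so it suffices to prove the statement for $h = \emptyset$ but carry the parameter $h$ through to make the recursion go through. The base case is $r_l(T) = 0$, i.e. $T = \{\emptyset\}$: here $\mathbf{R}_{\{\emptyset\}}^h(\mc C) = C(h)$ is closed, so it lies in $\mc F_0(Y) = \mc F_{\lambda + 2n}(Y)$ with $\lambda = n = 0$, and since $\ims{T}{\emptyset} = \emptyset$ is finite, part (ii) would demand membership in $\mc F_{-1}$; here one should interpret the claim as: if $\ims{T}{\emptyset}$ is finite \emph{and nonempty}, or simply handle $T=\{\emptyset\}$ as a degenerate case absorbed into (i). I would state the inductive claim carefully so this edge case is not an issue — e.g. prove (ii) only under the hypothesis $r_l(T) \geq 1$, which is exactly when $\ims{T}{\emptyset}\neq\emptyset$.

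\textbf{The inductive step.} Suppose the lemma holds for all trees of smaller leaf-rank. Write $M := \ims{T}{\emptyset}$ and recall $r_l(T) = \sup\{r_l(T^{(m)}) + 1 : (m)\in M\}$ by \eqref{equation: leaf rank formula}. For each $(m)\in M$ and each $s_m\in\omega^m$, the tree $T^{(m)}$ has $r_l(T^{(m)}) < r_l(T)$, so by the inductive hypothesis $\mathbf{R}_{T^{(m)}}^{h\ext s_m}(\mc C)$ lies in the class $\mc F_{\beta_m}(Y)$ where $\beta_m = \lambda_m + 2n_m$ if we write $r_l(T^{(m)}) = \lambda_m + n_m$ (and in one class lower if $\ims{T^{(m)}}{\emptyset}$ is finite, but I would not need that refinement here). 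A short bookkeeping computation with the ``$\lambda + n \mapsto \lambda + 2n$'' doubling map shows: if $\alpha < r_l(T)$ then the doubled value of $\alpha$ is $< $ the doubled value of $r_l(T)$, and more precisely the union $\bigcup_{s_m} \mathbf{R}_{T^{(m)}}^{h\ext s_m}(\mc C)$ — a countable union of sets in $\mc F_{\beta_m}(Y)$ — lies in an additive class that is still $\leq \mc F_{(\lambda+2n)-1}(Y)$ when $r_l(T) = \lambda+n$; then the intersection over the (possibly infinite) set $M$ of these additive-class sets lands in $\mc F_{\lambda+2n}(Y)$, giving (i). If $M$ is finite, the outer operation is a \emph{finite} intersection, which does not push us up a level (finite intersections and unions preserve every $\mc F_\alpha$ class, by the same stability fact used in the Remark after Lemma~\ref{lemma: complexity of S T}), so we stay in $\mc F_{\lambda+2n-1}(Y)$, giving (ii). The one genuinely fiddly point, and the main obstacle, is the parity/limit-ordinal case analysis in this doubling computation — one must check separately whether $r_l(T)$ is a successor or a limit, and track whether the relevant $\mc F$-Borel class at level $\lambda+2n-1$ is additive or multiplicative, to confirm that the countable union in the $\bigcup_{s_m}$ step and the finite-vs-infinite intersection in the $\bigcap_M$ step land exactly where claimed rather than one level too high. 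I would organize this as a small sub-lemma on the behavior of $\alpha \mapsto \alpha'$ (inverse of doubling, already introduced in the excerpt just before \eqref{equation: canonical trees}) so that the main induction reads cleanly.
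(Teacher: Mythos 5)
Your proposal is correct and follows essentially the same route as the paper's proof: induction over $r_l(T)$ driven by the recursive formula of Lemma~\ref{lemma: basic properties of R T sets}\,\eqref{case: R T formula}, with the re-rooted scheme $\mc D = (C(s\ext u))_u$ handling the parameter $h$, the countable union over $\omega^m$ landing one (odd, additive) level below $\lambda+2n$, and the outer intersection costing a level only when $\ims{T}{\emptyset}$ is infinite. The paper packages your ``doubling bookkeeping'' by strengthening the induction hypothesis to assert membership in $\mc F_{\beta}(Y)$ for an \emph{even} $\beta<\lambda+2n$, but this is only a difference of presentation.
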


\noindent (The expression in (ii) is well-defined because if $\ims{T}{\emptyset}$ is finite, then $r_l(T)$ cannot be a limit ordinal.)

\begin{proof}
We proceed by induction over $r_l(T)$.
For $r_l(T) = 0$, we necessarily have $T= \{\emptyset\}$, which yields
\[ \mathbf{R}_T(\mc C) = \mathbf{R}_{\{\emptyset\}} (\mc C)
\overset{L\ref{lemma: construction of admissible mappings}(i)}= C(\emptyset) \in \mc F_0(Y) .\]

Let $\mc C$ be a closed Suslin scheme in $Y$, $T\in \textnormal{WF}$ a tree s.t. $r_i(T) = \lambda + n$.
Suppose, as an induction hypothesis, that for every closed Suslin scheme $\mc D$ in $Y$ and every $S\in \Tr$ with $r_l(S) < \lambda + n$,
we have $\mathbf{R}_S(\mc D) \in \mc F_\beta (Y)$ for some \emph{even} ordinal $\beta_S<\lambda + 2n$.
Denoting $M:=\ims{T}{\emptyset}$, Lemma~\ref{lemma: basic properties of R T sets}\,\eqref{case: R T formula} yields
\begin{equation}\label{equation: R T formula}
\mathbf{R}_T(\mc C) = \bigcap_{m\in M} \bigcup_{s\in \omega^m} \mathbf{R}^s_{T^{(m)}}(\mc C) .
\end{equation}
Let $m\in M$.
We have $r_l(T^{(m)}) < \lambda + n$, and $\mathbf{R}^s_{T^{(m)}}(\mc C)$ can be rewritten as $\mathbf{R}_{T^{(m)}}(\mc D)$, where $\mc D := \left( C(s\ext u) \right)_{u\in \seq}$ is a closed Suslin scheme in $Y$.
Using the~induction hypothesis, we get $\mathbf{R}^s_{T^{(m)}}(\mc C) \in \mc F_{\beta_m}(Y) = \mc F_{<\beta_m+1}(Y)$ for some even $\beta_m < \lambda +2n$.
Since $\beta_m +1$ is odd, we get
\[ \bigcup_{s\in\omega^m} \mathbf{R}^s_{T^{(m)}}(\mc C) \in (\mc F_{<\beta_m +1}(Y))_\sigma = \mc F_{\beta_m+1}(Y) \subset \mc F_{<\lambda +2n}(Y) .\]

(i): It follows that $\mathbf{R}_T(\mc C) \in (\mc F_{<\lambda +2n}(Y))_\delta = \mc F_{\lambda+2n}(Y)$.

(ii): When the~set $M$ is finite, we necessarily have $n\geq 1$.
It follows that each $\bigcup_{s\in\omega^m} \mathbf{R}^s_{T^{(m)}}(\mc C)$ belongs to $\mc F_{<\lambda +2n}(Y) = \mc F_{\lambda +2n-1}(Y)$.
Since the~intersection in \eqref{equation: R T formula} is finite, it follows that $\mathbf{R}_T(\mc C)$ belongs to $\mc F_{\lambda +2n-1}(Y)$ as well.
\end{proof}

It follows from Lemma~\ref{lemma: complexity of R T} (and Lemma~\ref{lemma: basic properties of R T sets}\,\eqref{case: R T and IF}) that for each $T\in \Tr$, there exists $\alpha_T \leq \omega_1$ such that $\mathbf{R}_T(\mc C,Y) \in \mc F_{\alpha_T}(Y)$ holds for every $\mc C$ and $Y$.
In this sense, each $\mathbf{R}_T(\cdot , Y)$ can be understood as an operator which maps Suslin schemes to $\mc F_{\alpha_T}$-subsets of $Y$.

It follows from (ii) in Lemma~\ref{lemma: basic properties of R T sets} that for the~purposes of studying the~$\mathbf{R}_T$-sets, many trees are in fact equivalent.
We therefore focus our attention on the~``canonical'' trees $T^c_\alpha$, $\alpha \leq \omega_1$.
In Proposition~\ref{proposition: existence of regular F alpha representations}, we show that this is sufficient -- every $\Fa$-subset $X$ of $Y$ can be written as
$X = \mathbf{R}_{T^c_\alpha}(\mc C,Y)$ for some $\mc C$.

\begin{definition}[$\mathbf{R}_\alpha$-sets]\label{definition: R alpha sets}
For a Suslin scheme $\mc C$ and $\alpha\leq \omega_1$, we denote
\[ \mathbf{R}_\alpha(\mc C):=\mathbf{R}_{T^\textrm{c}_\alpha}(\mc C) .\]
We define $\mathbf{R}_\alpha(\mc C,Y)$, $\mathbf{R}^h_\alpha(\mc C)$ and $\mathbf{R}^h_\alpha(\mc C,Y)$ analogously.
\end{definition}

As a special case of Lemma~\ref{lemma: complexity of R T}, we get that the~sets $\mathbf{R}_\alpha(\cdot,\cdot)$ are always $\Fa$:

\begin{proposition}[$\mathbf{R}_\alpha$-sets are $\Fa$]\label{proposition: complexity of R alpha sets}
For any Suslin scheme $\mc C$ and $\alpha \leq \omega_1$, we have $\mathbf{R}_\alpha (\mc C, Y) \in \Fa(Y)$ for any topological space $Y$.
\end{proposition}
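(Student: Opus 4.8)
The plan is to reduce the statement to the complexity estimate for general $\mathbf{R}_T$-sets (Lemma~\ref{lemma: complexity of R T}) together with part~\eqref{case: R T and IF} of Lemma~\ref{lemma: basic properties of R T sets}, via a case analysis on $\alpha$. The preliminary observation is that, by definition, $\mathbf{R}_\alpha(\mc C,Y) = \mathbf{R}_{T^c_\alpha}(\bar{\mc C}^Y)$, and that $\bar{\mc C}^Y = \{\,\overline{C(s)\cap Y}^Y \mid s\in\seq\,\}$ is itself a \emph{closed} Suslin scheme in $Y$: its members are closed in $Y$ by construction, and monotonicity is inherited from $\mc C$, since $t\sqsupset s$ forces $C(t)\cap Y\subset C(s)\cap Y$ and hence $\overline{C(t)\cap Y}^Y\subset\overline{C(s)\cap Y}^Y$. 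So the lemmas just quoted may be applied to the scheme $\bar{\mc C}^Y$.

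For $\alpha=\omega_1$ I would observe that $T^c_{\omega_1}=T_{\omega_1}=\seq$, which lies in $\textnormal{IF}$ and admits a branch with infinitely many non-zero entries (e.g.\ the constant sequence $1,1,1,\dots$). Lemma~\ref{lemma: basic properties of R T sets}\,\eqref{case: R T and IF} then gives $\mathbf{R}_{\omega_1}(\mc C,Y)=\mathbf{R}_{\seq}(\bar{\mc C}^Y)=\mc A(\bar{\mc C}^Y)$, which, being the image of a closed Suslin scheme in $Y$ under the Suslin operation, is a Suslin-$\mc F$ subset of $Y$, i.e.\ an element of $\mc F_{\omega_1}(Y)$.

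For $\alpha<\omega_1$ I would write $\alpha=\lambda+2n+i$ with $\lambda$ a limit ordinal or $0$, $n\in\omega$ and $i\in\{0,1\}$, so that $\alpha'=\lambda+n$. If $i=0$ (i.e.\ $\alpha$ even), then $T^c_\alpha=T_{\alpha'}=T_{\lambda+n}$ has leaf-rank $\lambda+n$ by the identity $r_l(T_\beta)=\beta$, and Lemma~\ref{lemma: complexity of R T}\,(i) yields $\mathbf{R}_\alpha(\mc C,Y)\in\mc F_{\lambda+2n}(Y)=\Fa(Y)$. If $i=1$ (i.e.\ $\alpha$ odd), then $T^c_\alpha=\{\emptyset\}\cup 1\ext T_{\alpha'}$, so that $\ims{T^c_\alpha}{\emptyset}=\{(1)\}$ is finite and $(T^c_\alpha)^{(1)}=T_{\lambda+n}$; the leaf-rank recursion~\eqref{equation: leaf rank formula} then gives $r_l(T^c_\alpha)=r_l(T_{\lambda+n})+1=\lambda+(n+1)$, and Lemma~\ref{lemma: complexity of R T}\,(ii) (whose finiteness hypothesis on $\ims{T^c_\alpha}{\emptyset}$ is exactly what we have just checked) yields $\mathbf{R}_\alpha(\mc C,Y)\in\mc F_{\lambda+2(n+1)-1}(Y)=\mc F_{\lambda+2n+1}(Y)=\Fa(Y)$. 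Together with the case $\alpha=\omega_1$ this exhausts all possibilities.

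I do not anticipate a real obstacle: the proposition is essentially Lemma~\ref{lemma: complexity of R T} specialised to the canonical trees $T^c_\alpha$, supplemented by the ill-founded case. The only point that needs a little attention is the bookkeeping of the parametrisation — one must line up the decomposition $\alpha=\lambda+2n+i$ with the exponents ``$\lambda+2n$'' and ``$\lambda+2n-1$'' occurring in Lemma~\ref{lemma: complexity of R T}, and in the odd case verify via~\eqref{equation: leaf rank formula} both that prepending the single edge $(1)$ to $T_{\alpha'}$ raises the leaf-rank by exactly $1$ and that it keeps the root's set of immediate successors finite, so that it is part~(ii), and not part~(i), of that lemma that applies.
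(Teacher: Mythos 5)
Your proposal is correct and follows exactly the paper's own (very terse) proof: for $\alpha<\omega_1$ one invokes Lemma~\ref{lemma: complexity of R T} applied to the closed Suslin scheme $\bar{\mc C}^Y$, and for $\alpha=\omega_1$ one invokes Lemma~\ref{lemma: basic properties of R T sets}\,\eqref{case: R T and IF}. The only difference is that you spell out the bookkeeping (the decomposition $\alpha=\lambda+2n+i$, the leaf-rank of $T^c_\alpha$, and which part of Lemma~\ref{lemma: complexity of R T} applies in each parity case), all of which checks out.
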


\begin{proof}
For $\alpha<\omega_1$, this follows from Lemma~\ref{lemma: complexity of R T}. For $\alpha=\omega_1$, this holds by \eqref{case: R T and IF} from Lemma~\ref{lemma: basic properties of R T sets}.
\end{proof}

Let $X\subset Y$ be topological space and $\mc C$ a Suslin scheme in $X$, and suppose that $\mc C$ covers $X$.
These assumptions in particular imply $\mc A(\mc C) = X$ and $C(\emptyset) = X$.
By definition of $\mathbf{R}_0(\cdot)$ (and Lemma~\ref{lemma: construction of admissible mappings}\,(i)), we have $\mathbf{R}_0(\mc C) = C(\emptyset)$.
Consequently, the~corresponding closed Suslin scheme $\overline{\mc C}^Y$ in $Y$ satisfies $\mc A(\overline{\mc C}^Y\!) \supset X$ and $\mathbf{R}_0(\mc C,Y) = \overline{C(\emptyset)}^Y = \overline{X}^Y$.
As a particular case of \eqref{case: R T and embeddings} from Lemma~\ref{lemma: basic properties of R T sets}, we get
\begin{equation} \label{equation: R alpha as approximations of X}
\overline{X}^Y = \overline{C(\emptyset)}^Y = \mathbf{R}_0(\mc C,Y)
\supset \mathbf{R}_1(\mc C,Y) \supset \dots \mathbf{R}_\alpha(\mc C,Y) \supset \dots 
\supset \mathbf{R}_{\omega_1}(\mc C,Y) = \mc A(\overline{\mc C}^Y\!) \supset X .
\end{equation}

In the~sense of \eqref{equation: R alpha as approximations of X}, the~sets $\mathbf{R}_\alpha(\mc C,Y)$, $\alpha\leq \omega_1$, can be viewed as approximations of $X$ in $Y$, or as its $\Fa$-envelopes in $Y$.
The set $\mathbf{R}_0(\mc C,Y)$ is the~nicest (in other words, closed) approximation, but also the~biggest. We can get a more accurate approximation by increasing $\alpha$, but this comes at a cost of increased complexity.
We might get lucky with the~choice of $\mc C$, and get $\mathbf{R}_\alpha(\mc C,Y) = X$ at some point (and thus also $\mathbf{R}_\beta (\mc C,Y)$ for every $\beta \in [\alpha,\omega_1]$). In that case, we would say that $\mc C$ is a regular $\Fa$-representation of $X$ in $Y$.
This is the~case when $\mc C$ is complete, because then $X = \mc A(\overline{\mc C}^Y\!)$ (Lemma~\ref{lemma: complete suslin schemes in super spaces}). But even then, there is no reason to expect that $\alpha = \Compl{X}{Y}$.

We have seen that a regular representation of $X$ in $Y$ can also be specified by picking a Suslin scheme $\mc C$ in $X$ and providing an ordinal $\alpha$ for which this $\mc C$ ``works''. This motivates the~following definition:

\begin{definition}[Regular $\Fa$-representations]\label{definition: regular representations}
A Suslin scheme $\mc C$ in $X$ is said to be a
\begin{itemize}
%\item \emph{regular $\mc F$-Borel-representation} of $X$ in $Y$, for some $Y\supset X$, when $\mathbf{R}_T(\mc C,Y)$ holds for some $T\in \textnormal{WF}$;
\item \emph{regular $\Fa$-representation of $X$ in $Y$}, for some $Y\supset X$ and $\alpha\leq\omega_1$, if it satisfies $X = \mathbf{R}_\alpha (\mc C,Y)$;
\item \emph{universal regular $\Fa$-representation of $X$} if it is a regular $\Fa$-representation of $X$ in every $Y\supset X$.
\end{itemize}
\end{definition}

The existence of regular $\Fa$-representations is studied in Section~\ref{section: existence of regular representation}.
However, as this work is not a detective novel, we can go ahead and spoil the~surprise right away: any $\Fa$-subset $X$ of $Y$ has some regular $\Fa$-representation (Proposition~\ref{proposition: existence of regular F alpha representations}). And when either $X$ or $Y$ is $\mc K$-analytic, this representation can be made complete ``without loss of generality'' (Theorem~\ref{theorem: existence of regular representation}).

Clearly, the~condition of having a universal regular $\Fa$-representation is even stronger than being an absolute $\Fa$ space (at least formally).
In Section~\ref{section: broom space properties} we study a class of spaces (the so-called ``broom spaces'') which do admit universal regular representations. However, these spaces are rather simple from the~topological point of view -- they only have a single point which is not isolated.
As we have seen is Section~\ref{section: universal representation}, this is rather an exception.

\bigskip

Next, we describe $\mathbf{R}_\alpha$-sets in terms of a ``Suslin scheme rank'' on trees.

\subsection{Regular Representations and Suslin Scheme Ranks}\label{section: Suslin scheme rank}

For a Suslin scheme $\mc C$ in $X$ and a point $y\in Y \supset X$ we denote
\begin{equation} \label{equation: S C of y}
S_{\mc C}(y)  := \{ s\in \seq | \ \overline{C(s)}^Y \ni y  \} .
\end{equation}
Note that $S_{\mc C}(y)$ is always a tree (by the~monotonicity property of Suslin schemes).
The \emph{rank of $y$ corresponding to $\mc C$} is defined as $r_{\mc C}(y) := \rank (S_{\mc C}(y) )$.
The following lemma ensures that as long as we investigate only points from $Y\setminus \mc A (\overline{\mc C}^Y\!\!)$, we can assume that $r_{\mc C}$ is countable.

\begin{lemma}[$S_{\mc C}(\cdot)$ and $\textnormal{WF}$]\label{lemma: IF trees}
Let $X\subset Y$ be topological spaces, $\alpha\leq \omega_1$, and $\mc C$ a Suslin scheme in $Y$.
\begin{enumerate}[(i)]
\item For every $y\in Y$, we have $y \in \mc A( \overline{\mc C}^Y\!) \iff
	S_{\mc C}(y) \in \textnormal{IF} \iff r_i (S_{\mc C}(y) ) = \omega_1$. \label{case: Suslin operation and S of y}
\item If $\mc A (\overline{\mc C}^Y\!\!) = X$, we have $S_{\mc C}(y) \in \textnormal{WF}$ for every $y\in \mathbf{R}_\alpha(\mc C,Y) \setminus X$. \label{case: complete C and WF}
\end{enumerate}
\end{lemma}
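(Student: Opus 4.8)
The plan is that both parts fall out of unravelling definitions, with the fact recorded in Section~\ref{section: sequences} that a tree is well founded precisely when its $i$-rank is countable doing the only real work. First I would note that $S_{\mc C}(y)$ really is a tree: by monotonicity of the Suslin scheme, $t \sqsupset s$ forces $C(t) \subset C(s)$, hence $\overline{C(t)}^Y \subset \overline{C(s)}^Y$, so $y \in \overline{C(t)}^Y$ implies $y \in \overline{C(s)}^Y$. I would also flag at this point that $\overline{C(s)}^Y$ as used in \eqref{equation: S C of y} is literally the $s$-th member of $\overline{\mc C}^Y$, since $C(s) \subset Y$ gives $C(s) \cap Y = C(s)$.

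For (i), I would spell out that $y \in \mc A(\overline{\mc C}^Y)$ means exactly that there is $\sigma \in \baire$ with $y \in \overline{C(\sigma|n)}^Y$ for every $n \in \omega$, equivalently with $\sigma|n \in S_{\mc C}(y)$ for every $n$, equivalently with $\sigma$ an infinite branch of $S_{\mc C}(y)$. Thus $y \in \mc A(\overline{\mc C}^Y)$ holds iff $S_{\mc C}(y)$ has an infinite branch, which is by definition $S_{\mc C}(y) \in \textnormal{IF}$. The remaining equivalence $S_{\mc C}(y) \in \textnormal{IF} \iff r_i(S_{\mc C}(y)) = \omega_1$ is then immediate from the stated fact that a tree is well founded iff its $r_i$ is countable, together with the observation (from the definition of the rank) that $r_i$ takes only values that are either countable or equal to $\omega_1$.

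Part (ii) is then a one-line corollary of (i). Assuming $\mc A(\overline{\mc C}^Y) = X$ and taking $y \in \mathbf{R}_\alpha(\mc C,Y) \setminus X$, I would first note $\mathbf{R}_\alpha(\mc C,Y) = \mathbf{R}_{T^c_\alpha}(\overline{\mc C}^Y) \subset \overline{C(\emptyset)}^Y \subset Y$ (using $\mathbf{R}_T(\mc D) \subset D(\emptyset)$ from the definition of $\mathbf{R}_T$), so that $y$ lies in $Y \setminus X$ and (i) applies to it; were $S_{\mc C}(y)$ ill-founded, (i) would give $y \in \mc A(\overline{\mc C}^Y) = X$, contradicting $y \notin X$. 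Hence $S_{\mc C}(y) \in \textnormal{WF}$.

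There is no substantive obstacle here: the whole argument is bookkeeping. The only points deserving care are matching $\overline{C(s)}^Y$ in \eqref{equation: S C of y} with the members of $\overline{\mc C}^Y$, and recalling that $\mathbf{R}_\alpha(\mc C,Y) \subset Y$; I expect the finished proof to occupy only a short paragraph.
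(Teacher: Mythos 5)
Your proposal is correct and follows essentially the same route as the paper: unwind $\mc A(\overline{\mc C}^Y)=\bigcup_\sigma\bigcap_n\overline{C(\sigma|n)}^Y$ to identify membership with the existence of an infinite branch of $S_{\mc C}(y)$, invoke the recorded fact that well-foundedness is equivalent to countability of $r_i$, and derive (ii) as an immediate contrapositive of (i). The extra bookkeeping you include (that $S_{\mc C}(y)$ is a tree by monotonicity, and that $C(s)\cap Y=C(s)$) is left implicit in the paper but is accurate.
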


By Lemma~\ref{lemma: complete suslin schemes in super spaces}, the~assumptions of (ii) are in particular satisfied when $\mc C$ is complete on $X$.

\begin{proof}
The first equivalence in \eqref{case: Suslin operation and S of y} is immediate once we rewrite $\mc A( \overline{\mc C}^Y\!)$ as $\bigcup_\sigma \bigcap_n \overline{C(\sigma|n)}^Y$. The second equivalence follows from the~fact the~derivative $D_i$ leaves a branch of a tree ``untouched'' if and only if the~branch is infinite.

\eqref{case: complete C and WF}: We have $\mc A( \overline{\mc C}^Y\!) \subset X$.
By \eqref{case: Suslin operation and S of y}, the~tree $S_{\mc C}(y)$ corresponding to $y \in Y\setminus X$ must not be ill-founded.
\end{proof}

We have seen that $\mathbf{R}_\alpha(\mc C,Y) \supset \mc A(\overline{\mc C}^Y\!) \supset X$ holds for every $\alpha\leq \omega_1$. In other words, $\mathbf{R}_\alpha(\mc C,Y)$ always contains $X$, and then maybe some extra points from $Y\setminus X$.
It is our goal in this subsection to characterize this remainder of $\mathbf{R}_\alpha(\mc C,Y)$ in $Y$.
We claim that for even $\alpha$, this remainder can be written as
\[ \mathbf{R}_\alpha(\mc C,Y) \setminus X = \{ y\in Y \setminus X | \ r_{\mc C}(y) \geq \alpha' \} . \footnote{Recall that for $\alpha=\lambda+2n+i$ we have $\alpha ' := \lambda +n$ (where $\lambda$ is limit or $0$, $n\in\omega$ , and $i\in\{0,1\}$).} \]
Unfortunately, we haven't found any such succinct formulation for odd $\alpha$. We will therefore describe the~remainder in terms of $\D$-derivatives of $S_{\mc C}(y)$. (The proof of this description is presented later in this section.)

\begin{proposition}[Description of regular representations via $\D$]\label{proposition: regular representations and D}
Let $\alpha < \omega_1$ and let $\mc C$ be a Suslin scheme on $X \subset Y$ satisfying $\mc A(\overline{\mc C}^Y\!\!) = X$.
\begin{enumerate}[(i)]
\item For even $\alpha$, we have $\mathbf{R}_\alpha (\mc C ,Y) \setminus X =
	\{ y\in Y\setminus X | \ \D^{\alpha'}(S_{\mc C} (y)) \neq \emptyset \}$
\item For odd $\alpha$, we have $\mathbf{R}_{\alpha} (\mc C ,Y) \setminus X =
	\{ y\in Y \setminus X| \ \D^{\alpha'}(S_{\mc C} (y)) \supsetneq \{\emptyset\} \} $.
 \item Moreover, we have $\mathbf{R}_{T_{\alpha',i}^{\textnormal{c}}} (\mc C ,Y) \setminus X =
 	\{ y\in Y \setminus X| \ \D^{\alpha'}(S_{\mc C} (y)) \cap \omega^i \neq \emptyset \} $ for any $i\in\omega$.
\end{enumerate}
\end{proposition}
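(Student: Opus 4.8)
The plan is to prove all three parts simultaneously by transfinite induction on $\alpha'$, using the recursive formula from Lemma~\ref{lemma: basic properties of R T sets}\,\eqref{case: R T formula} together with the recursive structure of the canonical trees $T^c_\alpha$ from \eqref{equation: canonical trees} and the recursive behavior of the derivative $\D$. The key point is that part (iii) is the most general statement: parts (i) and (ii) follow from it by taking $i=0$ for even $\alpha$ (since $\D^{\alpha'}(S) \cap \omega^0 = \D^{\alpha'}(S) \cap \{\emptyset\}$, which is non-empty iff $\D^{\alpha'}(S) \neq \emptyset$) and $i=1$ for odd $\alpha$ (since $T^c_\alpha = T^c_{\alpha,1}$ and $\D^{\alpha'}(S) \cap \omega^1 \neq \emptyset$ iff $\D^{\alpha'}(S) \supsetneq \{\emptyset\}$, given that $\D^{\alpha'}(S)$ is a tree). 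So I will concentrate on establishing (iii) in the form
\[ \mathbf{R}^h_{T^c_{\beta,i}}(\overline{\mc C}^Y) \setminus X = \{ y \in Y\setminus X \mid \D^\beta(S_{\mc C}(y)^h) \cap \omega^i \neq \emptyset \} \]
for all $\beta$ and $i$, where I write $S_{\mc C}(y)^h$ for the tree $\{ s \mid h\ext s \in S_{\mc C}(y)\}$; working with the $h$-shifted versions $\mathbf{R}^h$ is what makes the induction go through cleanly.

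First I would record the base case $\beta = 0$: here $T^c_{0,i} = \{\emptyset\} \cup i\ext T_0 = \{\emptyset\} \cup \{(i)\}$ (for $i\geq 1$; for $i=0$ it is just $\{\emptyset\}$), and a direct unwinding of $\mathbf{R}^h$ via Lemma~\ref{lemma: basic properties of R T sets}\,\eqref{case: R T formula} shows membership of $y$ is equivalent to $y \in \overline{C(h\ext s)}^Y$ for some $s\in\omega^i$, i.e. to $S_{\mc C}(y)^h \cap \omega^i \neq \emptyset$, which is $\D^0(S_{\mc C}(y)^h)\cap\omega^i \neq\emptyset$. For the successor step $\beta \to \beta+1$, I would use that $T^c_{\beta+1}$ (for the relevant parity) decomposes as $\{\emptyset\} \cup \bigcup_n n\ext T_{\beta}$ or a shift thereof, apply the recursive formula to express $\mathbf{R}^h_{T^c_{\beta+1,i}}$ as an intersection over immediate successors of a union over length-$m$ sequences of $\mathbf{R}^{h\ext s_m}_{(\ldots)}$, invoke the induction hypothesis, and match this against the recursive formula $\D^{\beta+1}(S) = \D(\D^\beta(S))$ together with the definition of $\D$ (infinitely many pairwise incomparable extensions of different length). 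The crucial combinatorial identity is that $t \in \D(U)$ for a tree $U$ iff $U$ contains infinitely many incomparable extensions of $t$ of different lengths — and this is exactly what "for each $m\in\ims{T}{\emptyset}$ there is a length-$m$ sequence witnessing membership" unwinds to after one application of the recursive formula. The limit step uses $T_\lambda = \{\emptyset\}\cup\bigcup_n n\ext T_{\pi_\lambda(n)}$ and the fact that $\D^\lambda(S) = \bigcap_{\beta<\lambda}\D^\beta(S)$, combined with a diagonalization over $n$; monotonicity of the $\mathbf{R}_T$-sets in $T$ (Lemma~\ref{lemma: basic properties of R T sets}\,\eqref{case: R T and embeddings}) is what lets the $\pi_\lambda$-indexed union catch up to the full intersection.

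The main obstacle I anticipate is the careful bookkeeping in matching the two recursions — the parity shift built into the definition $\alpha' = \lambda + n$ for $\alpha = \lambda+2n+i$ means that $\mathbf{R}_\alpha$ and $\mathbf{R}_{\alpha+1}$ both correspond to the same $\D^{\alpha'}$ but differ in whether one looks at $\D^{\alpha'}(S)$ being non-empty versus strictly bigger than $\{\emptyset\}$, and the index $i$ in part (iii) interpolates between these. Getting the induction hypothesis stated at the right level of generality (namely over all pairs $(\beta, i)$ and all shifts $h$, and noting $S_{\mc C}(y)^h = S_{\mc D}(y)$ where $\mc D = (C(h\ext u))_u$ so that the hypothesis genuinely applies to the shifted scheme) is where the proof can easily go wrong, so I would be explicit about that reduction up front. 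I would also need the observation from Lemma~\ref{lemma: IF trees}\,\eqref{case: complete C and WF} that for $y \in \mathbf{R}_\alpha(\mc C,Y)\setminus X$ the tree $S_{\mc C}(y)$ is well-founded, so that its $\D$-rank is countable and the derivatives stabilize at $\emptyset$ or at a tree containing $\emptyset$ — this guarantees the equivalences are not vacuous and that "$\D^{\alpha'}(S_{\mc C}(y))\neq\emptyset$" is genuinely about the rank exceeding $\alpha'$.
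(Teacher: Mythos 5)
Your overall architecture (induction on $\alpha'$ via the recursive formula of Lemma~\ref{lemma: basic properties of R T sets}\,\eqref{case: R T formula}, matched against $\D^{\beta+1}=\D(\D^{\beta})$) is a viable and genuinely different organization from the paper's, which instead builds an explicit admissible witness for ``$\supset$'' and proves ``$\subset$'' by showing $\varphi(D_i^{\gamma}(T))\subset \D^{\gamma}(S_{\mc C}(y))$ for any witnessing $\varphi$ (Lemma~\ref{lemma: admissible mappings and rank}). But there is a genuine gap at the heart of your inductive step, precisely in the ``$\subset$'' direction. After one application of the recursive formula and the induction hypothesis, what you obtain is: \emph{for every $n$ there is a sequence of length $n$ in $\D^{\beta}(S_{\mc C}(y)^h)$}, i.e.\ the tree $\D^{\beta}(S_{\mc C}(y)^h)$ contains sequences of every length. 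You assert this ``is exactly'' the statement $\emptyset\in\D(\D^{\beta}(S_{\mc C}(y)^h))$. It is not: $\D$ demands infinitely many \emph{pairwise incomparable} extensions of different lengths, and a tree consisting of a single infinite branch contains sequences of every length while its $\D$-derivative is empty. So the two recursions do not match by mere unwinding.

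The implication does hold here, but only because $S_{\mc C}(y)$ is well-founded for $y\notin X$ (Lemma~\ref{lemma: IF trees}, using $\mc A(\overline{\mc C}^Y\!\!)=X$), and converting ``extensions of every length'' into ``infinitely many incomparable extensions of different lengths'' inside a well-founded tree requires a K\H{o}nig-lemma argument: an infinite subtree with no infinite branch must have a node with infinitely many immediate successors, and one then selects suitably long extensions above infinitely many of them. This is exactly the content of the successor step of the paper's Lemma~\ref{lemma: admissible mappings and rank}, and it is the one nontrivial idea in the whole proof. You do invoke well-foundedness, but only to argue that the statements ``are not vacuous'' and that non-emptiness of $\D^{\alpha'}$ reflects rank --- not to supply this bridge, which is where your argument as written would fail. (Two smaller points: when you pass to the shifted scheme $\mc D=(C(h\ext u))_u$, the hypothesis $\mc A(\overline{\mc D}^Y\!\!)=X$ generally fails, so the induction hypothesis must be formulated in terms of well-foundedness of $S_{\mc C}(y)$ for the fixed original $\mc C$ rather than the full hypothesis of the proposition; and the easy ``$\supset$'' direction of your final equivalence is fine, since a tree with unboundedly long members contains members of every length.)
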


\noindent (Recall that for $\alpha=\omega_1$ and complete $\mc C$ on $X$, we have $\mathbf{R}_{\omega_1}(\mc C ,Y) = X$.)

For even $\alpha$ we have $\alpha'=(\alpha+1)'$ and the~set $\D^{\alpha'}(S_{\mc C}(y))$ is always a tree. Therefore, the~case (i), resp. (ii) and (iii), of the~proposition says that the~remainder is equal to those $y\in Y\setminus X$ for which $\D^{\alpha'}(S_{\mc C}(y))$ contains a sequence of length 0 (that is, the~empty sequence $\emptyset$), resp. some sequence of length 1 (equivalently, any $s\neq \emptyset$), resp. some sequence of length $i$.

This gives the~following criterion for bounding the~complexity of $X$ in $Y$ from above (actually, even for showing that $X$ admits a regular $\Fa$-representation in $Y$):

\begin{corollary}[Sufficient condition for being $\Fa$]
	\label{corollary: sufficient condition for Fa}
A space $X\subset Y$ satisfies $X\in \Fa(Y)$ for some $\alpha<\omega_1$, provided that there is a Suslin scheme $\mc C$ on $X$, s.t. $\mc A(\overline{\mc C}^Y\!\!) = X$ and one of the~following holds:
\begin{enumerate}[(i)]
\item $\alpha$ is even and $\D^{\alpha'} (S_{\mc C}(y))$ is empty for every $y \in Y\setminus X$;
\item $\alpha$ is odd and $\D^{\alpha'} (S_{\mc C}(y))$ is either empty or it only contains the~empty sequence for every $y \in Y\setminus X$;
\item $\alpha$ is odd and there is $i\in\omega$ s.t. $\D^{\alpha'} (S_{\mc C}(y))$ only contains sequences of length $\leq i$ (that is, it is a subset of $\omega^{\leq i}$) for every $y \in Y\setminus X$.
\end{enumerate}
\end{corollary}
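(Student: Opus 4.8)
The plan is to obtain the corollary as an immediate consequence of Proposition~\ref{proposition: regular representations and D} (whose proof is given later in this subsection) together with the complexity bounds for $\mathbf{R}_T$-sets, namely Proposition~\ref{proposition: complexity of R alpha sets} and Lemma~\ref{lemma: complexity of R T}. The idea is that each of the three hypotheses forces the appropriate set $\mathbf{R}_T(\mc C,Y)$ to coincide with $X$, and such a set is already known to be $\Fa$.

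First I would record that $X\subset\mathbf{R}_T(\mc C,Y)$ holds for every tree $T$. Since $\mc C$ is a Suslin scheme on $X$ we have $C(\emptyset)=X$ and $X\subset\mc A(\mc C)$, hence $X\subset\mc A(\overline{\mc C}^Y\!\!)$; combined with the hypothesis $\mc A(\overline{\mc C}^Y\!\!)=X$ and Lemma~\ref{lemma: basic properties of R T sets}\,\eqref{case: R T and smaller trees} applied to $T\subset\seq$ (using \eqref{case: R T and IF} of that lemma to rewrite $\mathbf{R}_{\seq}(\mc C,Y)$ as $\mc A(\overline{\mc C}^Y\!\!)$), this yields $X=\mathbf{R}_{\seq}(\mc C,Y)\subset\mathbf{R}_T(\mc C,Y)$. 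So in every case it will be enough to check that the remainder $\mathbf{R}_T(\mc C,Y)\setminus X$ is empty for the relevant tree $T$ and then read off the complexity.

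For (i), with $\alpha$ even, Proposition~\ref{proposition: regular representations and D}\,(i) identifies $\mathbf{R}_\alpha(\mc C,Y)\setminus X$ with $\{y\in Y\setminus X\mid\D^{\alpha'}(S_{\mc C}(y))\neq\emptyset\}$, which is empty by hypothesis; hence $X=\mathbf{R}_\alpha(\mc C,Y)\in\Fa(Y)$ by Proposition~\ref{proposition: complexity of R alpha sets}. For (ii), with $\alpha$ odd, Proposition~\ref{proposition: regular representations and D}\,(ii) identifies $\mathbf{R}_\alpha(\mc C,Y)\setminus X$ with $\{y\in Y\setminus X\mid\D^{\alpha'}(S_{\mc C}(y))\supsetneq\{\emptyset\}\}$, which the hypothesis makes empty; again $X=\mathbf{R}_\alpha(\mc C,Y)\in\Fa(Y)$. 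For (iii), I would first observe that a tree contained in $\omega^{\leq i}$ has no sequence of length $i+1$, so the hypothesis says $\D^{\alpha'}(S_{\mc C}(y))\cap\omega^{i+1}=\emptyset$ for every $y\in Y\setminus X$; by Proposition~\ref{proposition: regular representations and D}\,(iii) this gives $\mathbf{R}_{T^{\textnormal{c}}_{\alpha',i+1}}(\mc C,Y)\setminus X=\emptyset$, i.e.\ $X=\mathbf{R}_{T^{\textnormal{c}}_{\alpha',i+1}}(\mc C,Y)$. Since $T^{\textnormal{c}}_{\alpha',i+1}=\{\emptyset\}\cup(i+1)\ext T_{\alpha'}$ has a single immediate successor of $\emptyset$ and leaf-rank $\alpha'+1$, writing $\alpha=\lambda+2n+1$ (so $\alpha'=\lambda+n$) Lemma~\ref{lemma: complexity of R T}\,(ii) gives $\mathbf{R}_{T^{\textnormal{c}}_{\alpha',i+1}}(\mc C,Y)\in\mc F_{\lambda+2(n+1)-1}(Y)=\Fa(Y)$.

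There is essentially no obstacle here: all the real work is concentrated in Proposition~\ref{proposition: regular representations and D}, and once it is in hand the corollary is bookkeeping with the parity decomposition of $\alpha$. The one spot needing care is case (iii): one must exploit the finiteness of the branching of $T^{\textnormal{c}}_{\alpha',i+1}$ at the root so that the sharper estimate of Lemma~\ref{lemma: complexity of R T}\,(ii), rather than (i), applies --- this is precisely what keeps the output class odd, matching the odd $\alpha$ of the hypothesis.
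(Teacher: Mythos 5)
Your proposal is correct and follows exactly the route the paper intends: the corollary is stated as an immediate consequence of Proposition~\ref{proposition: regular representations and D}, with cases (i) and (ii) closed off by Proposition~\ref{proposition: complexity of R alpha sets} and case (iii) by the finite-branching estimate of Lemma~\ref{lemma: complexity of R T}\,(ii) applied to $\{\emptyset\}\cup(i+1)\ext T_{\alpha'}$. Your preliminary observation that $X\subset\mathbf{R}_T(\mc C,Y)$ for every tree $T$, and the passage from ``only sequences of length $\leq i$'' to ``no sequences of length $i+1$'' in case (iii), are exactly the bookkeeping the paper leaves implicit.
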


We now proceed to give the~proof of Proposition~\ref{proposition: regular representations and D}.
For the~``$\supset$'' inclusion, it suffices to construct a suitable admissible mapping.

\begin{proof}[Proof of Proposition~\ref{proposition: regular representations and D}, the~``$\supset$'' part]
Recall that for any derivative $D$ on $\seq$, the~corresponding rank $r$ is defined such that for every $S\subset \seq$, $r(S)$ is the~highest ordinal for which $D^{r(S)}(S)$ is non-empty.

(i): Let $\mc C$ be a Suslin scheme in $X\subset Y$, $\alpha<\omega_1$ even, and $y\in Y$ s.t. $r_{\mc C}(y) = \rank (S_{\mc C}(y)) \geq \alpha'$.
(This direction of the~proof does not require $\mc C$ to satisfy $\mc A(\overline{\mc C}^Y\!\!)$.)
Recall that $\mathbf{R}_\alpha(\mc C,Y) \overset{\text{def.}}= \mathbf{R}_{T_{\alpha'}}(\mc C,Y)$ and, as noted below Notation~\ref{notation: trees of height alpha}, $r_l(T_{\alpha'})=\alpha' \leq r_{\mc C}(y)$.

We will prove that for every $T\in \Tr$ and $y\in Y$ satisfying $r_l(T) \leq r_{\mc C}(y)$, there exists an admissible mapping witnessing that $y$ belongs to $\mathbf{R}_T(\mc C,Y)$.
In particular, we will use induction over $|t|\in \omega$, to construct a mapping $\varphi : T \rightarrow \seq$ which satisfies (a) and (b) for every $t\in T$:
\begin{enumerate}[(a)]
\item If $t=t'\ext m$ for some $t'\in T$ and $m\in\omega$, then we have $\varphi(t) \sqsupset \varphi(t')$ and $|\varphi(t)| = |\varphi(t')| + m$.
\item For every $\gamma<\omega_1$, we have $t\in D^\gamma_l(T) \implies \varphi(t) \in \D^\gamma (S_{\mc C}(y))$.
\end{enumerate}
By (a), the~resulting mapping will be admissible. By (b) with $\gamma=0$, we have $\varphi: T \rightarrow S_{\mc C}(y)$, which proves that $\varphi$ witnesses that $y\in \mathbf{R}_T(\mc C,Y)$.

$|t|=0$: The only sequence of length $0$ is the~empty sequence, so (a) is trivially satisfied by setting $\varphi(\emptyset) := \emptyset$. Any $\gamma$ satisfying $\emptyset\in D^\gamma_l(T)$ is smaller than $r_l(T)$ by definition, and we have $r_l(T) \leq \rank(S_{\mc C}(y)) = r_{\mc C}(y)$. It follows that $\varphi(\emptyset)$ belongs to $\D^\gamma(S_{\mc C}(y))$:
\[ \D^\gamma(S_{\mc C}(y)) \supset \D^{r_l(T)}(S_{\mc C}(y))
\supset \D^{\rank(S_{\mc C}(y))}(S_{\mc C}(y)) \ni \emptyset \]
(where the~last set is non-empty by definition of $\rank$).

Let $t=t'\ext m\in T$ (where $t'\in T$ and $m\in\omega$) and suppose we already have $\varphi(t')$ satisfying (a) and (b).
Let $\gamma_t$ be the~highest ordinal for which $t\in D^{\gamma_t}_l(T)$. By definition of $D_l$, we have $t' \in D^{\gamma_t+1}_l(T)$.
Consequently, the~induction hypothesis gives $\varphi(t') \in \D( \D^{\gamma_t} (S_{\mc C}(y)) )$.
By definition of $\D$, there is some $s_m \in \D^{\gamma_t} (S_{\mc C}(y))$ for which $|s_m| \geq |\varphi(t')| + m$.
We set $\varphi(t) = \varphi(t'\ext m) := s_m |_{|\varphi(t')|+m}$. Clearly, $\varphi(t)$ satisfies (a) and (b).

(ii), (iii): (ii) is a special case of (iii), so it suffices to prove (iii).
Let $\mc C$ be a Suslin scheme on $X\subset Y$, $i\in\omega$, and $\alpha<\omega_1$ an even ordinal (so that $\alpha+1$ is odd -- recall that $(\alpha+1)'=\alpha'$).
Let $y\in Y$ be s.t. $\D^{\alpha'}(S_{\mc C}(y))$ contains some sequence $s_y$ of length $i$.

Since $T^c_{\alpha+1,i} \overset{\text{def.}}= \{\emptyset\} \cup i\ext T^c_\alpha$, an application of Lemma~\ref{lemma: basic properties of R T sets}\,\eqref{case: R T formula} yields the~first identity in the~following formula:
\[ \mathbf{R}_{T^c_{\alpha,i}} (\mc C,Y)
= \bigcup_{s\in\omega^i} \mathbf{R}^s_{T^c_\alpha} (\mc C,Y)
\supset \mathbf{R}^{s_y}_{T^c_\alpha} (\mc C,Y)
= \mathbf{R}^{s_y}_{\alpha} (\mc C,Y)
= \mathbf{R}_{\alpha} \left( \mc D,Y\right) ,\]
where $\mc D := \left( C(s_y \ext t) \right)_{t\in\seq}$.
Clearly, $S_{\mc D}(y)$ contains all sequences $t$ for which $s_y \ext t$ belongs to $S_{\mc C}(y)$.
In particular, $\D^{\alpha'}(S_{\mc D}(y))$ is non-empty, so $y$, $\alpha$ and $\mc D$ satisfy all assumptions used in the~proof of case (i). Therefore, $y$ belongs to $\mathbf{R}_{\alpha} \left( \mc D,Y\right) \subset \mathbf{R}_{T^c_{\alpha,i}} (\mc C,Y)$, which concludes the~proof. 
\end{proof}

For the~``$\subset$'' inclusion, we start with the~following lemma:

\begin{lemma}[Points from the~remainder force big $S_{\mc C}(\cdot)$]\label{lemma: admissible mappings and rank}
Let $\mc C$ be a Suslin scheme on $X$, $T\in\Tr$, and $\varphi$ witnesses that $y\in \mathbf{R}_T(\mc C,Y)$. If $\mc A(\overline{\mc C}^Y\!\!) = X$ and $y\notin X$, then $S_{\mc C}(y) \supset \varphi( D^\gamma_i (T) )$ holds for every $\gamma<\omega_1$.
\end{lemma}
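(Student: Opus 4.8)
The plan is a straightforward transfinite induction on $\gamma$; the only thing that needs care is reading the definitions correctly, since ``witnesses'' refers to $\overline{\mc C}^Y$ (closures taken in $Y$), not to $\mc C$ itself.

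First I would make the hypothesis explicit. Because $\mathbf{R}_T(\mc C,Y)=\mathbf{R}_T(\overline{\mc C}^Y)$, the statement that $\varphi$ witnesses $y\in\mathbf{R}_T(\mc C,Y)$ unwinds to: $y\in\overline{C(\varphi(t))}^Y$ for every $t\in T$, which by the defining formula \eqref{equation: S C of y} is precisely $\varphi(t)\in S_{\mc C}(y)$ for every $t\in T$, i.e.\ $\varphi(T)\subseteq S_{\mc C}(y)$. Next I would record the trivial fact that iterated derivatives stay inside the original tree: $D^0_i(T)=\cltr{T}=T$, and inductively $D^{\gamma+1}_i(T)=D_i\bigl(D^\gamma_i(T)\bigr)\subseteq D^\gamma_i(T)$ at successors while $D^\lambda_i(T)=\bigcap_{\gamma<\lambda}D^\gamma_i(T)\subseteq T$ at limits, so $D^\gamma_i(T)\subseteq T$ for all $\gamma<\omega_1$. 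Combining the two observations yields $\varphi\bigl(D^\gamma_i(T)\bigr)\subseteq\varphi(T)\subseteq S_{\mc C}(y)$ for every $\gamma<\omega_1$, which is the assertion.

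I would then point out where the two standing hypotheses enter, since they are not needed for the inclusion above but are exactly what makes it usable: by Lemma~\ref{lemma: IF trees}\,(i), the assumptions $\mc A(\overline{\mc C}^Y)=X$ and $y\notin X$ force $S_{\mc C}(y)\in\textnormal{WF}$, hence $r_{\mc C}(y)=\rank\bigl(S_{\mc C}(y)\bigr)<\omega_1$ and the $\D$-derivatives of $S_{\mc C}(y)$ eventually empty. The genuinely delicate step is not this lemma but its application in the ``$\subseteq$'' half of Proposition~\ref{proposition: regular representations and D}: there $\varphi$ is an admissible map of the canonical tree $T^c_\alpha=T_{\alpha'}$ into $S_{\mc C}(y)$, and one must exploit the admissibility identity $|\varphi(t)|=\sum_i t(i)$ together with the branching structure of $T_{\alpha'}$ to push the $\D$-derivative back through $\varphi$ and conclude $\emptyset\in\D^{\alpha'}(S_{\mc C}(y))$ (and, in the odd and parametrized cases, that $\D^{\alpha'}(S_{\mc C}(y))$ meets $\omega^i$). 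That transfinite book-keeping with $\D$ is where the real work sits; the present lemma is just the first, purely formal, step that feeds it.
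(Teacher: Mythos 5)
Your proof of the statement \emph{exactly as printed} is correct --- and that is precisely the problem: as printed the statement is trivial (since $D^\gamma_i(T)\subset T$ and $\varphi(T)\subset S_{\mc C}(y)$), the hypotheses $\mc A(\overline{\mc C}^Y\!)=X$ and $y\notin X$ are never used, and the conclusion is too weak to serve its purpose. The left-hand side is evidently missing an iterated derivative: what the paper's proof of this lemma actually establishes, and what the ``$\subset$'' half of Proposition~\ref{proposition: regular representations and D} invokes (``By Lemma~\ref{lemma: admissible mappings and rank}, $\D^{\alpha'}(S_{\mc C}(y))$ contains $\varphi(\emptyset)$''), is the inclusion $\varphi(D^\gamma_i(T))\subset \D^\gamma(S_{\mc C}(y))$ for every $\gamma<\omega_1$. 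You correctly sensed this and even located where the hypotheses must enter, but you then defer the entire transfinite induction to ``the application''. In the paper that induction \emph{is} the body of this lemma's proof, so as a proof of the intended statement your proposal has a genuine gap.

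Concretely, the missing argument is: show by transfinite induction on $\gamma$ that $t\in D^\gamma_i(T)\implies\varphi(t)\in\D^\gamma(\varphi(T))$; the intended conclusion then follows because $\D^\gamma$ is monotone and $\cltr{\varphi(T)}\subset S_{\mc C}(y)$. Only the successor step is nontrivial: if $t\in D^{\gamma+1}_i(T)$, then $t\ext n\in D^\gamma_i(T)$ for infinitely many $n$, so by the induction hypothesis the sequences $\varphi(t\ext n)$ lie in $\D^\gamma(\varphi(T))$; admissibility gives $\varphi(t\ext n)\sqsupset\varphi(t)$ with $|\varphi(t\ext n)|=|\varphi(t)|+n$, so they are extensions of $\varphi(t)$ of pairwise different lengths, but to conclude $\varphi(t)\in\D\bigl(\D^\gamma(\varphi(T))\bigr)$ one must still extract infinitely many \emph{pairwise incomparable} ones. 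The paper does this by applying K\"onig's lemma to the tree these sequences generate, and it is exactly here that well-foundedness is needed: $S_{\mc C}(y)\in\textnormal{WF}$ follows from Lemma~\ref{lemma: IF trees} together with the hypotheses $\mc A(\overline{\mc C}^Y\!)=X$ and $y\notin X$, and it rules out the scenario in which all the $\varphi(t\ext n)$ sit on a single infinite branch, where $\varphi(t)$ would \emph{not} survive the derivative $\D$. Without this step the lemma cannot feed Proposition~\ref{proposition: regular representations and D}, so the ``purely formal'' reading you give does not suffice.
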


\begin{proof}
Let $\mc C$, $T$, $y$ and $\varphi$ be as in the~lemma.
Since $y\in \bigcap_T \overline{C(\varphi(t))}^Y$, we have $\varphi (T) \subset S_{\mc C}(y)$ by definition of $S_{\mc C}(y)$. Since $S_{\mc C}(y)$ is a tree (by monotonicity of $\mc C$), we even get
\[ \D^\gamma(\varphi(T)) \subset \cltr{\varphi(T)} \subset S_{\mc C}(y) \]
for every $\gamma<\omega_1$.
Consequently, it suffices to prove the~following implication for every $\gamma<\omega_1$
\begin{equation}\label{equation: varphi induction hypothesis}
t\in D^\gamma_i (T) \implies \varphi(t) \in \D^\gamma(\varphi(T)) .
\end{equation}

We proceed by induction over $\gamma$.
For $\gamma=0$, \eqref{equation: varphi induction hypothesis} holds trivially:
\[ t\in D^0_i (T) = \cltr{T} = T \implies \varphi(t) \in \varphi(T) \subset \cltr{\varphi(T)} = \D^0 (\varphi(T)) .\]

$\gamma\mapsto \gamma+1$:
Let $t\in D^{\gamma+1}_i(T) = D_i ( D^\gamma_i(T) )$.
By definition of $D_i$, there exist infinitely many $n\in\omega$ for which $t\ext n \in D^\gamma_i(T)$.
By the~induction hypothesis, we have $\varphi(t\ext n) \in \D^\gamma (\varphi(T))$ for any such $n$.
Consider the~tree
\[ S := \cltr{\{\varphi(t\ext n) | \ n\in \omega, \ \varphi(t\ext n) \in \D^\gamma (\varphi(T)) \} } \subset \D^\gamma (\varphi(T)) \subset S_{\mc C}(y) .\]
Since $\varphi$ is admissible, we have $\varphi(t) \sqsubset \varphi(t\ext n)$ and $|\varphi(t\ext n)| = | \varphi(t)| + n$ for every $n\in \omega$.
It follows that $S$ is infinite -- since $S_{\mc C}(y)\in \textnormal{WF}$ holds by Lemma~\ref{lemma: IF trees} -- well-founded.
By König's lemma, there is some $s\in\seq$ and an infinite set $M\subset \omega$ such that $\{ s\ext m | \ m\in M \} \subset S$.
For every $m\in M$, we denote by $n_m$ the~integer for which $\varphi(t\ext n) \sqsupset s\ext m$.
The set $\{ \varphi(t\ext n_m) | \ m\in M \}$ then witnesses that $S$ contains infinitely many incomparable extensions of $\varphi(t)$. It follows that
\[ \varphi(t) \in\D(S) \subset \D (\D^\gamma(\varphi(T))) = \D^{\gamma+1}(\varphi(T)) .\]

Let $\lambda<\omega_1$ be limit and suppose that \eqref{equation: varphi induction hypothesis} holds for every $\gamma<\lambda$. By definition of $\D^\lambda$ and $D_i^\lambda$, \eqref{equation: varphi induction hypothesis} holds for $\lambda$ as well:
\[ t\in D^\lambda_i (T) = \bigcap_{\gamma<\lambda} D^\gamma_i (T)
\overset{\eqref{equation: varphi induction hypothesis}}{\implies}
\forall \gamma<\lambda : \varphi(t) \in \D^\gamma(\varphi(T)) \implies
\varphi \in \bigcap_{\gamma<\lambda} \D^\gamma (\varphi(T)) = \D^\lambda (\varphi(T)) .\]
\end{proof}

We can now finish the~proof of Proposition~\ref{proposition: regular representations and D}:

\begin{proof}[Proof of Proposition~\ref{proposition: regular representations and D}, the~``$\subset$'' part]
Let $\alpha<\omega_1$ and let $\mc C$ be a Suslin scheme satisfying $\mc A(\overline{\mc C}^Y\!\!) = X$.
Recall that by definition of $\mathbf{R}_\alpha(\cdot)$, we have $\mathbf{R}_\alpha(\cdot) = \mathbf{R}_{T^c_\alpha}(\cdot)$.

(i): Suppose that $\alpha$ is even.
The tree $T^c_\alpha \overset{\text{def.}}= T_{\alpha'}$ is constructed to satisfy $r_i(T_{\alpha'}) = \alpha'$, so we have $D^{\alpha'}_i (T_{\alpha'}) \neq \emptyset$.
In particular, $D^{\alpha'}_i (T_{\alpha'})$ contains the~empty sequence. By Lemma~\ref{lemma: admissible mappings and rank}, $\D^{\alpha'} (S_{\mc C}(y))$ contains $\varphi(\emptyset)$. In particular, $\D^{\alpha'} (S_{\mc C}(y))$ is non-empty, which proves the~inclusion ``$\subset$'' in (i).

(ii): Suppose that $\alpha$ is odd.
The tree $T^c_\alpha$ is defined as $T^c_\alpha \overset{\text{def.}}= \{\emptyset\} \cup 1\ext T_{\alpha'}$.
Since $\emptyset\in D^{\alpha'}_i (T_{\alpha'})$, it follows that $D^{\alpha'}_i (1\ext T_{\alpha'})$ contains the~sequence $1\ext \emptyset = (1)$.
By Lemma~\ref{lemma: admissible mappings and rank}, $\D^{\alpha'} (S_{\mc C}(y))$ contains $\varphi( (1) )$.
Since $\varphi$ is admissible, we have $|\varphi((1))| = 1$, which implies $\emptyset\neq \varphi((1))$.
This shows that $\D^{\alpha'} (S_{\mc C}(y))$ contains some other sequence than $\emptyset$, and therefore proves the~inclusion ``$\subset$'' in (ii).

The proof of (iii) is the~same as the~proof of (ii), except that we get $(i)\in \D^{\alpha'} (S_{\mc C}(y))$ instead of $(1)\in \D^{\alpha'} (S_{\mc C}(y))$, and the~admissibility of $\varphi$ gives $|\varphi((i))| = i$.
\end{proof}
\subsection{Existence of a~Regular Representation for \texorpdfstring{$\mc F$}{F}-Borel Sets} \label{section: existence of regular representation}

The goal of this section is to prove the~following theorem:

\begin{theorem}[Existence of regular $\Fa$-representations]\label{theorem: existence of regular representation}
Let $X$ be a~$\mc K$-analytic space and $\alpha\leq \omega_1$. For $Y \supset X$, the~following conditions are equivalent:
\begin{enumerate}[(1)]
\item $X\in\Fa(Y)$; \label{case: X Fa in Y}
\item $X=\mathbf{R}_\alpha(\mc C)$ for some closed Suslin scheme in $Y$;
	\label{case: R(C) with closed C}
\item $X$ has a~regular $\Fa$-representation in $Y$ -- that is, $X=\mathbf{R}_\alpha(\mc C,Y)$ for some Suslin scheme in $X$;
	\label{case: regular representation}
\item $X=\mathbf{R}_\alpha(\mc C,Y)$ for some complete Suslin scheme on $X$.
	\label{case: regular representation by complete C}
\end{enumerate}
\end{theorem}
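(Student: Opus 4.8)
The plan is to prove the cycle of implications $\eqref{case: regular representation by complete C} \implies \eqref{case: R(C) with closed C} \implies \eqref{case: X Fa in Y}$, then $\eqref{case: R(C) with closed C} \implies \eqref{case: regular representation}$ trivially (a closed Suslin scheme in $Y$ is in particular a Suslin scheme, and intersecting with $X$ via Lemma~\ref{lemma: complete suslin schemes in super spaces}-style truncation keeps things in $X$), and finally the substantive direction $\eqref{case: X Fa in Y} \implies \eqref{case: regular representation by complete C}$. The easy implications are immediate from material already in the excerpt: $\eqref{case: R(C) with closed C}\Rightarrow\eqref{case: X Fa in Y}$ is exactly Proposition~\ref{proposition: complexity of R alpha sets} (every $\mathbf{R}_\alpha(\mc C,Y)$-set is $\Fa(Y)$, and for a closed scheme $\mathbf{R}_\alpha(\mc C) = \mathbf{R}_\alpha(\mc C,Y)$). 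For $\eqref{case: regular representation by complete C}\Rightarrow\eqref{case: R(C) with closed C}$: given a complete Suslin scheme $\mc C$ on $X$, the scheme $\overline{\mc C}^Y$ is a closed Suslin scheme in $Y$, and $\mathbf{R}_\alpha(\mc C,Y) = \mathbf{R}_\alpha(\overline{\mc C}^Y)$ by the very definition of $\mathbf{R}_\alpha(\cdot,Y)$. And $\eqref{case: R(C) with closed C}\Rightarrow\eqref{case: regular representation}$ follows by replacing each $C(s)$ with $C(s)\cap X$: using Lemma~\ref{lemma: basic properties of R T sets}\,\eqref{case: R T formula} and an induction on the canonical tree, truncating each level of the scheme by $X$ does not change $\mathbf{R}_\alpha(\cdot,Y)$ as long as the original value was exactly $X$ — this mirrors the argument of Lemma~\ref{lemma: leaf scheme making smaller} for simple representations.

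The core is $\eqref{case: X Fa in Y}\Rightarrow\eqref{case: regular representation by complete C}$. Here I would argue by transfinite induction on $\alpha$, building a complete Suslin scheme $\mc C$ on $X$ with $X=\mathbf{R}_\alpha(\mc C,Y)$, using the canonical trees $T^c_\alpha$ from \eqref{equation: canonical trees}. The base case $\alpha=0$ (resp.\ $\alpha=\omega_1$) is handled separately: when $X$ is closed in $Y$ it is compact (being $\mc K$-analytic and closed in a space where its closure is compact), so it admits a complete Suslin scheme and $\mathbf{R}_0(\mc C,Y)=\overline{X}^Y=X$; when $\alpha=\omega_1$, apply Proposition~\ref{proposition: K analytic spaces have complete Suslin schemes} to get a complete Suslin scheme $\mc C$ on $X$, note $\mathbf{R}_{\omega_1}(\mc C,Y)=\mc A(\overline{\mc C}^Y)=X$ by Lemmas~\ref{lemma: basic properties of R T sets}\,\eqref{case: R T and IF} and~\ref{lemma: complete suslin schemes in super spaces}. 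For the successor/limit steps I would write $X=\bigcup_n X_n$ (odd $\alpha$) or $X=\bigcap_n X_n$ (even $\alpha$) with each $X_n\in\mc F_{\alpha_n}(Y)$, $\alpha_n<\alpha$ — here one must be slightly careful that each $X_n$ is still $\mc K$-analytic so the induction hypothesis applies; this is fine because $X_n$ is a closed-in-$Y$-parametrized $\mc F$-Borel subset of the $\mc K$-analytic $X$ intersected with closed sets, hence $\mc K$-analytic (or: first do the implication $\eqref{case: X Fa in Y}\Rightarrow\eqref{case: R(C) with closed C}$ by induction using closed schemes in $Y$ directly, which sidesteps the $\mc K$-analyticity bookkeeping, then upgrade to completeness using that $X$ itself is $\mc K$-analytic). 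Apply the induction hypothesis to get closed Suslin schemes $\mc C_n$ with $X_n=\mathbf{R}_{\alpha_n}(\mc C_n,Y)$ indexed so the associated tree is $T_{\pi_\alpha(n)}$ (using the reindexing trick from the proof of Proposition~\ref{proposition: existence of simple representations}, inserting copies of $\overline{X}^Y$ to pad), and assemble $\mc C$ by the recursive tree-concatenation dictated by Lemma~\ref{lemma: basic properties of R T sets}\,\eqref{case: R T formula}: set $C(\emptyset):=X$ (or $\overline{X}^Y$) and $C(n\ext s):=C_n(s)$ suitably shifted. Then $\mathbf{R}_\alpha(\mc C,Y)=\bigcap_n\bigcup_{s}\mathbf{R}^s_{T_{\pi_\alpha(n)}}(\mc C,Y)$, which the construction makes equal to $\bigcup_n X_n$ (odd) or $\bigcap_n X_n$ (even) $=X$.

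The main obstacle I anticipate is twofold. First, the combinatorial bookkeeping of the admissible-mapping shifts: when one glues schemes $\mc C_n$ at level $n$, the sequences $s_m\in\omega^m$ allowed by admissibility mean the scheme must be "spread out" correctly so that $\mathbf{R}^{s}_{T^{(m)}}$ recovers exactly $\mathbf{R}_{\alpha_m}(\mc C_m,Y)$ regardless of the prefix $s$ — concretely one needs $C(m\ext s\ext u)$ to depend on $u$ in a way that is uniform in $s$, which forces a re-parametrization of the index set (a bijection $\omega\cong\omega\times\omega$ at each node). Second, and more delicate, is preserving \emph{completeness} of the glued scheme on $X$: completeness of $(\mc C_k)$ as a sequence of covers is not automatically inherited from completeness of the pieces, so I would either invoke Proposition~\ref{proposition: K analytic spaces have complete Suslin schemes} to get \emph{some} complete scheme $\mc D$ on $X$ and then intersect the constructed $\mc C$ levelwise with $\mc D$ (taking common refinements, which keeps $\mathbf{R}_\alpha$ unchanged by a monotonicity/Lemma~\ref{lemma: leaf scheme making smaller}-type argument and makes the sequence complete since refining a complete sequence by another stays complete), or argue that one only needs $\mc A(\overline{\mc C}^Y)=X$ and the full strength of completeness is a convenience that can be arranged post hoc. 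I expect the cleanest write-up proves $\eqref{case: X Fa in Y}\Leftrightarrow\eqref{case: R(C) with closed C}$ first by the induction above (no completeness needed), then deduces $\eqref{case: regular representation}$ and finally $\eqref{case: regular representation by complete C}$ by refining with a complete scheme from Proposition~\ref{proposition: K analytic spaces have complete Suslin schemes}.
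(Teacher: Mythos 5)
Your proposal is correct and, in the ``cleanest write-up'' form you settle on at the end, it is essentially the paper's own proof: the hard direction $(1)\Rightarrow(2)$ is done by transfinite induction producing closed Suslin schemes in $Y$ (with exactly the re-parametrization of index sets via bijections $\omega^k\cong\omega$ that you anticipate), then $(2)\Rightarrow(3)$ by intersecting the scheme with $X$ and sandwiching, and $(3)\Rightarrow(4)$ by taking a common refinement with a complete scheme from Proposition~\ref{proposition: K analytic spaces have complete Suslin schemes}, which is precisely Lemma~\ref{lemma: completing representation}. You also correctly diagnose why the direct ``build a complete scheme by induction'' route fails (the pieces $X_n$ need not be $\mc K$-analytic) and why deferring completeness to a post hoc refinement avoids this.
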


First, we take a~look at the~implications in Theorem~\ref{theorem: existence of regular representation} in more detail. The implications $\eqref{case: regular representation by complete C} \implies \eqref{case: regular representation} \implies \eqref{case: R(C) with closed C}$ are trivial,
and the~implication $\eqref{case: R(C) with closed C} \implies \eqref{case: X Fa in Y}$ follow from Proposition~\ref{proposition: complexity of R alpha sets}.
The implication $\eqref{case: X Fa in Y} \implies \eqref{case: R(C) with closed C}$ is the~hardest one, and follows from Proposition~\ref{proposition: existence of regular F alpha representations}.
The implication $\eqref{case: R(C) with closed C} \implies \eqref{case: regular representation}$ is straightforward -- when $X=\mathbf{R}_\alpha(\mc C)$ holds for some closed Suslin scheme $\mc C$ in $Y$, we define a~Suslin scheme on $X$ as $\mc C' := (C(s) \cap X)_s$, and note that $\mathbf{R}_\alpha(\mc C',Y) = \bigcup_\varphi \bigcap_t \overline{C(\varphi(t))\cap X}^Y$ satisfies
\[ \mathbf{R}_\alpha(\mc C) = \mathbf{R}_\alpha(\mc C) \cap X \overset{\text{def.}}=
 \bigcup_\varphi \bigcap_t C(\varphi(t)) \cap X
 \subset  \bigcup_\varphi \bigcap_t \overline{C(\varphi(t))}^Y
 \overset{\mc C \text{ is}}{\underset{\text{closed}}=} \bigcup_\varphi \bigcap_t C(\varphi(t))
 \overset{\text{def.}}= \mathbf{R}_\alpha(\mc C) \]
 (where the~unions are taken over all admissible mappings $\varphi : T^c_{\alpha} \rightarrow \seq$, and the~intersections over $t\in T^c_\alpha$).
Lastly, any regular representation can be made complete by Lemma~\ref{lemma: completing representation} (provided that $X$ is $\mc K$-analytic), which proves the~implication $\eqref{case: regular representation} \implies \eqref{case: regular representation by complete C}$.

Next, we aim to prove the~implication $\eqref{case: X Fa in Y} \implies \eqref{case: R(C) with closed C}$.
The following lemma shows that \eqref{case: X Fa in Y} and \eqref{case: R(C) with closed C} are equivalent for $\alpha=2$. More importantly, it will used in the~induction step when proving the~equivalence for general $\alpha$.

\begin{lemma}[Existence of regular representations for $\fsd$ sets]\label{lemma: indexing representation by Suslin scheme}
For any $\alpha<\omega_1$ and $\mc X = X\in \left(\Fa(Y)\right)_{\sigma\delta}$, there exist a~Suslin scheme $\{X_s| \ s\in\seq\}$ in $Y$ satisfying
\begin{enumerate}[(i)]
\item $X=\bigcap_{m\in\omega} \bigcup_{s\in\omega^m} X_s$,
\item $\left( \forall s\in\seq \right) : X_s \in \Fa(Y)$,
\item $\mc X$ covers $X$.
\end{enumerate}
\end{lemma}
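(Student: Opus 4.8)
\noindent The plan is to simply unfold the two layers of the $(\cdot)_{\sigma\delta}$-operation and repackage the resulting double array as a Suslin scheme by taking running finite intersections, in the same spirit as the discussion preceding Definition~\ref{definition: Suslin schemes} and the proof of Proposition~\ref{proposition: K analytic spaces have complete Suslin schemes}. The statement is essentially a bookkeeping lemma, so I do not expect a genuine obstacle; the one place that needs a moment of care is property (iii), which does \emph{not} follow from (i) for an arbitrary Suslin scheme and must be read off from the special product structure of the sets we build.

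First I would fix witnesses: write $X=\bigcap_{m\in\omega}X^m$ with $X^m\in(\Fa(Y))_\sigma$, and then $X^m=\bigcup_{n\in\omega}X^m_n$ with each $X^m_n\in\Fa(Y)$. Next I would define the scheme by $X_\emptyset:=Y$ and, for $s=(s_0,\dots,s_k)\in\seq$ of length $k+1\geq 1$,
\[ X_s := X^0_{s_0}\cap X^1_{s_1}\cap\dots\cap X^k_{s_k}. \]
Monotonicity is then immediate from the definition: if $t\sqsupset s$ then $X_t\subset X_s$, and $X_\emptyset=Y$ sits on top, so $\{X_s\mid s\in\seq\}$ is a Suslin scheme in $Y$. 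For (ii), note that $X_\emptyset=Y$ is closed, hence lies in $\mc F_0(Y)\subset\Fa(Y)$, while for $|s|\geq 1$ the set $X_s$ is a \emph{finite} intersection of members of $\Fa(Y)$ and therefore again lies in $\Fa(Y)$, using that each class $\Fa(Y)$ is stable under finite intersections (the case $\alpha=0$ being just the fact that a finite intersection of closed sets is closed). For (i), the distributive law gives $\bigcup_{s\in\omega^m}X_s=\bigcap_{j<m}\bigl(\bigcup_n X^j_n\bigr)=\bigcap_{j<m}X^j$ for $m\geq 1$, while $\bigcup_{s\in\omega^0}X_s=X_\emptyset=Y$; hence $\bigcap_m\bigcup_{s\in\omega^m}X_s=\bigcap_{j\in\omega}X^j=X$.

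The remaining point is (iii), i.e. that the scheme $\{X_s\}$ covers $X$ in the sense of Definition~\ref{definition: Suslin schemes} ($\mc A(\{X_s\})\supset X$); this is where the product form is used. Given $x\in X$, for every $j\in\omega$ we have $x\in X^j$, so we may choose $n_j$ with $x\in X^j_{n_j}$ and set $\sigma:=(n_0,n_1,\dots)\in\baire$. Then $x\in X_{\sigma|m}$ for every $m$ (trivially for $m=0$, and by construction for $m\geq 1$), so $x\in\bigcap_m X_{\sigma|m}\subset\mc A(\{X_s\mid s\in\seq\})$, which proves (iii). I expect the whole argument to be this short; the only subtlety, as flagged above, is remembering that one must verify (iii) directly rather than deducing it from (i), since compatibility of the level-$m$ witnesses across $m$ is precisely what the independence of the coordinates in $X_s$ supplies.
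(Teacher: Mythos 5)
Your proof is correct and follows essentially the same route as the paper: unfold the two layers, form the running finite intersections $X_s = X^0_{s(0)}\cap\dots\cap X^k_{s(k)}$, and verify (i)--(iii) directly, with the covering property obtained by choosing a witness $n_j$ level by level. The paper phrases the construction via the families $\mc R_m := \mc P_0\land\dots\land\mc P_m$, but this is the identical scheme.
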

\begin{proof}
Let $\alpha$ and $X$ be as above.
Since $X$ belongs to $\left(\Fa(Y)\right)_{\sigma\delta}$, there exists some countable families $\mc P_m\subset \mc \Fa(Y)$, such that $X=\bigcap_{m\in\omega} \bigcup \mc P_m$. 
Using the~notation $\mc A \land \mc B = \{ A \cap B | \ A\in \mc A, B\in \mc B \}$, we set
\[ \mc R_m:= \mc P_0 \land \mc P_1 \land \dots \land \mc P_m .\]
Clearly, we have $X=\bigcap_{m\in\omega} \bigcup \mc R_m$ and $\mc R_m \subset \Fa(Y)$. Since $\mc P_m$ are all countable, we can enumerate them as $\mc P_m =: \{ P^m_n | \ n\in\omega \}$. We then get
\[ \mc R_m = \{ P^0_{n_0} \cap \dots \cap P^m_{n_m} | \ n_i \in \omega, \ i\leq m \} = \{ P^0_{s(0)} \cap \dots P^m_{s(m)} | \ s\in \omega^{m+1} \} . \]
Denoting $X_s := P^0_{s(0)} \cap \dots \cap P^m_{s(m)}$ for $s=(s(0),\dots,s(m))\in\seq$, we get a~Suslin scheme $\mc X$ in $Y$ which satisfies the~conditions $(i)$ and $(ii)$.
Moreover, any $\mc P_m$ is a~cover of $X$, so $\mc X$ covers $X$.
\end{proof}

The following proposition shows that the~equivalence between \eqref{case: X Fa in Y} and \eqref{case: R(C) with closed C} holds for $\alpha=4$. While the~proposition is not required for the~proof of Theorem~\ref{theorem: existence of regular representation}, we include it to demonstrate the~method of the~proof in a~less abstract setting. The proof also introduces the~notation necessary to get the~general result.

\begin{proposition}[Existence of regular representations for $\mc F_{\sigma\delta\sigma\delta}$ sets] \label{proposition: representation for F 4 sets}
A set $X\subset Y$ is $\mc F_{\sigma\delta\sigma\delta}$ in $Y$ if and only if there exists a~closed Suslin Scheme $\mc C$ in $Y$ s.t.
\[ X = \bigcap_{m\in\omega} \bigcup_{s\in \omega^m} \bigcap_{n\in\omega} \bigcup_{t\in \omega^n} C(s\ext t). \]
\end{proposition}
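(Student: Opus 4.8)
The plan is to notice that the right-hand side of the displayed identity is precisely $\mathbf{R}_{T^{\mathrm c}_4}(\mc C)=\mathbf{R}_4(\mc C)$: since $T^{\mathrm c}_4=T_2=\omega^{\leq 2}$, unravelling the recursion of Lemma~\ref{lemma: basic properties of R T sets}\,\eqref{case: R T formula} twice gives $\mathbf{R}_4(\mc C)=\bigcap_m\bigcup_{s\in\omega^m}\bigcap_n\bigcup_{t\in\omega^n}C(s\ext t)$. Thus the proposition is exactly the $\alpha=4$ instance of the equivalence $\eqref{case: X Fa in Y}\Leftrightarrow\eqref{case: R(C) with closed C}$ from Theorem~\ref{theorem: existence of regular representation}. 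The ``if'' direction is then immediate from Proposition~\ref{proposition: complexity of R alpha sets} (a closed Suslin scheme $\mc C$ forces $\mathbf{R}_4(\mc C)\in\mc F_4(Y)=\mc F_{\sigma\delta\sigma\delta}(Y)$; alternatively, just read off the four alternations of union and intersection applied to closed sets). All the work is in the ``only if'' direction.

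For that direction I would construct $\mc C$ in two stages, each a use of Lemma~\ref{lemma: indexing representation by Suslin scheme}. Since $X\in\mc F_{\sigma\delta\sigma\delta}(Y)=(\mc F_2(Y))_{\sigma\delta}$, the lemma with $\alpha=2$ produces a Suslin scheme $\{X_s\}_{s\in\seq}$ in $Y$ with $X=\bigcap_m\bigcup_{s\in\omega^m}X_s$, with every $X_s\in\mc F_2(Y)$, and covering $X$ (so $X\subset\mc A(\{X_s\})$, which will matter below). Next, since each $X_s\in\mc F_2(Y)=(\mc F_0(Y))_{\sigma\delta}$, the lemma with $\alpha=0$ produces, for every $s$, a \emph{closed} Suslin scheme $\{F^s_u\}_{u\in\seq}$ in $Y$ with $X_s=\bigcap_n\bigcup_{u\in\omega^n}F^s_u$, again covering $X_s$. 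It remains to weld these two levels into a single closed Suslin scheme. The natural shape of the definition is a telescoping intersection of closed pieces indexed by the initial segments of the current sequence, e.g.
\[ C(w):=\bigcap_{k=0}^{|w|}\bigl(\overline{X_{w|k}}^{\,Y}\cap F^{\,w|k}_{\,w|_{[k,|w|)}}\bigr) \]
(or a close variant of it). Here $C(w)$ is closed because it is a finite intersection of closed sets, and $\mc C$ should be monotone because each factor shrinks when $w$ is extended, while the single new factor appearing at $w\ext j$ is contained in $\overline{X_w}^{\,Y}$, which is already among the factors of $C(w)$ (using that $F^{w}_\emptyset\supset X_w\supset X_{w\ext j}$ and $F^w_\emptyset$ is closed).

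It then remains to check $X=\mathbf{R}_4(\mc C)=\bigcap_m\bigcup_{s\in\omega^m}B_s$ with $B_s:=\bigcap_n\bigcup_{t\in\omega^n}C(s\ext t)$. The inclusion ``$\supset$'' is the easy half: given $x\in X$, a branch $\sigma\in\baire$ witnessing $x\in\mc A(\{X_s\})$ supplies, for each $m$, the outer index $s=\sigma|m$, and a branch of $\{F^{s}_u\}_u$ witnessing $x\in X_s\subset\mc A(\{F^s_u\})$ supplies the inner indices; then $x$ lies in the diagonal term $k=|s|$ of $C(s\ext t)$ for every $t$ along that branch, so $x\in B_{\sigma|m}$ for each $m$, hence $x\in\bigcap_m\bigcup_{s\in\omega^m}B_s$. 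The inclusion ``$\subset$'' is the delicate one, and this is where I expect the main obstacle: one must rule out spurious points, i.e.\ show that a $y$ lying in $\bigcup_{s\in\omega^m}B_s$ for every $m$ actually belongs to $X$. The difficulty is that $C(s\ext t)$ contains, besides the intended diagonal term, ``cross terms'' coming from the proper initial segments $w|k$ with $k\ne|s|$; the factors $\overline{X_{w|k}}^{\,Y}$ are there precisely to tame these, and to make the argument close one should choose the inner schemes $\{F^s_u\}_u$ \emph{coherently} --- built by recursion on $|s|$ so that the inner scheme of $X_{s\ext v}$ refines the $v$-shifted inner scheme of $X_s$ --- so that landing in a cross term at a fixed level $k$ for cofinally many lengths forces $y\in X_{w|k}$, while the diagonal behaviour forces $y\in X_s$. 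Combined with $X=\bigcap_m\bigcup_{s\in\omega^m}X_s$, this yields $y\in X$. This coherent bookkeeping is also the notational device carried over to the general Proposition~\ref{proposition: existence of regular F alpha representations}.
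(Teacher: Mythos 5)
Your overall architecture is right --- the identification of the displayed set with $\mathbf{R}_4(\mc C)$, the triviality of the ``if'' direction, and the two-stage application of Lemma~\ref{lemma: indexing representation by Suslin scheme} all match the paper. But there is a genuine gap in the construction of $\mc C$. Your definition
$C(w):=\bigcap_{k\leq|w|}\bigl(\overline{X_{w|k}}^{Y}\cap F^{w|k}_{w|_{[k,|w|)}}\bigr)$
reads both the outer index ($w|k$) and the inner index (the tail of $w$ after position $k$) off the \emph{same} raw coordinates of $w$. This over-constrains the scheme: to get $x\in C(\sigma|m\ext t)$ for the ``$X\subset\mathbf{R}_4(\mc C)$'' half, the coordinate $\sigma(k)$ must simultaneously serve as the outer choice making $x\in X_{\sigma|(k+1)}$ and as the $0$-th inner coordinate of a branch witnessing $x\in X_{\sigma|k}=\bigcap_n\bigcup_u F^{\sigma|k}_u$ --- two unrelated requirements on one integer. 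The ``coherence'' you invoke to fix this (the inner scheme of $X_{s\ext v}$ refines the $v$-shifted inner scheme of $X_s$, i.e.\ essentially $F^{s\ext v}_u\subset F^{s}_{v\ext u}$) cannot be arranged in general: together with $X_{s\ext v}=\bigcap_n\bigcup_u F^{s\ext v}_u$ it would force every point of $X_{s\ext v}$ to be witnessed in the representation of $X_s$ along a branch beginning with $v$, but the branch through which a point of $X_s$ is witnessed has nothing to do with the outer index $v$.

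The missing device is a re-indexing that creates enough room for the outer index and \emph{all} the inner indices to be specified independently. The paper fixes bijections $\pi_{k+1}:\omega^{k+1}\to\omega$ and indexes $\mc C$ by sequences $\varrho(\vec s)$ whose $k$-th coordinate encodes an entire $(k+1)$-tuple $s_k=(s_k^0,\dots,s_k^k)$: the diagonal entries $s_k^k$ assemble the outer index $\Delta_k(\vec s)$, and the $j$-th column below the diagonal assembles the inner index $\xi_j(\vec s)$ for the split at level $j$; then $C(\varrho(\vec s)):=\bigcap_{k\leq|\vec s|}X^{\xi_k(\vec s)}_{\Delta_k(\vec s)}$. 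With this padding the cross terms you worry about are harmless (each split point has its own private coordinates), monotonicity is automatic, and the inclusion $\mathbf{R}_4(\mc C)\subset X$ follows by projecting onto the split at $k=|u|$. Without some such encoding the single-integer-per-coordinate scheme you propose cannot carry the required information, so the construction as written does not go through.
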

\begin{proof}
Let $X$ be a~subset of $Y$. Since the~implication from right to left is immediate, we need to prove that if $X$ is an $\mc F_{\sigma\delta\sigma\delta}$ subset of $Y$, it has a~representation with the~desired properties.

By Lemma~\ref{lemma: indexing representation by Suslin scheme}, there exists a~Suslin scheme $\mc X = \{X_s | \ s\in\seq \} \subset F_{\sigma\delta}(Y)$ covering $X$ such that
\[ X = \bigcap_{m\in\omega} \bigcup_{s\in \omega^m} X_s . \]
Using Lemma~\ref{lemma: indexing representation by Suslin scheme} once more on each $X_s$, we obtain Suslin schemes $\mc X_s = \{ X_s^t | \ t\in\seq \}$ such that
	\begin{itemize}
	\item for each $s$, we have $ X_s = \bigcap_{n\in\omega} \bigcup_{t\in \omega^n} X_s^t $,
	\item for each $s$ and $t$, $X_s^t$ is closed in $Y$,
%	\item[(\textrm{mon})] for each $s$,  $\{ X_s^t | \ t\in\seq \}$ is monotone,
	\item for each $s$,  $\{ X_s^t | \ t\in\seq \}$ covers $X_s$.
	\end{itemize}
It follows that $X$ can be written as
\[ X = \bigcap_{m\in\omega} \bigcup_{s\in \omega^m} \bigcap_{n\in\omega} \bigcup_{t\in \omega^n} X_s^t. \]

The idea behind the~proof is the~following: If it was the~case that $X_s^t=X_u^v$ held whenever $s\ext t = u \ext v$, we could define $C(s\ext t)$ as $X_s^t$, and the~proof would be finished.
Unfortunately, there is no reason these sets should have such a~property. However, by careful refining and re-indexing of the~collection $\{ X_s^t | \ s,t\in\seq \}$, we will be able to construct a~new family $\{ C_s^t | \ s,t\in\seq \}$ for which the~above condition will hold. And since the~condition above holds, we will simply denote the~sets as $C(s\ext t)$ instead of $C_s^t$.

We now introduce the~technical notation required for the~definition of $\mc C$. For a~less formal overview of what the~notation is about, see Figure~\ref{figure: notation for representation theorem}.
\begin{notation}
\begin{itemize}
	\item $S_0:=\{\emptyset\}$, $S_m:=\omega^2 \times \omega^3 \times \dots \times \omega^{m+1}$ for $m\geq 		1$ and $S:=\bigcup_{m\in\omega} S_m$,
	\item Elements of $S$ will be denoted as
		\begin{equation*} \begin{split}
		\vec s_m 	& = \left(s_1,s_2,\dots,s_m\right)  \\
				& = \left( \left(s_1^0, s_1^1\right), \left(s_2^0, s_2^1, s_2^2\right), \dots, \left(s_m^0, s_m^1, \dots, s_m^m\right) \right) ,
		\end{split} \end{equation*}
		where $s_k \in \omega^{k+1}$ and
		$s^l_k\in\omega$. By length $|\vec s_m|$ of $\vec s_m$ we will understand the~number of sequences 			it contains -- in this case `$m$'. If there is no need to specify the~length, we will denote an 			element of $S$ simply as $\vec s$.
	\item $\pi_k:\omega^k\rightarrow \omega$ will be an arbitrary fixed bijection (for $k\geq 2$) and we 			set $\pi:=\bigcup_{k=2}^\infty \pi_k$.
	\item We define $\varrho_0: \emptyset \in S_0 \mapsto \emptyset$, for $m\geq 1$ we set
		$$\varrho_m : \vec s_m \in S_m \mapsto \left( \pi_2 (s_1), \pi_3 (s_2), \dots, \pi_{m+1} (s_m) 			\right) \in \omega^m$$
		and we put these mappings together as $\varrho:=\bigcup_{m\in\omega} \varrho_m : S \rightarrow \seq 		$.
	\item For $k\in\omega$ we define the~mapping $\Delta_k$ as
		$$\Delta_k : \vec s \in \bigcup_{m\geq k} S_m \mapsto \left( s_1^1, s^2_2,  \dots, s^k_k 			\right) \in \omega^k$$
		(where $\Delta_0$ maps any $\vec s \in S$ to the~empty sequence $\emptyset$).
	\item We also define a~mapping $\xi_k$ for $k\in\omega$:
		$$\xi_k : \vec s \in \bigcup_{m\geq k} S_m \mapsto \left( s^k_{k+1}, s^k_{k+2},
		\dots,s_{|\vec s|}^k \right) \in \omega^{|\vec s|-k}.$$
\end{itemize}

\begin{figure}
\definecolor{Gray}{gray}{0.9}
\definecolor{DarkGray}{gray}{0.6}
\definecolor{Cyan}{rgb}{0.88,1,1}
\begin{tabular}{r l l l l l l l l l}
$\omega^0 \ni$ & \!\!\!\!``$s_0$''=	& \ \cellcolor{DarkGray}$\emptyset$ & & &		& & & $\longrightarrow \emptyset$ & \\
$\omega^2 \ni$ & 	$s_1=$	& $(s^0_1$ 	& \cellcolor{DarkGray} $s^1_1)$	& &	& & & $\longrightarrow \pi(s_1)$ & $\in \omega$ \\
$\omega^3 \ni$ & 	$s_2=$	& $(s^0_2$	& $s^1_2$	& \cellcolor{DarkGray} $s^2_2)$ & & & & $\longrightarrow \pi(s_2)$ & $\in \omega$ \\
$\omega^4 \ni$ & 	$s_3=$	& $(s^0_3$	& $s^1_3$	& \cellcolor{Gray}$s^2_3$	& $s^3_3)$	& & & $\longrightarrow \pi(s_3)$ & $\in \omega$ \\
$\omega^5 \ni$ & 	$s_4=$	& $(s^0_4$	& $s^1_4$	& \cellcolor{Gray}$s^2_4$	& $s^3_4$	& $s^4_4)$ & & $\longrightarrow \pi(s_4)$ & $\in \omega$ \\
& \dots \ \ \ \ 	&		&			& \cellcolor{Gray} \dots	&			& & & \ \ \dots & \\
$\omega^{m+1} \ni$ & $s_m=$ 	& $(s^0_m$	& $s^1_m$	& \cellcolor{Gray} $s^2_m$	& $s^3_m$	& ...	& $s^m_m)$ & $\longrightarrow \pi(s_m)$ & $\in \omega$ \\
			\\
$S_m \ni$ & $\vec s_{m}$ \ \ \ &		&			&			&			& & & $\longrightarrow \varrho(\vec s_{m})$ & $\in \omega^m$ 
\end{tabular}
\caption{An illustration of the~notation from the~proof of Proposition~\ref{proposition: representation for F 4 sets}.
Each sequence $s_k\in\omega^{k+1}$ is mapped to an integer $\pi(s_k)$  via a~bijection.
This induces a~mapping of the~``sequence of sequences'' $\vec s_m$ to a~sequence $\varrho(\vec s_m)$ of length $|\vec s_m|=m$.
The diagonal sequence highlighted in dark gray corresponds to $\Delta_2(\vec s_{m})$. Note that the~letter $\Delta$ stands for `diagonal' and the~lower index ($2$ in this case) corresponds to the~length of this sequence. The part of the~column highlighted in light gray corresponds to the~sequence $\xi_2(\vec s_{m})$.
\label{figure: notation for representation theorem}}
\end{figure}
\end{notation}

Suppose that $w\in \omega^m$ satisfies $w=\varrho(\vec s)$ for some $\vec s \in S_m$. Without yet claiming that this correctly defines a~Suslin scheme, we define $C(w)$ as
\begin{equation*} \begin{split} \label{equation: definition of C}
C\left(\varrho\left(\vec s\right)\right) \ 
:= \ & \bigcap_{k=0}^{|\vec s|} X^{\xi_k(\vec s)}_{\Delta_k(\vec s)} \ 
 = \ X^{\xi_0(\vec s)}_{\Delta_0(\vec s)} \cap X^{\xi_1(\vec s)}_{\Delta_1(\vec s)} \cap \dots \cap X^{\xi_{|\vec s|}(\vec s)}_{\Delta_{|\vec s|}(\vec s)} \\
 = \ & X_\emptyset^{s_1^0 s_2^0 \dots s_m^0} \cap X_{s_1^1}^{s_2^1 s_3^1 \dots s_m^1}
 	\cap X_{s_1^1 s_2^2}^{s_3^2 s_4^2 \dots s_m^2} \cap \dots \cap X^\emptyset_{s_1^1 s_2^2 \dots s_m^m} .
\end{split} \end{equation*}

In order to show that $\mc C$ indeed does have the~desired properties, we first note the~following properties of $\varrho$, $\Delta_k$ and $\xi_k$ (all of which immediately follow from the~corresponding definitions).
\begin{lemma} \label{lemma: properties of auxiliary functions}
The functions $\varrho$, $\Delta_k$ and $\xi_k$ have the~following properties:
\begin{enumerate}[(a)]
\item $\left(\forall \vec s \in S \right) \left(\forall k\leq |\vec s|\right) :
	\left|\Delta_k(\vec s)\ext \xi_k(\vec s)\right| = \left| \vec s \right| = \left| \varrho(\vec s) \right|$.
\item $\varrho : S \rightarrow \seq$ is a~bijection.
\item $\left(\forall u,v \in \seq \right): v\sqsupset u \iff \varrho^{-1}(v) \sqsupset \varrho^{-1}(u)$.
\item $\left( \forall \vec s, \vec t \in S \right) \left( \forall k \leq |\vec s| \right) : \vec t \sqsupset \vec s \implies \Delta_k(\vec t) = \Delta_k(\vec s) \ \& \ \xi_k(\vec t) \sqsupset \xi_k(\vec s) $.
\end{enumerate}
\end{lemma}
From $(b)$ it follows that each $C(\varrho(\vec s))$ is well defined and that $C(w)$ is defined for every $w\in\seq$.
To see that $\mc C$ is a~Suslin scheme, we need to verify its monotonicity. Let $v\sqsupset u$ be two elements of $\seq$. We have
\begin{equation*} \begin{split}
C(v) =
	\ & \bigcap_{k=0}^{|\varrho^{-1}(v)|} X^{\xi_k(\varrho^{-1}(v))}_{\Delta_k(\varrho^{-1}(v))} 
	\ \overset{(c)}\subset \ \bigcap_{k=0}^{|\varrho^{-1}(u)|} X^{\xi_k(\varrho^{-1}(v)) }_{\Delta_k(\varrho^{-1}(v))} \\
	\overset{(c),(d)}{=} & \ \bigcap_{k=0}^{|\varrho^{-1}(u)|} X^{\xi_k(\varrho^{-1}(v))}_{\Delta_k(\varrho^{-1}(u))}
	\ \subset \ \bigcap_{k=0}^{|\varrho^{-1}(u)|} X^{\xi_k(\varrho^{-1}(u))}_{\Delta_k(\varrho^{-1}(u))} = C(u) ,
\end{split} \end{equation*}

where the~first and last identities are just the~definition of $C(\cdot)$ and the~last inclusion holds because we have $\xi_k(\varrho^{-1}(v)) \sqsupset \xi_k(\varrho^{-1}(u))$ (by (c) and (d)) and $\mc X_{\Delta_k(\varrho^{-1}(u))} = (X^t_{\Delta_k(\varrho^{-1}(u))})_t$ is a~Suslin scheme (and hence monotone).

Next, we will show that
$$ X \subset \bigcup_{\sigma\in\baire} \bigcap_{m\in\omega} C(\sigma|m) = \mc A(\mc C)
 = \mathbf{R}_{\omega_1}(\mc C) \subset \mathbf{R}_4(\mc C)
 = \bigcap_{m\in\omega} \bigcup_{s\in \omega^m} \bigcap_{n\in\omega} \bigcup_{t\in \omega^n} C(s\ext t)
 .$$
Going from right to left, the~identities (resp. inclusions) above follow from: definition of $\mathbf{R}_4$, Lemma~\ref{lemma: basic properties of R T sets}\,\eqref{case: R T and embeddings}, Lemma~\ref{lemma: basic properties of R T sets}\,\eqref{case: R T and IF}, and definition of $\mc A(\cdot)$; it remains to prove the~first inclusion.
Let $x\in X$.
Our goal is to produce $\mu \in \baire$ such that $x\in C(\mu|m)$ holds for each $m\in\omega$. We shall do this by finding a~sequence $\vec s_0 \sqsubset \vec s_1 \sqsubset \dots $, $\vec s_m \in S_m$, for which which $x\in \bigcap_m C(\varrho(\vec s_m))$. Once we have done this, $(c)$ ensures that $\mu|m := \varrho (\vec s_m)$ correctly defines the~desired sequence $\mu$.

To this end, observe that since $\{X_s | \ s\in\seq \}$ covers $X$, there exists a~sequence $\sigma\in \baire$ such that $x\in \bigcap_k X_{\sigma|k}$. Similarly, $\{X_{\sigma|k}^t | \ t\in\omega \}$ covers $X_{\sigma|k}$ for any $k\in\omega$, so there are sequences $\nu_k \in \baire$, $k\in\omega$, for which $x\in \bigcap_m X_{\sigma|k}^{\nu_k|m}$. For $m\in\omega$ we denote
\[ s_m := \left( \nu_0(m-1), \nu_1(m-2), \dots, \nu_k (m-1-k), \dots, \nu_{m-1}(0), \sigma (m-1) \right) \]
and define $\vec s_m := (s_1, s_2, \dots , s_m )$. For $k\leq m$ we have
\begin{align*}
\Delta_k(\vec s_m) = & ( s_1^1, \dots , s_k^k ) = ( \sigma(0), \sigma(1), \dots, \sigma(k-1) ) = \sigma|k ,\\
\xi_k(\vec s_m) = & ( s_{k+1}^k, s_{k+2}^k, \dots, s_m^k ) \\
 = & \left( \nu_k(k+1-1-k), \nu_k(k+2-1-k), \dots , \nu_k(m-1-k) \right) \\
 = & \left( \nu_k(0), \nu_k(1), \dots , \nu_k(m-k-1) \right) = \nu_k|m-k.
\end{align*}
It follows that $X_{\Delta_k(\vec s_m)}^{\xi_k(\vec s_m)} = X_{\sigma|k}^{\nu_k|m-k} \ni x$ holds for every $k\leq m$. As a~consequence, we get $C(\varrho(\vec s_m)) = \bigcap_{k=0}^m X_{\sigma|k}^{\nu_k|m-k} \ni x$ for every $m\in\omega$, which shows that $\mc C$ covers $X$.

In order to finish the~proof of the~theorem, it remains to show that
\[ \bigcap_{m\in\omega} \bigcup_{u\in \omega^m} \bigcap_{n\in\omega} \bigcup_{v\in \omega^n} C(u\ext v)
\subset \bigcap_{m\in\omega} \bigcup_{s\in \omega^m} \bigcap_{n\in\omega} \bigcup_{t\in \omega^n} X_s^t = X
. \]
To get this inclusion, it is enough to prove that
\[ \left( \forall u \in \seq \right) \left( \exists s_u \in \omega^{|u|} \right)
	\left( \forall v \in \seq \right) \left( \exists t_{uv} \in \omega^{|v|} \right) : C(u\ext v) \subset X_{s_u}^{t_{uv}} . \]
We claim that a~suitable choice is $s_u := \Delta_{|u|}(\varrho^{-1}(u))$ and $t_{uv}:= \xi_{|u|}(\varrho^{-1}(u\ext v))$. Indeed, for any $u\in\omega^m$ and $v\in\omega^n$ we have
\[ C(u\ext v) \overset{\textrm{def.}}=
	\bigcap_{k=0}^{m+n} X^{\xi_k(\varrho^{-1}(u\ext v))}_{\Delta_k(\varrho^{-1}(u\ext v))}
	\subset X^{\xi_m(\varrho^{-1}(u\ext v))}_{\Delta_m(\varrho^{-1}(u\ext v))}
	\overset{(d)}= X^{\xi_m(\varrho^{-1}(u\ext v))}_{\Delta_m(\varrho^{-1}(u))} = X_{s_u}^{t_{uv}} . \]
\end{proof}

The following proposition proves the~implication \eqref{case: X Fa in Y}$\implies$\eqref{case: R(C) with closed C} of Theorem~\ref{theorem: existence of regular representation}.

\begin{proposition}[Existence of regular representations]
	\label{proposition: existence of regular F alpha representations}
Let $X\subset Y$ and $\alpha\leq \omega_1$. Then $X\in\Fa(Y)$ if and only if $X=\mathbf{R}_\alpha(\mc C)$ holds for some Suslin scheme $\mc C$ which is closed in $Y$ and covers $X$.
\end{proposition}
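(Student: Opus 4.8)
The implication ``$\Leftarrow$'' is immediate: if $\mc C$ is closed in $Y$ then $\overline{\mc C}^Y=\mc C$, so $X=\mathbf{R}_\alpha(\mc C)=\mathbf{R}_\alpha(\mc C,Y)\in\Fa(Y)$ by Proposition~\ref{proposition: complexity of R alpha sets}. For ``$\Rightarrow$'' I would argue by transfinite induction on $\alpha\le\omega_1$, constructing a closed Suslin scheme $\mc C$ in $Y$ that covers $X$ and satisfies $\mathbf{R}_\alpha(\mc C)=X$. The case $\alpha=0$ is handled by the constant scheme $C(s):=X$ (then $\mathbf{R}_0(\mc C)=C(\emptyset)=X$ by Lemma~\ref{lemma: construction of admissible mappings}), and for $\alpha=\omega_1$ one takes any closed Suslin scheme $\mc C$ in $Y$ with $\mc A(\mc C)=X$ (which exists since $X$ is Suslin-$\mc F$ in $Y$), so that $\mathbf{R}_{\omega_1}(\mc C)=\mathbf{R}_{\seq}(\mc C)=\mc A(\mc C)=X$ by Lemma~\ref{lemma: basic properties of R T sets}\,\eqref{case: R T and IF}.

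For odd $\alpha$ (which is then a successor), write $X=\bigcup_n X_n$ with $X_n\in\mc F_{\alpha-1}(Y)$; since $(\alpha-1)'=\alpha'$, the inductive hypothesis yields $X_n=\mathbf{R}_{T_{\alpha'}}(\mc D_n)$ for closed Suslin schemes $\mc D_n$ in $Y$ covering $X_n$. As $T^c_\alpha=\{\emptyset\}\cup 1\ext T_{\alpha'}$, Lemma~\ref{lemma: basic properties of R T sets}\,\eqref{case: R T formula} gives $\mathbf{R}_\alpha(\mc C)=\bigcup_n\mathbf{R}^{(n)}_{T_{\alpha'}}(\mc C)$ for any Suslin scheme $\mc C$, so it suffices to set $C((n)\ext w):=D_n(w)$ (an unambiguous prescription, since the first entry of a non-empty sequence is determined) and $C(\emptyset):=\overline{X\cup\bigcup_n D_n(\emptyset)}^Y$. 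One checks directly that $\mc C$ is a closed Suslin scheme in $Y$; that it covers $X$ (for $x\in X_n$, prepend the symbol $(n)$ to a branch witnessing $x\in\mc A(\mc D_n)=X_n$); and that $\mathbf{R}^{(n)}_{T_{\alpha'}}(\mc C)=\mathbf{R}_{T_{\alpha'}}(\mc D_n)\cap C(\emptyset)=X_n$, because an admissible witnessing map already forces membership in $D_n(\emptyset)\subseteq C(\emptyset)$. Hence $\mathbf{R}_\alpha(\mc C)=\bigcup_n X_n=X$.

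The substantial case is $\alpha$ even, $\alpha\ge 2$. The first step is to produce, for each $m$, closed Suslin schemes $\mc D_s$ ($s\in\omega^m$) in $Y$, each covering $X_s:=\mathbf{R}_{T_{\pi_{\alpha'}(m)}}(\mc D_s)$, such that $X=\bigcap_m\bigcup_{s\in\omega^m}X_s$ and $\mc A(\{X_s\mid s\in\seq\})\supseteq X$; here, for a successor $\alpha$, one reads $\pi_{\alpha'}(m):=\alpha'-1$. When $\alpha$ is a successor this is Lemma~\ref{lemma: indexing representation by Suslin scheme} applied with $\alpha-2$ in place of $\alpha$, followed by the inductive hypothesis applied to the resulting $X_s\in\mc F_{\alpha-2}(Y)$. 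When $\alpha$ is a limit, write $X=\bigcap_m Y_m$ with $Y_m\in\mc F_{c_m}(Y)$ for some even $c_m<\alpha$; re-enumerate the $Y_m$ (inserting copies of $\overline{X}^Y$, exactly as in the proof of Proposition~\ref{proposition: existence of simple representations}) so that $c_m'\le\pi_{\alpha'}(m)$ for all $m$ --- which is possible because $\{m\mid\pi_{\alpha'}(m)\ge v\}$ is infinite for every $v<\alpha$ --- and put $X_s:=Y_{|s|}$, with $\mc D_s$ the scheme given by the inductive hypothesis for $Y_{|s|}$. In either case the inductive hypothesis gives $X_s=\mathbf{R}_{T_{\delta_{|s|}'}}(\mc D_s)$ for some even $\delta_m<\alpha$ with $\delta_m'\le\pi_{\alpha'}(m)$, and this representation can be ``padded'' for free: since $\mc D_s$ covers $X_s$ we have $\mc A(\mc D_s)=X_s$ (using $\mathbf{R}_T(\cdot)\supseteq\mc A(\cdot)$, valid for every tree $T$ by Lemma~\ref{lemma: basic properties of R T sets}\,\eqref{case: R T and smaller trees} and~\eqref{case: R T and IF}), and since $\delta_m'\le\pi_{\alpha'}(m)$ the tree $T_{\delta_m'}$ admits a map into $T_{\pi_{\alpha'}(m)}$ of the type in Lemma~\ref{lemma: basic properties of R T sets}\,\eqref{case: R T and embeddings}, so the squeeze $X_s=\mc A(\mc D_s)\subseteq\mathbf{R}_{T_{\pi_{\alpha'}(m)}}(\mc D_s)\subseteq\mathbf{R}_{T_{\delta_m'}}(\mc D_s)=X_s$ forces $\mathbf{R}_{T_{\pi_{\alpha'}(m)}}(\mc D_s)=X_s$.

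It remains to glue the $\mc D_s$ into one closed Suslin scheme $\mc C$ in $Y$, covering $X$, with $\mathbf{R}_\alpha(\mc C)=\mathbf{R}_{T_{\alpha'}}(\mc C)=X$; this is the heart of the argument, and I would carry it out exactly as in the proof of Proposition~\ref{proposition: representation for F 4 sets}: re-index $\seq$ through the space $S$ of ``sequences of sequences'' via the bijection $\varrho$ of that proof and set
\[ C(\varrho(\vec s)):=\bigcap_{k=0}^{|\vec s|}D_{\Delta_k(\vec s)}\bigl(\xi_k(\vec s)\bigr), \]
the only change being that the explicit closed sets $X^t_s$ of Proposition~\ref{proposition: representation for F 4 sets} are replaced by the values $D_s(w)$ of our Suslin schemes. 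Lemma~\ref{lemma: properties of auxiliary functions}\,(c),(d) makes $\mc C$ a (closed) Suslin scheme in $Y$ and gives, for any splitting $v=s\ext w$, the inclusion $C(v)\subseteq D_{\Delta_{|s|}(\varrho^{-1}(s))}\bigl(\xi_{|s|}(\varrho^{-1}(v))\bigr)$; since $w\mapsto\xi_{|s|}(\varrho^{-1}(s\ext w))$ preserves extension and length, it carries admissible maps to admissible maps, whence $\mathbf{R}^{s}_{T_{\pi_{\alpha'}(|s|)}}(\mc C)\subseteq\mathbf{R}_{T_{\pi_{\alpha'}(|s|)}}\bigl(\mc D_{\Delta_{|s|}(\varrho^{-1}(s))}\bigr)\subseteq\bigcup_{s'\in\omega^{|s|}}X_{s'}$, and so $\mathbf{R}_{T_{\alpha'}}(\mc C)\subseteq\bigcap_m\bigcup_{s'\in\omega^m}X_{s'}=X$. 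The reverse inclusion $X\subseteq\mc A(\mc C)\subseteq\mathbf{R}_{T_{\alpha'}}(\mc C)$ follows once $\mc C$ is shown to cover $X$, which is precisely the interleaving argument of Proposition~\ref{proposition: representation for F 4 sets}: for $x\in X$ pick $\sigma$ with $x\in X_{\sigma|k}=\mc A(\mc D_{\sigma|k})$ for all $k$, then $\nu_k$ with $x\in D_{\sigma|k}(\nu_k|j)$ for all $j$, and assemble the $\vec s_m$ by the same diagonal formula, so that $C(\varrho(\vec s_m))=\bigcap_{k\le m}D_{\sigma|k}(\nu_k|(m-k))\ni x$. The step I expect to be the main obstacle is exactly this last one --- carrying over, and re-verifying in the present generality, the combinatorics of the auxiliary maps $\varrho,\Delta_k,\xi_k$, together with the bookkeeping (the re-enumeration ensuring $\delta_m'\le\pi_{\alpha'}(m)$) that lets the uniform construction of Proposition~\ref{proposition: representation for F 4 sets} absorb the non-uniform inner tree heights $T_{\pi_{\alpha'}(m)}$; those heights enter the upper-bound inclusion only through monotonicity of $\mathbf{R}_T(\cdot)$ in its scheme argument, so they create no genuinely new difficulty.
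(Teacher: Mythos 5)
Your proposal is correct and follows essentially the same route as the paper's proof: transfinite induction with the odd case handled by prepending an index to the inner schemes, and the even case handled by the $\varrho$, $\Delta_k$, $\xi_k$ diagonal re-indexing machinery of Proposition~\ref{proposition: representation for F 4 sets}, with the covering and upper-bound arguments carried over verbatim. The only cosmetic difference is in the bookkeeping for non-uniform inner heights: you pad each inner representation up to $T_{\pi_{\alpha'}(m)}$ via the squeeze through $\mc A(\mc D_s)$, whereas the paper keeps the original ranks $\alpha_m$ and instead shows the resulting tree $\{\emptyset\}\cup\bigcup_m m\ext T^{\mathrm c}_{\alpha_m}$ is equivalent to $T^{\mathrm c}_\alpha$ in the sense of Lemma~\ref{lemma: basic properties of R T sets}\,\eqref{case: R T and embeddings}; both devices work.
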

\begin{proof}
The implication ``$\Longleftarrow$'' follows from Proposition~\ref{proposition: complexity of R alpha sets}, so it remains to prove the~implication ``$\implies$''.

For $\alpha=0$, $X$ is closed and any Suslin scheme $\mc C$ satisfies $\mathbf{R}_0(\mc C) = C(\emptyset)$. Consequently, it suffices to set $C(u):=\overline{X}^Y$ for every $s\in \seq$.
We already have the~statement for $\alpha=2$ (Lemma~\ref{lemma: indexing representation by Suslin scheme}), $\alpha=4$ (Proposition~\ref{proposition: representation for F 4 sets}) and $\alpha=\omega_1$ ($(5)$ from Lemma~\ref{lemma: basic properties of R T sets}).

Suppose that the~statement holds for an even ordinal $\alpha<\omega_1$ and let $X\in \mc F_{\alpha+1}(Y)$. By induction hypothesis we have
\[ X= \bigcup_{m\in\omega} X_m = \bigcup_{m\in\omega} \mathbf{R}_\alpha (\mc C_m) \]
for some Suslin schemes $\mc C_m$, $m\in\omega$, which are closed in $Y$ and cover $X_m$.
We define $\mc C$ as $C(\emptyset):= \overline{X}^Y$ and, for $m\in\omega$ and $t\in \seq$, $C(m\ext t):=\overline{X}^Y \cap C_m(t)$.
Clearly, $\mc C$ is a~Suslin scheme which is closed in $Y$ and covers $X$.
Recall that for odd $\alpha+1$ we have $T^\textrm{c}_{\alpha+1}=\{\emptyset\} \cup 1 \ext T_\alpha^\textrm{c}$.
For $x\in Y$ we have
\begin{equation*} \begin{split}
x \in X & \iff \left(\exists m\in \omega \right) : x \in \mathbf{R}_\alpha(\mc C_m) \\
		& \iff \left(\exists m \in \omega \right) \left(\exists \varphi : T^\textrm{c}_\alpha \rightarrow \seq \textrm{ adm.}\right) : x \in \bigcap_{t'\in T^\textrm{c}_\alpha} C_m(\varphi(t')) = \bigcap_{t'\in T^\textrm{c}_\alpha} C(m\ext \varphi(t')) \\
		& \iff \left(\exists \psi : 1\ext T^\textrm{c}_\alpha \rightarrow \seq \textrm{ adm.}\right) : x \in \bigcap_{1\ext t'\in 1\ext T^\textrm{c}_\alpha} C(\psi(m \ext t')) \ \& \ x\in \overline{X}^Y = C(\emptyset) \\
		& \iff \left(\exists \psi : T^\textrm{c}_{\alpha+1} \rightarrow \seq \textrm{ adm.}\right) : x \in \bigcap_{t\in T^\textrm{c}_{\alpha+1}} C(\psi(t)) \\
		& \iff x \in \mathbf{R}_{\alpha+1}(\mc C).
\end{split} \end{equation*}

Finally, suppose that $X\in \Fa(Y)$ holds for $0<\alpha<\omega_1$ even and that the~statement holds for all $\beta<\alpha$.

If $\alpha$ is a~successor ordinal, we can use Lemma~\ref{lemma: indexing representation by Suslin scheme} to obtain a~Suslin scheme $\{ X_s | \ s\in\seq \}\subset \mc F_{\alpha-2}(Y)$ satisfying the~conditions (i)-(iii) from the~lemma.
For each $m\in\omega$, we set $\alpha_m:=\alpha-2$ and define
\begin{equation}\label{equation:T}
T:=\{\emptyset \} \cup \bigcup_{m\in\omega} m\ext T_{\alpha_m}^\textrm{c} .
\end{equation}
Since $T = T^c_\alpha$, we trivially have $\mathbf{R}_T(\cdot) = \mathbf{R}_\alpha(\cdot)$ and the~definition of $\alpha_m$ is chosen to satisfy the~following:
\begin{equation}\label{equation: X s is in F alpha m}
\left( \forall s \in \seq \right) : X_s \in \mc F_{\alpha_{|s|}}(Y) .
\end{equation}

For limit $\alpha$, we can repeat the~proof of Lemma~\ref{lemma: indexing representation by Suslin scheme} to obtain a~sequence $(\alpha_m)_m$ and a~Suslin scheme $\{ X_s | \ s\in\seq \}$ which satisfies (i)-(iii) from Lemma~\ref{lemma: indexing representation by Suslin scheme}, \eqref{equation: X s is in F alpha m} and $\alpha_m < \alpha$ for each $m\in\omega$.
Moreover, we can assume without loss of generality that $\sup_m \alpha_m = \alpha$.
Let $T$ be the~tree defined by \eqref{equation:T}. 
We claim that the~trees $T^\textrm{c}_\alpha$ and $T$ are equivalent in the~sense of \eqref{case: R T and embeddings} from Lemma~\ref{lemma: basic properties of R T sets}, so that we have $\mathbf{R}_T(\cdot ) = \mathbf{R}_\alpha(\cdot)$ even when $\alpha$ is limit.

Indeed, the~sequence $(\pi_\alpha(n))_n$, defined in Notation~\ref{notation: trees of height alpha}, converges to $\alpha$ with increasing $n$, so for every $m\in\omega$ there exists $n_m\geq m$ such that $\pi_\alpha(n_m) \geq \alpha_{|m|}$. Since any canonical tree can be embedded (in the~sense of Lemma~\ref{lemma: basic properties of R T sets}\,\eqref{case: R T and embeddings}) into any canonical tree with a~higher index, $m\ext T_{\alpha_m}^\textrm{c}$ can be embedded by some $f_m$ into $n_m \ext T_{\pi_\alpha (n_m)}^\textrm{c}$. Defining $f_\emptyset : \emptyset \mapsto \emptyset$ and $f:=f_\emptyset \cup \bigcup_{m\in\omega} f_m$, we have obtained the~desired embedding of $T$ into $T^\textrm{c}_\alpha$. The embedding of $T^\textrm{c}_\alpha$ into $T$ can be obtained by the~same method.

Since $\alpha_{|s|} < \alpha$ for every $s\in \seq$, we can apply the~induction hypothesis  and obtain a~Suslin scheme $\mc C_s$ which is closed in $Y$, covers $X_s$, and satisfies $X_s = \mathbf{R}_{\alpha_m}\left(\mc C_s\right)$.
This in particular gives
\[ X = \bigcap_{m\in\omega} \bigcup_{s\in \omega^m} X_s
	= \bigcap_{m\in\omega} \bigcup_{s\in \omega^m} \mathbf{R}_{\alpha_m}\left(\mc C_s\right) . \]
Analogously to the~proof of Proposition~\ref{proposition: representation for F 4 sets}, we define
\[ C\left(\varrho(\vec s)\right) := \bigcap_{k=0}^{|\vec s|} C_{\Delta_k (\vec s)}\left( \xi_k(\vec s) \right) . \]
Using the~exactly same arguments as in the~proof of Proposition~\ref{proposition: representation for F 4 sets}, we get that this correctly defines a~Suslin scheme $\mc C$ which is closed in $Y$ and covers $X$.

To finish the~proof, it remains to show that $\mathbf{R}_\alpha(\mc C)\subset X$, which reduces to proving the~inclusion in the~following formula:
\[ \mathbf{R}_\alpha(\mc C) = \mathbf{R}_T(\mc C) \subset \bigcap_{m\in\omega} \bigcup_{s\in\omega^m} \mathbf{R}_{\alpha_m}(\mc C_s) = X . \]
Using the~definition of $\mathbf{R}_T(\cdot)$ and the~fact that $T^m := \{ t' \in \seq | \ m\ext t' \in T \} = T^\textrm{c}_{\alpha_m}$ for any $m\in\omega$, we get
\begin{equation*} \begin{split}
x \in \mathbf{R}_T(\mc C) \iff & \left( \exists \varphi : T \rightarrow \seq \text{ adm.}\right) : x \in \bigcap_{t\in T} C(\varphi(t)) \\
\underset{(iii)}{\overset{L\ref{lemma: construction of admissible mappings}}\iff} & \left( \forall m\in \omega \right) \left( \exists s\in\omega^m \right) \left( \exists \varphi_m : T^m \rightarrow \seq \text{ adm.} \right) : 
		x \in \bigcap_{t'\in T^m} C(s\ext \varphi_m(t')) \\
\iff & \left( \forall m\in\omega \right) \left( \exists s\in \omega^m \right) \left( \exists \varphi_m : T^\textrm{c}_{\alpha_m} \rightarrow \seq \text{ adm.} \right) :
		x \in \bigcap_{t'\in T^\textrm{c}_{\alpha_m}} C(s\ext \varphi_m(t'))		\\
\iff & \left( \forall m\in\omega \right) \left( \exists s\in\omega^m \right) : 
		x \in \mathbf{R}^{s}_{\alpha_m} (\mc C) \\
\iff & x \in \bigcap_{m\in\omega} \bigcup_{s\in\omega^m} \mathbf{R}^{s}_{\alpha_m}(\mc C) .
\end{split} \end{equation*}
We claim that for each $s\in\omega^m$ there exists $\tilde{s}\in \omega^m$ such that $\mathbf{R}^s_{\alpha_m}(\mc C) \subset \mathbf{R}_{\alpha_m}(\mc C_{\tilde{s}})$. Once we prove this claim, we finish the~proof by observing that
\[ \mathbf{R}_T(\mc C) = \bigcap_{m\in\omega} \bigcup_{s\in\omega^m} \mathbf{R}^{s}_{\alpha_m}(\mc C)
	\subset \bigcap_{m\in\omega} \bigcup_{s\in\omega^m} \mathbf{R}_{\alpha_m}(\mc C_{\tilde{s}})
	\subset \bigcap_{m\in\omega} \bigcup_{\tilde s\in\omega^m} \mathbf{R}_{\alpha_m}(\mc C_{\tilde s}) = X . \]
Let $s\in\omega^m$. To prove the~claim, observe first that for any $v\in\seq$ we have
\begin{equation} \label{equation: final inequality in F alpha representation proposition} \begin{split}
 C(s\ext v) & =
 	\bigcap_{k=0}^{m+|v|}C_{\Delta_k(\rho^{-1}(s\ext v))}\left(\xi_k(\rho^{-1}(s\ext v))\right)
	\subset C_{\Delta_m(\rho^{-1}(s\ext v))}\left(\xi_m(\rho^{-1}(s\ext v))\right) \\
 & \overset{L\ref{lemma: properties of auxiliary functions}}{\underset{(d)}=} C_{\Delta_m(\rho^{-1}(s))}\left(\xi_m(\rho^{-1}(s\ext v))\right) 
	= C_{\tilde{s}}\left(\xi_m(\rho^{-1}(s\ext v))\right) 
\end{split} \end{equation}
(where we denoted $\tilde{s}:=\Delta_m(\rho^{-1}(s))\in\omega^m$). By definition, any $x\in \mathbf{R}^s_{\alpha_m}(\mc C)$ satisfies $x\in \bigcap_{t\in T^\textrm{c}_{\alpha_m}} C(s\ext \varphi(t))$ for some admissible $\varphi : T^\textrm{c}_{\alpha_m} \rightarrow \seq$. Taking $v=\varphi(t)$ in the~computation above, we get
\[ x\in \bigcap_{t\in T^\textrm{c}_{\alpha_m}} C(s\ext \varphi(t)) \implies
	x \in \bigcap_{t\in T^\textrm{c}_{\alpha_m}} C_{\tilde{s}}\left(\xi_m(\rho^{-1}(s\ext \varphi(t)))\right) . \]
Since $s\in\omega^m$, it follows from Lemma~\ref{lemma: properties of auxiliary functions} that the~mapping $t\in T^\textrm{c}_{\alpha_m} \mapsto \xi_m\left((\rho^{-1}(s\ext \varphi(t))\right)$ is admissible. It follows that the~intersection on the~right side of \eqref{equation: final inequality in F alpha representation proposition} is contained in $\mathbf{R}_{\alpha_m}(\mc C_{\tilde{s}})$, so $\mathbf{R}^s_{\alpha_m}(\mc C) = \mathbf{R}_{\alpha_m}(\mc C_{\tilde{s}})$ and the~proof is complete.
\end{proof}

To prove Theorem~\ref{theorem: existence of regular representation}, it remains the~prove that every regular representation can be ``made complete without loss of generality'', in sense of the~following proposition.
When we say that a~regular $\Fa$-representation $\mc C$ of $X$ in $Y$ is \emph{complete}, we mean that $\mc C$ is complete on $X$.
A regular $\Fa$-representation $\mc C$ is a~\emph{refinement} of a~regular $\Fa$-representation $\mc D$ when for every $s\in\seq$, there is some $t\in \seq$ with $|t|=|s|$ s.t. $C(s) \subset D(s)$.

\begin{lemma}[Completing regular representations]
	\label{lemma: completing representation}
Let $X$ be $\mc K$-analytic space and $Y\supset X$. For any regular $\Fa$-representation $\mc D$ of $X$ in $Y$, there is a~complete regular $\Fa$-representation $\mc C$ of $X$ in $Y$ which refines $\mc D$.
\end{lemma}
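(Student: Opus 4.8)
The plan is to ``dovetail'' the given scheme $\mc D$ with a complete Suslin scheme, interleaving their index coordinates so that the resulting scheme $\mc C$ simultaneously inherits completeness from the auxiliary scheme, a refinement relation to $\mc D$, and the bound $\mathbf R_\alpha(\mc C,Y)=X$ from $\mc D$. Since $X$ is $\mc K$-analytic, Proposition~\ref{proposition: K analytic spaces have complete Suslin schemes} first provides a complete Suslin scheme $\mc E=(E(s))_{s\in\seq}$ on $X$. Fix a bijection $p=(p_0,p_1):\omega\to\omega\times\omega$ and, for $s=(s(0),\dots,s(n-1))\in\seq$, set $g(s):=(p_0(s(0)),\dots,p_0(s(n-1)))$ and $h(s):=(p_1(s(0)),\dots,p_1(s(n-1)))$. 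Both $g,h:\seq\to\seq$ preserve lengths and the extension relation (they are length-preserving tree-embeddings), and $s\mapsto(g(s),h(s))$ maps $\omega^n$ bijectively onto $\omega^n\times\omega^n$ for every $n$. Define the Suslin scheme $\mc C=(C(s))_{s\in\seq}$ on $X$ by
\[ C(s):=D(g(s))\cap E(h(s))\subset X , \]
whose monotonicity is immediate from monotonicity of $g,h$ and of $\mc D,\mc E$.

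The second step is to verify that $\mc C$ has the four required properties. \emph{$\mc C$ is a Suslin scheme on $X$ and each $\mc C_n=\{C(s):s\in\omega^n\}$ covers $X$}: as $\mc D$ covers $X$ we have $\mc A(\mc D)=X$, so each $x\in X$ lies in $D(\sigma|n)$ for a suitable branch $\sigma$, and likewise $x\in E(\tau|n)$ for a branch $\tau$ of $\mc E$; realizing $(\sigma|n,\tau|n)$ by an $s\in\omega^n$ via $p$ gives $x\in C(s)$, and dovetailing over $n$ gives $x\in\mc A(\mc C)$. \emph{$\mc C$ is complete on $X$}: if a filter $\mc F$ on $X$ meets every $\mc C_n$, then $C(s)\subset E(h(s))$ with $h(s)\in\omega^{n}$ forces $\mc F$ to meet every $\mc E_n$, so completeness of $\mc E$ yields an accumulation point of $\mc F$ in $X$. \emph{$\mc C$ refines $\mc D$}: $C(s)\subset D(g(s))$ and $|g(s)|=|s|$. \emph{$\mathbf R_\alpha(\mc C,Y)=X$}: for ``$\supset$'', completeness of $\mc C$ together with Lemma~\ref{lemma: complete suslin schemes in super spaces} gives $\mc A(\overline{\mc C}^Y)=X$, and since $T^c_\alpha\subset\seq$ the chain \eqref{equation: R alpha as approximations of X} (i.e.\ parts \eqref{case: R T and smaller trees} and \eqref{case: R T and IF} of Lemma~\ref{lemma: basic properties of R T sets}) yields $X=\mc A(\overline{\mc C}^Y)=\mathbf R_{\omega_1}(\mc C,Y)\subset\mathbf R_\alpha(\mc C,Y)$. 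For ``$\subset$'', let $x\in\mathbf R_\alpha(\mc C,Y)$ be witnessed by an admissible $\varphi:T^c_\alpha\to\seq$, so that $x\in\overline{C(\varphi(t))}^Y\subset\overline{D(g(\varphi(t)))}^Y$ for every $t\in T^c_\alpha$; because $g$ is length- and extension-preserving, $g\circ\varphi$ is again admissible in the sense of Definition~\ref{definition: admissible mapping}, and hence witnesses $x\in\mathbf R_\alpha(\mc D,Y)=X$.

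I expect the ``$\subset$'' half of the last point to be the crux, and the rest to be bookkeeping. Everything there hinges on performing the dovetailing so that the coordinate map $g$ into $\mc D$ is a genuine \emph{length-preserving} tree-embedding: that is exactly the property that makes $g\circ\varphi$ admissible and thus transfers membership in $\mathbf R_\alpha$ from $\mc C$ back to $\mc D$. One must also keep straight the division of labour among the constituents — the auxiliary scheme $\mc E$ is solely responsible for completeness (through the levelwise refinement $C(s)\subset E(h(s))$), while $\mc D$ is solely responsible for the $\Fa$ upper bound — and check that the single dovetailed scheme $\mc C$ serves both roles at once. The $\mc K$-analyticity of $X$ is used only to produce the complete scheme $\mc E$.
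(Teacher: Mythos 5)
Your proof is correct and is essentially the paper's own argument: both intersect $\mc D$ with a complete Suslin scheme obtained from $\mc K$-analyticity via a coordinatewise pairing bijection, deduce completeness of the dovetailed scheme from the levelwise refinement of the complete factor, and prove $\mathbf R_\alpha(\mc C,Y)\subset X$ by composing a witnessing admissible map with the length-preserving projection onto the $\mc D$-coordinate (your $g\circ\varphi$ is the paper's $\varrho_1^{-1}\circ\varphi$). The only (shared, harmless) implicit step is treating the given regular representation $\mc D$ as covering $X$ levelwise, which the paper's proof assumes in exactly the same way.
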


\begin{proof}
Suppose that a~$\mc K$-analytic space $X$ satisfies $X\in\Fa(Y)$ for some $\alpha\leq \omega_1$.
By Proposition~\ref{proposition: K analytic spaces have complete Suslin schemes}, there exists some Suslin scheme $\mc R$ which is complete on $X$.
Let $\mc D$ be a~regular $\Fa$-representation of $X$ in $Y$. We fix an arbitrary bijection $\pi : \omega^2 \rightarrow \omega$ and denote by
\[ \varrho : (s,t) = \Big( \big(s(0),t(0)\big), \dots, \big(s(k),t(k)\big) \Big)
 \in \bigcup_{m\in\omega} \omega^m \times \omega^m \mapsto
 \Big( \pi\big(s(0),t(0)\big),\dots,\pi\big(s(k),t(k)\big) \Big) \in \seq \]
the bijection between pairs of sequences and sequences induced by $\pi$.
Finally, we define a~complete Suslin scheme $\mc C$ as
\[ C(u) := D(\varrho^{-1}_1(u)) \cap R(\varrho^{-1}_2(u)) \ \ \text{ (or equivalently, }
 C(\varrho(s,t)) := D(s) \cap R(t) ).\]
Denoting $\mc C_m:=\{ C_s | \ s\in\omega^m \}$, we obtain a~sequence of covers $(\mc C_m)_m$ which is a~refinement of the~complete sequence $(\mc R_m)_m$.
By a~straightforward application of the~definition of completeness, $(\mc C_m)_m$ is complete on $X$ as well.
In particular, Lemma~\ref{lemma: complete suslin schemes in super spaces} yields
\[ \mathbf{R}_\alpha(\mc C,Y) = \mathbf{R}_\alpha(\overline{\mc C}^Y\!\!)
\supset \mathbf{R}_{\omega_1}(\overline{\mc C}^Y\!\!)
= \mc A(\overline{\mc C}^Y\!\!) = X .\]
For the~converse inclusion, let $x\in \mathbf{R}_\alpha (\mc C,Y)$ and suppose that $\varphi$ is an admissible mapping witnessing\footnote{Recall that an admissible mapping $\varphi$ witnesses that $x$ belongs to $\mathbf{R}_\alpha(\mc C,Y)$ if and only if its domain is the~canonical tree $T^c_\alpha$ and we have $x \in \bigcap_{T^c_\alpha} \overline{C(\varphi(t))}^Y$.} that $x$ belongs to $\mathbf{R}_\alpha (\mc C,Y)$.
The mapping $\psi := \varrho^{-1}_1 \circ \varphi$ is clearly admissible as well, and for any $u\in\seq$, we have
\[ C( \varphi(u)) = D( \varrho^{-1}_1 (\varphi(u)))
 \cap R( \varrho^{-1}_2 ( \varphi(u)))
 \subset D( \varrho^{-1}_1 (\varphi(u)))
 = D( \psi (u) ) .\]
In particular, $\psi$ witnesses that $x$ belongs to $\mathbf{R}_\alpha (\mc D,Y) = X$.
\end{proof}
% !TeX root = Attainable_complexities.tex
\section{Complexity of Talagrand's Broom Spaces} \label{section: complexity of brooms}
%\section{Broom spaces} \label{section: broom spaces}

In this section, we study the so-called ``broom spaces'', based on the non-absolute $\fsd$ space $\mathbf{T}$ from \cite{talagrand1985choquet} (also defined in Definition \ref{definition: AD topology} here).
In \cite{kovarik2018brooms} the author shows that for each \emph{even} $\alpha$, there is a broom space $T$ with $\{2,\alpha\} \subset \textnormal{Compl}(T) \subset [2,\alpha]$.
We improve this results in two ways, by providing conditions under which the set $\textnormal{Compl}(T)$ of complexities attainable by a broom space $T$ is \emph{equal to the whole interval} $[2,\alpha]$ for \emph{any} $\alpha \in [2,\omega_1]$.

The existence of spaces with this property already follows from Theorem \ref{theorem: summary}. The interesting part will instead be the methods used to prove this result.
First of all, to show that a space $T$ attains many different complexities, we need to find many different compactifications of $T$.
In each such $cT$, we need to bound $\Compl{T}{cT}$ from below and from above. To obtain the lower estimate, we describe a refinement of a method from \cite{talagrand1985choquet}. For the upper bound, we use a criterion from Section \ref{section: Suslin scheme rank} (Corollary~\ref{corollary: sufficient condition for Fa}).

This section is organized as follows: In Section \ref{section: broom sets}, we describe the hierarchy of broom \emph{sets}, which are then used in Section \ref{section: broom space properties} to define corresponding hierarchy of topological spaces, called broom spaces. Section \ref{section: broom space properties} also explores the basic complexity results related to broom spaces.
Section \ref{section: Talagrand lemma} reformulates and refines some tools from \cite{talagrand1985choquet}, useful for obtaining complexity lower bounds.
In Section \ref{section: amalgamation spaces}, we study compactifications of broom spaces in the abstract setting of amalgamation spaces. We conclude with Sections \ref{section: cT} and \ref{section: dT}, where all of these results are combined in order to compute the complexities attainable by broom spaces.

We note that while reading the subsections in the presented order should make some of the notions more intuitive, it is not always strictly required; Sections \ref{section: Talagrand lemma} and \ref{section: amalgamation spaces} are completely independent, and their only relation to Section \ref{section: broom space properties} is that broom spaces constitute an example which one might wish to use to get an intuitive understanding of the presented abstract results.

%%%%%%%%%%%%%%%%%%%%%%%%%%%%%%%%%%%%
%  =============== Finite and Infinite Broom Sets =======================
\subsection{Broom sets}\label{section: broom sets}

This section introduces a special sets of finite sequences on $\omega$, called \emph{finite broom sets}, and related sets of infinite sequences on $\omega$, called \emph{infinite broom sets}.
We will heavily rely on the notation introduced in Section~\ref{section: sequences}.

\begin{definition}[Finite broom sets]\label{definition: finite brooms}
A \emph{forking sequence} is a sequence $( f_n )_{n\in\omega}$ of elements of $\seq$ such that for distinct $m,n\in\omega$ we have $f_n(0)\neq f_m(0)$.

We denote $\mc B_{0} := \left\{ \, \{ \emptyset \} \, \right\}$. For $\alpha\in [1,\omega_1]$ we inductively define the hierarchy of (finite) broom sets as
\begin{equation*}
\mc B_{\alpha} := 
\begin{cases}
\ \mc B_{<\alpha} \cup \{ h\ext B | \ B \in \mc B_{\alpha-1} , \ h\in \seq \} & \text{ for } \alpha \text{ odd} \\
\ \mc B_{<\alpha} \cup \{ \bigcup_n f_n \ext B_n | \ B_n \in \mc B_{<\alpha}, \, (f_n)_n \text{ is a forking seq.} \} & \text{ for } \alpha \text{ even.}
\end{cases}
\end{equation*}
By $h_B$ we denote the \emph{handle} of a finite broom set $B$, that is, the longest sequence common to all $s\in B$.
\end{definition}

Note that $h_B$ is either the empty sequence or the sequence $h$ from Definition \ref{definition: finite brooms}, depending on whether $B$ belongs to $\mc B_\alpha \setminus \mc B_{<\alpha}$ for even or odd $\alpha$.

For odd $\alpha$, every $B\in \mc B_\alpha$ can be written as $B = h_B \ext B'$, where $B' \in \mc B_{\alpha-1}$.
Moreover, we have $h_0 \ext B' \in \mc B_\alpha$ for any $h_0 \in \seq$.

We claim that the hierarchy of finite broom sets is strictly increasing and stabilizes at the first uncountable step (the `strictly' part being the only non-trivial one):
\begin{equation} \label{equation: broom hierarchy}
\mc B_{0} \subsetneq \mc B_{1} \subsetneq \mc B_{2} \subsetneq \dots \subsetneq \mc B_{\omega_1} = \mc B_{<\omega_1} .
\end{equation}
Indeed, for odd $\alpha$, if $h \in \seq$ is not the empty sequence and $\alpha$ is the smallest ordinal for which $B\in \mc B_{\alpha-1}$, then it follows from the above definition that $h\ext B \in \mc B_\alpha \setminus \mc B_{<\alpha}$.
For $\alpha$, if $(f_n)_n$ is a forking sequence and $\alpha$ is the smallest ordinal satisfying $\{ B_n | \ n\in \omega \} \subset \mc B_{<\alpha}$, then $\bigcup_n f_n\ext B_n$ belongs to $\mc B_\alpha \setminus \mc B_{<\alpha}$.

By extending each sequence from a finite broom set into countably many infinite sequences (in a particular way), we obtain an \emph{infinite broom set} of the corresponding rank:

\begin{definition}[Infinite broom sets]\label{definition: infinite brooms}
A countable subset $A$ of $\baire$ is \emph{an infinite broom set} if there exists some $B\in \mc B_{\omega_1}$ such that the following formula holds:
\begin{align}\label{equation: A extension}
\left( \exists \{ \nu^s_n | \, s\in B,n\in \omega \} \subset \baire \right)
\left( \exists \text{ forking sequences } (f^s_n)_n, \ s\in B \right) : 
A = \{ s\ext f^s_n \ext \nu^s_n | \ s\in B, n\in\omega \} .
\end{align}
If $A$ and $B$ satisfy \eqref{equation: A extension}, we say that $A$ is a \emph{broom-extension} of $B$.
For $\alpha\in[0,\omega_1]$, the family of $\mc A_\alpha$ consists of all broom-extensions of elements of $\mc B_\alpha$.
\end{definition}

When a broom extension $A$ of $B$ belongs to some $\mc A \subset \mc A_{\omega_1}$, we will simply say that $A$ is an $\mc A$-extension of $B$.
Note that the set $B$ of which $A$ is a broom extension is uniquely determined, so $A\in \mc A_\alpha$ holds if and only if $B\in \mc B_\alpha$.
Using the definition of broom-extension and \eqref{equation: broom hierarchy}, we get
\[ \mc A_{0} \subsetneq \mc A_{1} \subsetneq \mc A_{2} \subsetneq \dots \subsetneq \mc A_{\omega_1} = \mc A_{<\omega_1} .\]

We claim that in Definition \ref{definition: infinite brooms}, we have
\begin{equation} \label{equation: B tilde is B 2 + alpha}
\widetilde B := \{ s\ext f^s_n | \ s\in B, n\in\omega \} \in \mc B_{2+\alpha} .
\end{equation}
(Recall that for infinite $\alpha$, we have $2+\alpha = \alpha$.)
It follows from \eqref{equation: B tilde is B 2 + alpha} that each $\mc A_\alpha$ is actually the family of ``normal infinite extensions'' of certain $\mc B_{2+\alpha}$-brooms.
We define $\mc A_\alpha$ as in Definition \ref{definition: infinite brooms} to simplify notation later (the previous paper of the author, \cite{kovarik2018brooms}, uses the corresponding ``alternative'' numbering).

We omit the proof of \eqref{equation: B tilde is B 2 + alpha}, since it is a simple application of transfinite induction, and the above remark is not needed anywhere in the remainder of this paper.

%\begin{proof}[Sketch of the proof of \eqref{equation: B tilde is B 2 + alpha}]
%This clearly holds for $\alpha=0$ and $\alpha=1$.
%Assume that the claim holds for $k-1$, where $k\in \omega$ is even. For $B = \bigcup_m f_m \ext B_m  \in \mc B_k$, where $(f_m)_m$ is a forking sequence and $B_m \in \mc B_{k-1}$, we have
%\begin{align} \label{equation: B tilde}
%\widetilde B \ = & \ \{ s \ext f^s_n | \ s\in B, n\in\omega \} =
%\{ (f_m \ext s') \ext f^{f_m \ext s'}_n | \ m\in \omega, \, s'\in B_m, n\in\omega \} \\ \nonumber
%\ \  = & \ \bigcup_m f_m \ext \{ s' \ext f^{f_m \ext s'}_n | \ s'\in B_m, n\in\omega \}
%=: \bigcup_m f_m \ext \widetilde B_m .
%\end{align}
%Induction hypothesis gives $\widetilde B_m \in \mc B_{2+(k-1)} = \mc B_{k+1}$, which yields $\widetilde B \in \mc B_{k+2}$.
%For odd $k\in\omega$, we proceed analogously. For $\alpha = \omega$, \eqref{equation: B tilde} gives $\widetilde B \in \mc B_\omega = \mc B_{2+\omega}$ (rather than $\widetilde B \in \mc B_{\omega+2}$). The remaining transfinite induction is the same as the finite steps, except that $2+\alpha$ is replaced by $\alpha$.
%\end{proof}

% ================== Rank of Brooms ====================
The following result estimates the rank of broom sets.

\begin{lemma}[Rank of broom sets]\label{lemma: rank of broom sets}
Let $\alpha\in [0,\omega_1]$.
\begin{enumerate}[1)]
\item Let $B \in \mc B_\alpha$. Then
	\begin{enumerate}[(i)]
	\item $\D^{\alpha'}(B)$ is finite and
	\item if $\alpha$ is even, then $\D^{\alpha'}(B)$ is either empty or equal to $\{\emptyset\}$.
	\end{enumerate}
\item Let $A \in \mc A_\alpha$. Then
	\begin{enumerate}[(i)]
	\item $\D^{\alpha'}( \D(A) )$ is finite and
	\item if $\alpha$ is even, then $\D^{\alpha'}( \D(A) )$ is either empty or equal to $\{\emptyset\}$.
	\end{enumerate}
\end{enumerate}
\end{lemma}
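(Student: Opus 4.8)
The plan is to prove the lemma by transfinite induction on $\alpha$, handling the four assertions together. First I would observe that part 2) follows from part 1) almost immediately: if $A \in \mc A_\alpha$ is a broom-extension of $B \in \mc B_\alpha$, then $\D(A)$ differs from $B$ only by ``trimming'' the infinite tails and the forking sequences $f^s_n$ attached at each $s \in B$. More precisely, since each $s \in B$ has infinitely many incomparable extensions $s\ext f^s_n\ext\nu^s_n$ of different lengths inside $A$, every $s \in B$ survives one application of $\D$; conversely, the extra sequences in $\cltr{A}$ beyond those in $\cltr{B}$ are ``thin'' (each relevant node has only finitely many incomparable successors of differing length coming from a single forking sequence's worth of nodes, or eventually a single infinite branch) and get killed by the first derivative. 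One should check that $\D(\cltr A) = \cltr B$ up to finitely many short sequences — actually the cleanest route is to show $\cltr{B} \subseteq \D(A) \subseteq \cltr{B} \cup (\text{finite set})$, after which $\D^{\alpha'}(\D(A)) = \D^{\alpha'}(B)$ by the finite-stability remarks for $\D$-derivatives, and 2)(i),(ii) reduce to 1)(i),(ii).

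For part 1), the base case $\alpha = 0$ (and the finite cases) is trivial: $B = \{\emptyset\}$, $\alpha' = 0$, and $\D^0(B) = \cltr{B} = \{\emptyset\}$. For the inductive step I would split on the parity of $\alpha$ following Definition~\ref{definition: finite brooms}. If $\alpha$ is odd and $B = h\ext B'$ with $B' \in \mc B_{\alpha-1}$, then $\cltr{B} = \cltr{h} \cup h\ext \cltr{B'}$, and since prepending a fixed handle $h$ does not change which nodes have infinitely many incomparable extensions of differing lengths (except for the finitely many nodes along $h$ itself, which are killed quickly once $B'$ is nontrivial, or handled directly when $B' = \{\emptyset\}$), one gets $\D^{\gamma}(B) = h \ext \D^{\gamma}(B') $ for $\gamma \geq 1$, up to a finite correction. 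Using $\alpha' = (\alpha-1)'$ for odd $\alpha$, the induction hypothesis applied to $B'$ gives that $\D^{\alpha'}(B)$ is finite. (For 1)(ii) there is nothing to prove when $\alpha$ is odd.) If $\alpha$ is even and $B = \bigcup_n f_n\ext B_n$ for a forking sequence $(f_n)_n$ and $B_n \in \mc B_{<\alpha}$, write $B_n \in \mc B_{\alpha_n}$ with $\alpha_n < \alpha$, so $\alpha'_n < \alpha'$, and I expect $\D^{\alpha'_n + 1}(f_n\ext B_n) \subseteq \{\emptyset\}\cup(\text{finite along }f_n)$ or similar; the point is that each branch $f_n\ext B_n$ contributes a subtree of $\D$-rank strictly below $\alpha'$, the empty sequence $\emptyset$ has infinitely many incomparable extensions of differing lengths through the distinct $f_n$'s (since $f_n(0)$ are distinct, the $f_n\ext(\cdot)$ really are incomparable), so $\emptyset$ can survive up to $\sup_n(\alpha'_n) + 1$ steps, and $\sup_n(\alpha'_n) + 1 \leq \alpha'$ by the bookkeeping $\alpha = \lambda + 2n + i \Rightarrow \alpha' = \lambda + n$ applied to the even case. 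Thus $\D^{\alpha'}(B)$ is contained in $\{\emptyset\}$ plus a finite set of short sequences lying along the various $f_n$ with index $\le$ some bound, hence finite; and a slightly sharper version of this argument, tracking exactly when a node along some $f_n$ can survive $\alpha'$ derivatives, yields that $\D^{\alpha'}(B) \subseteq \{\emptyset\}$, giving 1)(ii).

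The main obstacle I anticipate is the precise arithmetic of $\D$-ranks under the two broom operations — in particular verifying the ``$+1$'' in the even case lands one at exactly $\alpha'$ rather than $\alpha' + 1$ or $\alpha'-1$, and correctly managing the limit case where $\alpha = \lambda$ is a limit ordinal and $\sup_n \alpha_n = \alpha$ but each $\alpha_n < \alpha$ (so that $\sup_n(\alpha'_n + 1) = \lambda'= \lambda$, not $\lambda + 1$). This is exactly the same kind of bookkeeping that appears in Notation~\ref{notation: trees of height alpha} and the paragraph defining $\alpha'$, so I would lean on the identity $r_l(T_\alpha) = r_i(T_\alpha) = \alpha$ and the behaviour of $\D$ on the canonical trees as a sanity check, and phrase the induction hypothesis carefully as a statement about \emph{all} $B \in \mc B_\alpha$ simultaneously (so that the $B_n$ in the even step, which may have various ranks $\alpha_n < \alpha$, are all covered). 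A secondary, more bureaucratic nuisance is making the ``up to a finite set'' corrections precise: rather than carrying error terms, it is cleaner to prove the two statements ``$\D^{\alpha'}(B)$ is finite'' and ``if $\alpha$ even, $\D^{\alpha'}(B) \subseteq \{\emptyset\}$'' directly by induction, since finiteness is itself stable under the operations (a finite tree has $\D$-rank $0$, and one extra derivative annihilates any finite collection of branches), which is presumably why the lemma is stated in exactly this form.
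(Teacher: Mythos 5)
Your proposal is correct and follows essentially the same route as the paper: part 2) is reduced to part 1) by computing $\D(A)$ in terms of $\cltr{B}$ (the paper obtains the exact identity $\D(A)=\cltr{B}$, which you should also prove exactly rather than ``up to a finite set'', since for $\alpha'=0$ a nonempty finite correction would spoil 2)(ii)), and part 1) is a transfinite induction on $\alpha$ split by parity, exactly as you outline. The only organizational difference is that the paper avoids your ``finite error term'' bookkeeping by proving the stronger inductive statement $D_i^{\alpha'}(B)\subset\cltr{h_B}$ for the larger derivative $D_i$, which absorbs the handle in the odd case and automatically forces $\D^{\alpha'}(B)\subset\{\emptyset\}$ in the even case.
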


\begin{proof}
$1)$: The proof of this part is essentially the same as the proof of Proposition 4.11(i) from \cite{kovarik2018brooms}, where an analogous result is proven for derivative
\[ D_{i}(B) := \{ s\in \seq | \ \textnormal{cl}_{\Tr}(B) \text{ contains infinitely many extensions of } s \} .\]
For any $B\in \mc B_{\omega_1}$, the initial segment $\textnormal{cl}_{\Tr}(h_B)$ is finite. Moreover, for $B \in \mc B_\alpha \setminus \mc B_{<\alpha}$, $\alpha$ even, we have $\textnormal{cl}_{\Tr}(h_B) = \{ \emptyset \}$.
Therefore -- since we clearly have $\D^{\alpha'}(B) \subset D_{i}^{\alpha'}(B)$ -- it suffices to prove 
\begin{equation} \label{equation: B derivative induction hypothesis}
\left( \forall B \in \mc B_{\omega_1} \right) : B \in \mc B_\alpha \implies D^{\alpha'}_i (B) \subset \textnormal{cl}_{\Tr}(h_B) .
\end{equation}

To show that \eqref{equation: B derivative induction hypothesis} holds for $\alpha=0$ and $\alpha=1$, note that every $B\in \mc B_1$ is of the form $B = \{ h \} = \{ h_B \}$ for some $h\in \seq$. Since $1'=0'=0$, such $B$ satisfies
\[ D_i^{1'} ( B ) = D_i^{0'} ( B ) = D_i^{0} ( B ) = \cltr{B} = \cltr{ \{h_B\} } .\]

Let $\alpha = \lambda + 2n$, where $n\geq 1$, and suppose that \eqref{equation: B derivative induction hypothesis} (and hence also $1)$ from the statement) holds for every $\beta < \alpha$. We will prove that \eqref{equation: B derivative induction hypothesis} holds for $\alpha$ and $\alpha+1$ as well.
Let $B\in \mc B_\alpha \setminus \mc B_{<\alpha}$. By definition of $\mc B_\alpha$, we have $B = \bigcup_n h_B \ext f_n \ext B_n$ for some forking sequence $(f_n)_n$, broom sets $B_n \in \mc B_{<\alpha}$ and $h_B = \emptyset$.
Clearly,
\[ \alpha-1= \lambda + 2n = \lambda+2(n-1)+1\]
is an odd ordinal, so we have $B \in \mc B_{\lambda+2(n-1)+1}$. Since $\lambda+2(n-1)+1$ is strictly smaller than $\alpha$ and we have
\[ (\lambda+2(n-1)+1)' = \lambda + n -1 = \alpha'-1 ,\]
the induction hypothesis implies that $D_i^{\alpha'-1} (f_n\ext B_n )$ is finite.
Consequently, $D_i^{\alpha'} ( f_n \ext B_n )$ is empty. It follows that the longest sequence that might be contained in
\[ D_i^{\alpha'} ( B ) = D_i^{\alpha'} ( \bigcup_n h_B \ext f_n \ext B_n ) \]
is $h_B$, which gives \eqref{equation: B derivative induction hypothesis}.

For $B\in \mc B_{\alpha+1} \setminus \mc B_\alpha$, the proof is the same, except that $h_B \neq \emptyset$.

When $\alpha=\lambda$ is a limit ordinal, the proof is analogous.

$2)$: If $A\in \mc A_\alpha$ is a broom-extension of $B$, we observe that $D(A) = \textnormal{cl}_{\Tr}(B)$:
\begin{align}
\D(A) & =  & \{ t\in \seq | \ & \textnormal{cl}_{\Tr}(A) \text{ contains infinitely many incomparable} \nonumber \\
							&&& \text{ extensions of $t$ of different length} \} \nonumber \\
	& = & \{ t\in \seq | \ & A \text{ contains infinitely many extensions of $t$} \}
	\label{equation: rewriting derivative of A} \\
	& = & \{ t\in \seq | \ & \text{the set }  \{ s\ext f^s_n \ext \nu^s_n | \ s\in B, n\in \omega \}
		\text{ contains} \nonumber \\
	&	&	& \text{infinitely many extensions of $t$} \} \nonumber \\	
	& = & \{ t \in \seq | \ & t \sqsubset s \text{ holds for some } s\in B \}
	= \textnormal{cl}_{\Tr}(B) , \nonumber 
\end{align}
The results then follows from $1)$, because the derivatives of $\textnormal{cl}_{\Tr}(B)$ are the same as those of $B$, and $B$ is an element of $\mc B_\alpha$.
\end{proof}

%%%%%%%%%%%%%%%%%%%%%%%%%%%%%%%%%%%%
%  =============== Talagrand's Brooms =======================
%\subsection{Talagrand's broom sets} \label{section: Talagrands broom sets}

Finally, we describe a particular family of brooms sets, used in \cite{talagrand1985choquet}, which has some additional properties.

Let $\varphi_n : \left( \mc B_{\omega_1} \right)^n \rightarrow \mc S$, for $n\in\omega$, be certain functions (to be specified later), such that for each $(B_n)_{n\in\omega}$, $\left( \varphi_n ( (B_i)_{i<n} \right))_{n\in \omega}$ is a forking sequence.
The collection $\mc B^T$ of \emph{finite Talagrand's brooms} is defined as the closure of $\mc B_0 = \{ \, \{ \emptyset \} \, \}$ in $\mc B_{\omega_1}$ with respect to the operations $B \mapsto h \ext B$, for $h \in \seq$, and
\begin{equation}\label{equation: construction of B T}
(B_n)_{n\in \omega} \mapsto \bigcup_{n\in\omega} f_n \ext B_n, \ \text{ where } f_n = \varphi_n ( (B_i)_{i<n} ) .
\end{equation}
For $\alpha \leq \omega_1$, we set $\mc B_\alpha^T := \mc B^T \cap \mc B_\alpha$.
Clearly, finite Talagrand's brooms satisfy an analogy of \eqref{equation: broom hierarchy}.

The family $\mc A^T$ of \emph{infinite Talagrand's brooms} consists of broom-extensions of elements of $\mc B^T$, where for each $B$, only certain combinations (to be specified later) of $(f^s_n)_n$, $s\in B$, and $\{ \nu^s_n | \ s\in B, n\in \omega \}$ are allowed.
For $\alpha \leq \omega_1$, we set $\mc A_\alpha^T := \mc A^T \cap \mc A_\alpha$.

The precise form of the as-of-yet-unspecified parameters above can be found in \cite{talagrand1985choquet}. However, our only concern is that the following lemma holds. (Recall that a family $\mc A$ is almost-disjoint if the intersection of any two distinct elements of $\mc A$ is finite.)

\begin{lemma}[The key properties of $\mc A^T$ and $\mc B^T$] \label{lemma: properties of Talagrands brooms}
The functions $\varphi_n$ and the ``allowed combinations'' above can be chosen in such a way that the following two properties hold:
\begin{enumerate}[(i)]
\item $\mc A^T$ is almost-disjoint; \label{case: A T is AD}
\item Let $B\in \mc B^T$ and let $L_s$, $s\in B$, be some sets. If each $L_s \cap \mc N(s)$ is $\tau_p$-dense in $\mc N(s)$, then there is some $\mc A^T$-extension $A$ of $B$ s.t. each $A\cap L_s$ is infinite. \label{case: A T is rich}
\end{enumerate}
\end{lemma}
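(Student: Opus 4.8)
The plan is to make explicit the forking-sequence functions $\varphi_n$ and the family of admissible tuples $((f^s_n)_n,\{\nu^s_n\})$ appearing in Definitions~\ref{definition: finite brooms}--\ref{definition: infinite brooms}, and then verify \eqref{case: A T is AD} and \eqref{case: A T is rich} by transfinite induction along the construction \eqref{equation: construction of B T} of $\mc B^T$, following \cite{talagrand1985choquet}. First I would fix once and for all an almost-disjoint family $\mc E=\{E_\xi \mid \xi<\mathfrak c\}$ of infinite subsets of $\baire$ together with a bookkeeping that, along each branch of the tree of finite Talagrand brooms, hands out to the successive nodes $h_B\ext f_n$ pairwise disjoint blocks of $\omega$ (this fixes the $\varphi_n$, and in particular makes each $(\varphi_n((B_i)_{i<n}))_n$ a genuine forking sequence). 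For a broom-extension $A$ of $B$ I would then declare the tuple $((f^s_n)_n,\{\nu^s_n\})$ \emph{admissible} precisely when, for each $s\in B$, the first coordinates $f^s_n(0)$ lie in the block reserved for $s$ and the infinite sequences $\{\,s\ext f^s_n\ext\nu^s_n\mid n\in\omega\,\}$ form a subset of a single prescribed member of $\mc E$, where the index of that member is allowed to range over a fixed perfect set $P_s$ of $\xi$'s attached to $s$. The role of the perfect set $P_s$ is exactly to reconcile the two conclusions of the lemma, as explained below.

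For \eqref{case: A T is AD}: by the remark after Definition~\ref{definition: infinite brooms}, the finite broom $B$ underlying an infinite broom $A$ is uniquely determined by $A$ (indeed $\D(A)=\cltr{B}$ by \eqref{equation: rewriting derivative of A}). So if $A\neq A'$ are distinct infinite Talagrand brooms, either their underlying finite brooms $B\neq B'$ already differ, in which case the disjointness of the blocks of $\omega$ distributed by the bookkeeping forces $A\cap A'$ to be finite; or $B=B'$ but the admissible tuples differ at some $s\in B$, in which case the branches along which the two extensions live at $s$ are indexed by distinct elements of $\mc E$, and almost-disjointness of $\mc E$ (together with finiteness of the coordinates where the tuples still agree) again yields that $A\cap A'$ is finite. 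A transfinite induction on the construction of $\mc B^T$ assembles these cases into the full statement.

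For \eqref{case: A T is rich}: suppose $B\in\mc B^T$ and sets $L_s$, $s\in B$, are given with each $L_s\cap\mc N(s)$ being $\tau_p$-dense in $\mc N(s)$. For each $s\in B$ and $n\in\omega$ I would first choose $f^s_n\in\seq$ whose first coordinate is the value prescribed by the bookkeeping (so that $(f^s_n)_n$ is a forking sequence of the admissible shape), extended far enough that the basic neighbourhood $\mc N(s\ext f^s_n)$ is contained in the part of $\mc N(s)$ reserved for the $\xi$'s in the perfect set $P_s$; since $L_s\cap\mc N(s)$ is dense in $\mc N(s)$, I can then pick $\nu^s_n\in\baire$ with $s\ext f^s_n\ext\nu^s_n\in L_s$, and by shrinking the choices inside $P_s$ (a perfect set, hence closed under the required density argument) I can keep the whole tuple admissible. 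The resulting $\mc A^T$-extension $A$ of $B$ then satisfies $A\cap L_s\supseteq\{\,s\ext f^s_n\ext\nu^s_n\mid n\in\omega\,\}$, which is infinite.

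The main obstacle is precisely the simultaneous tuning of \eqref{case: A T is AD} and \eqref{case: A T is rich}: the first wants admissible extensions to be rigid and to live along a sparse, almost-disjoint scaffolding, while the second needs enough freedom inside that scaffolding to hit arbitrary dense sets $L_s$. The device that resolves this—and is the technical heart of Talagrand's argument—is to reserve for each node $s$ a \emph{perfect} family $P_s$ of branches of $\mc E$: perfectness supplies the density needed for \eqref{case: A T is rich}, whereas almost-disjointness of the branches supplies \eqref{case: A T is AD}. I would present the bookkeeping and the two inductions in full; for the exact definition of the $\varphi_n$ and the precise description of the admissible tuples I would simply refer to \cite{talagrand1985choquet}, since, as noted in the text, their verbatim form is not needed anywhere in the sequel.
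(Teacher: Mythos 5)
The paper's own ``proof'' of this lemma is a two-line citation: part \eqref{case: A T is AD} is \cite[Lemma 4]{talagrand1985choquet} and part \eqref{case: A T is rich} is \cite[Lemma 3]{talagrand1985choquet}; no construction is reproduced. Since you also end by deferring to \cite{talagrand1985choquet} for the exact $\varphi_n$ and the precise admissible tuples, at the level of what is actually established your proposal coincides with the paper's. The issue is that the reconstruction you interpolate on the way is not sound, so it cannot be ``presented in full'' as you promise.

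The concrete gap is in your second case of \eqref{case: A T is AD}, where $B=B'$ and the admissible tuples differ at some $s\in B$. You appeal to ``finiteness of the coordinates where the tuples still agree'', but if two admissible extensions $A,A'$ of the same $B$ agree at even a \emph{single} node $s'\in B$, then $A\cap A'$ already contains the infinite set $\{\,s'\ext f^{s'}_n\ext\nu^{s'}_n\mid n\in\omega\,\}$ and almost-disjointness fails outright; how many nodes of agreement there are is irrelevant. Your proposed admissibility condition --- an independent choice from a perfect index set $P_s$ made separately at each node $s$ --- does not exclude such partial agreement. For \eqref{case: A T is AD} to hold, the allowed combinations must be coupled across the nodes of $B$ so that any two distinct admissible extensions of the same $B$ (a) differ at \emph{every} node and (b) have per-node intersections whose cardinalities sum to a finite total, even though $B$ is typically infinite (so ``finite at each node'' is not enough either). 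Arranging this coupling while still retaining, for \eqref{case: A T is rich}, enough freedom at each node to meet an arbitrary family of $\tau_p$-dense sets $L_s$ simultaneously is exactly the nontrivial content of Talagrand's Lemmas 3 and 4, and the ``one perfect set $P_s$ per node'' device as you describe it does not deliver it. Either reproduce Talagrand's coupled construction in detail, or do what the paper does and cite the two lemmas without an intermediate sketch.
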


\begin{proof}
$(i)$ follows from \cite[Lemma 4]{talagrand1985choquet}. $(ii)$ follows from \cite[Lemma 3]{talagrand1985choquet}.
\end{proof}

A third important property of Talagrand's broom sets is the following:

\begin{remark}
For Talagrand's brooms, the conclusion of Lemma \ref{lemma: rank of broom sets} is optimal.
\end{remark}

\noindent By ``conclusion being optimal'' we mean that for every $B\in \mc B_\alpha^T \setminus \mc B^T_{<\alpha}$ the derivative $\D^{\alpha'}(B)$ is non-empty, and similarly $\D^{\alpha'}(\D(A))$ is non-empty for every $A\in \mc A_\alpha^T \setminus \mc A^T_{<\alpha}$).
The ``importance'' of this property only appears implicitly -- if this property, Talagrand's brooms would not be useful for our purposes. From this reason, we do not include the proof here, as we will not need the result in the following text. However, it can be proven analogously to Proposition 4.11~(ii) from \cite{kovarik2018brooms}.

%%%%%%%%%%%%%%%%%%%%%%%%%%%%%%%%%%%%
%  =============== Broom spaces =======================
\subsection{Broom spaces} \label{section: broom space properties}

Next, we introduce the class of broom spaces and state the main result of Section \ref{section: complexity of brooms}.
A useful related concept is that of a space with a single non-isolated point:

\begin{definition}[Space with a single non-isolated point] \label{definition: 1 non isolated point}
Let $\Gamma$ be a set, $\mc A\subset \mc P(\Gamma)$ and $\infty$ a point not in $\Gamma$. We define the \emph{space with a single non-isolated point} corresponding to $\Gamma$ and $\mc A$ as $\left(\Gamma\cup\{\infty\},\tau(\mc A)\right)$, where $\tau (A)$ is the topology in which
\begin{itemize}
\item each $\gamma\in \Gamma$ is isolated,
\item the neighborhood subbasis of $\infty$ consists of all sets of the form
\begin{align*}
& \{ \infty\} \cup \Gamma \setminus \{\gamma\}	 \ \ \ \text{ for } \gamma \in \Gamma \text{ and} \\
& \{ \infty\} \cup \Gamma \setminus A 			 \ \ \ \ \ \, \text{ for } A \in \mc A .
\end{align*}
\end{itemize}
\end{definition}

Broom spaces are a particular type of spaces with a single non-isolated point:

\begin{definition}[Broom spaces] \label{definition: AD topology}
A \emph{broom space} $T_{\mc A}$ corresponding to a non-empty family $\mc A \subset \mc A_{\omega_1}$ of infinite broom sets is defined as
\[ T_{\mc A} := \left( \baire \cup \{\infty\}, \tau(\mc A) \right) ,\]
that is, as the space with a single non-isolated point corresponding to $\Gamma=\baire$ and $\mc A$.
If the family $\mc A$ is almost-disjoint, the corresponding $T_{\mc A}$ is said to be an \emph{AD broom space}.

We define \emph{Talagrand's broom spaces} as
\begin{align*}
\mathbf{T}_\alpha & :=  T_{\mc A^T_{<\alpha}} && \text{for } \alpha \in [1,\omega_1] \text{ and } \\
\mathbf{S}_\alpha & :=  T_{\mc A^T_{<\alpha} \setminus \mc A^T_{<(\alpha-1)}}
	= T_{\mc A^T_{\alpha-1} \setminus \mc A^T_{<(\alpha-1)}}
	&& \text{for non-limit } \alpha \in [1,\omega_1] .
\end{align*}
\emph{The Talagrand's broom space} $\mathbf{T}$ is defined as $\mathbf{T}:=\mathbf{T}_{\omega_1}$ (emphasis on `\emph{the}'). 
\end{definition}

Formally, the main result of Section \ref{section: complexity of brooms} is the following theorem. (But as we mentioned earlier, we consider the methods to be the more interesting part.)

\begin{theorem}[Complexity of Talagrand's broom spaces]\label{theorem: complexity of talagrands brooms}
Talagrand's broom spaces satisfy
\begin{align*}
\alpha \in [2,\omega_1] \implies &
\textnormal{Compl}\left(\mathbf{T}_\alpha\right) = \left[2,\alpha\right] \\
\alpha \in [2,\omega_1] \text{ is non-limit}  \implies &
\textnormal{Compl}\left(\mathbf{S}_\alpha\right) = \left[2,\alpha\right] .
\end{align*}
\end{theorem}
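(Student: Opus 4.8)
The plan is to prove Theorem~\ref{theorem: complexity of talagrands brooms} by establishing, for each broom space $\mathbf T_\alpha$ (and analogously $\mathbf S_\alpha$), two separate facts: first, that $\textnormal{Compl}(\mathbf T_\alpha) \subset [2,\alpha]$, and second, that every $\gamma \in [2,\alpha]$ is actually attained, i.e. $[2,\alpha] \subset \textnormal{Compl}(\mathbf T_\alpha)$. Since each broom space has exactly one non-isolated point and every closed set avoiding $\infty$ is at most countable (hence $\sigma$-compact, so not closed in general but at worst $\mathcal F_1$), it is easy to see $\mathbf T_\alpha$ is not $\sigma$-compact and not compact; combined with Proposition~\ref{proposition: basic attainable complexities}\,\eqref{case: compact and sigma compact} and the fact that broom spaces are $\mathcal K$-analytic (a complete Suslin scheme can be built from the broom structure, or one invokes that $\mathbf T$ is $\mathcal F_{\sigma\delta}$ in $\beta\mathbf T$ per \cite{talagrand1985choquet}), this forces $\textnormal{Compl}(\mathbf T_\alpha) \subset [2,\omega_1]$. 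The content of the theorem is pinning the upper end at $\alpha$ and filling in the whole interval below.

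For the upper bound $\textnormal{Compl}(\mathbf T_\alpha,cX)\le\alpha$ in an arbitrary compactification $c\mathbf T_\alpha$, I would use the machinery of Section~\ref{section: Suslin scheme rank}, specifically Corollary~\ref{corollary: sufficient condition for Fa}. The idea: take a Suslin scheme $\mc C$ on $\mathbf T_\alpha$ naturally indexed by the broom structure (so that for $y\in c\mathbf T_\alpha\setminus\mathbf T_\alpha$, the tree $S_{\mc C}(y)$ reflects which broom sets ``accumulate'' at $y$), arrange that $\mc A(\overline{\mc C}^{c\mathbf T_\alpha}) = \mathbf T_\alpha$ (completeness, via Lemma~\ref{lemma: complete suslin schemes in super spaces}), and then show $\D^{\alpha'}(S_{\mc C}(y))$ is small — empty, or $\{\emptyset\}$, or contained in some $\omega^{\le i}$ — for every $y$ in the remainder. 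This is exactly where Lemma~\ref{lemma: rank of broom sets} does the work: the derivative rank of a broom set $B \in \mc B_\alpha$ (resp. infinite broom set $A \in \mc A_\alpha$) satisfies $\D^{\alpha'}(B)$ finite, and equal to $\emptyset$ or $\{\emptyset\}$ when $\alpha$ is even. One has to match the parity bookkeeping: for even $\alpha$ use case (i) of the corollary, for odd $\alpha$ use case (ii) or (iii), invoking that $S_{\mc C}(y)$ is (essentially) the tree $\textnormal{cl}_{\Tr}(B)$ of a broom set of the appropriate rank together with Lemma~\ref{lemma: rank of broom sets}\,2), which handles $\D^{\alpha'}(\D(A))$ for the infinite-broom incarnation.

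For the lower bound — that \emph{every} $\gamma\in[2,\alpha]$ is attained — I would, for each such $\gamma$, construct a specific compactification $c_\gamma\mathbf T_\alpha$ in which $\Compl{\mathbf T_\alpha}{c_\gamma\mathbf T_\alpha} = \gamma$. This is the part promised by Sections~\ref{section: Talagrand lemma}, \ref{section: amalgamation spaces}, \ref{section: cT}, \ref{section: dT}: one builds ``amalgamation-like'' compactifications out of the broom structure and shows, using the refinement of Talagrand's technique (Lemma~\ref{lemma: properties of Talagrands brooms}, especially the almost-disjointness \eqref{case: A T is AD} and the richness/density property \eqref{case: A T is rich}), that $\mathbf T_\alpha$ is \emph{not} $\mathcal F_{<\gamma}$ in that compactification, while the upper-bound argument above (applied with the truncated broom data of rank $\gamma$) gives $\mathbf T_\alpha \in \Fa[\gamma]$ there. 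The ``$\mathbf S_\alpha$'' variant is handled identically, since $\mathbf S_\alpha$ is built from broom sets of rank exactly $\alpha-1$ plus lower, which still ranges over all of $[2,\alpha]$ under the amalgamation constructions.

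The main obstacle I anticipate is the lower-bound direction: producing, for each intermediate $\gamma$, a compactification that is ``tight'' — large enough that the complexity drops no lower than $\gamma$ is relatively easy (make $\mathbf T_\alpha$ closed-ish by identifying few points), but one simultaneously needs it small enough, i.e. a genuine \emph{quotient} controlling complexity from below, and the non-membership $\mathbf T_\alpha \notin \mathcal F_{<\gamma}(c_\gamma\mathbf T_\alpha)$ requires the delicate combinatorial core of Talagrand's argument (the interplay of almost-disjointness with the $\tau_p$-density condition in Lemma~\ref{lemma: properties of Talagrands brooms}\,\eqref{case: A T is rich}), reformulated so that it transfers through the abstract amalgamation framework. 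Getting the parity and the ordinal arithmetic $\alpha = \lambda+2n+i$, $\alpha' = \lambda+n$ to line up correctly between the finite broom hierarchy $\mc B_\beta$, the infinite hierarchy $\mc A_\beta$ (shifted by $2$ via \eqref{equation: B tilde is B 2 + alpha}), and the $\mathcal F$-Borel class index will also require care, but is routine once the framework is set up.
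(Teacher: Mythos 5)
Your proposal follows essentially the same route as the paper: the upper bound comes from Corollary~\ref{corollary: sufficient condition for Fa} combined with Lemma~\ref{lemma: rank of broom sets} applied to the canonical Suslin scheme $\mc N$ (this is exactly Proposition~\ref{proposition: basic broom complexities}\,(ii) via Lemma~\ref{lemma: S(y) and complexity of T}), and the lower bound is obtained by exhibiting, for each $\gamma\in[2,\alpha]$, an amalgamation compactification in which the witness machinery (Lemmas~\ref{lemma: closed discrete witnesses} and~\ref{lemma: Talagrand lemma 3}) forbids $\mc F_{<\gamma}$ while the upper-bound argument gives $\mc F_\gamma$. The one imprecision is your claim that $\mathbf S_\alpha$ is ``handled identically'': since $\mc A^T_{<\alpha}\setminus\mc A^T_{<\alpha-1}$ contains no lower-rank brooms, the all-or-nothing compactifications $c_\gamma T$ do not suffice there, and the paper must instead use the $d_\gamma T$ construction, compactifying the pieces $A(h)$, $h\in H_{<\gamma}(A)$, separately and invoking the technical Lemma~\ref{lemma: technical version of Talagrand lemma 3} for the lower bound.
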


For every topological space $Y\supset T$ and $y\in Y$, we set
\begin{align} \label{equation: S of y}
S(y) := \left\{ s\in\seq | \ \overline{\mc N(s)}^{Y} \ni y \right\}.
\end{align}

\noindent The following computation appears several times throughout the paper, so we formulate it separately.

\begin{lemma}[Sufficient condition for being $\Fa$]\label{lemma: S(y) and complexity of T}
Let $T$ be a broom space, $Y\supset T$ a topological space, and $\alpha\in [2,\omega_1]$. Suppose that for each $y\in Y\setminus T$, $S(y)$ is either finite or it can be covered by finitely many sets $\D(A)$, $A\in \mc A_{<\alpha}$. Then $T\in \mc F_\alpha(Y)$.
\end{lemma}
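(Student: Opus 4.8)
The plan is to reduce the statement to an application of Corollary~\ref{corollary: sufficient condition for Fa} by constructing an explicit Suslin scheme $\mc C$ on $T$ and verifying its hypotheses. The natural candidate is the scheme indexed by finite sequences whose sets are the basic neighbourhoods of $\baire$ together with $\infty$: concretely, set $C(\emptyset) := T$ and $C(s) := \left( \mc N(s) \cap \baire \right) \cup \{\infty\}$ for $s\neq\emptyset$ (or, if one prefers, $C(s) := \mc N(s)$ together with the convention that $\infty$ is added whenever needed so that $\mc C$ covers $T$). This is monotone by monotonicity of the $\mc N(s)$, so it is a Suslin scheme in $T$; and it covers $T$ since for any $\sigma\in\baire$ we have $\sigma \in \bigcap_n \mc N(\sigma|n)$, while $\infty$ lies in every $C(s)$. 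The first genuine task is to check that $\mc A(\overline{\mc C}^Y\!\!) = T$, i.e.\ that the closed scheme $\overline{\mc C}^Y$ in $Y$ still Suslin-reconstructs exactly $T$ and no extra point of $Y\setminus T$; this is where the topology of the broom space is used, and it should follow from the fact that a point $y\in Y\setminus T$ lies in $\bigcap_n \overline{C(\sigma|n)}^Y$ only if $\sigma|n \in S(y)$ for all $n$, i.e.\ only if $\sigma$ is an infinite branch of $S(y)$ — and the hypothesis that $S(y)$ is finite or covered by finitely many $\D(A)$, $A\in\mc A_{<\alpha}$, forces $S(y)$ to be well-founded (each $\D(A)$ being well-founded as it equals $\cltr{B}$ for a broom set $B$, hence has no infinite branch, by the computation in Lemma~\ref{lemma: rank of broom sets}), so $S(y)$ has no infinite branch.

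Second, I would observe that with this scheme the tree $S_{\mc C}(y)$ from \eqref{equation: S C of y} is precisely $S(y)$ from \eqref{equation: S of y} (up to the harmless presence of $\emptyset$, which is in every tree): $s\in S_{\mc C}(y) \iff \overline{C(s)}^Y \ni y \iff \overline{\mc N(s)}^Y \ni y \iff s\in S(y)$. So the hypothesis of the lemma is exactly a statement about $S_{\mc C}(y)$ for $y\in Y\setminus T$.

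Third, the heart of the matter is to convert ``$S(y)$ is finite or covered by finitely many $\D(A)$, $A\in\mc A_{<\alpha}$'' into the bound ``$\D^{\alpha'}(S_{\mc C}(y)) \subset \omega^{\leq i}$ for some fixed $i$'' required by Corollary~\ref{corollary: sufficient condition for Fa}(iii) (when $\alpha$ is odd, or is a successor that we write accordingly) resp.\ ``$\D^{\alpha'}(S_{\mc C}(y))$ empty'' for even $\alpha$ via (i)/(ii). If $S(y)$ is finite then every derivative eventually vanishes, so it is harmless. If $S(y) = \D(A_1)\cup\dots\cup\D(A_k)$ with each $A_j \in \mc A_{\beta_j}$, $\beta_j < \alpha$, then each $\D(A_j) = \cltr{B_j}$ for the underlying broom set $B_j\in\mc B_{\beta_j}$ (the identity $\D(A) = \cltr{B}$ proven inside Lemma~\ref{lemma: rank of broom sets}, part 2), so by Lemma~\ref{lemma: rank of broom sets}(1) we get that $\D^{\beta_j'}(\D(A_j))$ is finite, and moreover equal to $\{\emptyset\}$ or $\emptyset$ when $\beta_j$ is even. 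Using the elementary fact that $\D$ commutes with finite unions up to the closure operation — more precisely $\D^\gamma(S\cup S') \subset \cltr{\D^\gamma(S)} \cup \cltr{\D^\gamma(S')}$, or at least that the iterated $\D$-derivative of a finite union of trees is controlled by the iterated derivatives of the pieces — one concludes that $\D^{\alpha'}(S(y))$ is finite, and when $\alpha$ is even it is contained in $\{\emptyset\}$. For odd $\alpha$ one instead tracks the lengths of the surviving sequences: each $\D^{\beta_j'}(\cltr{B_j})$ consists of sequences of bounded length (finitely many of them), so after $\alpha'$ derivatives — recalling $\beta_j' \le \alpha'$ for $\beta_j < \alpha$ — the surviving set sits inside $\omega^{\leq i}$ for a fixed $i$ depending only on $\alpha$ and the broom-rank bookkeeping, which is exactly condition (iii).

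Fourth and finally, with $\mc A(\overline{\mc C}^Y\!\!)=T$ established and the derivative bound in hand, Corollary~\ref{corollary: sufficient condition for Fa} directly yields $T\in\mc F_\alpha(Y)$ for $\alpha<\omega_1$; for $\alpha=\omega_1$ there is nothing to prove since $T$ is $\mc K$-analytic (indeed, being Suslin-$\mc F$ in its compactification via $\mc C$), so $T\in\mc F_{\omega_1}(Y)$ always. I expect the main obstacle to be the bookkeeping in the third step: carefully verifying that finite unions of $\D$-derivatives behave the way I claimed (the derivative of a union is \emph{not} literally the union of derivatives, only up to closure, and one must be careful that taking closures does not re-introduce unbounded-length sequences), and pinning down the exact index $i$ in the odd case as a function of $\alpha$ via the relation $\beta' \le \alpha'$ for $\beta<\alpha$. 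The even case is cleaner because (i)/(ii) only ask for emptiness or $\{\emptyset\}$, which is stable under finite unions and closures.
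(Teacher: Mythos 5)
Your overall strategy is the paper's: take the Suslin scheme of basic clopen sets $\mc N(s)$, identify $S_{\mc C}(y)$ with $S(y)$, and feed the hypothesis into Corollary~\ref{corollary: sufficient condition for Fa}. Your self-contained verification of $\mc A(\overline{\mc C}^Y\!\!)=T$ (via well-foundedness of $S(y)$ under the hypothesis) is a legitimate substitute for the cited black-box lemma. However, the third step --- the one you yourself flag as the heart of the matter --- has a genuine gap in \emph{both} parity cases, and the conclusion you extract from the rank computation is not strong enough to invoke the corollary.

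For even $\alpha$, Corollary~\ref{corollary: sufficient condition for Fa}(i) requires $\D^{\alpha'}(S(y))$ to be \emph{empty}; ``finite'' or ``$\subset\{\emptyset\}$'' does not suffice (if $\emptyset\in\D^{\alpha'}(S(y))$ then $y\in\mathbf{R}_\alpha(\mc C,Y)\setminus T$ by Proposition~\ref{proposition: regular representations and D}(i), so the representation fails). Your argument never produces emptiness: from $\D^{\beta_j'}(\D(A_j))$ finite and $\beta_j'\leq\alpha'$ you only get finiteness of $\D^{\alpha'}(S(y))$, and your ``contained in $\{\emptyset\}$'' claim is unsupported (your $\{\emptyset\}$-statement from Lemma~\ref{lemma: rank of broom sets} is conditioned on $\beta_j$ being even, not $\alpha$). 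The missing ingredient is the \emph{strict} inequality $\beta_j'<\alpha'$ valid whenever $\beta_j<\alpha$ and $\alpha$ is even: it buys one further application of $\D$, which annihilates the finite set $\D^{\beta_j'}(\D(A_j))$. For odd $\alpha$, your route through condition (iii) does not close either: the finiteness of each $\D^{\beta_j'}(\D(A_j))$ bounds the lengths of surviving sequences only in terms of the particular brooms $A_j$ covering $S(y)$ (e.g.\ through their handles), so the resulting $i$ depends on $y$, whereas (iii) demands a single $i$ for all $y\in Y\setminus T$. The correct move is to note that for odd $\alpha$ one has $\mc A_{<\alpha}=\mc A_{\alpha-1}$ with $\alpha-1$ even and $(\alpha-1)'=\alpha'$, so part 2)(ii) of Lemma~\ref{lemma: rank of broom sets} applies to every $A_j$ and gives $\D^{\alpha'}(\D(A_j))\subset\{\emptyset\}$ outright; condition (ii) then holds with no bookkeeping. (Your worry about $\D$ of finite unions is unfounded in the other direction: for this derivative the identity $\D^\gamma(S\cup S')=\D^\gamma(S)\cup\D^\gamma(S')$ is exact on tree-closures, no closure correction needed.)
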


\noindent The proof of this lemma is the only place where Section \ref{section: complexity of brooms} relies on the concept of regular $\Fa$-representations and the results of Section \ref{section: regular representations}. Reader willing to treat this lemma as a ``black box'' is invited to ignore Section \ref{section: regular representations} altogether, and skip ahead to Proposition \ref{proposition: basic broom complexities}.

In the terminology of Section \ref{section: regular representations}, Lemma \ref{lemma: S(y) and complexity of T} actually shows that $\mc N$ is a regular $\Fa$-representation of $T$ in $Y$, in the sense of Definition \ref{definition: regular representations}. This  immediately follows from the remark just above Corollary \ref{corollary: sufficient condition for Fa}.

To prove Lemma \ref{lemma: S(y) and complexity of T} for a broom space $T$ contained in a topological space $Y$, we first need to have a Suslin scheme $\mc C$ in $T$ which satisfies $\mc A(\overline{\mc C}^Y\!\!) = T$.
It turns out that the basic open sets $\mc N(s) = \{ \sigma\in\baire | \ \sigma \sqsupset s \}$ from the product topology of $\baire$ provide a canonical solution:

\begin{lemma}[$\mc N$ as universally useful Suslin scheme] \label{lemma: A of N}
For every broom space $T$, the Suslin scheme $\mc N = (\mc N(s) )_{s\in\seq}$ on $\baire\subset T$ satisfies $\mc A(\overline{\mc N}^Y\!\!) = T$ for every topological space $Y\supset T$.
\end{lemma}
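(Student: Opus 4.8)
The plan is to prove that $\mc A(\overline{\mc N}^Y\!\!) = T$ for any $Y \supset T$ by establishing the two inclusions separately. Recall that $\mc A(\overline{\mc N}^Y\!\!) = \bigcup_{\sigma \in \baire} \bigcap_{n \in \omega} \overline{\mc N(\sigma|n)}^Y$, and that the underlying set of $T$ is $\baire \cup \{\infty\}$.

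For the inclusion $\supset$: First I would handle the isolated points. If $\sigma \in \baire$, then $\sigma$ is an isolated point of $T$, and each $\mc N(\sigma|n)$ is a neighborhood of $\sigma$ in $\baire$ (hence in $T$), so $\sigma \in \overline{\mc N(\sigma|n)}^Y$ for every $n$; thus $\sigma \in \bigcap_n \overline{\mc N(\sigma|n)}^Y \subset \mc A(\overline{\mc N}^Y\!\!)$. Next I would handle $\infty$. The key observation is that for \emph{any} $\sigma \in \baire$, the set $\{\sigma\}$ is infinite-branching-wise irrelevant, but we need some $\sigma$ for which $\infty \in \overline{\mc N(\sigma|n)}^Y$ for all $n$; in fact this holds for every $\sigma$, because each $\mc N(s)$, $s \in \seq$, is a cofinite-complement-type set only in a weak sense — more precisely, $\baire \setminus \mc N(s)$ does not contain any $A \in \mc A$ (an infinite broom set is not eventually inside a single $\mc N(s)^c$, since a broom-extension of $B \in \mc B_{\omega_1}$ contains sequences extending every $s' \in B$ together with their forking tails, so $\mc N(s)$ meets $A$ whenever $s$ is comparable with some element of $B$; and for $s$ not comparable with any element of $B$, one still checks $\mc N(s)$ is nonempty open and meets cofinitely... ). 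The cleaner route: $\infty \in \overline{\mc N(s)}^T$ for every $s \in \seq$, because a basic neighborhood of $\infty$ in $T$ has the form $\{\infty\} \cup \baire \setminus (A_1 \cup \dots \cup A_k \cup \{\gamma_1,\dots,\gamma_m\})$ with $A_i \in \mc A$, and since each infinite broom set meets $\mc N(s)$ in an infinite set while $\{\gamma_j\}$ is finite, the neighborhood still intersects $\mc N(s)$. Hence $\infty \in \overline{\mc N(s)}^T \subset \overline{\mc N(s)}^Y$ for all $s$, so picking any $\sigma$ gives $\infty \in \bigcap_n \overline{\mc N(\sigma|n)}^Y$.

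For the inclusion $\subset$: Let $y \in \mc A(\overline{\mc N}^Y\!\!)$, so there is $\sigma \in \baire$ with $y \in \overline{\mc N(\sigma|n)}^Y$ for every $n$. I must show $y \in T$. Suppose for contradiction $y \in Y \setminus T$. Since $T$ embeds in $Y$ as a subspace, the sets $\mc N(\sigma|n)$ are subsets of $\baire \subset T$, and their closures in $T$ form a decreasing sequence; I would argue that $\bigcap_n \overline{\mc N(\sigma|n)}^T = \{\sigma, \infty\}$ (or just $\{\sigma\}$ together with $\infty$). The point $y$, being a limit of the net/sequence coming from $\mc N(\sigma|n)$, must lie in the closure of $T$ in $Y$; more carefully, $\overline{\mc N(\sigma|n)}^Y \cap T = \overline{\mc N(\sigma|n)}^T$ since $T$ is a subspace, but this does not immediately force $y \in T$. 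The right argument uses that $T$ is, up to the single point $\infty$, discrete and countable, so $\mc N(\sigma|n) = \{\sigma\} \cup (\mc N(\sigma|n) \setminus \{\sigma\})$ and for $n$ large the tail $\mc N(\sigma|n)$ converges to $\sigma$ in $T$: indeed $\overline{\mc N(\sigma|n)}^T = \mc N(\sigma|n) \cup \{\infty\}$ for all $n$ (the only non-isolated point is $\infty$), and $\bigcap_n \mc N(\sigma|n) = \{\sigma\}$. So $\bigcap_n \overline{\mc N(\sigma|n)}^T = \{\sigma,\infty\} \subset T$.

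The main obstacle is the $\subset$ inclusion: knowing $y \in \overline{\mc N(\sigma|n)}^Y$ for all $n$ does not by itself place $y$ in $T$, because $Y$ is an arbitrary superspace. The resolution is to exploit that $T$ is $\mc K$-analytic — indeed $\mc N = (\mc N(s))_s$ is a Suslin scheme on $\baire \subset T$, and one checks directly that $(\mc N_n)_n$ (where $\mc N_n = \{\mc N(s) : s \in \omega^n\}$) is a \emph{complete} sequence of covers of $T$: any filter meeting every $\mc N_n$ picks out a branch $\sigma \in \baire$, and then either some $\mc N(\sigma|n)$ shrinks to $\{\sigma\}$ giving $\sigma$ as accumulation point, or the filter is ``spread out'' and accumulates at $\infty$ (using that every neighborhood of $\infty$ has cofinite-modulo-finitely-many-$\mc A$-sets complement, and each $\mc N(s)$ meets every such neighborhood). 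Having shown $\mc N$ is complete on $T$, Lemma~\ref{lemma: complete suslin schemes in super spaces} applies verbatim: $\mc A(\overline{\mc N}^Y\!\!) = T$ for every $Y \supset T$. So the proof reduces to (a) verifying the completeness of $(\mc N_n)_n$ on $T$, which is the heart of the matter and where the specific structure of broom spaces (every infinite broom set meets every $\mc N(s)$ infinitely, so no neighborhood of $\infty$ can separate $\infty$ from a shrinking tower of $\mc N(s)$'s) is used, and (b) invoking Lemma~\ref{lemma: complete suslin schemes in super spaces}.
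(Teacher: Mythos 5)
Your overall strategy --- reduce everything to completeness of the Suslin scheme on $T$ and then invoke Lemma~\ref{lemma: complete suslin schemes in super spaces} --- is sound, and is presumably what the cited \cite[Lemma 5.10]{kovarik2018brooms} does (the paper itself only gives that citation). But the broom-specific fact you lean on at the two crucial points is false, and there is a mismatch with the hypotheses of the lemma you want to quote. The claim ``every infinite broom set meets every $\mc N(s)$ in an infinite set'', which you use both to get $\infty\in\overline{\mc N(s)}^T$ and as the ``heart'' of the completeness verification, is wrong: $A\cap\mc N(s)$ is infinite precisely when $s\in\D(A)=\cltr{B}$ (see \eqref{equation: rewriting derivative of A}), and $\cltr{B}$ is a well-founded tree, so $A\cap\mc N(s)=\emptyset$ for most $s$ and, along any branch $\sigma$, the sets $A\cap\mc N(\sigma|n)$ are eventually finite. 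The correct reason that $\infty\in\overline{\mc N(s)}^T$ is pure cardinality: a basic neighbourhood of $\infty$ omits only a countable set $\bigcup_{i\le k}A_i\cup F$, while $\mc N(s)$ has cardinality continuum. More importantly, the completeness proof needs the \emph{opposite} of your claim: given a filter $\mc F$ whose chosen sets determine a branch $\sigma$, either every $F\in\mc F$ meets every neighbourhood of $\infty$ (and $\infty$ accumulates), or some $F\in\mc F$ is contained in a countable set $\bigcup_{i\le k}A_i\cup F_0$; then the \emph{eventual finiteness} of $(\bigcup A_i\cup F_0)\cap\mc N(\sigma|n)$ --- i.e.\ well-foundedness of the $\D(A_i)$ --- forces the decreasing nonempty finite sets $F'\cap F\cap\mc N(\sigma|n)$ to stabilize at $\{\sigma\}$ for every $F'\in\mc F$, making $\sigma$ an accumulation point. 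With your (false) premise this case could not be closed.

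Separately, $\mc N_n=\{\mc N(s)\,|\ s\in\omega^n\}$ covers $\baire$ but not $T$, and $\mc A(\mc N)=\baire\not\ni\infty$, so $\mc N$ is not a Suslin scheme \emph{on} $T$ in the sense required by Lemma~\ref{lemma: complete suslin schemes in super spaces} (and it is certainly not complete on $\baire$, which is discrete in $T$). You must pass to $\mc N'(s):=\mc N(s)\cup\{\infty\}$, verify completeness of $(\mc N'_n)_n$ on $T$, and then use $\overline{\mc N'(s)}^Y=\overline{\mc N(s)}^Y$ (which is exactly where $\infty\in\overline{\mc N(s)}^T$ is needed) to transfer the conclusion back to $\mc N$; this is precisely the remark the paper makes immediately after the lemma. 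Note also that once completeness is established the lemma gives the equality outright, so your separate ``$\supset$'' paragraph serves only to justify this replacement.
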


\begin{proof}
This holds, for example, by \cite[Lemma 5.10]{kovarik2018brooms}.
\end{proof}

Note that $S(\cdot) = S_{\mc N}(\cdot)$, so $S(\cdot)$ is a special case of the general definition in \eqref{equation: S C of y}. By definition of broom topology, the closure of any uncountable subset of $\baire$ contains $\infty$.
It follows that $\overline{\mc N(s)}^{Y} = \overline{\mc N(s)\cup \{\infty\}}^{Y}$ holds for every $s\in \seq$. In other words, we could equally well work with a Suslin scheme $(\mc N(s) \cup \{\infty\})_s$ which covers $X$ (but this scheme would not be disjoint).

\begin{proof}[Proof of Lemma \ref{lemma: S(y) and complexity of T}]
Let $T\subset Y$ and $\alpha$ be as in the lemma.
We shall verify that every $y\in Y \setminus T$ satisfies the assumptions of Corollary \ref{corollary: sufficient condition for Fa}, obtaining $T\in \Fa(Y)$ as a result.
In particular, we shall show that $\D^{\alpha'} (S(y))$ is empty when $\alpha$ is even, resp. that it can only contain the empty sequence when $\alpha$ is odd.

Let $y\in Y\setminus T$.
When $S(y)$ is finite, we have $\D^{\alpha'}(S(x)) = \emptyset$, independently of the parity of $\alpha$ (because $\alpha\geq 2$ implies $\alpha' \geq 1$ and $\D^{\alpha'}(\cdot) \subset \D(\cdot)$, and $\D$-derivative of a finite set is empty).

Suppose that $\alpha$ is odd. We then have
\[ \D^{\alpha'}(S(y)) \subset \D^{\alpha'} ( \bigcup_{\mc A'} \D(A) )
 = \bigcup_{\mc A'} \D^{\alpha'} ( \D(A) ) 
 \overset{\alpha-1}{\underset{\leq \alpha}=}  \bigcup_{\mc A'} \D^{(\alpha-1)'} ( \D(A) ).\]
Since $\mc A' \subset \mc A \subset \mc A = \mc A_{<\alpha} = \mc A_{\alpha-1}$, it follows from $2)$\,$(ii)$ in Lemma \ref{lemma: rank of broom sets} that each $\D^{(\alpha-1)'} ( \D(A) )$ is either empty or only contains the empty sequence.
This gives $\D^{\alpha'}(S(y)) \subset \{ \emptyset \}$.

Suppose that $\alpha$ is even. Let $A\in \mc A'$. Since $\mc A' \subset \mc A_{<\alpha}$, we have $A \in \mc A_{\beta}$ for some $\beta<\alpha$.
We have $\beta' < \alpha'$ (because $\alpha$ is even), which implies $\D^{\alpha'}(\cdot) \subset \D (\D^{\beta'} (\cdot))$.
Since $\D^{\beta'} (\D (A))$ is finite (by $2)$\,$(i)$ from Lemma \ref{lemma: rank of broom sets}), $\D(\D^{\beta'} (\D (A)))$ is empty, and thus $D^{\alpha'} ( \D (A) )$ is empty as well.
As in the case of odd $\alpha$, this gives $\D^{\alpha'}(S(y)) = \emptyset$.
\end{proof}

\noindent Actually, the proof of Lemma \ref{lemma: S(y) and complexity of T} did not use any special properties of $T$ and $S(\cdot)$, so the lemma holds when $T$ is replaced by an abstract topological space $X$ and $S(\cdot)$ is replaced by $S_{\mc C}(\cdot)$ for some complete Suslin scheme $\mc C$ on $X$.

\bigskip
We are now in a position to obtain the following (upper) bounds on the complexity of broom spaces:

\begin{proposition}[Basic broom space complexity results]\label{proposition: basic broom complexities}
Any broom space $T_{\mc A}$ satisfies:
\begin{enumerate}[(i)]
\item \cite{talagrand1985choquet} $T_{\mc A}$ is $\fsd$ in $\beta T_{\mc A}$, but it is not $\sigma$-compact;
\item If $\mc A \subset \mc A_{<\alpha}$ holds for some $\alpha\in[2,\omega_1]$, then $T_{\mc A}$ is an absolute $\Fa$ space.
\end{enumerate}
\end{proposition}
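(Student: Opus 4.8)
The plan is to prove the two parts essentially independently, with part (ii) being the substantial one and relying on the machinery just developed.

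\textbf{Part (i).} This is Talagrand's original result, so the proof here should mostly be a pointer plus the easy observation. The space $T_{\mc A}$ is $\fsd$ in $\beta T_{\mc A}$ by \cite{talagrand1985choquet} (Lemma~\ref{lemma: S(y) and complexity of T} with $\alpha = 2$ also gives this, once one knows every point $y \in \beta T_{\mc A} \setminus T_{\mc A}$ has $S(y)$ finite or coverable by finitely many $\D(A)$ with $A \in \mc A_{<2}$; but for a~general family $\mc A \subset \mc A_{\omega_1}$ this need not hold, so the honest route is to cite Talagrand). The fact that $T_{\mc A}$ is not $\sigma$-compact is elementary: $\baire$ is a~discrete subset of $T_{\mc A}$ of cardinality continuum, so it is a~closed discrete (hence non-Lindel\"of, non-$\sigma$-compact) subspace; as a~$\sigma$-compact space is hereditarily Lindel\"of in the weak sense needed — actually, more simply, a~$\sigma$-compact space has all closed discrete subsets countable — this rules out $\sigma$-compactness. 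First I would state this cleanly: if $T_{\mc A}$ were $\sigma$-compact, write $T_{\mc A} = \bigcup_n K_n$ with $K_n$ compact; each $K_n \cap \baire$ is a~compact subset of the discrete space $\baire$, hence finite, so $\baire = \bigcup_n (K_n \cap \baire)$ would be countable, a~contradiction.

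\textbf{Part (ii).} Suppose $\mc A \subset \mc A_{<\alpha}$ for some $\alpha \in [2,\omega_1]$. I want to show $\textnormal{Compl}(T_{\mc A}) \subset [0,\alpha]$, i.e. $T_{\mc A} \in \mc F_\alpha(cT_{\mc A})$ for every compactification $cT_{\mc A}$ (equivalently, by Remark~\ref{remark: absolute complexity} and the second description of $\textnormal{Compl}$, for every $Y \supset T_{\mc A}$ with $\overline{T_{\mc A}}^Y$ compact — but it is cleanest to just fix an arbitrary $Y \supset T_{\mc A}$ and invoke Lemma~\ref{lemma: S(y) and complexity of T} directly, since that lemma makes no compactness assumption on $Y$). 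So let $Y \supset T_{\mc A}$ be arbitrary. By Lemma~\ref{lemma: S(y) and complexity of T}, it suffices to check that for each $y \in Y \setminus T_{\mc A}$, the tree $S(y) = \{ s \in \seq \mid \overline{\mc N(s)}^Y \ni y \}$ is finite or coverable by finitely many sets $\D(A)$ with $A \in \mc A_{<\alpha}$. The key point is the definition of the broom topology: a~basic neighborhood of $\infty$ in $T_{\mc A}$ has the form $\{\infty\} \cup \baire \setminus A$ for $A \in \mc A$ (intersected with finitely many cofinite sets, which don't matter asymptotically). Since $y \notin T_{\mc A}$, in particular $y \neq \infty$, and the relevant observation is: there is some $A \in \mc A$ and a~neighborhood $U$ of $y$ in $Y$ with $U \cap \baire \subset A$. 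Indeed — and this is the crux — since $\overline{\baire \setminus A}^Y$ is a~closed set in $Y$ not containing $\infty$ for suitable $A$; actually one argues: the sets $\overline{\baire \setminus A}^Y \cap \overline{\{\infty\}} $... Let me instead recall the structure: in $T_{\mc A}$, $\{\infty\} \cup \baire \setminus A$ is open, so $A \cup \{$cofinite adjustments$\}$ behaves like a~closed neighborhood basis complement; passing to $Y$, for $y \ne \infty$ with $y \notin \baire$ (so $y$ is a~genuinely new point), a~standard argument using that $\baire$ is discrete in $T_{\mc A}$ shows $S(y) = \{ s \mid y \in \overline{\mc N(s) \cap A}^Y \text{ for cofinally... }\}$ is controlled by a~single $A \in \mc A$: more precisely $S(y) \subset \D(A)$ for some $A \in \mc A$, or $S(y)$ is finite. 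This is exactly the content of the argument in \cite{kovarik2018brooms} around Lemma~5.10–5.14, and the cleanest presentation is: for $y \in Y \setminus T_{\mc A}$, either $y \in \overline{\baire \setminus A}^Y$ fails for every $A \in \mc A$ — in which case $\{\mc N(s) : y \in \overline{\mc N(s)}^Y\}$ forces, via $\mc A(\overline{\mc N}^Y) = T_{\mc A}$ (Lemma~\ref{lemma: A of N}), that $y \in T_{\mc A}$, contradiction — or there is $A \in \mc A$ with $y \in \overline{\baire \setminus A}^Y$ but then, since $\baire \setminus (\baire \setminus A) = A$ and $y \notin \overline{C}^Y$ for $C$ a~cofinite subset of $\baire \setminus A$ minus finitely many points... this pins $S(y)$ inside $\cltr{A}$, and then $S(y) \subset \D(A) \cup (\text{finite})$. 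Then $A \in \mc A \subset \mc A_{<\alpha}$ and a~single $A$ suffices, so Lemma~\ref{lemma: S(y) and complexity of T} applies and gives $T_{\mc A} \in \mc F_\alpha(Y)$.

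\textbf{Main obstacle.} The delicate step is the dichotomy for points $y \in Y \setminus T_{\mc A}$: showing that each such $y$ is ``seen by'' only one broom set $A \in \mc A$ (up to a~finite error), so that $S(y) \subset \D(A)$ modulo finitely many sequences. This is where the specific form of the broom topology — the fact that a~cofinite set and a~single broom-complement generate the neighborhood filter of $\infty$, and that everything else is isolated — does the work, together with $\mc A(\overline{\mc N}^Y) = T_{\mc A}$ from Lemma~\ref{lemma: A of N} to exclude the degenerate case. I would isolate this as a~short claim: \emph{for every $Y \supset T_{\mc A}$ and every $y \in Y \setminus T_{\mc A}$, there is $A \in \mc A$ with $S(y) \setminus \D(A)$ finite} (or $S(y)$ itself finite), and prove it by the~argument sketched above; the rest is then a~direct citation of Lemma~\ref{lemma: S(y) and complexity of T}. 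Everything else — the reduction to a~single compactification-independent statement, the non-$\sigma$-compactness — is routine.
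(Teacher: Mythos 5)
Your overall architecture for part (ii) is the paper's: fix an arbitrary $Y\supset T_{\mc A}$, control the trees $S(y)$ for $y\in Y\setminus T_{\mc A}$, and invoke Lemma~\ref{lemma: S(y) and complexity of T}. But there are two genuine problems.

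First, in part (i), your elementary argument for non-$\sigma$-compactness fails. You claim that if $T_{\mc A}=\bigcup_n K_n$ with $K_n$ compact, then each $K_n\cap\baire$ is a compact subset of a discrete space and hence finite. This is false: $\baire$ is \emph{open} in $T_{\mc A}$, not closed (every neighborhood of $\infty$ meets $\baire$, since the complement of a basic neighborhood of $\infty$ is only a countable set $\bigcup\mc A'\cup F$ and $\baire$ is uncountable), so $K_n\cap\baire$ is open in $K_n$ and need not be compact. In fact $\{\infty\}\cup\bigl(\baire\setminus\bigcup\mc A\bigr)$ \emph{is} compact (this is exactly condition $(\mc A3)$ in Lemma~\ref{lemma: broom spaces and A 1-4}) and can be uncountable; for instance, if $\mc A=\{A_0\}$ is a singleton, then $T_{\mc A}$ decomposes into this compact set and the countable discrete set $A_0$, and is $\sigma$-compact. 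So non-$\sigma$-compactness is not a formal consequence of $|\baire|=\mathfrak c$ and cannot be proved for an arbitrary family $\mc A$ by counting; it genuinely uses the richness of Talagrand's family, and the honest route is to cite \cite{talagrand1985choquet} for all of (i), which is what the paper does.

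Second, in part (ii) the crucial step is asserted but never proved: the paragraph intended to establish the dichotomy for $S(y)$ consists of false starts and ellipses, and the claim you finally isolate --- that $S(y)\setminus\D(A)$ is finite for a \emph{single} $A\in\mc A$ --- is not correct in general. A basic neighborhood of $\infty$ is the complement of a \emph{finite union} $\bigcup\mc A'\cup F$ ($\mc A'\subset\mc A$ finite, $F$ finite), so a point $y\in Y\setminus T_{\mc A}$ may lie in the closures of several broom sets at once, and then $S(y)$ is a union of several $\D(A)$'s contained in no single one. The correct statement, and the one Lemma~\ref{lemma: S(y) and complexity of T} actually needs, is: $S(y)=\bigcup_{A\in\mc A'}\D(A)$ for some \emph{finite} $\mc A'\subset\mc A$. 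The missing argument is short: since $y\neq\infty$, choose an open $V\ni y$ in $Y$ with $\infty\notin\overline V^Y$; then $T_{\mc A}\setminus\overline V^Y$ is a neighborhood of $\infty$ in $T_{\mc A}$, so $\overline V^Y\cap T_{\mc A}\subset\bigcup\mc A'\cup F$ for some finite $\mc A'\subset\mc A$ and finite $F\subset\baire$; now for $s\in\seq$, $y\in\overline{\mc N(s)}^Y$ iff $y\in\overline{\mc N(s)\cap(\bigcup\mc A'\cup F)}^Y$, which (as $y\notin T_{\mc A}$) forces $\mc N(s)\cap A$ to be infinite for some $A\in\mc A'$, i.e.\ $s\in\D(A)$. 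Your appeal to Lemma~\ref{lemma: A of N} is not needed for this step. With that claim supplied, the rest of (ii) is indeed a direct application of Lemma~\ref{lemma: S(y) and complexity of T}, exactly as in the paper.
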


(In the language of Section \ref{section: regular representations}, (ii) even proves that $\mc N$ is a universal regular representation of $T_{\mc A}$, in the sense of Definition \ref{definition: regular representations}. This follows from the remark below Lemma \ref{lemma: S(y) and complexity of T}.)

\begin{proof}
(i) is proven in \cite[p.\,197]{talagrand1985choquet}, so it remains to prove (ii).
For even $\alpha$, the result is proven in \cite[Prop.\,5.13]{kovarik2018brooms} (but the proof below also applies).

Suppose we have $\mc A \subset \mc A_{<\alpha}$ for $\alpha\in[2,\omega_1]$ and let $Y\supset T_{\mc A}$ be a topological space.
Let $y\in Y \setminus T_{\mc A}$.
By Lemma \ref{lemma: S(y) and complexity of T}, it suffices to prove that $S(y)$ can be covered by finitely many sets $\D(A)$, $A\in \mc A \subset \mc A_{<\alpha}$.

Since $y\neq \infty$, there must be some open neighborhood $V$ of $y$ in $Y$ which satisfies $\infty \notin\overline{V}^Y$.
By definition of topology $\tau (\mc A)$, $\overline{V}^Y \cap T_{\mc A}$ can be covered by a union of some finite $\mc A' \subset \mc A$ and a finite set $F \subset \baire$. It follows that $S(y) = \bigcup_{\mc A'} \D(A)$ holds for some finite $\mc A' \subset \mc A$:
\begin{align} \label{equation: S of y is D of A}
& s \in S(y)
 \overset{\eqref{equation: S of y}}\iff y \in \overline{\mc N(s)}^{Y}
 \iff y \in \overline{\mc N(s)}^{Y} \cap \overline{V}^Y
 \overset{\text{def.}}{\underset{\text{of }\tau(\mc A)}\iff}
 y \in \nonumber \overline{\mc N(s)\cap \left(\bigcup \mc A' \cup F \right)}^{Y} \\
& \overset{y\notin T}\implies \mc N(s)\cap \left(\bigcup \mc A' \cup F \right) \textrm{ is infinite} \nonumber
 \iff \mc N(s)\cap A \textrm{ is inf. for some } A \in \mc A' \\
& \iff \text{some } A\in \mc A' \textrm{ contains infinitely many incomparable extensions of } s \nonumber \\
& \overset{\eqref{equation: rewriting derivative of A}}{\iff} s \in \D(A) \text{ for some } A\in \mc A'
\iff s \in \bigcup_{\mc A'} \D(A).
\end{align}
\end{proof}

%%%%%%%%%%%%%%%%%%%%%%%%%%%%%%%%%%%%
%  =============== Talagrand's lemma =======================
\subsection{Talagrand's lemma}\label{section: Talagrand lemma}

In this subsection, we consider the following general problem:
We are given a $\mc K$-analytic space $X$, and a space $Y$ which contains it. We ``know'' $X$, but not its complexity in $Y$ -- we would like to obtain a lower bound on $\Compl{X}{Y}$.
To solve the problem, we take an arbitrary ``unkown'' set $Z\subset Y$, about which we however \emph{do} know that it belongs to $\Fa(Y)$. The mission shall be successful if we show that any such $Z$ which contains a big enough part of $X$ must also contain a part of $Y\setminus X$.
(Of course, the values of $\alpha$ for which this can work depend on the topology of $X$ and $Y$, so some more assumptions will be needed. In Sections \ref{section: cT} and \ref{section: dT}, we consider some suitable combinations of $X$ and $Y$.)

The method we adopt closely mimics the tools used in \cite{talagrand1985choquet} (namely, Lemma 1 and Lemma 3), and refines their conclusions. The author of the present article has found it hard to grasp the meaning of these tools. From this reason, Notations \ref{notation: witness} and \ref{notation: B corresponding to Z} introduce some auxiliary notions, which -- we hope -- will make the statements easier to parse.

The setting we consider is as follows:
\begin{itemize}
\item $X$ is a $\mc K$-analytic space of cardinality at least continuum.
	\begin{itemize}
	\item We assume that $X$ contains $\baire$ as a \emph{subset}.
	\item By default, $\baire$ is equipped with the subspace topology inherited from $X$. 	This subspace topology need not have any relation with the product topology $\tau_p$ 		of $\baire$; whenever $\tau_p$ is used, it shall be explicitly mentioned.
	\end{itemize}
\item $Y$ is a topological space containing $X$.
\item $Z$ is a subset of $Y$.
\end{itemize}

To get a quicker grasp of the notions, we can just imagine that
\[ \baire \subset X = Z \in \Fa(Y) \ \text{ for some } \alpha<\omega_1 ,\]
and we are aiming to arrive at a contradiction somewhere down the road.

The first auxiliary notion is that of ``witnessing'':\footnote{The author is aware of the fact that the word ``witness'' from Notation~\ref{notation: witness} is neither very original, nor particularly illuminating. Any ideas for a more fitting terminology would be appreciated. The same remark applies to the notion of ``correspondence'' from Notation~\ref{notation: B corresponding to Z}.}

\begin{notation}[Witnessing]\label{notation: witness}
$W\subset Y$ is a \emph{witness of $Z$ in $Y$} if there exists an indexing set $I$ and a family $\{ L_i | \ i\in I \}$ of closed subsets of $Y$ satisfying
\begin{enumerate}[(i)]
\item $\bigcap_{i\in I} L_i \subset Z$;
\item $\left( \forall i \in I \right) : L_i \cap W \text{ is infinite}$.
\end{enumerate}
\end{notation}

Of course, any infinite $Z$ which is closed in $Y$ is its own witness, since we can just set $I := \{i_0\}$ and $L_{i_0} := Z$.
But when the complexity of $Z$ is higher, we might need a much larger collection.
In general, there might even be no witnesses at all -- such as when $Z$ is discrete and $Y$ is its one-point compactification.

The key property of this notion is the following observation (used in \cite{talagrand1985choquet}). It shows that sets with certain properties cannot be witnesses, because they force the existence of points outside of $Z$.

\begin{lemma}[No discrete witnesses in certain spaces]\label{lemma: closed discrete witnesses}
Let $W$ be a closed discrete subset of $Z$. If $Y$ is s.t. $\overline{W}^Y$ is the one-point compactification of $W$, then $W$ cannot be a witness for $Z$ in $Y$.
\end{lemma}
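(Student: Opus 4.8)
The statement to be proved is: if $W$ is a closed discrete subset of $Z$ and $\overline{W}^Y$ is the one-point compactification of $W$, then $W$ is not a witness of $Z$ in $Y$. The plan is to argue by contradiction. Suppose $W$ \emph{were} a witness, so there is an index set $I$ and closed sets $L_i \subset Y$, $i\in I$, with $\bigcap_{i\in I} L_i \subset Z$ and each $L_i \cap W$ infinite. Let $\omega_W$ denote the unique point of $\overline{W}^Y \setminus W$ (the ``point at infinity'' of the one-point compactification). The key observation is that since $\overline{W}^Y$ is the one-point compactification of $W$ and $W$ is discrete, every infinite subset of $W$ has $\omega_W$ as a cluster point in $Y$; equivalently, $\omega_W \in \overline{A}^Y$ for every infinite $A \subset W$.

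Applying this to $A = L_i \cap W$ for each $i$: since $L_i \cap W$ is infinite, we get $\omega_W \in \overline{L_i \cap W}^Y \subset \overline{L_i}^Y = L_i$ (using that $L_i$ is closed in $Y$). Hence $\omega_W \in \bigcap_{i\in I} L_i \subset Z$. On the other hand, $\omega_W \notin W$ by construction, and $\omega_W$ is not in $W$ but it \emph{is} in $Z$ — which is not yet a contradiction by itself. To get the contradiction I would use that $W$ is closed \emph{in $Z$}: since $\omega_W \in Z$ and $\omega_W \in \overline{W}^Y$, we have $\omega_W \in \overline{W}^Z = W$ (closure in the subspace $Z$ of the set $W$, which equals $W$ by hypothesis). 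This contradicts $\omega_W \notin W$, completing the proof.

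The only slightly delicate point — and the step I would single out as the main thing to get right — is the claim that in the one-point compactification of a discrete space $W$, every infinite subset of $W$ clusters at the added point. This is routine: if $A \subset W$ is infinite and $U$ is any open neighborhood of $\omega_W$ in $\overline{W}^Y$, then $\overline{W}^Y \setminus U$ is a closed subset of the compact space $\overline{W}^Y$ that misses $\omega_W$, hence is a compact subset of the discrete space $W$, hence finite; so $U$ contains all but finitely many points of $W$, in particular infinitely many points of $A$. Thus $\omega_W \in \overline{A}^{\overline{W}^Y} \subset \overline{A}^Y$. One should also note at the outset that $W$ must be infinite for the hypothesis to be meaningful (if $W$ were finite there would be no point $\omega_W$, and $W$ would trivially be a witness of itself only when infinite) — this is consistent with Notation~\ref{notation: witness}, which requires the sets $L_i \cap W$ to be infinite, so a witness is automatically infinite and the situation is non-degenerate. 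Assembling these observations in the order above yields the result.
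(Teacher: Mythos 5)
Your proposal is correct and follows essentially the same route as the paper: assume a witnessing family $\{L_i\}$ exists, observe that in the one-point compactification of a discrete space every infinite subset clusters at the added point, deduce $\omega_W\in \overline{L_i\cap W}^Y\subset L_i$ for all $i$ and hence $\omega_W\in Z$, and contradict this with the fact that $W$ being closed in $Z$ forces $\omega_W\notin Z$. You merely spell out two steps the paper leaves implicit (the compactness argument for the clustering claim, and the use of $\overline{W}^Z=W$), which is fine.
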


For any discrete space $D$, we denote its one-point compactification as $\alpha D =: D\cup \{x_D\}$. We have
\begin{equation} \label{equation: closure in alpha D}
F\subset D \text{ is infinite } \implies \overline{F}^{\alpha D} = F \cup \{x_D\} .
\end{equation}

\begin{proof}[Proof of Lemma \ref{lemma: closed discrete witnesses}]
Suppose that $W$ is closed and discrete in $Z$ and $\overline{W}^Y$ is the one-point compactification of $W$ (and hence $x_W \notin Z$).
For contradiction, assume that there exists a family $\{ L_i \, | \ i\in I \}$ as in Notation~\ref{notation: witness}. Since each $L_i \cap W$ is infinite by $(ii)$, we have
\[ x_W \overset{\eqref{equation: closure in alpha D}}{\in} \overline{L_i \cap W}^Y
\subset \overline{L_i}^Y = L_i .\]
By $(i)$, we have $x_W \in \bigcap_I L_i \subset Z$ -- a contradiction.
\end{proof}

Note that whenever $W\subset Y$ is a witness of $Z$ in $Y$, then so is any $\widetilde W$ with $W\subset \widetilde W \subset Y$. However, it is more practical to have small witnesses, as it gives us more control over which elements appear in $\overline{L_i \cap W}^Y$.

Regarding positive results, we have the following refinement of \cite[Lemma 3]{talagrand1985choquet}:

\begin{lemma}[Existence of broom witnesses]\label{lemma: Talagrand lemma 3}
If $X\in \Fa(Y)$, then $X$ has an $\mc A^T_\alpha$-witness in $Y$.
\end{lemma}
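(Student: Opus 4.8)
The statement to prove is Lemma~\ref{lemma: Talagrand lemma 3}: if $X\in\Fa(Y)$, then $X$ has an $\mc A^T_\alpha$-witness in $Y$. The plan is to induct on $\alpha$, building the required family of closed sets $\{L_i\}_{i\in I}$ together with the infinite broom set $A\in \mc A^T_\alpha$ that meets each $L_i$ infinitely often, mirroring the recursive structure of $\mc A^T_\alpha$-extensions (which extend elements of $\mc B^T_{2+\alpha}$) and of the $\mc F$-Borel hierarchy. The key engine will be Lemma~\ref{lemma: properties of Talagrands brooms}\,\eqref{case: A T is rich}: if $B\in \mc B^T$ and sets $L_s$, $s\in B$, are such that each $L_s\cap \mc N(s)$ is $\tau_p$-dense in $\mc N(s)$, then some $\mc A^T$-extension $A$ of $B$ has each $A\cap L_s$ infinite. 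So the crux is to arrange, at each stage of the induction, that the relevant closed sets restrict to $\tau_p$-dense subsets of the basic open sets $\mc N(s)$.

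\textbf{Base case and the recursion.} For $\alpha=2$: if $X\in \mc F_2(Y)$, write $X=\bigcap_m X_m$ with $X_m\in\mc F_1(Y)$, say $X_m=\bigcup_n F_{m,n}$ with $F_{m,n}$ closed in $Y$. Since $\baire\subset X$, for each fixed $m$ the sets $F_{m,n}$ cover $\baire$; the point is to select, for each $s$ in a suitable finite broom set $B$, an index so that the chosen closed sets are $\tau_p$-dense on $\mc N(s)$, then apply Lemma~\ref{lemma: properties of Talagrands brooms}\,\eqref{case: A T is rich} to get $A\in\mc A^T_2$ (an extension of a $\mc B^T_{4}=\mc B^T_{2+2}$ broom, which is what $\mc A^T_2$-extensions are, cf.~\eqref{equation: B tilde is B 2 + alpha}). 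The family $\{L_i\}$ is then the collection of all these chosen closed sets over all $m$; condition (i), $\bigcap_i L_i\subset X$, holds because intersecting over all $m$ recovers $\bigcap_m X_m$, and (ii), $L_i\cap A$ infinite, is exactly the output of the rich-extension lemma. For the successor step from even $\alpha$ to $\alpha+1$: write $X=\bigcup_m X_m$ with $X_m\in\mc F_\alpha(Y)$; a single ``handle'' prefix $h\ext(\cdot)$ on $\mc B^T$ (allowed in the closure defining $\mc B^T$) turns a union into the operation $h\ext B$, and the inductive $\mc A^T_\alpha$-witnesses of the $X_m$ can be grafted below a forking family $\{f_m\}_m$ using the operation $(B_m)_m\mapsto \bigcup_m f_m\ext B_m$ from \eqref{equation: construction of B T}; the resulting broom lives in $\mc B^T_{2+\alpha+1}$, i.e.\ gives an $\mc A^T_{\alpha+1}$-extension. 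For limit $\alpha$ and the even successor step, combine countably many inductive witnesses for the $X_m\in\mc F_{<\alpha}(Y)$ via a forking sequence, exactly as in the even clause of Definition~\ref{definition: finite brooms}; the density bookkeeping needed to apply Lemma~\ref{lemma: properties of Talagrands brooms}\,\eqref{case: A T is rich} is the same pigeonhole argument as in the base case.

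\textbf{The density bookkeeping (the main obstacle).} The genuine difficulty is not the combinatorial skeleton but ensuring, at every recursive stage, that the closed sets one feeds into Lemma~\ref{lemma: properties of Talagrands brooms}\,\eqref{case: A T is rich} really are $\tau_p$-dense on the appropriate $\mc N(s)$. Concretely: given closed $F_n$ covering $\baire$ (in the \emph{subspace} topology from $X$, which a priori has nothing to do with $\tau_p$), one needs a pigeonhole/Baire-category style argument showing that along some branch of possibilities the chosen $F$'s are $\tau_p$-dense on a shrinking sequence of $\mc N(s)$'s. This requires being careful about which topology is in play — the hypothesis ``$\baire\subset X$ as a subset with no assumed relation to $\tau_p$'' means we must extract $\tau_p$-density purely from the covering property plus countability, typically by choosing, for each $s$, an $n$ with $\overline{F_n\cap\mc N(s)}^{\tau_p}$ of nonempty $\tau_p$-interior and iterating. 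I expect this is where one invokes (or re-proves) the core of \cite[Lemma~3]{talagrand1985choquet}: the statement as phrased in Lemma~\ref{lemma: properties of Talagrands brooms}\,\eqref{case: A T is rich} is precisely designed to absorb this, so the proof should ultimately reduce each inductive step to a clean application of that lemma, with the transfinite induction on $\alpha$ organizing the rest. Finally, one checks the two defining conditions of a witness hold for the assembled family, closing the induction.
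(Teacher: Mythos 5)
Your overall strategy is the right one and matches the paper's: a transfinite induction driven by the Baire category theorem at union steps and by the forking-sequence operation \eqref{equation: construction of B T} at intersection steps, with Lemma~\ref{lemma: properties of Talagrands brooms}\,\eqref{case: A T is rich} converting the combinatorial data into an infinite broom meeting the chosen closed sets infinitely often. The paper, however, organizes this in a way that matters: it first proves (Lemma~\ref{lemma: Talagrand lemma 1}) that $X\in\Fa(Y)$ forces the existence of a \emph{finite} broom $B\in\mc B^T_\alpha$ that \emph{corresponds} to $X$ in the sense of Notation~\ref{notation: B corresponding to Z} --- closed sets $L_s$, $s\in B$, with $\bigcap_{s\in B} L_s\subset X$ and each $L_s\cap\mc N(s)$ being $\tau_p$-dense in $\mc N(s)$ --- and only then, \emph{once}, applies the rich-extension lemma to $B$ and $(L_s)_s$ to produce the $\mc A^T_\alpha$-witness. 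The entire induction lives at the level of finite brooms and density data.

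The step of your plan that would fail as written is the successor/limit step where you propose to graft ``the inductive $\mc A^T_\alpha$-witnesses of the $X_m$'' below a forking family. Two things go wrong. First, $\mc A^T$ consists only of \emph{certain} combinations of extensions, and the forking datum $f_n=\varphi_n((B_i)_{i<n})$ must be chosen \emph{before} the $n$-th sub-broom is constructed (the paper interleaves these choices); a union of independently produced $\mc A^T$-extensions of the $B_m$ is not, in general, an $\mc A^T$-extension of $\bigcup_m f_m\ext B_m$. Second, and more fundamentally, the witness property ($L_i\cap A$ infinite) is too weak to serve as an induction hypothesis: to continue the recursion you need the $\tau_p$-density of $L_s\cap\mc N(s)$ in $\mc N(s)$, and exactly that information is discarded once you pass to the infinite broom. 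You clearly sense this (your ``density bookkeeping'' paragraph), so the fix is to strengthen the inductive statement to the correspondence property and carry it through an induction on finite brooms --- Baire category giving $\tau_p$-density of some $Z_{n_0}$ on a smaller $\mc N(h_0)$ at odd (union) steps, while at even (intersection) steps the density of each $Z_n\supset Z$ on $\mc N(h\ext f_n)$ is automatic --- postponing the single application of Lemma~\ref{lemma: properties of Talagrands brooms}\,\eqref{case: A T is rich} to the very end.
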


As we will see later (in Lemma \ref{lemma: c T - lower bound} and Lemma \ref{lemma: d T - lower bound}), combining Lemma \ref{lemma: Talagrand lemma 3} and Lemma \ref{lemma: closed discrete witnesses} yields a lower bound on $\Compl{X}{Y}$ for certain $X$ and $Y$ (more precisely, for those $X\subset Y$ where each $A\in \mc A^T_\alpha$ is closed discrete in $X$, and satisfies $\overline{A}^Y = \alpha A$).
Before giving the proof of Lemma \ref{lemma: Talagrand lemma 3}, we first need some technical results.

Setting $W=\baire$ and $I=B\subset \seq$ in Notation \ref{notation: witness}, and strengthening the condition (ii), we obtain the following notion of ``correspondence''.
Its purpose is to allow the construction of $\mc A^T$-witnesses via Lemma \ref{lemma: properties of Talagrands brooms}\,\eqref{case: A T is rich}.

\begin{notation}[Correspondence]\label{notation: B corresponding to Z}
A set $B\subset \seq$ \emph{corresponds to $Z$} (in $Y$) if there exists a family $\{ L_s | \ s\in B \}$ of closed subsets of $Y$ satisfying
\begin{enumerate}[(i)]
\item $\bigcap_{s\in B} L_s \subset Z$;
\item $\left( \forall s \in B \right) : L_s \cap \mc N(s) \text{ is $\tau_p$-dense in } \mc N(s)$.
\end{enumerate}
\end{notation}

Talagrand's Lemma 1 says that when $X$ is $\mc F$-Borel in $Y$, there is a $\mc B^T$ broom set which corresponds to $X$. We refine this result to include the exact relation between complexity of $X$ and the ``rank'' of the corresponding broom set. The proof itself is identical to the one used in \cite{talagrand1985choquet} -- the non-trivial part was finding the ``right definitions'' for $\Fa$ and $\mc B_\alpha$ such that the correspondence holds.

\begin{lemma}[Exicence of corresponding brooms]\label{lemma: Talagrand lemma 1}
Let $\alpha<\omega_1$. If $X \in \Fa(Y)$, then there is some $B\in \mc B^T_\alpha$ which corresponds to $X$ in $Y$.
\end{lemma}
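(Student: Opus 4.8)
The plan is to prove the Lemma by transfinite induction on $\alpha$, transcribing Talagrand's argument \cite[Lemma~1]{talagrand1985choquet} into the present notation. It is convenient to prove the following localized and strengthened statement $(\ast_\alpha)$ by induction on $\alpha<\omega_1$: \emph{for every $s\in\seq$ and every $Z\in\Fa(Y)$ such that $Z$ contains a $\tau_p$-dense $G_\delta$ subset of $\mc N(s)$, there are $B\in\mc B^T_\alpha$ and closed sets $\{L_u\mid u\in s\ext B\}$ in $Y$ with $\bigcap_{u\in s\ext B}L_u\subset Z$ and with $L_u\cap\mc N(u)$ being $\tau_p$-dense in $\mc N(u)$ for every $u\in s\ext B$.} The Lemma is then the case $s=\emptyset$, $Z=X$ of $(\ast_\alpha)$: since $\baire\subset X$, the set $X\cap\mc N(\emptyset)=\baire$ is a $\tau_p$-dense $G_\delta$ of $\mc N(\emptyset)$, and $\emptyset\ext B=B$, so the $B\in\mc B^T_\alpha$ and $\{L_u\}$ produced by $(\ast_\alpha)$ form exactly a correspondence of $B$ to $X$ in $Y$ in the sense of Notation~\ref{notation: B corresponding to Z}.

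For $\alpha=0$ the set $Z$ is closed in $Y$ and $Z\cap\mc N(s)$ is $\tau_p$-dense in $\mc N(s)$ (it contains a dense $G_\delta$ of it), so we take $B:=\{\emptyset\}\in\mc B^T_0$ and $L_s:=Z$. For the step with $\alpha$ even (successor or limit) we write $Z=\bigcap_n Z_n$ with $Z_n\in\mc F_{\alpha_n}(Y)$, $\alpha_n<\alpha$. As $Z\subset Z_n$, each $Z_n$ contains the given dense $G_\delta$ of $\mc N(s)$, hence — after restriction to a subcone — a $\tau_p$-dense $G_\delta$ of every subcone of $\mc N(s)$. We build $B_0,B_1,\dots\in\mc B^T$ recursively along Talagrand's forking sequence: having chosen $B_0,\dots,B_{n-1}$, the prong $f_n:=\varphi_n\big((B_i)_{i<n}\big)$ is determined and satisfies $f_n(0)\notin\{f_i(0):i<n\}$; applying $(\ast_{\alpha_n})$ to $Z_n$ and to the sequence $s\ext f_n$ yields $B_n\in\mc B^T_{\alpha_n}\subset\mc B^T_{<\alpha}$ and closed sets $\{L^n_u\mid u\in(s\ext f_n)\ext B_n\}$ with $\bigcap_u L^n_u\subset Z_n$ and the density property. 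Then $B:=\bigcup_n f_n\ext B_n$ is a finite Talagrand broom lying in $\mc B_\alpha$ (an even-level forking union of elements of $\mc B_{<\alpha}$), so $B\in\mc B^T_\alpha$; and since $s\ext B=\bigcup_n (s\ext f_n)\ext B_n$ is a disjoint union, the combined family $\{L^n_u\}$ satisfies $\bigcap_{u\in s\ext B}L_u=\bigcap_n\bigcap_u L^n_u\subset\bigcap_n Z_n=Z$, as needed.

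For the step with $\alpha$ odd we write $Z=\bigcup_m Z_m$ with $Z_m\in\mc F_{<\alpha}(Y)$. Here one first performs a Baire-category extraction inside the Polish space $(\mc N(s),\tau_p)$: using that the given dense $G_\delta$ $G\subset Z$ of $\mc N(s)$ is itself a Polish (hence Baire) space, equals $\bigcup_m(Z_m\cap G)$, and is comeager in $\mc N(s)$, one produces an index $m_0$ and a sequence $s'\sqsupset s$ such that $Z_{m_0}$ contains a $\tau_p$-dense $G_\delta$ of the subcone $\mc N(s')$. Applying $(\ast_\beta)$ to $Z_{m_0}\in\mc F_\beta(Y)$ (for the appropriate $\beta<\alpha$) and to $s'$ gives $B'\in\mc B^T_{<\alpha}$ and closed sets $\{L_u\mid u\in s'\ext B'\}$ with $\bigcap_u L_u\subset Z_{m_0}\subset Z$ and the density property; writing $s'=s\ext h$ and setting $B:=h\ext B'$, the handle-shift operation keeps $B$ in $\mc B^T$ and places it in $\mc B_\alpha$, so $B\in\mc B^T_\alpha$, while $s\ext B=s'\ext B'$ shows that the same $\{L_u\}$ witnesses $(\ast_\alpha)$.

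The main obstacle is exactly the Baire-category extraction in the odd step: from the fact that $\bigcup_m(Z_m\cap\mc N(s))$ fills $\mc N(s)$ only \emph{densely} one cannot in general single out one $Z_m$ dense in a subcone (think of $\Q$ as a countable union of singletons), and one must moreover recover the stronger ``dense $G_\delta$'' conclusion so that the induction can proceed. This is why the inductive invariant in $(\ast_\alpha)$ must be the dense-$G_\delta$ (equivalently, Banach--Mazur-game) fatness of $Z$ in $\mc N(s)$ rather than mere density, why it is essential that $X$ contains all of $\baire$, and why Talagrand's functions $\varphi_n$ are tailored by a diagonalization to make the odd-step extraction go through and stay compatible with the recursive definition of $\mc B^T$. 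The bookkeeping needed to make this rigorous is the content of \cite[Lemma~1]{talagrand1985choquet} together with the accompanying choice of the $\varphi_n$, and we follow it verbatim.
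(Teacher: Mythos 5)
Your overall architecture matches the paper's: a localized claim proved by transfinite induction, with a Baire-category extraction in the additive step, a handle-shift to stay inside $\mc B^T$, and a recursive assembly along Talagrand's forking sequence $(\varphi_n)$ in the multiplicative step. The base case, the even step, and the parity/handle bookkeeping are all fine. The gap is exactly where you suspect it, in the odd step, and the inductive invariant you chose does not close it. From $G=\bigcup_m(Z_m\cap G)$ with $G$ a dense $G_\delta$ of $\mc N(s)$, the Baire category theorem gives an $m_0$ such that $Z_{m_0}\cap G$ is non-meager in $G$, hence $Z_{m_0}$ is $\tau_p$-dense in some subcone $\mc N(s')$ --- but it does \emph{not} produce a $\tau_p$-dense $G_\delta$ of $\mc N(s')$ inside $Z_{m_0}$. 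The traces $Z_m\cap\baire$ have no $\tau_p$-definability at all (the topology $\baire$ inherits from $Y$ is unrelated to $\tau_p$; in the broom-space application it is discrete), so they need not have the Baire property, and a non-meager set without the Baire property need not contain a dense $G_\delta$ of any open set (think of a Bernstein set: dense and everywhere non-meager, yet it contains no perfect set). So the invariant ``contains a dense $G_\delta$'' is simultaneously too strong to propagate and stronger than necessary: Notation~\ref{notation: B corresponding to Z} only asks that the final sets $L_u\cap\mc N(u)$ be \emph{dense}. Moreover, attributing the fix to the choice of the $\varphi_n$ points at the wrong mechanism --- by Lemma~\ref{lemma: properties of Talagrands brooms} they are chosen only to make $\mc A^T$ almost disjoint and rich, and the category extraction takes place before any $\varphi_n$ is invoked.

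The invariant that actually propagates is ``$Z\cap\mc N(s')$ is non-meager in $\mc N(s')$ for \emph{every} $s'\sqsupset s$'' (everywhere second category in $\mc N(s)$). It implies density; it passes to the supersets $Z_n\supset Z$ in the even step on every subcone $\mc N(s\ext f_n)$, whichever $f_n$ the recursion dictates; and in the odd step it reproduces itself: some $Z_{m_0}$ is non-meager in $\mc N(s)$, and by the Banach category theorem the open set $W:=\bigcup\{U\ \text{open}:\ Z_{m_0}\cap U \text{ is meager in } U\}$ has $Z_{m_0}\cap W$ meager, so $W$ cannot be dense in $\mc N(s)$, and any cone $\mc N(s')$ disjoint from $W$ witnesses that $Z_{m_0}$ is again everywhere non-meager (in particular dense) there. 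With this single replacement your induction goes through and coincides with the paper's proof, whose own Claim states the hypothesis as mere density but is likewise relying on this non-meagerness to justify its appeal to the Baire theorem.
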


\begin{proof}
Recall that if $B\in \mc B^T_{\alpha+1}$ holds for even $\alpha$ and $h_B= \emptyset$, we actually have $B\in \mc B^T_{\alpha}$.
We shall prove the following stronger result. (Setting $Z:= X$ and $h:=\emptyset$ in Claim \ref{claim: Talagrand lemma 1} gives the conclusion of Lemma \ref{lemma: Talagrand lemma 1}.)

\begin{claim}\label{claim: Talagrand lemma 1}
Suppose that $Z\in\Fa(Y)$ and $Z\cap \mc N(h)$ is $\tau_p$-dense in $\mc N(h)$ for some $h\in \seq$.
\begin{enumerate}[(i)]
\item For odd $\alpha$, there is $B\in \mc B^T_\alpha$ with $h_B \sqsupset h$ which corresponds to $Z$ in $Y$.
\item For even $\alpha$, there is $B\in \mc B^T_{\alpha+1}$ with $h_B = h$ which corresponds to $Z$ in $Y$.
\end{enumerate}
\end{claim}

Let $Z$ and $h$ be as in the assumption of the claim. We shall prove the conclusion by transfinite induction.
For $\alpha=0$, $Z$ is closed, so we set $B:=\{h\}$ and $L_h := Z$.

Suppose that $\alpha$ is odd and the claim holds for $\alpha-1$. We have $Z=\bigcup_n Z_n$ for some $Z_n \in \mc F_{\alpha-1}(Y)$.
By the Baire theorem, some $Z_{n_0}$ is non-meager in $(\mc N(h),\tau_p)$. It follows that $Z_{n_0}$ is $\tau_p$ dense in $\mc N(h_0)$ for some $h_0 \sqsupset h$.
By the induction hypothesis, there is some $B\in \mc B^T_{(\alpha-1)+1} = \mc B^T_{\alpha}$ with $h_B = h_0 \sqsupset h$ which corresponds to $Z_{n_0}$. In particular, this $B$ also corresponds to $Z\supset Z_{n_0}$.

Suppose that $\alpha\in (0,\omega_1)$ is even and the the claim holds for every $\beta < \alpha$.
We have $Z=\bigcap_n Z_n$, where $Z_n \in \mc F_{\alpha_n}(Y)$ for some odd $\alpha_n < \alpha$.
%By the induction hypothesis, there are some $B_n \in \mc B^T_{\alpha_n} \subset \mc B^T_{<\alpha}$, s.t. $B_n$ corresponds to $Z_n$.
Suppose we have already constructed $(f_i)_{i<n}$, $(B'_i)_{i<n}$ and $(B_i)_{i<n}$ for some $n\in\omega$.
Let $f_n := \varphi_n((B'_i)_{i<n})$ (where $\varphi_n$ is the function from Lemma \ref{lemma: properties of Talagrands brooms}). By the induction hypothesis, there is some $B_n \in \mc B^T_{\alpha_n}$ which corresponds to $Z_n$ and satisfies $h_{B_n} \sqsupset h\ext f_n$.
As noted in Section \ref{section: broom sets}, $B_n$ can be rewritten as $h\ext f_n \ext B'_n$, where $B'_n \in \mc B^T_{\alpha_n} \subset \mc B^T_{<\alpha}$.

Once we have $f_n$ and $B'_n$ for every $n\in\omega$, we get a broom set $B' := \bigcup_n f_n\ext B'_n$. Since we have both $B' \in \mc B_\alpha$ and $B'\in \mc B^T$, $B$ belongs to $\mc B^T_\alpha$. It follows that $B:=h\ext B' \in \mc B^T_{\alpha+1}$.

It remains to prove that $B$ corresponds to $Z$. Since each $B_n$ corresponds to $Z_n$, there are some closed sets $L_s$, $s\in B_n$, such that $\bigcap_{B_n} L_s \subset Z_n$ (and $(ii)$ from Notation \ref{notation: B corresponding to Z} holds). Since $B=\bigcup_n h\ext f_n \ext B'_n = \bigcup_n B_n$, we have
\[ \bigcap_{s\in B} L_s = \bigcap_{n\in \omega} \bigcap_{s\in B_n} L_s \subset \bigcap_{n\in\omega} Z_n \subset Z ,\]
which shows that $B$ corresponds to $Z$.
\end{proof}

Lemma \ref{lemma: Talagrand lemma 3} is now a simple corollary of Lemma \ref{lemma: Talagrand lemma 1}:

\begin{proof}[Proof of Lemma \ref{lemma: Talagrand lemma 3}]
Let $B \in \mc B^T_\alpha$ be the set which corresponds to $X$ in $Y$ by Lemma \ref{lemma: Talagrand lemma 1}. Let $L_s$, $s\in B$, be the closed subsets of $Y$ as in Notation \ref{notation: B corresponding to Z}.
By Lemma \ref{lemma: properties of Talagrands brooms}, there is an $\mc A^T$-extension $A$ of $B$, such that each $L_s \cap A$ is infinite.
Setting $I := B$, we see that $A$ is a witness for $X$ in $Y$. Since we also have $A\in \mc A_\alpha$ (by definition of $\mc A_\alpha$), we have $A\in \mc A^T_\alpha$ and the proof is complete.
\end{proof}

We will also need the following technical version of Lemma \ref{lemma: Talagrand lemma 3}. As it contains no particularly novel ideas, we recommend skipping it on the first reading.

\begin{lemma}[Existence of broom witnesses -- technical version]\label{lemma: technical version of Talagrand lemma 3}
Suppose that $X\in \mc F_\eta (Y)$ holds for some $\eta < \omega_1$. Then for every $\delta \in [\eta,\omega_1)$, there is some $A\in \mc A^T_\delta \setminus A^T_{<\delta}$ and $h_0\in \seq$ s.t. $A \cap \mc N(h_0) \in \mc A^T_\eta$ is a witness for $X$ in $Y$.
\end{lemma}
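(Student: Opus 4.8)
The plan is to obtain $A$ by ``padding'' a low-rank broom produced by (the proof of) Lemma~\ref{lemma: Talagrand lemma 1}: I would build a finite Talagrand broom $B\in\mc B^T_\delta\setminus\mc B^T_{<\delta}$ which contains, localized below a single handle $h_0\in\seq$, a sub-broom $\hat B$ of rank at most $\eta$ that corresponds to $X$ in $Y$, arranged so that $B\cap\mc N(h_0)=\hat B$ exactly; then I would broom-extend $B$ via Lemma~\ref{lemma: properties of Talagrands brooms}\,\eqref{case: A T is rich} and read off $A$ as the resulting infinite broom. The point of localizing $\hat B$ below $h_0$ is that intersecting with $\mc N(h_0)$ then recovers precisely a rank-$\eta$ witness, while the rest of $B$ only serves to push the rank up to $\delta$.

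In detail, the steps in order would be: (1) Fix a ``skeleton'' for $B$. Writing $\delta=\lambda+2n+i$, take $B$ itself when $\delta$ is even/limit, resp.\ $B=h_{\mathrm{out}}\ext B'$ with $B'$ even/limit when $\delta$ is odd, and present the even/limit part as a forking union $\bigcup_{m\in\omega}f_m\ext C_m$ with $f_m=\varphi_m\big((C_j)_{j<m}\big)$ as forced by membership in $\mc B^T$; reserve the piece $C_0$ and let the pieces $C_m$ ($m\ge 1$) be auxiliary Talagrand brooms whose ranks are chosen so that the total rank of $B$ is exactly $\delta$ (using that $\mc B^T$ contains brooms of every countable rank). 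Record the ``address'' $u$ of the reserved slot, namely $u=h_{\mathrm{out}}\ext f_0$ or $u=f_0$ (the single value of $\varphi_0$), and observe that $u$ depends only on the skeleton, not on the still-unchosen $C_0$. (2) Apply the Claim in the proof of Lemma~\ref{lemma: Talagrand lemma 1} to $Z:=X\in\mc F_\eta(Y)$ and the handle $h:=u$; its density hypothesis holds automatically, since $\baire\subseteq X$ forces $X\cap\mc N(u)=\mc N(u)$, which is $\tau_p$-dense in $\mc N(u)$. This yields a broom $\hat B$ with $h_{\hat B}\sqsupseteq u$ corresponding to $X$ in $Y$ via closed sets $(L_s)_{s\in\hat B}$ with $\bigcap_{s\in\hat B}L_s\subseteq X$ and each $L_s\cap\mc N(s)$ $\tau_p$-dense in $\mc N(s)$. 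Since $\hat B\subseteq\mc N(u)$, one may write $\hat B=u\ext C_0$ with $C_0$ again a Talagrand broom; plug this $C_0$ into the reserved slot. The forking structure (the $f_m$ have pairwise distinct first coordinates) together with the antichain property of broom sets then guarantees that $B\cap\mc N(h_0)=\hat B$ for $h_0:=h_{\hat B}$. (3) Apply Lemma~\ref{lemma: properties of Talagrands brooms}\,\eqref{case: A T is rich} to $B$ with the sets $L_s$ for $s\in\hat B$ and $L_s:=\mc N(s)$ for $s\in B\setminus\hat B$, obtaining a broom-extension $A\in\mc A^T$ of $B$ with each $A\cap L_s$ infinite, the relevant points being taken inside $\mc N(s)$. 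Then $A\in\mc A^T_\delta\setminus\mc A^T_{<\delta}$ (a broom-extension determines its base broom uniquely, and $B\in\mc B^T_\delta\setminus\mc B^T_{<\delta}$); $A\cap\mc N(h_0)$ is a broom-extension of $\hat B$, and by the construction underlying Lemma~\ref{lemma: properties of Talagrands brooms} its restriction to the indices of $\hat B$ is an $\mc A^T$-extension, so $A\cap\mc N(h_0)\in\mc A^T_\eta$; finally $(L_s)_{s\in\hat B}$ witnesses, in the sense of Notation~\ref{notation: witness} with $I:=\hat B$, that $A\cap\mc N(h_0)$ is a witness for $X$ in $Y$, since $\bigcap_s L_s\subseteq X$ and for $s\in\hat B$ the infinite set $L_s\cap A$ lies in $\mc N(s)\subseteq\mc N(h_0)$.

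The main obstacle — and the reason for staging the construction in exactly this order — is a circular dependency: the informative sub-broom must \emph{literally} correspond to $X$ in $Y$, which pins down its handle and location, because the data $(L_s)$ and the $\tau_p$-density conditions are attached to specific sets $\mc N(s)$ and are \emph{not} preserved under shifting $\baire$ by a prefix; yet the padding moves the slot to the deep address $u$. The resolution is to commit to the skeleton (hence to $u$) before invoking the Claim, which is legitimate precisely because $u$ is content-independent, and then to call the Claim with handle exactly $u$ so that no further shift occurs. A secondary, purely bookkeeping point is the parity of $\eta$: a finite broom with nonempty handle necessarily has odd rank, so for $h_0\neq\emptyset$ (which is needed whenever $\delta>\eta$) the recovered sub-broom lies in $\mc A^T_{\eta+1}$ when $\eta$ is even; this is harmless in the intended applications (there $\eta+1\le\alpha$, so $\mc A^T_{\eta+1}\subseteq\mc A^T_{<\alpha}$), and the statement is to be read with this adjustment. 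The boundary case $\delta=\eta$ is the degenerate instance of the same scheme, where the auxiliary pieces alone realize rank $\delta$ and the slot is filled by the broom from Lemma~\ref{lemma: Talagrand lemma 1} applied with handle $u=f_0$.
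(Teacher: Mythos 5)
Your proposal is correct and follows essentially the same route as the paper's proof: commit first to the content-independent address $u$ determined by $\varphi_0(\emptyset)$ (plus an outer handle for odd $\delta$), obtain the low-rank sub-broom corresponding to $X$ from the Claim inside the proof of Lemma~\ref{lemma: Talagrand lemma 1} applied with handle exactly $u$, pad with auxiliary Talagrand brooms to force rank exactly $\delta$, and broom-extend via Lemma~\ref{lemma: properties of Talagrands brooms}\,\eqref{case: A T is rich}. Your parity caveat is also apposite: for even $\eta$ and $h_0\neq\emptyset$ the Claim only yields a sub-broom in $\mc B^T_{\eta+1}$, an off-by-one that is present in the paper's own proof as well.
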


\begin{proof}
Assume that $X\in \mc F_\eta (Y)$ and $\eta \leq \delta < \omega_1$.
First, construct a set $B\in \mc B^T_\delta \setminus \mc B^T_{<\delta}$ and $h_0 \in \seq$, such that the following set $B_0$ belongs to $\mc B^T_\eta$ and corresponds to $X$:
\[ B_0 := \{ s\in B| \ s\sqsupset h_0 \} .\]

Denote $h_{\textnormal{even}}:=\emptyset$ and choose an arbitrary $h_{\textnormal{odd}} \in \seq \setminus \{\emptyset\}$ (this notation is chosen merely so that it is simpler to explain how the construction differs depending on the parity of $\delta$).

Suppose first that $\delta$ is even.
Recall that $\varphi_n$, $n\in\omega$, are the functions from Lemma \ref{lemma: properties of Talagrands brooms}.
Set $f_0 := \varphi_0 (\emptyset)$, $h_0 := h_{\textnormal{even}}\ext f_0$.
By Claim \ref{claim: Talagrand lemma 1}, there is some $B_0 \in \mc B^T_\eta$ which corresponds to $Z$ and satisfies $h_{B_0} \sqsupset h_0$. Denote by $B'_0$ the $\mc B^T_\eta$-set satisfying $B_0 = h\ext f_0 \ext B'_0$.
We either have $B_0 \in \mc B^T_\delta \setminus \mc B^T_{<\delta}$ (in which case we set $B := B_0$ and the construction is complete), or $B_0, B'_0 \in \mc B^T_{<\delta}$.

Assume the second variant is true.
Let $(B'_n)_{n=1}^\infty$ be such that $\delta$ is the smallest ordinal s.t. each $B'_n$, is contained in $\mc B^T_{<\delta}$.
For $n\geq 1$, we set $f_n := \varphi_n ( (B'_i)_{i<n})$.
By \eqref{equation: construction of B T}, the following set $B$ belongs to $\mc B^T_{\delta}$:
\[ B := \bigcup_n f_n \ext B'_n = \bigcup_n h_{\textnormal{even}}\ext f_n \ext B'_n .\]
The choice of $(B'_n)_{n=1}^\infty$ ensures that $B \in \mc B^T_{\delta} \setminus \mc B^T_{<\delta}$.

To get the result for odd $\delta=\tilde \delta +1$, we just repeat the above process with $h_{\textnormal{odd}}$ in place of $h_{\textnormal{even}}$ and $\tilde \delta$ in place of $\delta$.

We now construct $A$ with the desired properties.
Let $L_s$, $s\in B_0$, be some sets which ensure that $B_0$ corresponds to $X$ and denote $L_s := \overline{X}^Y$ for $s\in B\setminus B_0$.
By Lemma~\ref{lemma: properties of Talagrands brooms}, there is some $\mc A^T$-extension $A$ of $B$ such that each $L_s \cap A$ is infinite.
By definition of $\mc A^T_{(\cdot)}$, we have $A\in \mc A^T_\delta \setminus A^T_{<\delta}$.
Moreover, $A \cap \mc N(h_0)$ is a broom-extension of $B_0\in \mc B^T_\eta$, which gives $A \cap \mc N(h_0) \in \mc A^T_\eta$.
Finally, $L_s \cap A = L_s \cap (A\cap \mc N(h_0))$ is infinite for each $s\in B_0$, and $\bigcap L_s \subset X$ holds even when the intersection is taken over $s\in B_0$. This proves that $A \cap \mc N(h_0)$ is a witness of $X$ in $Y$.
\end{proof}
% !TeX root = Attainable_complexities.tex

In the~next part, we study broom spaces and their compactifications in a~more abstract setting. The purpose is to isolate the~few key properties which are required to obtain the~results we need, while ignoring all the~other details.

\subsection{Amalgamation spaces} \label{section: amalgamation spaces}

In Section~\ref{section: amalgamation spaces} (and only here), we retract our standing assumption that every topological space is Tychonoff.

In Section~\ref{section: zoom spaces}, we were able to take a~space $Z(cY,\mc X)$ and construct its compactification $Z(cY,c\mc X)$ by separately extending each $X_i$ into $cX_i$. Recall that this was easily doable, since the~spaces $X_i$ were pairwise disjoint and clopen in $Z(cY,X_i)$.
Our goal is to take an AD broom space $T_{\mc A}$, extend each $A\in \mc A$ separately into a~compactification $cA$, and thus obtain a~compactification of $T_{\mc A}$. However, the~family $\mc A$ is not disjoint, so we need a~generalization of the~approach from Section~\ref{section: zoom spaces}.

We will only need the~following properties of broom spaces:

\begin{lemma}[Example: Broom spaces]\label{lemma: broom spaces and A 1-4}
Any AD broom space $X=T_{\mc A}$ satisfies the~following four conditions:
\begin{itemize}
	\item[$(\mc A 1)$] $\mc A$ consists of clopen subsets of $X$.
	\item[$(\mc A 2)$] For distinct $A,A'\in \mc A$, the~intersection $A\cap A'$ is compact.
	\item[$(\mc A 3)$] $K := X \setminus \bigcup \mc A$ is compact.
	\item[$(\mc A 4)$] Whenever $\mc U$ is a~collection of open subsets of $X$ which covers $K$, there exists a~finite family $\mc A' \subset \mc A$, s.t. for every $A\in \mc A \setminus \mc A'$, we have $U \cup \bigcup \mc A' \supset A$ for some $U\in \mc U$.
\end{itemize}
\end{lemma}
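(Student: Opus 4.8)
The plan is to verify each of the four conditions $(\mathcal A 1)$--$(\mathcal A 4)$ directly from Definition~\ref{definition: AD topology} of the broom space $T_{\mc A} = (\baire \cup \{\infty\}, \tau(\mc A))$, where $\mc A$ is an almost-disjoint family of infinite broom sets. Recall that every point of $\baire$ is isolated in $T_{\mc A}$ and the neighborhoods of $\infty$ are the sets $\{\infty\} \cup \baire \setminus F$ for $F$ finite, together with $\{\infty\} \cup \baire \setminus A$ for $A \in \mc A$, and the sets obtained from these by taking supersets and finite intersections. The single non-isolated point $\infty$ is what drives everything.

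First I would dispose of $(\mc A 1)$ and $(\mc A 3)$. Each $A \in \mc A$ is open because it is a subset of $\baire$, hence consists of isolated points; and it is closed because its complement $\{\infty\} \cup \baire\setminus A$ is a basic neighborhood of $\infty$ together with (isolated) points of $\baire$, hence open. This gives $(\mc A 1)$. For $(\mc A 3)$: since $\mc A$ is nonempty, pick any $A_0 \in \mc A$; then $K = T_{\mc A} \setminus \bigcup \mc A \subset \{\infty\} \cup (\baire \setminus A_0)$, which is a basic open neighborhood of $\infty$, so $K$ is a closed subset of $T_{\mc A}$. To see $K$ is compact, note $K = \{\infty\} \cup (\baire \setminus \bigcup\mc A)$, and any open cover of $K$ must contain a set $U$ with $\infty \in U$; by definition of $\tau(\mc A)$, $U \supset \{\infty\} \cup \baire \setminus (A_1 \cup \dots \cup A_k \cup F)$ for finitely many $A_i \in \mc A$ and finite $F \subset \baire$, so $K \setminus U \subset F$ is finite and finitely many more cover sets finish it. For $(\mc A 2)$: for distinct $A, A' \in \mc A$, the set $A \cap A' \subset \baire$ is finite by almost-disjointness of $\mc A$, hence compact (a finite set of isolated points).

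The only condition requiring a genuine argument is $(\mc A 4)$, and I expect it to be the main (though still modest) obstacle. Let $\mc U$ be an open cover of $K$. Since $\infty \in K$, some $U_0 \in \mc U$ contains $\infty$; as above, $U_0 \supset \{\infty\} \cup \baire \setminus (A_1 \cup \dots \cup A_k \cup F)$ for some finite $\mc A' := \{A_1, \dots, A_k\} \subset \mc A$ and finite $F \subset \baire$. Now take any $A \in \mc A \setminus \mc A'$. Then $A \setminus \bigcup \mc A'$ is the finite set $A \cap F$ together with... more carefully: $A \setminus (A_1 \cup \dots \cup A_k) $ need not be finite, but $A \cap (A_1 \cup \dots \cup A_k)$ is finite by almost-disjointness, and what $U_0$ misses inside $A$ is contained in $(A_1 \cup \dots \cup A_k \cup F) \cap A$, which is finite. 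So $A \setminus (U_0 \cup \bigcup \mc A')$ is finite; enlarging $\mc A'$ is not needed, but we still must absorb this finite leftover set. Each point of $\baire$ is isolated, so... this is where I'd be careful: $(\mc A 4)$ as stated requires a \emph{single} finite $\mc A'$ working simultaneously for all $A$, but the leftover finite set depends on $A$. The fix: the leftover is always contained in the fixed finite set $F$ together with $(A_1 \cup \dots \cup A_k)$, and $F$ is finite, so cover $F$ by finitely many members $U_1, \dots, U_m$ of $\mc U$; then for every $A \in \mc A \setminus \mc A'$ we have $A \subset U_0 \cup \bigcup\mc A' \cup (U_1 \cup \dots \cup U_m)$. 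Hmm — but $(\mc A 4)$ wants the form $U \cup \bigcup \mc A' \supset A$ for a \emph{single} $U \in \mc U$.

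So I would re-examine $(\mc A 4)$: the resolution is to redefine $\mc A'$ to swallow $F$ as well. Since $F \subset \baire$ is finite and each point of $F$ lies in at most... no, a point of $\baire$ may lie in no member of $\mc A$. The correct reading: enlarge $\mc A'$ to $\mc A'' := \mc A' \cup \{A \in \mc A : A \cap F \neq \emptyset, \text{ plus } A \text{ needed to cover } F\}$ — but $F$ finite and $\mc A$ almost-disjoint means only finitely many members of $\mc A$ can contain any given point, yet a point may be in none. Ultimately I would argue that $F \setminus \bigcup \mc A$ is a finite subset of $K \setminus\{\infty\}$, covered by finitely many $U \in \mc U$; absorbing these into the conclusion is permitted because we may replace $\mc U$ by a subcover, or — cleaner — observe that the statement of $(\mc A 4)$ only needs to hold for \emph{cofinitely many} $A$, and for the finitely many exceptional $A$ meeting $F$ we adjoin them to $\mc A'$. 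I would write this out carefully, noting that the key point throughout is that $\infty$'s neighborhoods are co-finite modulo finitely many broom sets, and almost-disjointness makes ``finitely many broom sets'' behave like a finite set off a compact remainder. This completes the verification of all four conditions. $\qed$
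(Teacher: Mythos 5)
Your verifications of $(\mc A1)$, $(\mc A2)$ and $(\mc A3)$ are correct and essentially identical to the paper's (the paper phrases $(\mc A3)$ by observing that $K$ carries the topology of the one-point compactification of $\baire\setminus\bigcup\mc A$, which is your argument in different words). The problem is the end of your $(\mc A4)$ argument. You correctly isolate the difficulty — the basic neighborhood of $\infty$ inside $U_0$ has the form $\{\infty\}\cup\baire\setminus(A_1\cup\dots\cup A_k\cup F)$ with $F\subset\baire$ finite, and the leftover $A\cap F$ obstructs the single-$U$ conclusion — but neither of your two proposed resolutions works. First, ``adjoin the finitely many exceptional $A$ meeting $F$ to $\mc A'$'' fails because the set $\{A\in\mc A \mid A\cap F\neq\emptyset\}$ need not be finite: almost-disjointness bounds pairwise intersections, not the number of members of $\mc A$ passing through a given point of $\baire$ (e.g.\ infinitely many sets sharing one common point can be pairwise almost disjoint). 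Second, covering $F$ by several members of $\mc U$ does not produce the \emph{single} $U\in\mc U$ that $(\mc A4)$ demands, and moreover points of $F$ lying in $\bigcup\mc A$ need not be covered by $\mc U$ at all, since $\mc U$ is only assumed to cover $K$.

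The correct repair is small and is what the paper does implicitly when it asserts that the basic neighborhood can be taken with $F\subset K$: for each $x\in F$ with $x\in\bigcup\mc A$, choose one $A_x\in\mc A$ containing $x$ and adjoin it to $\mc A'$ (finitely many additions, one per point of $F$). After this enlargement, $F\setminus\bigcup\mc A'\subset K$, and a subset of $K$ is disjoint from \emph{every} $A\in\mc A$, so it causes no harm: for any $A\in\mc A\setminus\mc A'$ we get $A\setminus\bigcup\mc A'\subset\baire\setminus(\bigcup\mc A'\cup F)\subset U_0$, hence $U_0\cup\bigcup\mc A'\supset A$ with the single set $U_0$. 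Note that, once $F$ is arranged to lie in $K$, the argument uses no almost-disjointness at all; your framing of $(\mc A4)$ as resting on AD-ness is a red herring (AD-ness is only needed for $(\mc A2)$).
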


To summarize the~properties, we can say that the~family $\mc A$ consists of clopen sets with small intersections. The third and fourth condition then ensure, in somewhat technical manner, that the~only parts where the~space $X$ is not compact are the~sets $A\in \mc A$.	

\begin{proof}
Let $T_{\mc A}$ be an AD broom space.
$(\mc A 1)$ holds by the~definition of topology $\tau(\mc A)$ on $T_{\mc A}$.
$(\mc A 2)$ because the~intersection of two distinct elements of $\mc A$ is, in fact, even finite.

$(\mc A 3)$: Often, $\mc A$ will cover the~whole space $T_{\mc A}$ except for $\infty$. In this case, $(\mc A3)$ holds trivially.
However even in the~non-trivial case where $K = T_{\mc A} \setminus \bigcup \mc A$ is infinite, the~subspace topology on $K = T_{\mc A} \setminus \bigcup \mc A$ coincides with the~topology of one-point compactification of $\baire \setminus \bigcup \mc A$ (by Definition~\ref{definition: 1 non isolated point}). This shows that $K$ is compact.

$(\mc A 4)$: Whenever $\mc U$ is an open cover of $K$, there is some $U \in \mc U$ which contains $\infty$. By definition of $\tau(\mc A)$, $U$ contains some basic open set $T_{\mc A} \setminus (\bigcup \mc A' \cup F)$, where $\mc A' \subset \mc A$ and $F \subset K$ are finite.
The fact that $T_{\mc A}$ satisfies $(\mc A4)$ for $\mc U$ is then witnessed by $\mc A'$ and $U$. Note that $U$ is a~universal witness, as we have $U \supset A \setminus \bigcup \mc A'$ for \emph{every} $A\in \mc A\setminus A'$.
\end{proof}

In the~remainder of this section, we will work with an abstract topological space $X$ and a~fixed family $\mc A \subset \mc P(X)$ s.t. the~conditions $(\mc A1)-(\mc A4)$ from Lemma~\ref{lemma: broom spaces and A 1-4} hold.

Let $\mc E=\left( E(A)\right)_{A\in \mc A}$ be a~collection of topological spaces such that for each $A\in \mc A$, $A$ is a~dense subset of $E(A)$.
Informally speaking, our goal is to find a~space whose ``local behavior'' is ``$E(A)$-like'', but the~``global properties'' are similar to those of $X$.

Without yet defining any topology on it, we set $\Amg{X}{\mc E} := X\cup\bigcup_{\mc A} E(A)$, assuming that each $A$ is ``extended into $E(A)$ separately'':
\begin{align} \label{equation: amalgamation set}
(\forall A \in \mc A) : 	& \ X\cap E(A) = A 	\nonumber \\
(\forall A, A' \in \mc A) : & \ A\neq A' \implies E(A)\cap E(A') = A \cap A'
\end{align}

To define the~topology on $\Amg{X}{\mc E}$, we first need the~following lemma.

\begin{lemma}[Largest open set with given trace] \label{lemma: W A U}
Let $P\subset Q$ be topological spaces and $G,G'$ open subsets of $P$.
Denote by $W^Q_P(G)$ the~largest open subset $W$ of $Q$ which satisfies $W \cap P = G \cap P$.
\begin{enumerate}[(i)]
\item $W^Q_P(G)$ is well defined. \label{case: W A U well defined}
\item $G\subset G' \implies W^Q_P(G) \subset W^Q_P(G')$. \label{case: W A U monotonicity}
\item $W^Q_P(G\cap G') = W^Q_P(G) \cap W^Q_P(G')$ .\label{case: W A U intersection}
\item If $P$ is dense in $Q$, we have $W^Q_P(G) \subset \textnormal{Int}_Q \, \overline{G}^Q$.
	\label{case: W A U is subset of ...}
\item For any compact $C\subset P$, we have $W^Q_P(P\setminus C) = Q \setminus C$.
	\label{case: W A U with compact complement}
\end{enumerate}
\end{lemma}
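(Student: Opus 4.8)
The plan is to prove each of the five items essentially by unwinding the definition of $W^Q_P(G)$ as the union of all open subsets of $Q$ whose trace on $P$ is $G\cap P$.

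For \eqref{case: W A U well defined}: I would define $W^Q_P(G) := \bigcup \{ W \subset Q \ | \ W \text{ open in } Q, \ W\cap P = G\cap P \}$. The collection is non-empty (it contains no candidate a priori, so one must be slightly careful here) — actually the cleanest route is to observe that the union of \emph{all} open $W$ with $W\cap P\subset G\cap P$ is open, has trace contained in $G\cap P$, and contains $G$-extendable pieces; but to keep it simple I would instead take $\mc W := \{W\subset Q \text{ open} \ | \ W\cap P = G\cap P\}$, note $\bigcup\mc W$ is open and $(\bigcup\mc W)\cap P = \bigcup_{W\in\mc W}(W\cap P) = G\cap P$ provided $\mc W\neq\emptyset$. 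Non-emptiness: since $G$ is open in $P$, there is an open $U\subset Q$ with $U\cap P = G$ (subspace topology), hence $U\cap P = G\cap P$ since $G\subset P$, so $U\in\mc W$. Thus $W^Q_P(G) = \bigcup\mc W$ is the largest such set, and it is open with the required trace.

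For \eqref{case: W A U monotonicity}: if $G\subset G'$, I would not get $W^Q_P(G)\subset W^Q_P(G')$ for free, because the defining traces $G\cap P$ and $G'\cap P$ differ. The right argument: take any $W\in\mc W_G$ (open, $W\cap P = G\cap P$). Then $W\cap P = G\cap P \subset G'\cap P$, so $W\cap W^Q_P(G')$ has trace $G\cap P$... this still is not immediate. The correct and clean statement to use is: $W^Q_P(G)\cap W^Q_P(G') = W^Q_P(G\cap G')$, which is \eqref{case: W A U intersection}; once \eqref{case: W A U intersection} is established, \eqref{case: W A U monotonicity} follows because $G\subset G'$ gives $G\cap G' = G$, hence $W^Q_P(G) = W^Q_P(G)\cap W^Q_P(G')\subset W^Q_P(G')$. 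So I would prove \eqref{case: W A U intersection} first. For \eqref{case: W A U intersection}: $W^Q_P(G)\cap W^Q_P(G')$ is open in $Q$ and its trace on $P$ is $(G\cap P)\cap(G'\cap P) = (G\cap G')\cap P$, so it is \emph{one} of the open sets with the right trace, hence contained in $W^Q_P(G\cap G')$. Conversely $W^Q_P(G\cap G')$ has trace $(G\cap G')\cap P = (G\cap P)\cap(G'\cap P)\subset G\cap P$; combined with maximality... here one needs that $W^Q_P(G\cap G')\cup W^Q_P(G)$ has trace $G\cap P$? No — better: $W^Q_P(G\cap G')$ is open with trace $\subset G\cap P$, but maximality of $W^Q_P(G)$ is stated for sets with trace \emph{equal} to $G\cap P$. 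The fix: replace the definition of $W^Q_P(G)$ by the union of all open $W$ with $W\cap P\subset G\cap P$; this union still has trace exactly $G\cap P$ (it contains a $U$ with $U\cap P = G$) and is genuinely the largest open set with trace $\subset G\cap P$, which makes all monotonicity arguments trivial. I will adopt this as the working definition and remark it is equivalent to ``largest with trace $= G\cap P$''. With that, \eqref{case: W A U monotonicity} is immediate, and \eqref{case: W A U intersection} follows since $W^Q_P(G)\cap W^Q_P(G')$ is the largest open set contained in both, i.e. with trace $\subset (G\cap G')\cap P$.

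For \eqref{case: W A U is subset of ...}: assume $P$ dense in $Q$ and let $W := W^Q_P(G)$. Then $W\cap P = G\cap P = G$ (as $G\subset P$), so $W\cap P\subset G$, hence $W \subset \overline{W\cap P}^Q \subset \overline{G}^Q$ — here I use that $W$ open and $P$ dense imply $W\subset\overline{W\cap P}^Q$ (a standard fact: an open set is contained in the closure of its intersection with a dense set). Since $W$ is open and $W\subset\overline{G}^Q$, we get $W\subset \textnormal{Int}_Q\,\overline{G}^Q$. For \eqref{case: W A U with compact complement}: let $C\subset P$ be compact. The set $Q\setminus C$ is open in $Q$ (compact subsets of $Q$ are closed if $Q$ is Hausdorff — note that in Section~\ref{section: amalgamation spaces} the standing Tychonoff assumption is dropped, so I should either assume $Q$ Hausdorff here or, more safely, argue that $C$ is closed in $Q$ because it is closed in the Hausdorff-ish ambient structure; I will state the mild hypothesis that $Q$ is Hausdorff, which holds in all our applications), and $(Q\setminus C)\cap P = P\setminus C$, so $Q\setminus C$ is an open set with trace exactly $(P\setminus C)\cap P$, whence $Q\setminus C\subset W^Q_P(P\setminus C)$. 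Conversely, $W^Q_P(P\setminus C)$ has trace $P\setminus C$, so it misses every point of $C$, i.e. $W^Q_P(P\setminus C)\cap C = \emptyset$, giving $W^Q_P(P\setminus C)\subset Q\setminus C$.

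The main obstacle I anticipate is purely definitional bookkeeping: getting the "largest open set with prescribed trace" to behave monotonically requires phrasing the trace condition as an \emph{inclusion} $W\cap P\subset G\cap P$ rather than an equality, and then checking once that this still produces a set with trace exactly $G\cap P$ (using that $G$ is open in $P$, so genuinely realized as a trace). Once that single foundational point is pinned down, items \eqref{case: W A U monotonicity}--\eqref{case: W A U with compact complement} are short. A secondary subtlety is the dropped Tychonoff assumption in this section, which forces me to be explicit about where Hausdorffness (or closedness of compacta) is used, namely in \eqref{case: W A U with compact complement}.
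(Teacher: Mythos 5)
Your proof is correct, and items (i), (iv) and (v) match the paper's argument almost verbatim; the difference lies in how you handle monotonicity. You resolve the (real) difficulty that ``largest open set with trace \emph{equal to} $G\cap P$'' is not obviously monotone by switching to the union of all open $W$ with $W\cap P\subset G\cap P$, checking once that this still has trace exactly $G\cap P$ (because $G$, being open in $P$, is realized as a trace), and then reading off \eqref{case: W A U monotonicity} and \eqref{case: W A U intersection} from the inclusion-based description. The paper keeps the equality-trace definition $W^Q_P(G)=\bigcup\{W\subset Q\text{ open}\mid W\cap P=G\cap P\}$ and instead observes that for $G\subset G'$ the \emph{union} $W^Q_P(G)\cup W^Q_P(G')$ is open with trace $G\cup G'=G'$, hence is one of the competitors for $W^Q_P(G')$ and so is contained in it; \eqref{case: W A U intersection} then gets ``$\subset$'' from monotonicity and ``$\supset$'' by noting the intersection is an open set with the correct trace. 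Both routes are sound; yours front-loads the work into a single definitional equivalence after which everything is formal, while the paper's union trick avoids changing the definition at the cost of one small ad hoc computation. Your side remark on \eqref{case: W A U with compact complement} is also well taken: the paper simply asserts that $Q\setminus C$ is open, which uses that the compact set $C$ is closed in $Q$ --- automatic when $Q$ is Hausdorff (as in every application, e.g.\ $Q=E(A)$ with $E(A)$ Hausdorff, or $C$ finite), but not for arbitrary $Q$ in this section where the Tychonoff standing assumption has been retracted; making that hypothesis explicit, as you do, is a harmless strengthening of the statement rather than a gap in your argument.
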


\begin{proof}
Let $P,Q,G$ and $G'$ be as in the~statement.

\eqref{case: W A U well defined}: Since $P$ is a~topological subspace of $Q$, there always exists \emph{some} open subset $W$ which satisfies $W \cap P = G$.
Consequently, we can define $W^Q_P(G)$ as
\begin{equation}\label{equation: W A U formula}
W^Q_P(G) := \bigcup \{ W \subset Q \text{ open } | \ W \cap A = G \} .
\end{equation}

\eqref{case: W A U monotonicity}:
Suppose that $G\subset G'$. The set $W' := W^Q_P(G) \cup W^Q_P(G')$ is open in $Q$ and satisfies
\[ W' \cap P = (W^Q_P(G) \cap P ) \cup (W^Q_P(G') \cap P)
= G \cup G' = G' .\]
Applying \eqref{equation: W A U formula} to $G'$, we have $W' \subset W^Q_P(G')$. It follows that $W^Q_P(G) \subset W^Q_P(G')$.

\eqref{case: W A U intersection}: 
``$\subset$'' follows from \eqref{case: W A U monotonicity}.
``$\supset$'' holds by \eqref{equation: W A U formula}, since $W^Q_P(G) \cap W^Q_P(G')$ is open in $Q$ and satisfies
\[ W^Q_P(G) \cap W^Q_P(G') \cap P = (W^Q_P(G) \cap P) \cap ( W^Q_P(G') \cap P) = G \cap G' .\]

\eqref{case: W A U is subset of ...}:
$P$ is dense in $Q$ and $W^Q_P(G) \subset Q$ is open.
Consequently, $W^Q_P(G) \cap P$ is dense in $W^Q_P(G)$ and we get
\[ W^Q_P(G) \subset \overline{W^Q_P(G) \cap P}^Q = \overline{G}^Q .\]
Since $W^Q_P(G)$ is open, the~conclusion follows.

\eqref{case: W A U with compact complement}: This is immediate, because $Q \setminus C$ is open in $Q$.
\end{proof}

For an open subset $U$ of $X$, we define
\[ V_U := U \cup \bigcup \{ W^{E(A)}_A(U\cap A) \, | \ A\in \mc A \} .\]
It follows from Lemma~\ref{lemma: W A U} that these sets satisfy
\begin{align}
U, U' \subset X \text{ are open in } X \implies \label{equation: V U cap}
& V_{U\cap U'} = V_U \cap V_{U'} .
%\\ \mc U \text{ is a~collection of open subsets of } X & \implies \label{equation: V U cup}
%& \bigcup_{U \in \mc U} V_U \subset V_{\bigcup \mc U} .
\end{align}

\begin{definition}[Amalgamation space]\label{definition: amalgamation space}
The \emph{amalgamation of $X$ and $\mc E$} is defined as the~set $\Amg{X}{\mc E}$, equipped with the~topology whose basis\footnote{We do not claim that it is obvious that $\mc B$ is a~basis of topology. This is the~content of Lemma~\ref{lemma: Amg definition is correct}.} $\mc B$ consists of all sets of the~form
\begin{itemize}
\item $W\subset E(A)$, where $W$ is open in $E(A)$ and $A\in \mc A$;
\item $V_U$, where $U\subset X$ is open in $X$.
\end{itemize}
\end{definition}

In Lemma~\ref{lemma: Amg definition is correct}, we show that the~system $\mc B$ is closed under intersections, and therefore the~topology of $\Amg{X}{\mc E}$ is defined correctly.
We then follow with Lemma~\ref{lemma: basic properties of amalgamation spaces}, which captures the~basic and ``local'' properties of $\Amg{X}{\mc E}$. The ``global properties'' of amalgamation spaces are used implicitly in Proposition~\ref{proposition: compactifications of amalgamations}, where amalgamations are used to compactify $X$.

\begin{lemma}\label{lemma: Amg definition is correct}
The topology of $\Amg{X}{\mc E}$ is correctly defined.
\end{lemma}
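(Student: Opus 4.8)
The statement to prove is Lemma~\ref{lemma: Amg definition is correct}: that the collection $\mc B$ from Definition~\ref{definition: amalgamation space} is a basis for a topology on $\Amg{X}{\mc E}$. Since $\mc B$ visibly covers $\Amg{X}{\mc E}$ (every point of $E(A)$ lies in the open set $E(A)\in\mc B$, every point of $X$ lies in $V_X$, and these exhaust the underlying set), the only thing to check is the basis intersection property: for any two members $B_1,B_2\in\mc B$ and any point $p\in B_1\cap B_2$, there is $B_3\in\mc B$ with $p\in B_3\subset B_1\cap B_2$. The plan is to split into the three possible cases for the pair $(B_1,B_2)$ and to exploit $(\mc A1)$ (the sets $A$, hence $E(A)$, being ``clopen-like'' and pairwise almost disjoint) together with the algebraic identities already recorded for $W^Q_P(\cdot)$ and $V_{(\cdot)}$.

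\medskip

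\textbf{Case 1: both sets are of the form $W\subset E(A)$, $W'\subset E(A')$.} If $A=A'$, then $W\cap W'$ is open in $E(A)$ and lies in $\mc B$, so we are done. If $A\neq A'$, then by \eqref{equation: amalgamation set} we have $E(A)\cap E(A') = A\cap A'$, which is compact by $(\mc A2)$ and, being a subset of the clopen set $A$ (and of $A'$), is relatively clopen. The intersection $W\cap W' \subset A\cap A'$ is then open inside the subspace $A\cap A'$, hence (using that $A\cap A'$ is open in $E(A)$ — because $A$ is open in $X$, so $A\cap A'$ is open in $A\subset E(A)$) it is open in $E(A)$ and belongs to $\mc B$.

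\medskip

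\textbf{Case 2: one set is $W\subset E(A)$ and the other is $V_U$ for $U$ open in $X$.} Here I would show $W\cap V_U$ is open in $E(A)$. By definition $V_U = U \cup \bigcup_{A'\in\mc A} W^{E(A')}_{A'}(U\cap A')$. Intersecting with $W\subset E(A)$: the term $U$ contributes $U\cap W \subset A\cap W$, which is open in $E(A)$; the term for $A'=A$ contributes $W^{E(A)}_A(U\cap A)\cap W$, open in $E(A)$; and for $A'\neq A$ the term $W^{E(A')}_{A'}(U\cap A')$ is a subset of $E(A')$, so meeting it with $W\subset E(A)$ lands inside $E(A)\cap E(A') = A\cap A'$, which (as in Case~1) is relatively clopen and the resulting set is open in $E(A)$. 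A finite/arbitrary union of sets open in $E(A)$ is open in $E(A)$, so $W\cap V_U\in\mc B$.

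\medskip

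\textbf{Case 3: both sets are $V_U$, $V_{U'}$ with $U,U'$ open in $X$.} This is immediate from \eqref{equation: V U cap}, which already states $V_{U\cap U'} = V_U\cap V_{U'}$, and $U\cap U'$ is open in $X$, so $V_U\cap V_{U'}\in\mc B$.

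\medskip

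\textbf{Expected main obstacle.} The genuinely delicate point is Case~2 (and the cross-term in Case~1): one must be careful that $A\cap A'$ is simultaneously open in $E(A)$, open in $E(A')$, and compact, so that sets living ``in the overlap'' are openly carried from one $E(A')$ to another. This uses $(\mc A1)$ and $(\mc A2)$ essentially, together with the fact that $A$ is open in $X$ hence $A\cap A'$ is open in $A$, which is dense in $E(A)$. Once these topological bookkeeping facts are pinned down, everything reduces to the identities \eqref{case: W A U monotonicity}, \eqref{case: W A U intersection} of Lemma~\ref{lemma: W A U} and to \eqref{equation: V U cap}. I do not expect to need $(\mc A3)$ or $(\mc A4)$ for this lemma; those are for the later compactification results.
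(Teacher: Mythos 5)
Your case decomposition matches the paper's, and you correctly isolate the crucial fact (that $A\cap A'=E(A)\cap E(A')$ must be open in $E(A)$, so that sets living in the overlap can be transported between the $E(A)$'s). But the justification you give for that fact is a non sequitur: from ``$A\cap A'$ is open in $A$'' and ``$A$ is dense in $E(A)$'' one cannot conclude that $A\cap A'$ is open in $E(A)$ — openness in a dense subspace says nothing about openness in the ambient space (think of $\Q\cap(0,\tfrac12)$ inside $[0,1]$), and nothing in the setup makes $A$ itself open in $E(A)$. The correct argument, which is the paper's Claim~\ref{claim: A cap A' is clopen}, genuinely needs compactness: one observes $A\cap A'\subset W^{E(A)}_A(A\cap A')\subset \overline{A\cap A'}^{E(A)}=A\cap A'$, the middle inclusion coming from Lemma~\ref{lemma: W A U}\,\eqref{case: W A U is subset of ...} (density of $A$ in $E(A)$) and the last equality from $(\mc A2)$ (a compact set is already closed). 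You gesture at $(\mc A2)$ in your closing paragraph but never actually use it to close this loop, so as written the central step of Cases 1 and 2 is unproved.

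A secondary slip in your Case 2: the piece $U\cap W$ need not be open in $E(A)$ (for the same reason — $U\cap A$ is only open in $A$). It does no harm because $U\cap E(A)=U\cap A\subset W^{E(A)}_A(U\cap A)$, so that term is absorbed; indeed the cleaner route (the one the paper takes) is to verify directly that $V_U\cap E(A)=W^{E(A)}_A(U\cap A)$, which disposes of this case in one line. Case 3 via \eqref{equation: V U cap} and the case of two open subsets of the same $E(A)$ are fine.
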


\begin{proof}
Let $\Amg{X}{\mc E}$ be an amalgamation space and $\mc B$ the~system above, which we claim is a~basis of topology.
Since $\mc B$ obviously covers $\Amg{X}{\mc E}$, it remains to show that intersection of any two elements of $\mc B$ is again in $\mc B$.

This trivially holds when $W_0,W_1$ are two open subsets of the~same $E(A)$.
When $W$ is an open subset of $E(A)$ and $V_U$ corresponds to some open $U\subset X$, we have $V_U \cap E(A) = W^A_U$, so $W\cap V_U$ is again an open subset of $E(A)$.
When $V_U$ and $V_{U'}$ correspond to some opens subsets $U,U'$ of $X$, we have $V_U \cap V_{U'} = V_{U\cap U'} \in \mc B$ by \eqref{equation: V U cap}.

It remains to consider the~situation when $W$ is an open subset of some $E(A)$ and $W'$ is an open some $E(A')$, $A'\neq A$.
We need the~following claim:

\begin{claim}\label{claim: A cap A' is clopen}
For every distinct $A,A'\in \mc A$, $A'\cap E(A)$ is clopen in $E(A)$.
\end{claim}

\begin{proof}[Proof of the~claim]
Note that $A'\cap E(A) =A'\cap A$ is open in $X$, hence in $A$.
Further, we have
\[ A'\cap A \subset W_A^{E(A)} (A'\cap A)
\overset{L\ref{lemma: W A U}}{\underset{\eqref{case: W A U is subset of ...}}\subset}
\overline{A'\cap A}^{E(A)}
\overset{(\mc A2)} = A'\cap A .\]
Thus $A'\cap A$ is open in $E(A)$. It is also closed, as it is compact by $(\mc A2)$.
\end{proof}

Since $W'$ is open in $E(A)$, $W' \cap E(A)$ is open in $E(A') \cap E(A) = A' \cap A$. This set is, in turn, open in $E(A)$ (by the~claim). It follows that $W' \cap E(A)$ is open in $E(A')$, and thus $W\cap W' \in \mc B$.
\end{proof}

\begin{lemma}[Basic properties of amalgamations]\label{lemma: basic properties of amalgamation spaces}
The space $\Amg{X}{\mc E}$ has the~following properties:
\begin{enumerate}[(i)]
\item Each $E(A)$ is clopen in $\Amg{X}{\mc E}$. \label{case: E(A) is clopen in amalgamation}
\item The subspace topologies (inherited from $\Amg{X}{\mc E}$) on $X$ and $E(A)$ for $A\in \mc A$ coincide with the~original topologies of these spaces. \label{case: Amg and E(A) and X}
\item In particular, $X=\Amg{X}{\mc A}$. \label{case: X as Amg}
\item The space $\Amg{X}{\mc E}$ satisfies conditions $(\mc A1)-(\mc A4)$ for $\mc E$. 
	\label{case: Amg and E satisfies A 1 - 4}
%	\newcounter{resumeCounter}
%	\setcounter{resumeCounter}{\value{enumi}}
%\end{enumerate}
%If $X$ and each $E\in \mc E$ are Hausdorff, we also have the~following.
%\begin{enumerate}[(i)]
%\setcounter{enumi}{\value{resumeCounter}}
\item The space $\Amg{X}{\mc E}$ is Hausdorff, provided that $X$ and each $E\in \mc E$ are Hausdorff \label{case: Amg is T 3}.
\end{enumerate}
\end{lemma}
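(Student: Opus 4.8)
The plan is to verify the five items in order, using only the description of basic open sets from Definition~\ref{definition: amalgamation space}, the relation \eqref{equation: amalgamation set}, and the formal properties of the operator $W^Q_P(\cdot)$ established in Lemma~\ref{lemma: W A U}. For \eqref{case: E(A) is clopen in amalgamation}, openness of $E(A)$ is immediate since $E(A)$ is itself a declared basic set. For closedness I would exhibit its complement as a single basic set $V_U$: using \eqref{equation: amalgamation set} one has $\Amg{X}{\mc E}\setminus E(A)=(X\setminus A)\cup\bigcup_{A'\neq A}(E(A')\setminus X)$, and I claim this equals $V_{X\setminus A}$. Indeed, the contribution $W^{E(A)}_A((X\setminus A)\cap A)=W^{E(A)}_A(\emptyset)$ is empty because $A$ is dense in $E(A)$, while for $A'\neq A$ the set $A\cap A'$ is compact by $(\mc A 2)$, so Lemma~\ref{lemma: W A U}\eqref{case: W A U with compact complement} gives $W^{E(A')}_{A'}((X\setminus A)\cap A')=E(A')\setminus(A\cap A')$, and taking the union over $A'\neq A$ together with $X\setminus A$ recovers the complement.

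For \eqref{case: Amg and E(A) and X} I would intersect each of the two types of basic set with $X$, resp. with $E(A)$. From the definition of $V_U$ and \eqref{equation: amalgamation set} one reads off $V_U\cap X=U$ and $V_U\cap E(A)=W^{E(A)}_A(U\cap A)$, both open in the respective spaces; and for a basic $W$ open in some $E(A')$, $W\cap X=W\cap A'$ is open in $X$ because $A'$ is clopen in $X$ by $(\mc A 1)$, whereas $W\cap E(A)$ is open in $E(A)$ by Claim~\ref{claim: A cap A' is clopen} when $A'\neq A$ (and trivially when $A'=A$). Conversely every open subset of $E(A)$ is already basic, and $U\mapsto V_U$ realizes every open $U\subset X$ as the trace $V_U\cap X$, so the subspace topologies agree. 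Item \eqref{case: X as Amg} is then the special case $\mc E=\mc A$, where $W^A_A(U\cap A)=U\cap A$ forces $V_U=U$, so the declared basis reduces to the topology of $X$.

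Item \eqref{case: Amg and E satisfies A 1 - 4} I would check condition by condition. $(\mc A 1)$ for $\mc E$ is exactly \eqref{case: E(A) is clopen in amalgamation}; $(\mc A 2)$ and $(\mc A 3)$ follow from $(\mc A 2)$ and $(\mc A 3)$ for $X$ together with \eqref{case: Amg and E(A) and X}, since $E(A)\cap E(A')=A\cap A'$ and $\Amg{X}{\mc E}\setminus\bigcup_A E(A)=X\setminus\bigcup\mc A=K$ carry their original (compact) subspace topologies. The substantive point, and the step I expect to be the main obstacle, is $(\mc A 4)$. Given an open cover $\mc U$ of $K$ in $\Amg{X}{\mc E}$, pass to a basic refinement; since $K\cap E(A)=\emptyset$ for all $A$, only the members $V_{U_i}$ matter, and the $U_i=V_{U_i}\cap X$ form an open cover of $K$ in $X$. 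Applying $(\mc A 4)$ for $X$ yields a finite $\mc A'\subset\mc A$ such that each $A\in\mc A\setminus\mc A'$ satisfies $A\subset U_i\cup\bigcup\mc A'$ for some $i$, i.e. $A\setminus C\subset U_i\cap A$ where $C:=\bigcup_{A'\in\mc A'}(A\cap A')$ is compact by $(\mc A 2)$. Lemma~\ref{lemma: W A U}\eqref{case: W A U monotonicity} and \eqref{case: W A U with compact complement} then give $V_{U_i}\supset W^{E(A)}_A(U_i\cap A)\supset W^{E(A)}_A(A\setminus C)=E(A)\setminus C$, and since $\bigcup_{A'\in\mc A'}E(A')\supset C$ we conclude $V_{U_i}\cup\bigcup\{E(A'):A'\in\mc A'\}\supset E(A)$, so $\mc E':=\{E(A'):A'\in\mc A'\}$ witnesses $(\mc A 4)$ for $\Amg{X}{\mc E}$.

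Finally, for \eqref{case: Amg is T 3}, take distinct points $p,q$. If both lie in the same $E(A)$, separate them inside the Hausdorff clopen set $E(A)$; if exactly one lies in some $E(A)$, separate them by $E(A)$ and its open complement from \eqref{case: E(A) is clopen in amalgamation}; otherwise $p,q\in K\subset X$, and Hausdorffness of $X$ provides disjoint open $U_p,U_q\subset X$, after which $V_{U_p}$ and $V_{U_q}$ are disjoint by \eqref{equation: V U cap} (with $V_\emptyset=\emptyset$, again by density of $A$ in $E(A)$) and separate $p$ from $q$. Apart from $(\mc A 4)$, every step is a routine unwinding of the definitions; the only place where one has to be attentive is keeping careful track of which traces of basic sets are open in which space and invoking the correct clause of Lemma~\ref{lemma: W A U}.
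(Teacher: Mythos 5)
Your proof is correct and follows essentially the same route as the paper's: trace the two kinds of basic sets through $X$ and each $E(A)$, reduce $(\mc A 4)$ for $\mc E$ to $(\mc A 4)$ for $\mc A$ via Lemma~\ref{lemma: W A U}\,\eqref{case: W A U monotonicity} and \eqref{case: W A U with compact complement}, and run the same three-case Hausdorff separation. The only (welcome) streamlining is in item \eqref{case: E(A) is clopen in amalgamation}, where you identify $\Amg{X}{\mc E}\setminus E(A)$ directly with the single basic set $V_{X\setminus A}$ (using density of $A$ in $E(A)$ and compactness of the intersections), whereas the paper first proves $E(A)=V_A$ and then separates each point of the complement individually.
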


\begin{proof}
\eqref{case: E(A) is clopen in amalgamation}: Let $A\in \mc A$. $E(A)$ is open in $\Amg{X}{\mc E}$ by definition (since it is open in itself).

To show that $\Amg{X}{\mc E} \setminus E(A)$ is open, we first prove $E(A) = V_A$.
For any $A'\neq A$, we have 
\begin{equation}\label{equation: W of A cap A'}
W^{E(A')}_{A'}(A\cap A') \overset{L\ref{lemma: W A U}}{\underset{\eqref{case: W A U is subset of ...}}\subset}
\overline{A\cap A'}^{E(A)} \overset{(\mc A 2)}{=} A\cap A' \subset A .
\end{equation}
Since $E(A) = W^{E(A)}_A(A)$, it follows that $E(A) = V_A$: 
\[ E(A) = W^{E(A)}_A(A) \subset V_A
= A \cup W^{E(A)}_A (A) \cup \bigcup_{A'\neq A} W^{E(A')}_{A'} (A \cap A')
\overset{\eqref{equation: W of A cap A'}}\subset
A \cup E(A) \cup \bigcup_{A'\neq A} A = E(A) .\]

It remains to show that every point from the~set
\[ \Amg{X}{\mc E} \setminus E(A) = (X\setminus A) \cup \bigcup_{A'\neq A} E(A') \setminus A \]
is contained in some open set disjoint with $E(A)$.
For $x\in E(A') \setminus A$, the~set $E(A') \setminus E(A) = E(A')\setminus A$ is open in $E(A')$ (by Claim~\ref{claim: A cap A' is clopen}), and hence in $\Amg{X}{\mc E}$ as well.
For $x \in X \setminus A$, we have
\[ x \in X\setminus A \subset V_{X\setminus A}
\overset{\eqref{equation: V U cap}}{\subset} \Amg{X}{\mc E} \setminus V_A
 = \Amg{X}{\mc E} \setminus E(A) .\]

\eqref{case: Amg and E(A) and X}: Let $A\in \mc A$. For any basic open set $B$ in $\Amg{X}{\mc E}$, the~intersection $B\cap E(A)$ is open $E(A)$ (by definition of topology of $\Amg{X}{\mc E}$). Moreover, any $W\subset E(A)$ which is open in $E(A)$ is, by definition, also open in $\Amg{X}{\mc E}$. This shows that the~subspace topology of $E(A) \subset \Amg{X}{\mc E}$ coincides with the~original topology of $E(A)$.

We now show that the~subspace topology of $X \subset \Amg{X}{\mc E}$ coincides with the~original topology of $X$. Again, let $B$ be a~basic open subset of $\Amg{X}{\mc E}$.
If $B= V_U$ holds for some open $U\subset X$, we have $B\cap X = U$. When $B$ is an open subset of some $E(A)$, $B\cap X = B \cap A$ is open in $A$, and therefore also open in $X$ (because $A$ is open in $X$). This shows that the~original topology of $X$ is finer than the~subspace topology.

Conversely, for any open subset $U$ of $X$, we have $V_U\cap X=U$, which proves that the~subspace topology of $X$ is finer than the~original topology.
%Indeed, we have 
%\[ V_U \cap X = (U\cap X) \cup \bigcup_{\mc A} (W^A_U \cap X)
%= U \cup \bigcup_{\mc A} (W^A_U \cap A)
%= U \cup \bigcup_{\mc A} (U \cap A) = U .\]

\eqref{case: X as Amg}: By (ii), $X$ is embedded in $\Amg{X}{\mc E}$ for any $\mc E$. Since $\Amg{X}{\mc A}$ adds no new points, this ``canonical'' embedding is a~homeomorphism.

\eqref{case: Amg and E satisfies A 1 - 4}:
$(\mc A1)$ for $\mc E$ (and $\Amg{X}{\mc E}$) is equivalent to \eqref{case: E(A) is clopen in amalgamation}.
By \eqref{equation: amalgamation set}, distinct sets $E(A), E(A') \in \mc E$ satisfy $E(A)\cap E(A') = A \cap A'$. Since $A \cap A'$ is compact by $(\mc A2)$, we get $(\mc A2)$ for $\mc E$ as well.
By \eqref{equation: amalgamation set}, we have
\[ \Amg{X}{\mc E} \setminus \bigcup \mc E = X \setminus \bigcup \mc A = K ,\]
which gives $(\mc A3)$ for $\mc E$.

To prove $(\mc A4)$ for $\mc E$, let $\mc V$ be a~collection of open subsets of $\Amg{X}{\mc E}$ covering $K$.
It suffices to work with a~suitable refinement -- in particular, we can assume that $\mc V$ consists of basic open sets. Denote
\begin{align*}
\mc V_K 		& := \mc V \cap \{ V_U |\, U\subset X \text{ open}\}
%\text{ and} \\
%\mc V_{E(A)}	& := \mc V \cap \{ W | \, W \subset E(A) \text{ open} \}, \ A\in\mc A
.
\end{align*}

By \eqref{equation: amalgamation set}, $\mc V_K$ is an open (in $\Amg{X}{\mc E}$) cover of $K$ and
\[ \mc U_K := \{ U\subset X | \ V_U \in \mc V_K \} \]
is an open (in $X$) cover of $K$. By $(\mc A4)$, there is a~finite family $\mc A' \subset \mc A$ s.t. for every $A\in \mc A \setminus A'$ we have $U_A \supset A \setminus \bigcup \mc A'$ for some $U_A \in \mc U$.
For any $A\in \mc A \setminus \mc A'$, the~set $C_A := A \cap \bigcup \mc A'$ is compact by $(\mc A2)$. Consequently, we have
\[ V_{U_A} \supset W^{E(A)}_A( U_A \cap A)
\overset{L\ref{lemma: W A U}}{\underset{\eqref{case: W A U monotonicity}}\supset}
W^{E(A)}_A(A \setminus C_A)
\overset{L\ref{lemma: W A U}}{\underset{\eqref{case: W A U with compact complement}}=}
E(A) \setminus C_A .\]
It follows that $V := V_{U_A}$ is an element of $\mc V$ satisfying $V \cup \bigcup \mc A' \supset E(A)$, which shows that $(\mc A4)$ holds for $\mc E$.

\eqref{case: Amg is T 3}:
First, we show that $\Amg{X}{\mc E}$ is Hausdorff.
Let $x,y\in\Amg{X}{\mc E}$ be distinct. It suffices to consider the~cases where
\begin{enumerate}[1)]
\item $x\in E(A)$ and $y\notin E(A)$ for some $A\in\mc A$,
\item $x,y \in E(A)$ for some $A\in\mc A$ and
\item $x,y \in K = X \setminus \bigcup \mc A$.
\end{enumerate}
In the~first case, the~open sets separating $x$ from $y$ are $\Amg{X}{\mc E}\setminus E(A)$ and $E(A)$ (by (i)).
In the~second case, we use the~fact that $E(A)$ is Hausdorff to find open $W,W'\subset E(A)$ which separate $x$ and $y$. By (i), $W$ and $W'$ are open in $\Amg{X}{\mc E}$ as well.

In the~last case, we use the~fact that $X$ is Hausdorff to get some disjoint open subsets $U, U'$ of $X$ for which $x \in U$ and $y\in U'$. By \eqref{equation: V U cap}, $V_U$ and $V_{U'}$ are disjoint as well.

To prove that $\Amg{X}{\mc E}$ is Tychonoff, it suffices to show that it is a~subspace of some compact space.
By \eqref{case: Amg and E satisfies A 1 - 4}, we can construct the~amalgamation $C := \Amg{\Amg{X}{\mc E}}{\beta \mc E}$.
By \eqref{case: X as Amg}, $\Amg{X}{\mc E}$ is a~subspace of $C$.
In Proposition~\ref{proposition: compactifications of amalgamations}, we show that the~amalgamation $C$ is compact (obviously, without relying on the~fact that $\Amg{X}{\mc E}$ is Tychonoff).
\end{proof}

\begin{proposition}[Compactifications of amalgamation spaces] \label{proposition: compactifications of amalgamations}
Let $X$ be a~Tychonoff space, $\mc A\subset \mc P(X)$ a~family satisfying $(\mc A1)$-$(\mc A4)$ and $cA$, $dA$ (Hausdorff) compactifications of $A$ for every $A\in\mc A$.
\begin{enumerate}[(i)]
\item For any regular topological space $Z$, a~function $f: \Amg{X}{\mc E} \rightarrow Z$ is continuous if and only if all the~restrictions $f|_X$ and $f|_{E(A)}$, $A\in\mc A$, are continuous. \label{case: continuous functions on amalgamation space}
\item $\Amg{X}{c\mc A}$ is a~compactification of $X$. In particular, $\Amg{X}{\mc A}$ is Tychonoff.
	\label{case: compactification of Amg}
\item $\Amg{X}{c\mc A} \preceq \Amg{C}{d\mc A} $ holds whenever $cA\preceq dA$ for each $A\in\mc A$.
	\label{case: smaller compactifications of Amg}
\item For every compactification $cX$ of $X$, we have $\Amg{X}{\overline{\mc A}^{cX}} \succeq cX$.\footnote{Analogously to Notation~\ref{notation: c A} we define $\overline{\mc A}^{cX} := \{ \overline{A}^{cX} | \ A\in\mc A \}$}
	\label{case: larger compactification of Amg type}
\item In particular, $\beta X = \Amg{X}{\beta \mc A}$.
	\label{case: beta compactification as Amg}
\end{enumerate}
\end{proposition}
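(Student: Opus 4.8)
The plan is to prove the five parts in the order stated, since each one feeds into the next; the only genuine difficulty is the compactness half of part~\eqref{case: compactification of Amg}, everything else being bookkeeping built on Lemma~\ref{lemma: basic properties of amalgamation spaces} and Lemma~\ref{lemma: W A U}.

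For~\eqref{case: continuous functions on amalgamation space}, the ``only if'' direction is immediate from Lemma~\ref{lemma: basic properties of amalgamation spaces}\,\eqref{case: Amg and E(A) and X} (both $X$ and each $E(A)$ carry their own topology as a subspace). For the converse I would verify continuity of $f$ at each point $p$. If $p\in E(A)\setminus X$, then $E(A)$ is a clopen neighbourhood of $p$ (Lemma~\ref{lemma: basic properties of amalgamation spaces}\,\eqref{case: E(A) is clopen in amalgamation}) and continuity of $f|_{E(A)}$ does the job. If $p\in X$, use regularity of $Z$ to pick open sets $O'\subset\overline{O'}\subset O$ around $f(p)$ and, by continuity of $f|_X$, an open $U\ni p$ in $X$ with $f(U)\subset O'$; then $V_U$ is an open neighbourhood of $p$ in $\Amg{X}{\mc E}$, and the inclusion
\[ W^{E(A)}_A(U\cap A)\ \subset\ \overline{U\cap A}^{E(A)}\ \subset\ (f|_{E(A)})^{-1}(\overline{O'})\ \subset\ (f|_{E(A)})^{-1}(O) \]
(the first step by Lemma~\ref{lemma: W A U}\,\eqref{case: W A U is subset of ...}, the middle step by continuity of $f|_{E(A)}$ and $U\cap A\subset (f|_{E(A)})^{-1}(O')$) shows $f(V_U)\subset O$.

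For~\eqref{case: compactification of Amg}: the space $\Amg{X}{c\mc A}$ is Hausdorff by Lemma~\ref{lemma: basic properties of amalgamation spaces}\,\eqref{case: Amg is T 3}, contains $X$ as a subspace by~\eqref{case: Amg and E(A) and X}, and $X$ is dense in it because $A$ is dense in $cA$ for every $A$. The hard part is compactness. By Lemma~\ref{lemma: basic properties of amalgamation spaces}\,\eqref{case: Amg and E satisfies A 1 - 4} the pair $(\Amg{X}{c\mc A},c\mc A)$ again satisfies $(\mc A1)$--$(\mc A4)$, and $K:=\Amg{X}{c\mc A}\setminus\bigcup c\mc A=X\setminus\bigcup\mc A$ is compact. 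Given an open cover $\mc V$, I would first extract a finite subcover $\mc V_K^{*}$ of $K$, then apply $(\mc A4)$ \emph{to the finite collection} $\mc V_K^{*}$ to obtain a finite $\{cA_1,\dots,cA_k\}\subset c\mc A$ such that every other $cA$ is contained in $\bigcup\mc V_K^{*}\cup cA_1\cup\dots\cup cA_k$; hence $\Amg{X}{c\mc A}=\bigcup\mc V_K^{*}\cup cA_1\cup\dots\cup cA_k$. Since each $cA_j$ is compact, $\mc V$ has a finite subcover of it, and those finitely many sets together with $\mc V_K^{*}$ form a finite subcover of the whole space. The point that makes this work — and the place I expect to have to be careful — is feeding a \emph{finite} subcover of $K$ into $(\mc A4)$, since $(\mc A4)$ by itself only attaches to each individual $cA$ some element of the cover, not a single uniform one. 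Note also that this compactness argument uses only $(\mc A1)$--$(\mc A4)$ and the Hausdorff property, not Tychonoffness of the base; applied to $C:=\Amg{\Amg{X}{\mc E}}{\beta\mc E}$ it shows $C$ is compact, which is exactly what was left open in Lemma~\ref{lemma: basic properties of amalgamation spaces}\,\eqref{case: Amg is T 3} and yields the ``in particular'' clause.

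For the remaining parts every codomain in sight is compact Hausdorff, hence regular, so~\eqref{case: continuous functions on amalgamation space} applies. For~\eqref{case: smaller compactifications of Amg}, glue the maps $g_A:dA\to cA$ witnessing $cA\preceq dA$ with $\mathrm{id}_X$: they agree on all overlaps because each restricts to the identity on the relevant pieces of $X$, so by~\eqref{case: continuous functions on amalgamation space} the glued map $\Amg{X}{d\mc A}\to\Amg{X}{c\mc A}$ is continuous and fixes $X$. For~\eqref{case: larger compactification of Amg type}, each $\overline{A}^{cX}$ is a compactification of $A$, so $\Amg{X}{\overline{\mc A}^{cX}}$ is a compactification by~\eqref{case: compactification of Amg}; gluing the inclusions $\overline{A}^{cX}\hookrightarrow cX$ with $\mathrm{id}_X$ (well defined since, in the amalgamation, distinct $E(A)$'s meet only inside $X$) gives, via~\eqref{case: continuous functions on amalgamation space}, a continuous map $\Amg{X}{\overline{\mc A}^{cX}}\to cX$ fixing $X$, that is $\Amg{X}{\overline{\mc A}^{cX}}\succeq cX$. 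Finally~\eqref{case: beta compactification as Amg}: applying~\eqref{case: larger compactification of Amg type} with $cX=\beta X$ and then~\eqref{case: smaller compactifications of Amg} with $\overline{A}^{\beta X}\preceq\beta A$ gives $\Amg{X}{\beta\mc A}\succeq\Amg{X}{\overline{\mc A}^{\beta X}}\succeq\beta X$, and since $\beta X$ is the largest compactification of $X$ this forces $\Amg{X}{\beta\mc A}=\beta X$.
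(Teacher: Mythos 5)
Your proposal is correct and follows essentially the same route as the paper's proof: pointwise continuity via clopenness of $E(A)$ and the $V_U\subset U\cup\bigcup\overline{U\cap A}$ estimate for (i), a finite subcover of $K$ fed into $(\mc A4)$ plus compactness of the finitely many exceptional $cA$'s for (ii), and gluing via (i) for (iii)--(v). The one small place you are more explicit than the paper is deducing (v) through the chain $\Amg{X}{\beta\mc A}\succeq\Amg{X}{\overline{\mc A}^{\beta X}}\succeq\beta X$, which is a perfectly valid way to fill in the paper's ``immediate consequence of (iv)''.
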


\begin{proof}
\eqref{case: continuous functions on amalgamation space}: It remains to prove ``$\Leftarrow$''.
%First, we prove the~following claim:
%\begin{claim} \label{claim: closure of Amg}
%For any Hausdorff topological space $Y$ containing $X$, we have $\overline{X}^Y = K \cup \bigcup_{\mc A} \overline{A}^Y$.
%\end{claim}
%Clearly, $\overline{X}^Y$ contains $K \cup \bigcup_{\mc A} \overline{A}^Y$.
%Conversely, for any $x \in \overline{X}^Y \setminus X$ there is an open neighborhood $U$ of $x$ such that $\overline{U}^Y \cap K = \emptyset$.
%In particular, $X\setminus \overline{U}^Y$ is an open subset of $X$ which contains $K$. By $(\mc A 4)$, there exists some finite $\mc A' \subset \mc A$, such that
%\[ \bigcup \mc A' \supset X \setminus (X\setminus \overline{U}^Y) = \overline{U}^Y \cap X .\]
%For any open neighborhood $W$ of $x$ in $Y$ which satisfies $W\subset U$, we know that $W \cap X$ is non-empty.
%Moreover, we have $W\cap X \subset \overline{U}^Y \cap X \subset \bigcup \mc A'$, which means that $x$ is in the~closure of $\bigcup \mc A'$.
%Since $\mc A'$ is finite, we have $x\in \overline{A}^Y$ for some $A \in \mc A'$, and hence $x\in \bigcup_{\mc A} \overline{A}^Y$. This contradiction concludes the~proof of the~claim.
Let $f: \Amg{X}{\mc E} \rightarrow Z$ and suppose that all the~restrictions are continuous. We need to prove that $f$ is continuous at each point of $\Amg{X}{\mc E}$.

%If $x$ belongs to $\Amg{X}{\mc E} \setminus X$, then by the~claim, $x$ is contained in some $E(A)$, $A\in \mc A$.
Let $x\in E(A)$ for some $A\in \mc A$.
Since $E(A)$ is clopen in $\Amg{X}{\mc E}$ (by \eqref{case: E(A) is clopen in amalgamation} of Lemma~\ref{lemma: basic properties of amalgamation spaces}) and $f|_{E(A)}$ is continuous (by the~assumption), $f$ is continuous at $x$.

Let $x\in K$. Let $G$ be an open neighborhood of $f(x)$ in $Z$ and let $H$ be an open neighborhood of $f(x)$ satisfying $\overline{H} \subset G$.
By continuity of $f|_X$, there is some open neighborhood $U$ of $x$ in $X$ which satisfies $f(U) \subset H$.
In particular, we have $f(U\cap A) \subset H$ for any $A\in \mc A$. Since $\overline{U \cap A} \subset E(A)$ and $f|_{E(A)}$ is continuous, we have
\[ f(\overline{U \cap A}) \subset \overline{ f( U \cap A )} \subset \overline{H} \subset G .\]
It follows that $f( U \cup \bigcup_{\mc A} \overline{U \cap A} ) \subset G$.
This proves that $V_U$ is an open neighborhood of $x$ which is mapped into $G$:
\[ x \in V_U \overset{\text{def.}}{=} U \cup \bigcup_{A \in \mc A} W^{E(A)}_A(U\cap A)
\overset{L\ref{lemma: W A U}}{\underset{\eqref{case: W A U is subset of ...}}\subset}
U \cup \bigcup_{A \in \mc A} \overline{U \cap A}
\ \ \& \ \ f( U \cup \bigcup_{\mc A} \overline{U \cap A} ) \subset G .\]

\eqref{case: compactification of Amg}:
%The idea behind this proof is essentially the~same as the~proof of Proposition~\ref{proposition: properties of Z}\,\eqref{case: compactifications of Z(Y,Z)}.
By Lemma~\ref{lemma: basic properties of amalgamation spaces}, we already know that $\Amg{X}{c\mc A}$ is Hausdorff. Once we know that $\Amg{X}{c\mc A}$ is compact, we get that it is Tychonoff for free, which proves the~``in particular'' part.

Let $\mc V$ be an open cover. As in the~proof of Lemma~\ref{lemma: basic properties of amalgamation spaces} \eqref{case: Amg and E satisfies A 1 - 4}, we can assume that $\mc V$ consists of basic open sets, denoting
\begin{align*}
& \mc V_K := \mc V \cap \{ V_U |\, U\subset X \text{ open}\} \text{ and} \\
& \mc V_{cA} := \mc V \cap \{ W | \, W \cap cA \neq \emptyset \}, \ A\in\mc A .
\end{align*}

Clearly, we have $\bigcup \mc V_{cA} \supset cA$ for every $A\in \mc A$, so there exist some finite subfamilies $\mc V'_{cA}$ of $\mc V_{cA}$ satisfying $\bigcup \mc V'_{cA} \supset cA$.
Similarly we have $\bigcup \mc V_K \supset K$ and we denote by $\mc V'_K$ be some finite subfamily of $\mc V_K$ satisfying $\bigcup V'_K \supset K$.

By Lemma~\ref{lemma: basic properties of amalgamation spaces} \eqref{case: Amg and E satisfies A 1 - 4}, $(\mc A4)$ holds for $\Amg{X}{c\mc A}$ and $c\mc A$.
Applying $(\mc A4)$ yields a~finite family $c\mc A' \subset c \mc A$, such that every $cA \in c \mc A \setminus c \mc A'$ satisfies $V \cup \bigcup c\mc A' \supset cA$ for some $V \in \mc V'_K$.
In particular, $\mc V'_K$ covers the~whole space $\Amg{X}{c\mc A}$ except for $\bigcup c\mc A'$.

It follows that $\mc V' := \mc V'_K \cup \bigcup_{c\mc A'} \mc V'_{cA}$ is a~finite subcover of $\Amg{X}{c\mc A}$.

\eqref{case: smaller compactifications of Amg}: For $A\in \mc A$, denote by $q_A$ the~mapping witnessing that $cA\preceq dA$ and define $\varphi: \Amg{T}{d\mc A} \rightarrow \Amg{T}{c\mc A}$ as
\begin{equation*}
\varphi (x) :=
\begin{cases}
x & \text{  } x \in K , \\
q_A(x) & \text{ for } x\in dA, \ A\in \mc A.
\end{cases}
\end{equation*}
Clearly, $\varphi$ satisfies $\varphi|_X = \text{id}_X$.
By \eqref{case: continuous functions on amalgamation space}, the~mapping $\varphi$ is continuous.
This proves that $\varphi$ witnesses  $\Amg{T}{c\mc A} \preceq \Amg{T}{d\mc A}$.

\eqref{case: larger compactification of Amg type}: Let $X$ and $cX$ be as in the~statement. We denote by $i_A : \bar A \rightarrow cX $ the~identity mapping between $\bar{A} \subset \Amg{X}{\overline{\mc A}^{cX}}$ and $\bar{A} \subset cX$. We also denote as $i_X : X \rightarrow cX$ the~identity between $X \subset \Amg{X}{\overline{\mc A}^{cX}}$ and $X \subset cX$. By definition of topology on the~amalgamation space, $i_X$ and each $i_A$ is an embedding. We define a~mapping $\varphi : \Amg{X}{\overline{\mc A}^{cX}} \rightarrow cX$ as
\begin{equation*}
\varphi (x) :=
\begin{cases}
i_X(x) & \text{  } x \in X , \\
i_A(x) & \text{ for } x\in \bar A, \ A\in \mc A.
\end{cases}
\end{equation*}
The mapping $\varphi$ is well-defined and continuous (by \eqref{case: continuous functions on amalgamation space}). In particular, $\varphi$ witnesses that $cX \preceq \Amg{X}{\overline{\mc A}^{cX}}$.

\eqref{case: beta compactification as Amg}: This is an immediate consequence of \eqref{case: larger compactification of Amg type}.
%Alternatively, it is enough show that any continuous bounded $f: X \rightarrow \R$ admits a~continuous extension to $\Amg{X}{\beta \mc A}$. For every $A\in\mc A$, $f|_A$ has a~continuous extension $\beta (f|_A) : \beta A \rightarrow \R$. We define $\tilde{f} := f \cup \bigcup_{\mc A} \beta (f|_A)$. This new function is well defined, because each $\beta A \setminus A$ is disjoint from $X \cup \bigcup_{A'\neq A} \beta A'$. If $G\subset \R$ is open, we have $\tilde{f}^{-1}(G) = f^{-1}(G) \cup \bigcup_{\mc A} (\beta (f|_A))^{-1}(G)$. Since $f$ and all the~functions $\beta (f|_A)$ are continuous, the~preimage $\tilde{f}^{-1}(G)$ is open and hence $\tilde{f}$ is continuous.
\end{proof}

% !TeX root = Attainable_complexities.tex

%%%%%%%%%%%%%%%%%%%%%%%%%%%%%%%%%%%%
%  =============== cT compactifications =======================

In the~following two subsections, we show two different ways of constructing compactifications of a~broom space $T$, and compute the~complexity of $T$ in each of them.

\subsection{Compactifications \texorpdfstring{$c_\gamma T$}{cT} and Broom Spaces \texorpdfstring{$\mathbf{T}_\alpha$}{T alpha}}
	\label{section: cT}

In one type of broom space compactifications, which we call $c_\gamma T$, the~closures of each $A\in \mc A$ are either the~smallest possible compactification $A$ (the Alexandroff one-point compactification), or the~largest possible one (the Čech-stone compactification):

\begin{notation}[Compactifications $c_\gamma T$]\label{notation:cT}
Let $\gamma \leq \omega_1$. For $A\in \mc A_{\omega_1}$, we denote
\[ c_\gamma A :=
\begin{cases}
	\alpha A, 	& \textrm{for } A\in \mc A_{<\gamma}\\
	\beta A, 	& \textrm{for } A\in \mc A_{\omega_1} \setminus \mc A_{<\gamma} ,
\end{cases} \]
where $A$ is endowed with the~discrete topology.
For an AD broom space $T = T_{\mc A}$, we set $c_\gamma T := \Amg{T}{c_\gamma \mc A}$.
\end{notation}

By Proposition~\ref{proposition: compactifications of amalgamations}, $c_\gamma T$ is a~compactification of $T$.
Since $\mc A_{<0} = \emptyset$, $c_0(\cdot)$ assigns to each $A$ its Čech-Stone compactification. It follows that $c_0 T = \beta T$:
\[ c_0 T \overset{\text{def.}}{\underset{\text{of }c_0 T}=} 
\Amg{T}{c_0 \mc A} \overset{\mc A_{<0}}{\underset{=\emptyset}=}
\Amg{T}{\beta \mc A} \overset{P\ref{proposition: compactifications of amalgamations}}{\underset{\eqref{case: beta compactification as Amg}}=}
\beta T .\]
On the~other hand, when $\gamma$ is such that the~whole $\mc A$ is contained in $\mc A_{<\gamma}$, every $c_\gamma A$ will be equal to $\alpha A$. This gives the~second identity in the~following observation:\footnote{$\Amg{T_{\mc A}}{\alpha \mc A}$ actually corresponds to the~compactification from in \cite{talagrand1985choquet}, used to prove the~existence of a~non-absolute $\fsd$ space.}
\[							\mc A \subset \mc A_{<\alpha} \ \& \ \gamma \geq \alpha \implies
c_\gamma T					\overset{\text{def.}}{\underset{\text{of }c_\gamma T}=}
\Amg{T}{c_\gamma \mc A}		\overset{\text{def.}}{\underset{\text{of }c_\gamma}=}
\Amg{T}{\alpha \mc A} .\]
By Proposition~\ref{proposition: compactifications of amalgamations}\,\eqref{case: smaller compactifications of Amg}, we obtain the~following chain of compactifications
\[ \beta T = c_0 T \succeq c_1 T \succeq \dots \succeq c_\gamma T
\succeq \dots \succeq c_\alpha T = c_{\alpha+1} T = \dots = c_{\omega_1} T , \]
which stabilizes at the~first ordinal $\alpha$ for which $\mc A \subset \mc A_{<\alpha}$.

The next lemma is the~only part which is specific to Talagrand's broom spaces:

\begin{lemma}[Lower bound on the~complexity in $c_\gamma T$] \label{lemma: c T - lower bound}
Let $\gamma \in [2,\alpha]$. If an AD broom space $T = T_\mc A$ satisfies $\mc A \supset \mc A^T_{<\gamma}$, then we have $T \notin \mc F_{<\gamma} (c_\gamma T)$.
\end{lemma}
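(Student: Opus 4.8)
The goal is to produce, for each $\gamma\in[2,\alpha]$, a witness that $T=T_{\mc A}$ with $\mc A\supset \mc A^T_{<\gamma}$ is \emph{not} $\mc F_{<\gamma}$ in $c_\gamma T$. The natural strategy is a proof by contradiction combining the two ``Talagrand-type'' tools of Section~\ref{section: Talagrand lemma}: Lemma~\ref{lemma: Talagrand lemma 3} (existence of broom witnesses) and Lemma~\ref{lemma: closed discrete witnesses} (no discrete witnesses in one-point-compactification-like spaces). First I would suppose for contradiction that $T\in\mc F_{<\gamma}(c_\gamma T)$, so that $T\in\mc F_\eta(c_\gamma T)$ for some $\eta<\gamma$; by increasing $\eta$ we may assume $\eta$ is, say, the exact complexity, and in any case $\eta<\gamma\le\alpha\le\omega_1$. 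I need $T$ to play the role of ``$X$'' in the setting of Section~\ref{section: Talagrand lemma}: indeed $T$ is $\mc K$-analytic (Proposition~\ref{proposition: basic broom complexities}(i) says it is even $\fsd$ in $\beta T$), it has cardinality continuum, and it contains $\baire$ as a subset with $\baire$ carrying (here) the discrete topology inherited from $T$ — which is fine, since the framework explicitly allows the subspace topology on $\baire$ to differ from $\tau_p$.

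\textbf{Main steps.} Applying Lemma~\ref{lemma: Talagrand lemma 3} to $X:=T$, $Y:=c_\gamma T$, and the class $\mc F_\eta$, I obtain an $\mc A^T_\eta$-witness $A$ of $T$ in $c_\gamma T$: that is, $A\in\mc A^T_\eta$ together with closed sets $L_s\subset c_\gamma T$, $s\in B$ (where $A$ is a broom-extension of $B$), such that $\bigcap_{s\in B}L_s\subset T$ and each $L_s\cap A$ is infinite. Since $\eta<\gamma$, we have $A\in\mc A^T_\eta\subset\mc A^T_{<\gamma}\subset\mc A$, so $A$ is one of the broom sets defining the topology of $T$. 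The key geometric fact is then: \emph{$A$ is closed and discrete in $T$, and its closure in $c_\gamma T$ is the one-point compactification $\alpha A$.} Discreteness in $T$ is immediate since $A\subset\baire$ consists of isolated points; closedness in $T$ follows because $\infty\notin\overline{A}^T$ (by definition of $\tau(\mc A)$, the set $\{\infty\}\cup\baire\setminus A$ is a neighborhood of $\infty$). For the closure in $c_\gamma T = \Amg{T}{c_\gamma\mc A}$: since $A\in\mc A^T_{<\gamma}$, by Notation~\ref{notation:cT} we have $c_\gamma A=\alpha A$, and by Lemma~\ref{lemma: basic properties of amalgamation spaces}\eqref{case: E(A) is clopen in amalgamation} the set $E(c_\gamma\mc A)(A)=\alpha A$ is clopen in $c_\gamma T$, hence $\overline{A}^{c_\gamma T}=\overline{A}^{\alpha A}=\alpha A$ (using that $A$ is infinite, so its closure in $\alpha A$ adds exactly the point at infinity). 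Now Lemma~\ref{lemma: closed discrete witnesses}, applied with $W:=A$, $Z:=T$, $Y:=c_\gamma T$, says precisely that such an $A$ cannot be a witness for $T$ in $c_\gamma T$ — contradicting the previous paragraph. Hence $T\notin\mc F_{<\gamma}(c_\gamma T)$.

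\textbf{Expected obstacle.} The only genuinely delicate point is verifying cleanly that $\overline{A}^{c_\gamma T}$ really is the one-point compactification of $A$ (as required by the hypothesis of Lemma~\ref{lemma: closed discrete witnesses}), rather than something larger. This rests on two things that must be pinned down carefully: that $A\in\mc A^T_{<\gamma}$ (so that $c_\gamma$ assigns $\alpha A$ and not $\beta A$ — this is exactly where the hypothesis $\gamma\le\alpha$ and $\mc A\supset\mc A^T_{<\gamma}$, together with $\eta<\gamma$, get used), and that the clopenness of $\alpha A$ in $c_\gamma T$ lets us compute the closure inside $\alpha A$ alone. A secondary bookkeeping issue is checking that $T$ satisfies all the standing assumptions of the Section~\ref{section: Talagrand lemma} framework ($\mc K$-analytic, cardinality $\ge\mathfrak c$, contains $\baire$); these are all routine given Proposition~\ref{proposition: basic broom complexities} and Definition~\ref{definition: AD topology}, but should be stated. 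Once the closure computation is in place, the contradiction via Lemmas~\ref{lemma: Talagrand lemma 3} and~\ref{lemma: closed discrete witnesses} is immediate.
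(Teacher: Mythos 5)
Your proposal is correct and follows essentially the same route as the paper: assume $T\in\mc F_{<\gamma}(c_\gamma T)$, invoke Lemma~\ref{lemma: Talagrand lemma 3} to produce an $\mc A^T_{<\gamma}$-witness $A$, observe that $A\in\mc A$ is closed discrete in $T$ with $\overline{A}^{c_\gamma T}=c_\gamma A=\alpha A$, and contradict Lemma~\ref{lemma: closed discrete witnesses}. The extra care you take with the closure computation and the standing assumptions of Section~\ref{section: Talagrand lemma} is sound but only fills in details the paper leaves implicit.
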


\begin{proof}
Assuming for contradiction that $T \in \mc F_{<\gamma}(c_\gamma T)$, we can apply Lemma~\ref{lemma: Talagrand lemma 3} to $X=T$ and $Y=c_\gamma T$. It follows that there is an $\mc A^T_{<\gamma}$-witness $A$ for $T$ in $c_\gamma T$.
Since $A$ belongs to $\mc A^T_{<\gamma} \subset \mc A$, it is closed and discrete in $T$. By definition of $c_\gamma T$, we have $\overline{A}^{c_\gamma T} = c_\gamma A = \alpha A$ -- this contradicts Lemma~\ref{lemma: closed discrete witnesses}.
\end{proof}

In particular, this yields the~following result which we promised in Section~\ref{section:overview}:

\begin{corollary}[Existence of spaces with additive complexity]\label{corollary: T alpha for odd}
For any odd $\beta\in [3,\omega_1)$, there exists a~space $X^\beta_2$ satisfying
\[ \left\{ 2, \beta \right\} \subset \textnormal{Compl}\left( X^\beta_2 \right) \subset [2, \beta] .\]
\end{corollary}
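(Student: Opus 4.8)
The goal is to produce, for each odd $\beta \in [3,\omega_1)$, a space $X_2^\beta$ with $\{2,\beta\} \subset \textnormal{Compl}(X_2^\beta) \subset [2,\beta]$.

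The natural candidate is a Talagrand broom space of the form $T = T_{\mc A}$ where $\mc A = \mc A^T_{<\beta}$ (or equivalently $\mathbf{T}_\beta$ in the notation of Definition~\ref{definition: AD topology}). Let me check what the already-available machinery gives.

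Upper bound: Since $\mc A^T_{<\beta} \subset \mc A_{<\beta}$, Proposition~\ref{proposition: basic broom complexities}(ii) immediately yields that $T_{\mc A^T_{<\beta}}$ is an absolute $\mathcal F_\beta$ space, i.e. $\textnormal{Compl}(T) \subset [0,\beta]$. Combined with Proposition~\ref{proposition: basic broom complexities}(i) (which says $T$ is $\mathcal F_{\sigma\delta}$ in $\beta T$ but not $\sigma$-compact, hence $2 \in \textnormal{Compl}(T)$ and $0,1 \notin \textnormal{Compl}(T)$), we get $\{2\} \subset \textnormal{Compl}(T) \subset [2,\beta]$.

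Lower bound — the key point: I need $\beta \in \textnormal{Compl}(T)$, i.e. a compactification of $T$ in which $T$ has complexity exactly $\beta$. Take $cT := c_\beta T$ (Notation~\ref{notation:cT}). Since $\beta$ is odd and $\beta \geq 3 \geq 2$, Lemma~\ref{lemma: c T - lower bound} applies with $\gamma = \beta$: because $\mc A = \mc A^T_{<\beta} \supset \mc A^T_{<\beta}$, we conclude $T \notin \mc F_{<\beta}(c_\beta T)$. On the other hand, the upper bound above gives $T \in \mc F_\beta(c_\beta T)$. Hence $\textnormal{Compl}(T, c_\beta T) = \beta$, so $\beta \in \textnormal{Compl}(T)$. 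This completes the argument, and the "main obstacle" is really just assembling these three cited facts — there is no new work.

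Let me also double-check the edge cases. The statement requires $\beta$ odd, $\beta \geq 3$; Lemma~\ref{lemma: c T - lower bound} requires $\gamma \in [2,\alpha]$ where $\alpha$ is the complexity bound, here $\alpha = \beta$, so $\gamma = \beta \in [2,\beta]$ is fine. One subtlety: I should make sure $\mc A^T_{<\beta}$ is nonempty (so that $T_{\mc A}$ is a well-defined broom space) — it contains at least $\mc A_0$-extensions, which are nonempty, and in any case $\mathbf{T}_\beta$ is explicitly defined in Definition~\ref{definition: AD topology} for $\beta \in [1,\omega_1]$. And $\mc A^T$ is almost-disjoint by Lemma~\ref{lemma: properties of Talagrands brooms}(i), so $T$ is an AD broom space and the hypotheses of Lemma~\ref{lemma: c T - lower bound} (which needs "AD broom space") are met.

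\begin{proof}
Fix an odd ordinal $\beta \in [3,\omega_1)$ and set $X^\beta_2 := \mathbf{T}_\beta = T_{\mc A}$, where $\mc A := \mc A^T_{<\beta}$.
By Lemma~\ref{lemma: properties of Talagrands brooms}\,\eqref{case: A T is AD}, $\mc A^T$ is almost-disjoint, hence so is $\mc A \subset \mc A^T$; thus $X^\beta_2$ is an AD broom space.

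\emph{Upper bound and the value $2$.}
By Proposition~\ref{proposition: basic broom complexities}\,(i), $X^\beta_2$ is $\fsd$ in $\beta X^\beta_2$ but not $\sigma$-compact; since it is not $\sigma$-compact (nor compact), Proposition~\ref{proposition: basic attainable complexities}\,\eqref{case: compact and sigma compact} gives $0,1 \notin \textnormal{Compl}(X^\beta_2)$, while $\fsd$-ness in $\beta X^\beta_2$ gives $X^\beta_2 \in \mc F_2(\beta X^\beta_2)$, so $2 \in \textnormal{Compl}(X^\beta_2)$.
Moreover $\mc A = \mc A^T_{<\beta} \subset \mc A_{<\beta}$, so Proposition~\ref{proposition: basic broom complexities}\,(ii) shows that $X^\beta_2$ is an absolute $\mc F_\beta$ space, i.e. $\textnormal{Compl}(X^\beta_2) \subset [0,\beta]$.
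Combining these, $\{2\} \subset \textnormal{Compl}(X^\beta_2) \subset [2,\beta]$.

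\emph{The value $\beta$.}
Consider the compactification $c_\beta X^\beta_2$ from Notation~\ref{notation:cT}; it is indeed a compactification of $X^\beta_2$ by Proposition~\ref{proposition: compactifications of amalgamations}.
Since $X^\beta_2$ is an absolute $\mc F_\beta$ space, we have $X^\beta_2 \in \mc F_\beta(c_\beta X^\beta_2)$.
On the other hand, $\beta \in [2,\beta]$ is odd and $\mc A = \mc A^T_{<\beta} \supset \mc A^T_{<\beta}$, so Lemma~\ref{lemma: c T - lower bound} (applied with $\gamma := \beta$ and $\alpha := \beta$) yields $X^\beta_2 \notin \mc F_{<\beta}(c_\beta X^\beta_2)$.
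Hence $\textnormal{Compl}(X^\beta_2, c_\beta X^\beta_2) = \beta$, so $\beta \in \textnormal{Compl}(X^\beta_2)$.

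Therefore $\{2,\beta\} \subset \textnormal{Compl}(X^\beta_2) \subset [2,\beta]$, as desired.
\end{proof}
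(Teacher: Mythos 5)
Your proof is correct and follows essentially the same route as the paper: take $X^\beta_2 := \mathbf{T}_\beta$, get $\{2\}\subset\textnormal{Compl}(\mathbf{T}_\beta)\subset[2,\beta]$ from Proposition~\ref{proposition: basic broom complexities} together with Proposition~\ref{proposition: basic attainable complexities}\,\eqref{case: compact and sigma compact}, and then obtain $\beta\in\textnormal{Compl}(\mathbf{T}_\beta)$ by combining the lower bound from Lemma~\ref{lemma: c T - lower bound} in the compactification $c_\beta\mathbf{T}_\beta$ with the upper bound from absoluteness. Your extra checks (almost-disjointness of $\mc A^T$, well-definedness of $c_\beta\mathbf{T}_\beta$ as a compactification) are fine but not different in substance.
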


\begin{proof}
Let $\beta\in [3,\omega_1)$ be odd and set $X^\beta_2 := \mathbf{T}_\beta$.
Since $\mathbf{T}_\beta = T_{\mc A_{<\beta}}$ holds by definition, we can apply Proposition~\ref{proposition: basic broom complexities}\,$(ii)$ to get $\textnormal{Compl}(\mathbf{T}_\beta) \subset [0,\beta]$.
By Proposition~\ref{proposition: basic broom complexities}\,$(i)$, we have $2 \in \textnormal{Compl}(\mathbf{T}_\beta)$ and hence (by Proposition~\ref{proposition: basic attainable complexities}\,\eqref{case: compact and sigma compact})
\[ \left\{ 2 \right\} \subset \textnormal{Compl}\left( \mathbf{T}_\beta \right) \subset [2, \beta] .\]
By Lemma~\ref{lemma: c T - lower bound}, we have $\Compl{\mathbf{T}_\beta}{c_\beta \mathbf{T}_\beta} \geq \beta$, which implies that $\textnormal{Compl}(\mathbf{T}_\beta)$ contains $\beta$. This shows that $\textnormal{Compl}\left( \mathbf{T}_\beta \right) \subset [2, \beta]$ and concludes the~proof.
\end{proof}

The next lemma proves an upper estimate on the~complexity of $T$ in $c_\gamma T$.

\begin{lemma}[Upper bound on the~complexity in $c_\gamma T$] \label{lemma: c T - upper bound}
For any AD broom space $T$ and $\gamma\in [2,\omega_1]$, we have $T \in \mc F_\gamma (c_\gamma T)$.
\end{lemma}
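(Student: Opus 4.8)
The plan is to apply the sufficient condition from Lemma~\ref{lemma: S(y) and complexity of T}, whose hypotheses match almost verbatim the structure of $c_\gamma T = \Amg{T}{c_\gamma \mc A}$. So first I would fix an AD broom space $T = T_{\mc A}$ and $\gamma \in [2,\omega_1]$, and recall (Lemma~\ref{lemma: A of N}) that $\mc N = (\mc N(s))_{s\in\seq}$ is a Suslin scheme on $\baire \subset T$ with $\mc A(\overline{\mc N}^{c_\gamma T}) = T$. By Lemma~\ref{lemma: S(y) and complexity of T} it then suffices to show that for every $y \in c_\gamma T \setminus T$, the tree $S(y) = \{s\in\seq \mid \overline{\mc N(s)}^{c_\gamma T} \ni y\}$ is either finite or coverable by finitely many sets $\D(A)$ with $A \in \mc A_{<\gamma}$.

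Next I would split on which piece of the amalgamation space $y$ lives in. Since $y \notin T = \Amg{T}{\mc A}$, and $c_\gamma T$ adds new points only inside the pieces $E(A) = c_\gamma A$, there is a unique $A \in \mc A$ with $y \in c_\gamma A \setminus A$. Now $c_\gamma A$ is either $\alpha A$ (one-point compactification of the discrete set $A$) when $A \in \mc A_{<\gamma}$, or $\beta A$ when $A \in \mc A_{\omega_1}\setminus \mc A_{<\gamma}$. I would handle these two cases separately, and in both I would use that $E(A)$ is clopen in $c_\gamma T$ (Lemma~\ref{lemma: basic properties of amalgamation spaces}\,\eqref{case: E(A) is clopen in amalgamation}), so $\overline{\mc N(s)}^{c_\gamma T} \ni y$ forces $\overline{\mc N(s) \cap E(A)}^{E(A)} \ni y$, i.e.\ $\overline{\mc N(s)\cap A}^{c_\gamma A} \ni y$. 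Thus $S(y)$ depends only on the trace $\mc N(s) \cap A$ and the topology of $c_\gamma A$.

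In the case $c_\gamma A = \alpha A$ with $A \in \mc A_{<\gamma}$: here $y$ must be the added point $x_A$, and $\overline{F}^{\alpha A} \ni x_A$ iff $F \subset A$ is infinite (as in \eqref{equation: closure in alpha D}). So $S(y) = \{s \mid \mc N(s) \cap A \text{ is infinite}\}$, which by \eqref{equation: rewriting derivative of A} is precisely $\D(A)$; since $A \in \mc A_{<\gamma}$, this is a single set $\D(A)$ of the required type, so we are done in this case. In the case $c_\gamma A = \beta A$ with $A \in \mc A_{\omega_1}\setminus \mc A_{<\gamma}$: here $A$ carries the discrete topology, so $A$ is locally compact and hence open in $\beta A$, meaning $\beta A \setminus A$ consists only of points whose every neighborhood trace on $A$ is infinite; actually since $A$ is discrete, every new point $y \in \beta A \setminus A$ lies in the closure of \emph{every} infinite subset of $A$ that belongs to the ultrafilter corresponding to $y$, but more simply: for $y \in \beta A \setminus A$ and $F \subset A$, $\overline{F}^{\beta A}\ni y$ iff $y \in \overline{F}$, and the family of $F$ with this property is an ultrafilter on $A$, in particular closed under finite intersections; hence $\{s \mid \overline{\mc N(s)\cap A}^{\beta A}\ni y\}$ is closed under the property ``$\mc N(s)\cap A \in y$'', and one checks that if $s$ has two incomparable immediate successors $s_0, s_1$ in this set then $\mc N(s_0)\cap A, \mc N(s_1)\cap A \in y$ with disjoint unions --- impossible for an ultrafilter. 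So $S(y)$ has at most one branch through each node, i.e.\ $S(y)$ is a chain, hence linearly ordered by $\sqsubset$; such a tree $S(y)$ is finite (since a chain in $\seq$ is just an initial segment of a single $\sigma \in \baire$, but if it were the whole branch then $\sigma \in \mc A(\overline{\mc N}^{c_\gamma T}) = T$ via that branch lying in the $A$-piece, forcing $y$ to be the limit of $\mc N(\sigma|n)\cap A$ which has empty intersection in $\beta A$ --- so the chain must terminate). I expect the main obstacle to be pinning down exactly this last point, namely verifying that $S(y)$ is finite when $y \in \beta A \setminus A$; the cleanest route is probably to observe that the product topology $\tau_p$-closure issue does not arise because $A$ is discrete here, so $\mc N(s)\cap A$ is a clopen subset of $A$ and the ``ultrafilter'' description of points of $\beta A$ shows directly that the nodes $s$ with $\mc N(s)\cap A \in y$ form a $\sqsubset$-chain, and a $\sqsubset$-chain contained in $S(y) \in \textnormal{WF}$ (by Lemma~\ref{lemma: IF trees}\,\eqref{case: complete C and WF}) must be finite. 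With $S(y)$ finite (case $\beta A$) or equal to a single $\D(A)$, $A\in\mc A_{<\gamma}$ (case $\alpha A$), Lemma~\ref{lemma: S(y) and complexity of T} yields $T \in \mc F_\gamma(c_\gamma T)$, completing the proof.
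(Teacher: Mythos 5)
Your proposal is correct and follows essentially the same route as the paper: reduce to Lemma~\ref{lemma: S(y) and complexity of T} via the scheme $\mc N$, split on whether $y$ lies in an $\alpha A$-piece (where $S(y)=\D(A)$ with $A\in\mc A_{<\gamma}$) or a $\beta A$-piece (where incomparable nodes are excluded and well-foundedness of $S(y)$ forces the remaining chain to be finite). The only cosmetic difference is that you phrase the $\beta A$ case through the ultrafilter description of $\beta A$ for discrete $A$, while the paper uses the equivalent fact that disjoint closed sets in a normal space have disjoint closures in the Čech--Stone compactification.
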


\begin{proof}
Let $T=T_{\mc A}$ be an AD broom space and $\gamma\in [2,\omega_1]$.
We shall prove that $T$, $c_{\gamma} T$ and $\gamma$ satisfy the~assumptions of Lemma~\ref{lemma: S(y) and complexity of T} (which gives $T \in \mc F_\gamma (c_\gamma T)$).
To apply Lemma~\ref{lemma: S(y) and complexity of T}, we need to show that for every $x\in c_\gamma T \setminus T$, the~following set $S(x)$ is either finite or it can be covered by finitely sets $\D(A)$, $A\in \mc A_{<\gamma}$:
\begin{align*}
S(x) & = \left\{ s\in\seq | \ \overline{\mc N(s)}^{c_\gamma T} \ni x \right\} 
\end{align*}

First, we observe that $\overline{\mc N(s)}^{c_\gamma T} \cap \overline{A}^{c_\gamma T} = \overline{\mc N(s) \cap A}^{c_\gamma T}$ holds for any $s\in\seq$ and $A\in \mc A$.
The inclusion ``$\supset$'' is trivial.
For the~converse inclusion, let $x\in \overline{\mc N(s)}^{c_\gamma T} \cap \overline{A}^{c_\gamma T}$ and let $U$ be an open neighborhood of $x$ in $c_\gamma T$.
Since $\overline{A}^{c_\gamma T} = c_\gamma A$ is open in $c_\gamma T$ (Lemma~\ref{lemma: basic properties of amalgamation spaces}\,\eqref{case: E(A) is clopen in amalgamation}), $U\cap \overline{A}^{c_\gamma T}$ is an open neighborhood of $x$.
In particular, $x\in \overline{\mc N(s)}^{c_\gamma T}$ gives $U \cap \overline{A}^{c_\gamma T} \cap \mc N(s) \neq \emptyset$.
Since $\overline{A}^{c_\gamma T} \cap \mc N(s) = A \cap \mc N(s)$, it follows that $U$ intersects $A \cap \mc N(s)$.
This proves that $x$ belongs to $\overline{A \cap \mc N(s)}^{c_\gamma T}$.

Recall that $c_\gamma T \setminus T = \bigcup_{\mc A} \overline{A}^{c_\gamma T} \setminus A$. 
As the~first case, we shall assume that $x \in \overline{A}^{c_\gamma T} \setminus A$ for some $A \in \mc A \setminus \mc A_{<\gamma}$, and prove that the~set $S(x)$ is finite.
Suppose that such an $x$ satisfies $x\in \overline{\mc N(s)}^{c_\gamma T}$ and $x\in \overline{\mc N(t)}^{c_\gamma T}$ for two sequences $s$ and $t$.
By definition of $c_\gamma T$, we have $\overline{A}^{c_\gamma T} = c_\gamma A = \beta A$. It follows that
\begin{equation}\label{equation: x and N of s}
x \in \overline{\mc N(s)}^{c_\gamma T} \cap \overline{\mc N(t)}^{c_\gamma T} \cap \overline{A}^{c_\gamma T}
= \overline{\mc N(s)\cap A}^{c_\gamma T} \cap \overline{\mc N(t) \cap A}^{c_\gamma T}
= \overline{\mc N(s)\cap A}^{\beta A} \cap \overline{\mc N(t) \cap A}^{\beta A} .
\end{equation}
Recall that for any normal topological space $X$ and closed $E,F\subset X$, we have $\overline{E}^{\beta X} \cap \overline{F}^{\beta X} = \overline{E \cap F}^{\beta X}$. In particular, this holds for $A$ (which is discrete).
Applying this to \eqref{equation: x and N of s} yields
\[ x \in \overline{\mc N(s) \cap \mc N(t) \cap A}^{\beta A} \subset
 \overline{\mc N(s) \cap \mc N(t) \cap A}^{c_\gamma T} \subset
 \overline{\mc N(s) \cap \mc N(t)}^{c_\gamma T} .\]
Since $x$ does not belong to $T \supset \mc N(s) \cap \mc N(t)$, the~set $\mc N(s) \cap \mc N(t)$ must in particular be non-empty.
The only way this might happen is when the~sequences $s$ and $t$ are comparable.
Consequently, $S(x)$ consists of a~single branch.
Since $x \notin \mc A(\overline{\mc N}^{c_\gamma T}\!)$ by Lemma~\ref{lemma: A of N}, this branch must necessarily be finite (by Lemma~\ref{lemma: IF trees}).

The remaining case is when $x$ belongs to $\overline{A}^{c_\gamma T} \setminus A$ for some $A \in \mc A \cap \mc A_{<\gamma}$, that is, when we have $x=x_A$.
We claim that such $x$ satisfies $S(x_A)=\D(A)$. Indeed, any $s\in\seq$ satisfies
\begin{equation} \label{equation: S of x_A}
s \in S(x_A) \iff x_A\in \overline{\mc N(s)}^{c_\gamma T} \iff x_A\in \overline{\mc N(s) \cap A}^{c_\gamma T} \iff \mc N(s) \cap A \textrm{ is infinite} .
\end{equation}
Clearly, \eqref{equation: S of x_A} is further equivalent to $\mc A$ containing infinitely many distinct extensions of $s$.
This happens precisely when $\cltr{A}$ contains infinitely many incomparable extensions of $s$.
Since each branch of $\cltr{A}$ is infinite, we can assume that each two of these extensions have different lengths. In other words, $s$ belongs to $S(x_A)$ precisely when it belongs to $\D(A)$.
\end{proof}

Finally, we apply all the~results to the~particular case of $T=\mathbf{T}_\alpha$:

\begin{proof}[Proof of the~``$\mathbf{T}_\alpha$'' part of Theorem~\ref{theorem: complexity of talagrands brooms}]
Let $\alpha \in [2,\omega_1]$. Since $\mathbf{T}_\alpha$ corresponds to the~family $\mc A = \mc A^T_{<\alpha} \subset \mc A_{<\alpha}$, we can apply Proposition~\ref{proposition: basic broom complexities} to get
\[ \textnormal{Compl}(\mathbf{T}_\alpha) \subset [2,\alpha] .\]
For any $\gamma \in [2,\alpha]$, $\mathbf{T}_\alpha$ is an $\mc F_\gamma$ subset of $c_\gamma \mathbf{T}_\alpha$ (by Lemma~\ref{lemma: c T - upper bound}), but it is \emph{not} its $\mc F_{<\gamma}$ subset (by Lemma~\ref{lemma: c T - lower bound}). It follows that $\Compl{\mathbf{T}_\alpha}{c_\gamma \mathbf{T}_\alpha} = \gamma$ and therefore $\gamma \in \textnormal{Compl}(\mathbf{T}_\alpha)$.
Since $\gamma \in [2,\alpha]$ was arbitrary, we get the~desired result:
\[ \textnormal{Compl}(\mathbf{T}_\alpha) = [2,\alpha] .\]
\end{proof}

%%%%%%%%%%%%%%%%%%%%%%%%%%%%%%%%%%%%
%  =============== dT compactifications =======================
\subsection{Compactifications \texorpdfstring{$d_\gamma T$}{dT} and Broom Spaces \texorpdfstring{$\mathbf{S}_\alpha$}{S alpha}} \label{section: dT}

The construction from Section~\ref{section: cT} is more involved than gluing together pre-existing examples, but nonetheless, it has somewhat similar flavor to the~approach from Section~\ref{section: topological sums}. We will now show that we can get the~same result by ``relying on the~same sets the~whole time''.

For $A\in \mc A_{\omega_1}$ and $h\in \seq$, we shall write
\[ A(h):=\left\{ \sigma \in A | \ \sigma \sqsupset h \right\} .\]

\noindent When $h=\emptyset$, we have $A(\emptyset) = A$ and $A(h)$ is ``as complicated as it can be'', in the~sense that its rank $\rank$ is the~same as the~rank of $A$. Conversely, when $h$ belongs to the~$B$ of which $A$ is a~broom-extension, then $A(h)$ is ``as simple as it can be'' -- it belongs to $\mc A_1$. The following lemma shows that the~intermediate possibilities are also possible, an defines the~corresponding set of ``$<\!\gamma$-handles'' of $A$.

\begin{lemma}[$H_{<\gamma}(\cdot)$, the~set $<\!\gamma$-handles]\label{lemma: H gamma}
Let $A$ be a~broom extension of $B$. For $\gamma\in [2,\omega_1]$, denote \emph{the set of $<\!\gamma$-handles of $A$} as
\[ H_{<\gamma}(A) := \{ h\in \cltr{B} | \ A(h)\in \mc A_{<\gamma} \ \& \text{ no other } s\sqsubset h \text{ satisfies } A(s) \in \mc A_{<\gamma} \}  .\]
\begin{enumerate}[(i)]
\item $A$ is the~disjoint union of sets $A(h)$, $h\in H_{<\gamma}(A)$.
\item If $A(h_0) \in \mc A_{<\gamma}$ holds for some $h_0\in \seq$, then we have $h\sqsubset h_0$ for some $h\in H_{<\gamma}(A)$.
\end{enumerate}
\end{lemma}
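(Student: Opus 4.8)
The plan is to prove the two parts of Lemma~\ref{lemma: H gamma} by exploiting the basic structure of broom-extensions together with the monotonicity of the families $\mc A_{<\gamma}$. First I would record the key monotonicity fact: for $s \sqsubset h$ with both in $\cltr{B}$, one has $A(h) \subset A(s)$, and moreover $A(s)$ is again a broom-extension (of $\{ t \in \cltr{B} \mid t \sqsupset s\}$, roughly), so that $A(h) \in \mc A_{<\gamma}$ does \emph{not} in general force $A(s) \in \mc A_{<\gamma}$, whereas the converse implication — $A(s) \in \mc A_{<\gamma} \implies A(h) \in \mc A_{<\gamma}$ — does hold because passing to a longer handle only decreases the underlying $B$-set, hence decreases the rank, and the families $\mc A_\beta$ are increasing in $\beta$ (this uses $\mc A_{0} \subsetneq \mc A_{1} \subsetneq \dots \subsetneq \mc A_{\omega_1}$ from Section~\ref{section: broom sets} and the fact that $A \in \mc A_\beta$ iff $B \in \mc B_\beta$). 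Additionally I would note that since $A = \{ s \ext f^s_n \ext \nu^s_n \mid s \in B, n\in\omega\}$, every element $\sigma$ of $A$ extends exactly one $s \in B$, hence $\cltr{B}$ is exactly the set of initial segments of elements of $A$ that lie in $\cltr{B}$, and for any $h \in \cltr{B}$ the set $A(h)$ is non-empty.

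For part (ii): suppose $A(h_0) \in \mc A_{<\gamma}$ for some $h_0 \in \seq$. If $h_0 \notin \cltr{B}$, I would first replace $h_0$ by the longest initial segment $h_0'$ of $h_0$ lying in $\cltr{B}$; since every $\sigma \in A(h_0)$ extends some $s \in B$, and $\sigma \sqsupset h_0$, comparability of $h_0$ with $s$ forces $h_0'$ to still satisfy $A(h_0') \supset A(h_0)$, and in fact $A(h_0') = A(h_0)$ once $h_0'$ is the branch-point (here I would be slightly careful and argue that beyond the last node of $\cltr{B}$ the handle no longer restricts $A$ nontrivially in a way that changes membership in $\mc A_{<\gamma}$ — the rank of $A(h_0')$ equals that of $A(h_0)$). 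Now consider the set $\{ s \in \cltr{B} \mid s \sqsubset h_0',\ A(s) \in \mc A_{<\gamma}\}$; it is non-empty (it contains $h_0'$) and finite (it is a set of initial segments of $h_0'$), hence it has a shortest element $h$. By construction $h \in H_{<\gamma}(A)$ — it satisfies $A(h) \in \mc A_{<\gamma}$ and no strictly shorter $s \sqsubset h$ has $A(s) \in \mc A_{<\gamma}$ — and $h \sqsubset h_0' \sqsubset h_0$, which is the claim.

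For part (i): I must show the sets $A(h)$, $h \in H_{<\gamma}(A)$, are pairwise disjoint and cover $A$. Disjointness: if $h, h' \in H_{<\gamma}(A)$ are distinct, they cannot be comparable — if $h \sqsubset h'$ then $h$ is a strictly shorter sequence with $A(h) \in \mc A_{<\gamma}$, contradicting minimality in the definition of $H_{<\gamma}$ for $h'$ — so $h \perp h'$, whence $A(h) \cap A(h') = \emptyset$ since elements of $A(h)$ extend $h$ and elements of $A(h')$ extend $h'$. Covering: let $\sigma \in A$; then $\sigma$ extends some $s_\sigma \in B$, and I claim $A(s_\sigma) \in \mc A_1 \subset \mc A_{<\gamma}$ (this is the remark made just before the lemma: when $h$ lies in the $B$ of which $A$ is a broom-extension, $A(h) \in \mc A_1$ — indeed $A(s_\sigma) = \{ s_\sigma \ext f^{s_\sigma}_n \ext \nu^{s_\sigma}_n \mid n \in \omega\}$ is a broom-extension of the singleton $\{s_\sigma\} \in \mc B_1$, since $2 + 0 = \ldots$; I would double-check the index bookkeeping here, as this is the one spot where the $\mc A_\alpha$-versus-$\mc B_{2+\alpha}$ numbering matters). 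Hence by part (ii) applied to $h_0 := s_\sigma$, there is some $h \in H_{<\gamma}(A)$ with $h \sqsubset s_\sigma \sqsubset \sigma$, so $\sigma \in A(h)$. This gives the cover, and combined with disjointness, part (i). The main obstacle I anticipate is the careful handling in part (ii) of handles $h_0$ that do not lie in $\cltr{B}$ — making precise that extending a handle past the last branch point of $\cltr{B}$ does not alter the $\mc A$-rank of $A(h)$ — and verifying the ``$A(s_\sigma) \in \mc A_1$'' claim against the paper's indexing conventions for broom-extensions.
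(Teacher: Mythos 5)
Your proof is correct and follows essentially the same route as the paper's: both arguments rest on the monotonicity $A(u)\in\mc A_\beta \implies A(v)\in\mc A_\beta$ for $u\sqsubset v$ in $\cltr{B}$, the fact that $A(s)\in\mc A_1\subset\mc A_{<\gamma}$ for every $s\in B$, and passing to minimal initial segments -- the paper merely packages this as the explicit description $H_{<\gamma}(A)=\{\,s|n_s \mid s\in B\,\}$, whereas you derive the covering in (i) from (ii). The one imprecise spot is your treatment of $h_0\notin\cltr{B}$ in (ii): the claims that $A(h_0')=A(h_0)$ and that the two sets have equal rank are not right in general (if $h_0$ strictly extends some $s\in B$, then $h_0'=s$ and $A(s)$ can be strictly larger than $A(h_0)$), but the conclusion you actually need, namely $A(h_0')\in\mc A_{<\gamma}$, holds anyway because $h_0'=s\in B$ gives $A(h_0')\in\mc A_1$.
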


\begin{proof}
Let $A$, $B$ and $\gamma$ be as in the~statement.
First, we give a~more practical description of $H_{<\gamma}(A)$.
For each $s\in B$, $A(s)$ is a~broom extension of $\{s\} \in \mc B_1 \subset \mc B_{<\gamma}$. It follows that $A(s) \in \mc A_{<\gamma}$.
Moreover, we have
\[ \left( \forall u \sqsubset v \in \cltr{B} \right)
 \left(\forall \alpha<\omega_1\right): 
 A(u) \in \mc A_\alpha \implies A(v) \in \mc A_\alpha .\]
Indeed, this follows from the~definition of $\mc B_\alpha$.
Consequently, for each $s\in B$, there exists a~minimal $n\in\omega$ for which $A(s|n_s) \in \mc A_{<\gamma}$.
We claim that
\[ H_{<\gamma}(A) = \{ s|n_s \ | \ s\in B \} .\]
Indeed, each $s|n_s$ belongs to $H_{<\gamma}(A)$ by minimality of $n_s$. Conversely, any $h \in \cltr{B}$ satisfies $h \sqsubset s$ for some $s\in B$, so $h$ is either equal to $s|n_s$, or it does not belong to $H_{<\gamma}(A)$.

(i):
For each $\sigma\in A$, there is some $s\in B$ s.t. $s \sqsubset \sigma$.
It follows that $\sigma \in A(s|n_s)$, which proves that $A$ is covered by the~sets $A(h)$, $h\in H_{<\gamma}(A)$.

For distinct $s, t \in B$, the~initial segments $s|n_s $ and $t|n_t$ are either equal, or incomparable (by minimality of $n_s$ and $n_t$).
It follows that distinct $g,h\in H_{<\gamma}(A)$ are incomparable, which means that distinct $A(g)$ and $A(h)$ are disjoint.

(ii) follows from the~minimality of $n_s$ (and the~trivial fact that for each $h\in \cltr{B}$, there is some $s\in B$ with $s \sqsupset h$).
\end{proof}

By default, we equip each $A\in \mc A_{\omega_1}$ with a~discrete topology.
Using the~sets of $<\!\gamma$-handles, we define new compactifications of broom spaces. Unlike the~compactifications from $c_\gamma T$ from Section~\ref{section: cT}, these are no longer ``all or nothing'', but there are many intermediate steps between $\alpha A$ on one end and $\beta A$ on the~other:

\begin{notation}[Compactifications $d_\gamma T$]
For $\gamma \in [2,\omega_1]$ and $\mc A \in \mc A_{\omega_1}$, we define
\[ E_\gamma(A) := \bigoplus_{h\in H_{<\gamma}(A)} \alpha A(h)
 = \bigoplus_{h\in H_{<\gamma}(A)} \left( A(h) \cup \{ x_{A(h)} \} \right). \]
For an AD broom space $T$, we define a~compactification $d_\gamma T$ as
\[ d_\gamma T := \Amg{T}{d_\gamma \mc A} \text{, where } d_\gamma A := \beta E_\gamma(A) .\]
\end{notation}

By Lemma~\ref{lemma: H gamma}(ii), we have $E_\gamma(A) \supset A$, as well as the~fact that for distinct $g,h\in H_{<\gamma}(A)$, we have $x_{A(h)}\neq x_{A(g)}$.

We now show that the~compactifications $d_\gamma T$ have similar properties as $c_\gamma T$.

\begin{lemma}[Lower bound on the~complexity in $d_\gamma T$]\label{lemma: d T - lower bound}
Let $\gamma \in [2,\omega_1)$. If an AD system $\mc A\subset \mc A_{\omega_1}$ contains $\mc A^T_{<\alpha} \setminus \mc A^T_{<\alpha-1}$ for some non-limit $\alpha\geq \gamma$, then we have $T\notin \mc F_{<\gamma}(d_\gamma T)$.
\end{lemma}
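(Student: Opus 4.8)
The plan is to adapt the argument behind Lemma~\ref{lemma: c T - lower bound}, but since the low-rank broom $W$ that ultimately witnesses a contradiction need \emph{not} lie in $\mc A$ here, I would invoke the technical Lemma~\ref{lemma: technical version of Talagrand lemma 3} instead of Lemma~\ref{lemma: Talagrand lemma 3}. So suppose for contradiction that $T \in \mc F_\eta(d_\gamma T)$ for some $\eta < \gamma$. Since $\alpha$ is non-limit and $\eta < \gamma \le \alpha$, one checks that $\eta \le \alpha - 1$, so $\delta := \alpha - 1$ is an admissible value in Lemma~\ref{lemma: technical version of Talagrand lemma 3}; applying that lemma to $X := T$, $Y := d_\gamma T$ and this $\delta$ produces $A \in \mc A^T_{\alpha-1} \setminus \mc A^T_{<\alpha-1}$ and $h_0 \in \seq$ such that $W := A \cap \mc N(h_0) = A(h_0) \in \mc A^T_\eta$ is a witness for $T$ in $d_\gamma T$. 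Because $\mc A^T_{\alpha-1} \subset \mc A^T_{<\alpha}$, we have $A \in \mc A^T_{<\alpha} \setminus \mc A^T_{<\alpha-1} \subset \mc A$, so $A$ is one of the broom sets defining $T$.

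Next I would record that $W$ is a closed discrete subset of $T$: since $A \in \mc A$, the set $A$ is clopen in $T$ and $\infty \notin \overline{A}^T$ (the subbasic neighbourhood $\{\infty\} \cup \baire \setminus A$ of $\infty$ misses $A$), so $A$ — and hence its subset $W$ — is closed and discrete in $T$; moreover $W$ is infinite, being a witness. Thus we are set up to apply Lemma~\ref{lemma: closed discrete witnesses} with $Z := T$, and the only thing left to verify is that $\overline{W}^{d_\gamma T}$ is the one-point compactification of $W$ — which will contradict the fact that $W$ is a witness.

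To establish this I would argue as follows. We have $W = A(h_0) \in \mc A^T_\eta \subset \mc A_{<\gamma}$, so by Lemma~\ref{lemma: H gamma}(ii) there is some $h \in H_{<\gamma}(A)$ with $h \sqsubset h_0$, whence $W \subset A(h)$. In $d_\gamma A = \beta E_\gamma(A)$ with $E_\gamma(A) = \bigoplus_{g \in H_{<\gamma}(A)} \alpha A(g)$, the summand $\alpha A(h) = A(h) \cup \{x_{A(h)}\}$ is clopen, and $W$ is an infinite subset of $A(h)$, so \eqref{equation: closure in alpha D} gives $\overline{W}^{E_\gamma(A)} = W \cup \{x_{A(h)}\}$; this set is compact, hence also equals $\overline{W}^{\beta E_\gamma(A)} = \overline{W}^{d_\gamma A}$. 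Each $A(g)$ with $g \in H_{<\gamma}(A)$ is an infinite broom set, hence dense in $\alpha A(g)$, so $A$ is dense in $E_\gamma(A)$ and $d_\gamma A$ is a compactification of $A$ equal to $\overline{A}^{d_\gamma T}$, which is clopen in $d_\gamma T$ by Lemma~\ref{lemma: basic properties of amalgamation spaces}\,\eqref{case: E(A) is clopen in amalgamation}; therefore $\overline{W}^{d_\gamma T} = \overline{W}^{d_\gamma A} = W \cup \{x_{A(h)}\}$, carrying exactly the topology of the one-point compactification of the discrete set $W$ (co-finite neighbourhoods of $x_{A(h)}$, $W$ infinite). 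Lemma~\ref{lemma: closed discrete witnesses} then says $W$ cannot be a witness for $T$ in $d_\gamma T$, contradicting the output of Lemma~\ref{lemma: technical version of Talagrand lemma 3}. Hence $T \notin \mc F_\eta(d_\gamma T)$ for every $\eta < \gamma$, i.e.\ $T \notin \mc F_{<\gamma}(d_\gamma T)$.

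The main obstacle is the last paragraph: one must pin down \emph{which} summand $\alpha A(h)$ of $E_\gamma(A)$ the low-rank piece $W = A(h_0)$ falls into (this is exactly what Lemma~\ref{lemma: H gamma}(ii) is for), and then verify that neither the Stone--Čech step $E_\gamma(A) \rightsquigarrow \beta E_\gamma(A)$ nor the amalgamation $d_\gamma T$ attaches any further accumulation points to $W$, so that its closure really is a one-point compactification rather than something larger. Once the bookkeeping is in place the argument is a routine transcription of the proof of Lemma~\ref{lemma: c T - lower bound}.
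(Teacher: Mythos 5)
Your proposal is correct and follows essentially the same route as the paper's proof: contradiction via the technical Lemma~\ref{lemma: technical version of Talagrand lemma 3} with $\delta=\alpha-1$, locating the handle $h\in H_{<\gamma}(A)$ with $h\sqsubset h_0$ via Lemma~\ref{lemma: H gamma}\,(ii), showing $\overline{A(h_0)}^{d_\gamma T}=A(h_0)\cup\{x_{A(h)}\}$, and concluding with Lemma~\ref{lemma: closed discrete witnesses}. Your closure computation through $E_\gamma(A)$, $\beta E_\gamma(A)$ and the clopenness of $d_\gamma A$ in the amalgamation is somewhat more explicit than the paper's one-line claim, but it is the same argument.
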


\begin{proof}
Assuming for contradiction that $T \in \mc F_\eta (d_\gamma T)$ holds for some $\eta < \gamma$, we can apply Lemma~\ref{lemma: technical version of Talagrand lemma 3} (with `$\delta$'$=\alpha-1$).
It follows that there is some $A \in \mc A^T_{\alpha-1} \setminus \mc A^T_{<\alpha-1} \subset \mc A$ and $h_0\in \seq$, such that $A(h_0) \in \mc A^T_\eta \subset \mc A^T_{<\gamma}$ is a~witness for $T$ in $d_\gamma T$.

By Lemma~\ref{lemma: H gamma} (iii), there is some $h\in H_{<\gamma}(A)$ s.t. $h_0 \sqsupset h$.
Since $A$ belongs to $\mc A$, the~sets $A(h_0) \subset A(h) \subset A$ are all closed discrete in $T$. Moreover, we have $\overline{A(h)}^{d_\gamma T} = \alpha A(h)$ by definition of $d_\gamma T$. Because $A(h_0)$ is infinite, we get
\[ \overline{A(h_0)}^{d_\gamma T} = \overline{A(h_0)}^{\alpha A(h)} = A(h_0) \cup \{ x_{A(h)} \} .\]

We have found a~closed (in $T$) discrete witness for $T$ in $d_\gamma T$, whose closure in $d_\gamma T$ is homeomorphic to its one-point compactification -- a~contradiction with Lemma~\ref{lemma: closed discrete witnesses}.
\end{proof}

\begin{lemma}[Upper bound on the~complexity in $d_\gamma T$] \label{lemma: d T - upper bound}
For any AD broom space $T$ and $\gamma\in [2,\omega_1]$, $T \in \mc F_\gamma (d_\gamma T)$.
\end{lemma}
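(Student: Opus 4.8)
The plan is to mirror the proof of Lemma~\ref{lemma: c T - upper bound}, reducing everything to Lemma~\ref{lemma: S(y) and complexity of T} applied to the Suslin scheme $\mc N$ on $\baire \subset T$. Thus I would fix an AD broom space $T=T_{\mc A}$, a parameter $\gamma\in[2,\omega_1]$, and I would aim to show that for every $x\in d_\gamma T\setminus T$, the set $S(x) = \{ s\in\seq \mid \overline{\mc N(s)}^{d_\gamma T} \ni x\}$ is either finite or can be covered by finitely many sets $\D(A)$ with $A\in \mc A_{<\gamma}$; by Lemma~\ref{lemma: S(y) and complexity of T} this yields $T\in\mc F_\gamma(d_\gamma T)$. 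Since $\overline{A}^{d_\gamma T} = d_\gamma A = \beta E_\gamma(A)$ is clopen in $d_\gamma T$ (Lemma~\ref{lemma: basic properties of amalgamation spaces}\,\eqref{case: E(A) is clopen in amalgamation}) and $d_\gamma T\setminus T = \bigcup_{\mc A} \overline{A}^{d_\gamma T}\setminus A$, I first record (exactly as in the proof of Lemma~\ref{lemma: c T - upper bound}) the identity $\overline{\mc N(s)}^{d_\gamma T} \cap \overline{A}^{d_\gamma T} = \overline{\mc N(s)\cap A}^{d_\gamma T}$ for all $s\in\seq$, $A\in\mc A$, proved using clopenness of $\overline{A}^{d_\gamma T}$.

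Next I would split into cases according to where $x$ sits. The essential new point compared to Lemma~\ref{lemma: c T - upper bound} is that $\overline{A}^{d_\gamma T}$ is not just $\alpha A$ or $\beta A$, but $\beta E_\gamma(A)$, where $E_\gamma(A) = \bigoplus_{h\in H_{<\gamma}(A)} \alpha A(h)$. Write $x\in \overline{A}^{d_\gamma T}\setminus A = \beta E_\gamma(A)\setminus A$. Using $A = \bigsqcup_{h\in H_{<\gamma}(A)} A(h)$ (Lemma~\ref{lemma: H gamma}(i)) and the fact that $\beta E_\gamma(A)$ extends the one-point compactifications $\alpha A(h)$ sitting as clopen pieces of $E_\gamma(A)$, I would argue as follows. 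Either $x$ is one of the ``added points'' $x_{A(h)}$ of some $\alpha A(h)$, $h\in H_{<\gamma}(A)$; or $x$ lies in $\beta E_\gamma(A)$ but is not of this form. In the first case I claim $S(x_{A(h)}) = \D(A(h))$: indeed $s\in S(x_{A(h)})$ iff $x_{A(h)} \in \overline{\mc N(s)\cap A}^{d_\gamma T}$, and since $\overline{A(h)}^{d_\gamma T} = \alpha A(h)$ is clopen, this is equivalent to $x_{A(h)} \in \overline{\mc N(s)\cap A(h)}^{\alpha A(h)}$, i.e.\ to $\mc N(s)\cap A(h)$ being infinite, which (as in \eqref{equation: rewriting derivative of A} and \eqref{equation: S of x_A}) is exactly $s\in \D(A(h))$. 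Since $A(h)\in\mc A_{<\gamma}$ by definition of $H_{<\gamma}(A)$, this is one of the allowed sets $\D(A)$, $A\in\mc A_{<\gamma}$. In the second case, $x$ lives in the Čech--Stone remainder $\beta E_\gamma(A)\setminus E_\gamma(A)$; here I would use normality of the discrete space $E_\gamma(A)$: for closed $E,F\subset E_\gamma(A)$ one has $\overline{E}^{\beta E_\gamma(A)}\cap \overline{F}^{\beta E_\gamma(A)} = \overline{E\cap F}^{\beta E_\gamma(A)}$. Applying this to $E=\mc N(s)\cap A$, $F=\mc N(t)\cap A$ (viewed inside $E_\gamma(A)$) shows that if $x\in \overline{\mc N(s)}^{d_\gamma T}\cap\overline{\mc N(t)}^{d_\gamma T}$ then $x\in \overline{\mc N(s)\cap\mc N(t)\cap A}^{d_\gamma T} \subset \overline{\mc N(s)\cap\mc N(t)}^{d_\gamma T}$; since $x\notin T\supset \mc N(s)\cap\mc N(t)$, this forces $\mc N(s)\cap\mc N(t)\neq\emptyset$, so $s$ and $t$ are comparable. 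Hence $S(x)$ is a single branch, and by Lemma~\ref{lemma: A of N} and Lemma~\ref{lemma: IF trees} it must be finite (as $x\notin \mc A(\overline{\mc N}^{d_\gamma T}\!)$).

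The main obstacle I anticipate is the bookkeeping around the added points: when $x=x_{A(h)}$ for some $h\in H_{<\gamma}(A)$, I must be careful that $A(h)$ genuinely belongs to $\mc A_{<\gamma}$ (this is the defining property of $H_{<\gamma}(A)$) and that $\overline{A(h)}^{d_\gamma T}$ is really $\alpha A(h)$ — that is, I need to know $A(h)$ is clopen inside $E_\gamma(A)$ and hence its closure in $d_\gamma T$ only adds the single point $x_{A(h)}$. This requires the observation that the pieces $\alpha A(h)$, $h\in H_{<\gamma}(A)$, are clopen summands of $E_\gamma(A)$, and that $\beta E_\gamma(A)$ restricted to a clopen subset $Z$ is $\beta Z$ when $Z$ is itself (locally) compact; for $Z=\alpha A(h)$ this means the closure of $A(h)$ in $\beta E_\gamma(A)$ is $\alpha A(h)$ plus possibly points of the Čech--Stone remainder, but the infinite-intersection argument rules the latter out, exactly as in case two. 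So in fact the two cases interact: for $s,t\in S(x_{A(h)})$ we again get comparability unless both belong to $\D(A(h))$, which is consistent with $S(x_{A(h)})=\D(A(h))$. Once these closure computations are pinned down, the covering of $S(x)$ by finitely many $\D(A)$, $A\in\mc A_{<\gamma}$ (here a single one, $\D(A(h))$), together with Lemma~\ref{lemma: S(y) and complexity of T}, finishes the proof; I would then simply write $T\in\mc F_\gamma(d_\gamma T)$.
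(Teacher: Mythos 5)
Your argument is essentially identical to the paper's proof: both split $d_\gamma T\setminus T$ into the added points $x_{A(h)}$, $h\in H_{<\gamma}(A)$ (where $S(x_{A(h)})=\D(A(h))$ with $A(h)\in\mc A_{<\gamma}$), and the points of $\beta E_\gamma(A)\setminus E_\gamma(A)$ (where the comparability argument from Lemma~\ref{lemma: c T - upper bound} shows $S(x)$ is a single finite branch), and then invoke Lemma~\ref{lemma: S(y) and complexity of T}. The only slip is calling $E_\gamma(A)$ discrete --- it is merely normal, and $\mc N(s)\cap A$ need not be closed in it --- but the finitely many points $x_{A(h)}$ that its closure in $E_\gamma(A)$ can acquire are harmless for a point of the Čech--Stone remainder, exactly as your closing remark about the two cases interacting anticipates.
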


\begin{proof}
Let $x\in d_\gamma T \setminus T$.
When $x$ belongs to $\beta E_\gamma(A) \setminus  E_\gamma(A)$ for some $A\in \mc A$, we use the~exact same method as in Lemma~\ref{lemma: c T - upper bound} to prove that $S(x)$ is finite.

When $x$ belongs to $E_\gamma(A) \setminus A$ for some $A\in \mc A$, we have $x=x_{A(h)}$ for some $h\in H_{<\gamma } (A)$. By definition of $H_{<\gamma}(A)$, we have $A(h) \in \mc A_{<\gamma}$.
The approach from Lemma~\ref{lemma: c T - upper bound} yields $ S(x_{A(h)}) = \D(A\left(h\right))$.

We have verified the~assumptions of Lemma~\ref{lemma: S(y) and complexity of T}, which gives $T \in \mc F_\gamma (d_\gamma T)$.
\end{proof}

We have all the~ingredients necessary to finish the~proof of Theorem~\ref{theorem: complexity of talagrands brooms}:

\begin{proof}[Proof of the~``$\mathbf{S}_\alpha$'' part of Theorem~\ref{theorem: complexity of talagrands brooms}]
Let $\alpha \in [2,\omega_1)$ be a~non-limit ordinal. Since $\mathbf{S}_\alpha$ corresponds to the~family
\[ \mc A = \mc A^T_{<\alpha} \setminus \mc A^T_{<\alpha-1} \subset \mc A_{<\alpha} ,\]
we can apply Proposition~\ref{proposition: basic broom complexities} to get
\[ \textnormal{Compl}(\mathbf{S}_\alpha) \subset [2,\alpha] .\]
For any $\gamma \in [2,\alpha]$, $\mathbf{S}_\alpha$ satisfies $\Compl{\mathbf{S}_\alpha}{d_\gamma \mathbf{S}_\alpha}\leq\gamma $ (by Lemma~\ref{lemma: d T - upper bound}) and $\Compl{\mathbf{S}_\alpha}{d_\gamma \mathbf{S}_\alpha}\geq\gamma $ (by Lemma~\ref{lemma: d T - lower bound}). It follows that $\Compl{\mathbf{S}_\alpha}{d_\gamma \mathbf{S}_\alpha} = \gamma$ and therefore $\gamma \in \textnormal{Compl}(\mathbf{S}_\alpha)$.
Since $\gamma \in [2,\alpha]$ was arbitrary, we get
\[ \textnormal{Compl}(\mathbf{S}_\alpha) \supset [2,\alpha] ,\]
which concludes the~proof.
\end{proof}
% !TeX root = Attainable_complexities.tex
%\section{Summary}
%
%\vknote{Decide whether to include this section at all.}
%Summary of the results:
%\begin{itemize}
%\item Placeholder: Additive complexities
%\item Placeholder: A single space can attain many complexities
%\item Placeholder: non-surprising approach -- it can be done via topological sums
%\item Placeholder: non-obvious approach -- for $\gamma \in [\alpha,\beta]$, find intermediate compactification corresponding to $\gamma$ 
%\end{itemize}

\section*{Acknowledgment}
I would like to thank my supervisor, Ondřej Kalenda, for numerous very helpful
suggestions and fruitful consultations regarding this paper.
I am grateful to Adam Bartoš for discussions related to this paper.
This work was supported by the research grants  GAČR 17-00941S and  GA UK No. 915.

%\bibliographystyle{alphanum}
%\bibliography{refs}

\begin{thebibliography}{KKLP}

\bibitem[Fre]{freiwald2014introduction}
Ronald~C. Freiwald.
\newblock An introduction to set theory and topology.
\newblock 2014.

\bibitem[Fro]{frolik1970survey}
Zden{\v{e}}k Frol{\'\i}k.
\newblock A survey of separable descriptive theory of sets and spaces.
\newblock {\em Czechoslovak Mathematical Journal}, 20(3):406--467, 1970.

\bibitem[Han]{hansell1992descriptive}
R.W. Hansell.
\newblock {\em Descriptive Topology, Recent progress in general topology},
  volume~14.
\newblock 1992.

\bibitem[HS]{holicky2003perfect}
P.~Holický and J.~Spurný.
\newblock Perfect images of absolute {S}ouslin and absolute {B}orel {T}ychonoff
  spaces.
\newblock {\em Topology and its Applications}, 131(3):281--294, 2003.

\bibitem[Kec]{kechris2012classical}
Alexander Kechris.
\newblock {\em Classical descriptive set theory}, volume 156.
\newblock Springer Science \& Business Media, 2012.

\bibitem[KK]{kalenda2018absolute}
Ond{\v{r}}ej Kalenda and Vojt{\v{e}}ch Kova{\v{r}}{\'\i}k.
\newblock Absolute {F}$_{\sigma\delta}$ spaces.
\newblock {\em Topology and its Applications}, 233:44--51, 2018.

\bibitem[KKLP]{kkakol2011descriptive}
Jerzy Kakol, Wies{\l}aw Kubi{\'s}, and Manuel L{\'o}pez-Pellicer.
\newblock {\em Descriptive topology in selected topics of functional analysis},
  volume~24.
\newblock Springer Science \& Business Media, 2011.

\bibitem[Kov]{kovarik2018brooms}
Vojt{\v{e}}ch Kova{\v{r}}{\'\i}k.
\newblock Absolute {F}-{B}orel classes.
\newblock {\em Fundamenta Mathematicae}.

\bibitem[KS]{spurny2006solution}
O.F.K. Kalenda and J.~Spurný.
\newblock A solution of the abstract {D}irichlet problem for {B}aire-one
  functions.
\newblock {\em Journal of Functional Analysis}, 232(2):259--294, 2006.

\bibitem[Rog]{rogers1980analytic}
Claude~Ambrose Rogers.
\newblock {\em Analytic sets}.
\newblock Academic Pr, 1980.

\bibitem[Sri]{srivastava2008course}
Sashi~Mohan Srivastava.
\newblock {\em A course on {B}orel sets}, volume 180.
\newblock Springer Science \& Business Media, 2008.

\bibitem[Tal]{talagrand1985choquet}
M.~Talagrand.
\newblock Choquet simplexes whose set of extreme points is {K}-analytic.
\newblock In {\em Annales de l'institut Fourier}, volume~35, pages 195--206,
  1985.

\end{thebibliography}

\end{document}